\newtheorem{thm}{Theorem}[section]
\newtheorem{cor}[thm]{Corollary}
\newtheorem{lem}[thm]{Lemma}
\newtheorem{prop}[thm]{Proposition}
\newtheorem{example}[thm]{Example}
\newtheorem{remarks}[thm]{Remark}
\newtheorem{defn}[thm]{Definition}
\newtheorem{hyp}[thm]{Hypothesis}
\numberwithin{equation}{section}
\date{}
\def\BB{\mathcal B}
\def\<{\langle}
\def\>{\rangle}
\def\d"{^{\prime\prime}}
\def\bhyp{\begin{hyp}}
\def\nhyp{\end{hyp}}
\def\bbeq{\begin{equation}}
\def\nneq{\end{equation}}
\def\bdef{\begin{defn}}
\def\ndef{\end{defn}}
\def\bthm{\begin{thm}}
\def\nthm{\end{thm}}
\def\bprop{\begin{prop}}
\def\nprop{\end{prop}}
\def\brmk{\begin{remarks}}
\def\nrmk{\end{remarks}}
\def\bexa{\begin{example}}
\def\nexa{\end{example}}
\def\blem{\begin{lem}}
\def\nlem{\end{lem}}
\def\bcor{\begin{cor}}
\def\ncor{\end{cor}}
\def\bexe{\begin{exe}}
\def\nexe{\end{exe}}
\def\bprf{\begin{proof}}
\def\nprf{\end{proof}}
\def\bdes{\begin{description}}
\def\ndes{\end{description}}
\def\benu{\begin{enumerate}}
\def\nenu{\end{enumerate}}
\begin{document}

 \title[Killed Generalized Langevin  and Nos\'e-Hoover processes]
 {Generalized Langevin  and  Nos\'e-Hoover processes absorbed at the boundary of a metastable domain}

\author[A. Guillin]{\textbf{\quad {Arnaud} Guillin$^{\dag}$    }}
\address{{\bf Arnaud Guillin}. Universit\'e Clermont Auvergne, CNRS, LMBP, F-63000 CLERMONT-FERRAND, FRANCE}
 \email{arnaud.guillin@uca.fr}
 
 \author[D. Lu]{\textbf{\quad Di Lu$^{\dag}$  }}
 \address{{\bf Di Lu}. School of Mathematical Sciences, Dalian University of Technology, Dalian 116024,  China}
 \email{diluMath@hotmail.com}
  
\author[B. Nectoux]{\textbf{\quad Boris Nectoux$^{\dag}$  }}
\address{{\bf Boris Nectoux}.Universit\'e Clermont Auvergne, CNRS, LMBP, F-63000 CLERMONT-FERRAND, FRANCE}
 \email{boris.nectoux@uca.fr}

\author[L. Wu]{\textbf{\quad Liming Wu$^{\dag}$ \, \, }}
\address{{\bf Liming Wu}. Universit\'e Clermont Auvergne, CNRS, LMBP, F-63000  CLERMONT-FERRAND, FRANCE, and, 
Institute for Advanced Study in Mathematics, Harbin Institute of Technology, Harbin 150001, China}
\email{Li-Ming.Wu@uca.fr}

\date{\today}

\begin{abstract} 
In this paper, we   prove in a very weak regularity setting existence and uniqueness of quasi-stationary distributions as well as exponential convergence towards the quasi-stationary distribution for  the generalized Langevin and the Nos\'e-Hoover processes, two  processes  which are widely used in molecular dynamics. The case of singular potentials is considered.
With the techniques used in this work, we are also  able to   greatly  improve  existing results on quasi-stationary distributions for the  kinetic Langevin process to a weak regularity setting. 
\end{abstract}


\maketitle
 \vskip 20pt\noindent {\it AMS 2010 Subject classifications.}   	37A30,  37A60, 60B10, 60J25, 60J60, 74A25. 
\vspace{-0.2cm}
 \vskip 20pt\noindent {\it Key words.} Molecular dynamics, metastability, quasi-stationary distribution, generalized Langevin, Nos\'e-Hoover, kinetic Langevin.

 \section{Setting and main results}

\subsection{Introduction}

\subsubsection{Purpose of this work}
The basic ingredient in molecular dynamics is a potential energy function $V:(\mathbf R^d)^N\to [1,+\infty]$ which associates to a set of position coordinates of $N$ $\mathbf R^d$-valued particles the energy of the system.   With this function $V$, several  continuous state space   models exist. When the system is  thermostated,  the time evolution of the position-velocity pair  $(x_t=(x^1_t,\ldots, x^N_t),v_t=(v^1_t,\ldots, v^N_t))\in (\mathbf R^d)^N\times (\mathbf R^d)^N$ of the particles  is commonly described by the so-called kinetic Langevin process, which is the solution in $(\mathbf R^d)^N\times (\mathbf R^d)^N$ to the equation
\begin{equation}\label{eq.K}
 dx_t=v_tdt, \ dv_t=-\nabla V(x_t)dt-\gamma v_tdt +\sqrt{2\gamma}\,  dW_t, 
 \end{equation}
 where $(W_t,t\ge 0)$ is  a  standard $(\mathbf R^d)^N$-Brownian motion and $-\nabla V$ is the force field. Due to energetic barriers, the position process $(x_t,t\ge 0)$ remains trapped   for very long times in a  basin of attraction  $ \mathscr B_V(x_*)$ of some  local minimum $x_*$ of $V$ for the dynamics $\dot x=-\nabla V(x)$  in $(\mathbf R^d)^N$. The process~\eqref{eq.K} is therefore said to be metastable and  such  subdomains  of $(\mathbf R^d)^N$, which typically describe  the macroscopic states of the system, are the metastable regions. The move from one metastable region to another is typically related to a macroscopic transition.
 The metastable phenomenon    precludes us from having access to the macroscopic transitions by simulating directly the trajectories of the process \eqref{eq.K} since such  transitions occur over very  long  periods of  time.   In simulations in molecular dynamics, many algorithms have been designed to have   access to macroscopic transitions such as e.g. the powerful and widely used   \textit{accelerated dynamics algorithms} introduced by  A.F. Voter \textit{$\&$ al.}~\cite{sorensen-voter-00,voter-98,bai2010efficient,perez2009accelerated}.
 Recently,   it has been shown~\cite{le2012mathematical,perez2016long,di-gesu-lelievre-le-peutrec-nectoux-17,DLLN,lelievre2015accelerated,ramil2022mathematical} that the notion of quasi-stationary distribution  (see Definition \ref{QSD})  is the cornerstone  to analyse the  mathematical foundations of these accelerated dynamics algorithms. 
    For that reason, the study of quasi-stationary distributions for metastable continuous state space   model has     recently  attracted a lot of attention, especially for the hypoelliptic and non reversible process \eqref{eq.K}, see indeed~\cite{guillinqsd,guillinqsd2,ramilarxiv1,ramilarxiv2,benaim2021degenerate,lelievre2022estimation}.
 
In this work, we will  address   the question of existence, uniqueness, and exponential convergence to the quasi-stationary distribution for   two other widely  used, for example in molecular dynamics, metastable  continuous state space   models which are structurally more complicated than \eqref{eq.K}: the Mori-Zwanzig  Markovian approximation of the generalized Langevin process (see \eqref{eq.GLintro} below) and the Nos\'e-Hoover process (see  \eqref{eq.NHintro} below), which we   introduce now.  

 \begin{sloppypar}
The generalized Langevin process, namely the solution $(x_t=(x^1_t,\ldots, x^N_t),v_t=(v^1_t,\ldots, v^N_t), t\ge 0) \in (\mathbf R^d)^N\times (\mathbf R^d)^N$ to the  integro-differential equation 
\begin{equation}\label{eq.G1}
 dx_t=v_tdt, \ dv_t=-\nabla V(x_t)dt-\gamma v_tdt + \int_0^t\mathsf K(t-s)v_s ds \, dt + F_tdt+\sqrt{2\gamma}\,  dW_t, 
 \end{equation}
has been derived to describe the evolution in time of a system of $N$ $\mathbf R^d$-valued particles interacting with a heat bath (see~\cite{zwanzig1973nonlinear,zwanzig2001nonequilibrium,pavliotisbook,mori1965continued,mori1965transport}). Here $\gamma\ge 0$ is the friction paramater. The diagonal square bloc matrix $\mathsf K_t= {\rm diag }(\mathsf K^1_tI_{(\mathbf R^d)^N},\ldots, \mathsf K^N_tI_{(\mathbf R^d)^N})$ is the memory kernel, where $\mathsf K^i_t\in \mathbf R_+$, is a characteristic of the bath, which encodes the long memory property of the generalized Langevin process. The process $(F_t=(F^1_t,\ldots,F_t^N)\in (\mathbf R^d)^N,t\ge 0)$ is a stationary zero mean   Gaussian stochastic forcing and the fluctuation-dissipation principle  writes $\mathbf E[F^i_s\cdot F^i_u]=\mathsf K^i(|s-u|)$. 
  The generalized Langevin process, which is non Markovian,  is one  of the standard models in  nonequilibrium statistical mechanics~\cite{stella2014generalized,lindenberg1990nonequilibrium,ness2015applications} and is widely used in many areas of   science    
such as    surface scattering~\cite{ala2002collective,doll1975generalized}, polymer dynamics~\cite{snook2006langevin}, sampling in molecular dynamics~\cite{ceriotti1,ceriotti2}, and global optimization using  simulated annealing~\cite{gidas1985global,chak2023generalized}.   
When  $\mathsf K^i_t =\sum_{l=1}^{k_i} \lambda_{i,l}^2e^{-\alpha_{i,l}t}$ (where $\alpha_i,\lambda_{i,l}>0$ for all $i\in \{1,\ldots,N\}$ and  all $l\in \{1,\ldots,k_i\}$, $k_i\ge 1$),  the process \eqref{eq.G1} is quasi-Markovian, i.e. 
it can be   written as a Markovian process by adding a finite number of additional variables \cite{pavliotisbook}  (see  also~\cite[Section 1.2]{ottobre2012asymptotic},~\cite{glatt}, and references therein). More precisely, for such $\mathsf K^i$'s, \eqref{eq.G1} is  equivalent to a Markovian system of stochastic differential equations: 
\begin{equation}\label{eq.A1}
  \left\{
    \begin{array}{ll}
    d x^{i}_t&= v^{i}_tdt \\
       dv^{i}_t&= -\nabla_{x_i}V(x_t)dt-\gamma v^{i}_tdt+ \sum_{l=1}^{k_i} \lambda_{i,l} z^{i,l}_tdt + \sqrt{2\gamma} \, dW^{i}_t  \\
      dz^{i,l}_t&= -\alpha_{i,l}z^{i,l}_tdt-\lambda_{i,l} v^{i}_tdt + \sqrt{2\alpha_{i,l}}\,  dB^{i,l}_t,
       \end{array}
\right.
\end{equation}
   where for $i\in \{1,\ldots,N\}$ and  $l\in \{1,\ldots,k_i\}$, $(x^{i}_t,v^{i}_t,z^{i,l}_t)\in \mathbf R^d\times \mathbf R^d\times \mathbf R^d$, and where the $(W^i_t,t\ge 0)$'s  and  the $(B^{i,l}_t,t\ge 0)$'s are mutually  independent $\mathbf R^d$-Brownian motions. We also mention that for certain kinds of memory kernels $\mathsf K$, \eqref{eq.G1}  can be conveniently approximated by the finite dimensional systems~\eqref{eq.A1}, see e.g. \cite{pavliotisbook,kupferman2004fractional,ottobre2011asymptotic,ness2015applications}. 
For ease of expository and since all our proofs extend trivially to the case when $k_i>1$,  we have decided in this work  to only consider  the case when  $k_i=1$ for all $i\in \{1,\ldots,N\}$. In this case,~\eqref{eq.A1} writes:
\begin{equation}\label{eq.GLintro}
  \left\{
    \begin{array}{ll}
        dx_t&= v_tdt \\
        dv_t&= -\nabla V(x_t)dt-\gamma v_tdt+\lambda z_tdt + \sqrt{2\gamma} \, dW_t\\
        dz_t&= -\alpha z_tdt-\lambda v_tdt +  \sqrt{2\alpha}\, dB_t,
    \end{array}
\right.
\end{equation}
where $(x_t,v_t,z_t)\in (\mathbf R^d)^N\times (\mathbf R^d)^N\times  (\mathbf R^d)^N$, $\alpha, \lambda>0$, $\gamma \ge 0$, and where  $(W_t,t\ge 0)$   and   $(B_t,t\ge 0)$ are independent $(\mathbf R^d)^N$-Brownian motions. Note that when $\gamma=0$, which is the  case often   used in practice, the process \eqref{eq.GLintro} is more degenerated than \eqref{eq.K} in the sense that the noise does not act on the positions and on the velocities but  on an auxiliary variable. 
With a slight abuse of language,~\eqref{eq.GLintro} will be referred in this work       as the generalized Langevin process.  
 \end{sloppypar}

The second  process we will consider is the  Nos\'e-Hoover process which is 
another extension of the kinetic Langevin process \eqref{eq.K}. 
The  Nos\'e-Hoover process  is the  solution  on $(\mathbf R^d)^N\times (\mathbf R^d)^N\times  \mathbf R$ to the following stochastic differential equation: 
  \begin{equation} \label{eq.NHintro}
\left\{
\begin{aligned}
 d  x_t &= v_t dt\\
dv_t&=-\nabla V(x_t)dt - \gamma v_t dt -  v_ty_tdt+  \sqrt{2\gamma} \, dB_t\\
dy_t&= |v_t|^2 dt -  d N dt,
\end{aligned}
\right.
\end{equation}
where $\gamma>0$. 
The process \eqref{eq.NHintro}  is  also sometimes  referred as the adaptive Langevin process. It has been introduced in the context of sampling in  molecular dynamics~\cite{KCLS,LR09,jones2011adaptive,leimkuhler2016adaptive}, where $y_t$ acts as a thermostat. See also~\cite{ding2014bayesian,shang2015covariance} for applications in Bayesian sampling. The friction parameter in the kinetic Langevin process~\eqref{eq.K}  is   considered in \eqref{eq.NHintro} as a dynamical variable. This stochastic correction, which is renewed  according to a negative feedback loop control law (as in the Nos\'e-Hoover thermostat)  models   random perturbations of unknown magnitude which can occur on the  potential gradient, and  serves as variable which restores the canonical distribution associated with the prescribed inverse temperature, see indeed~\cite{stoltzadaptive}.

 Studying these two metastable processes, via their quasi-stationary distribution, is thus of paramount importance for practical applications, e.g. in molecular dynamics. 
 
Since $V$  might have singularities, we  consider  a connected component  $\mathbf O_V$ of  the set $\{x\in (\mathbf R^d)^N, V(x)<+\infty\}$. Then, if  collisions between particles  do not occur, which will be the case in this work, the two processes~\eqref{eq.GLintro} and~\eqref{eq.NHintro}  evolve   on the state space
 $$\mathscr E= \mathbf O_V\times \mathbf R^m,$$ 
 where $m=Nd+Nd$ for the generalized Langevin process \eqref{eq.GLintro} and $m=Nd+1$ for the Nosé-Hoover process \eqref{eq.NHintro}.

In this work, we prove, for the two processes \eqref{eq.GLintro} and \eqref{eq.NHintro}, existence and uniqueness in some weighted spaces of the quasi-stationary distribution on metastable domains $\mathscr D$, which are regions of the forms~$\mathscr D=\mathscr O\times \mathbf R^m$, where~$\mathscr O$ is a subdomain (i.e. a  nonempty, connected, and open subset) of $\mathbf O_V$, bounded or not,  
as well as the exponential convergence towards the quasi-stationary distribution.  Our main results  are Theorem~\ref{th.1} (and its extension to non gradient force fields,  Theorem \ref{th.1-NE}), and Theorems~\ref{th.2}, and~\ref{th.3}. We also mention    Theorem~ \ref{th.K} for a significant extension of  existing results on the quasi-stationary distribution for  the kinetic Langevin  process~\eqref{eq.K}.

\subsubsection{Contributions}

The first main contribution of this work is the very weak regularity setting we consider on both the domain $\mathscr D$ and the force field (see items $\mathbf{a}$ and $\mathbf{b}$ just below). One of the  main novelty of this setting, compared to previous works on quasi-stationary distributions for  hypoelliptic  degenerate  processes~\cite{guillinqsd,guillinqsd2,ramilarxiv1,ramilarxiv2,ramilarxiv3,lelievre2022estimation} on such domains~$\mathscr D$,  is that  
we have managed to get rid of any regularity assumption on the boundary of~$\mathscr O$. This is a paramount improvement in order to   treat the  cases which are  considered in practice where~$\mathscr O$ is defined as a basin  of attraction $\mathscr B_V(x_*)$ (see e.g.~\cite{perez2015b,perez2016long,di-gesu-lelievre-le-peutrec-nectoux-17,ramil2022mathematical}), or as   (the interior of the closure of the) union of neighboring basins  of attraction. In this case,  the properties of $\partial \mathscr O$ are very arduous to infer, and even worse, it is known that in this case  $\partial \mathscr O$ is far from being regular\footnote{E.g. $\partial \mathscr O$ may have corners and/or may contain points at which it does not satisfy the exterior sphere condition. Worst geometric situations can also occur for the boundary of such domains.}, even for smooth potentials $V$.  \\
More precisely, in the weak regularity we consider is the case when: 
\begin{enumerate}
\item[$\mathbf{a}$.] $\mathscr O$ is any subdomain of $\mathbf O_V$ (note that there is thus no assumption on the regularity of $\partial \mathscr O$, and that 
 $\partial \mathscr O$  can also intersect  the set $\partial \mathbf O_V$ where the potential $V$ is  infinite, due to collisions between particles),
\item[$\mathbf{b}$.]  the force field $-\nabla V$ is  only locally Lipschitz over $\mathbf O_V$ and infinite on $\partial \mathbf O_V$.
\end{enumerate}
In this previous setting, we will  in particular prove  that:
\begin{itemize}
\item[$\mathbf 1$.]  The nonkilled semigroup, defined in \eqref{eq.KS-0}, and the killed semigroup, defined in \eqref{eq.KS}, are respectively  strong Feller and weak Feller (actually it will be shown that the latter is also strong Feller). These are respectively  the conditions \textbf{(C1)} and \textbf{(C4)} in Section~\ref{sec.nota}.
\item[$\mathbf 2$.] The killed semigroup is  topologically irreducible (this is  the condition  \textbf{(C5)}  below in Section~\ref{sec.nota}).
\end{itemize}
 In the weak regularity setting $\mathbf{a}$ and $\mathbf{b}$,  the tools used  to prove Item $\mathbf 1$ in~\cite{guillinqsd,guillinqsd2} for the kinetic process~\eqref{eq.K} are not adapted anymore. We   will   rather rely on a different approach to  prove  Item $\mathbf 1$. This approach, which is explained in details in Section~\ref{sec.22},  is  fundamentally based on   the   \textit{energy splitting}~Equation~\eqref{eq.splitting}  together with  the   analysis   of the behavior of the process  at low and at high energy $\mathsf H$  (where~$\mathsf H$ denotes  the Hamiltonian  of the processes).  

On the other hand, to ensure existence of quasi-stationary distributions and exponential convergence to it, we also have to prove
 \begin{itemize}
\item[$\mathbf 3$.] Enhanced exponential integrability of hitting times of larger and larger sets (see the condition \textbf{(C3)} below).
\end{itemize}
The second main contribution of this work is  that we  consider singular potentials  $V$
  for both processes and ensure that Item $\mathbf 3$ still holds. 
  To this end, we have in particular to  construct Lyapunov functions satisfying a strong return from $\infty$ (see \textbf{(C3)}). The Lyapunov functions we construct are bounded from above by~$C\exp ( c\mathsf H^\delta)$, for   $\delta\in (0,1]$,  where we recall that~$\mathsf H$ denotes  the Hamiltonian  of the processes. It will turn  out    that for singular potentials~$V$ and for  the generalized Langevin process, a right choice of Lyapunov function  is  $e^{\mathsf F^\delta}$, where $\mathsf F$ is the modified Hamiltonian introduced in \cite{duong2023asymptotic} (see Section~\ref{sec.GL-singular}). 
However, for  the Nos\'e-Hoover process \eqref{eq.NHintro}, the Lyapunov function   introduced in \cite{herzogNH} does not satisfy \textbf{(C3)}, and for that reason we modify it to  obtain  the asymptotic return from $\infty$ in the position variable $x$ and also to obtain smaller Lyapunov functions.

  The starting point of our analysis is~\cite[Theorem~2.2]{guillinqsd} when we will consider  the generalized Langevin process, whereas, for the Nosé-Hoover process,  we will rely on a   more general result, namely Theorem~\ref{th.ext} (stated and proved in Section \ref{sec.extC5}). This extension  is required since   the  Nosé-Hoover process   does not satisfy the condition \textbf{(C5)} (defined at the end of Section \ref{sec.nota}).


 \subsubsection{On the approach to check the regularity conditions}

The approach we  use  in this work to check the regularity conditions \textbf{(C1)}, \textbf{(C2)}, and \textbf{(C4)} is introduced in  Section \ref{sec.22}. It is  particularly well-suited  for    solutions to degenerate (or not) SDEs in $\mathbf R^\ell$ (driven by different kind of noises)  with  non smooth drifts  on   any kind of   subdomains $\mathscr D\subset \mathbf R^\ell$. Indeed, in addition to having been successfully used in this work  for   degenerate kinetic processes driven by Brownian noises,  this approach has also been used   after writing this article   by the authors  to derive \textbf{(C1)}, \textbf{(C2)}, and \textbf{(C4)}  for other types of  processes which are listed   in Remark~\ref{sec.reSF} below. In particular, these are processes for which  standard techniques to derive \textbf{(C1)}, such as the H\"ormander's condition and the Malliavin calculus, do not apply due to the lack of regularity of the coefficients. In some cases,  the drift is indeed  not smooth enough or even worst, it becomes    infinite on some parts of the state space which are almost surely visited by the process~\cite{guillin2024Dyson}.  
  We finally mention that this approach will be also employed in a future work to study the existence and uniqueness of (quasi- or not) stationary distributions for kinetic  SDEs with jump-noises.
  \subsubsection{Improving existing results on the kinetic Langevin process}
Using the  approach introduced in  Section \ref{sec.22} and the tools used in this work, we can  also considerably    improve 
 existing results on the existence and uniqueness of quasi-stationary distributions for the kinetic Langevin process~\cite[Theorems 2.4 and 3.2]{guillinqsd2} with singular potentials, as well as those obtained~\cite{ramilarxiv2}: they are   valid without any regularity assumption on the boundary of the subdomain $\mathscr D$ (see respectively Theorems~\ref{th.K} and~\ref{th.Ks} below) and when the drift is only locally Lipschitz. As explained, such extensions are of real interest to match with the non smooth domains, namely the basins of attraction $\mathscr B_V(x^*)$ of $\dot x=-\nabla V(x)$, on which is considered the quasi-stationary distribution in practice. We also refer to Theorems \ref{th.ell} and~\ref{th.ell2} when elliptic processes are considered.

\subsubsection{Related results}
The ergodic properties of the  nonkilled semigroup of the generalized Langevin  process  is now well-known, see for instance~\cite{ottobre2011asymptotic,ottobre2012asymptotic,pavliotis2021scaling,Sachs,glatt,duong2023asymptotic} and references therein (see also~\cite{carmona2007existence,monmarche2023almost,herzog2023gibbsian}).  
On the other hand, the long time behavior of nonkilled semigroup of the Nos\'e-Hoover process has been studied in~\cite{herzogNH}, where we also mention that the case of Lennard-Jones type potentials is also considered there (see also~\cite{stoltzadaptive} for a similar study for the adaptive Langevin process, through a hypocoercivity analysis).  
We also refer to~\cite{bony2022eyring} (resp.~\cite{lois}) where the metastable behavior of the process \eqref{eq.GLintro} (resp. \eqref{eq.NHintro}) has been studied through the derivation of sharp asymptotic equivalents in the small temperature regime of the smallest eigenvalues of  its generator (see also~\cite{herauEK}). 
 
Several general criteria have been introduced in the mathematical literature to ensure existence and uniqueness of quasi-stationary distributions, see~\cite{meleard2012quasi,champagnat2017general} and references therein (see also the recent works~\cite{arnaudon2024lyapunov,velleret2022unique,sanchez2023krein}). 
When dealing with biological systems, we refer e.g. to~\cite{cattiaux09,diaconis2009times,cattiaux2010competitive,chazottes2016sharp,kolbQSDpersistance,champagnat2021lyapunov}.  As already mentioned, quasi-stationary distributions  for the kinetic Langevin process~\eqref{eq.K} have been studied recently in~\cite{guillinqsd,guillinqsd2,ramilarxiv2,benaim2021degenerate} (see also~\cite{tough2022infty}). 

\subsubsection{Notation}\label{sec.nota}
Let  $(\Omega, \mathcal F, (\mathcal F_t)_{t\ge 0}, \mathbf P)$ be a  filtered probability space  (where the filtration satisfies the usual condition). 
Consider  $(X_t,t\ge 0)$  a time  homogeneous  strong Markov process valued in  $\mathscr E$ with continuous sample paths, where  $\mathscr E$ is a nonempty open subset of $\mathbf R^k$, $k\ge 1$. 
Let $\mathcal B(\mathscr E)$ be the Borel $\sigma$-algebra of $\mathscr E$, $b\mathcal B(\mathscr E)$ the space of all bounded and Borel measurable (real-valued) functions $f$ on $\mathscr E$,  $ \mathcal C^b(\mathscr E)$  the space of all bounded  and continuous  (real-valued)  functions   on $\mathscr E$, and $\mathcal P (\mathscr E)$ as the set of probability measures  on $\mathscr E$.   We will denote by $\mathsf L_{{\rm Lip}}^{1, {\rm loc}}(\mathscr E)$ the set of functions $V:\mathscr E\to \mathbf R$ such that over $\mathscr E$, $V$ is   
differentiable  and     $\nabla V$ is locally Lipschitz.   
The transition  probability semigroup is denoted by $(P_t,t\ge 0)$, i.e. 
\begin{equation}\label{eq.KS-0}
P_tf(\mathsf x)=\mathbf E_{\mathsf  x}[f(X_t)], \ f\in b\mathcal B(\mathscr E), \ \mathsf x\in \mathscr E.
\end{equation}
  $(P_t,t\ge 0)$ will be referred as the nonkilled semigroup (where by definition,  $\mathbf E_{\mathsf  x}[f(X_t)]=\mathbf E[f(X_t)| X_0=\mathsf  x]$). 
 The space $\mathcal C([0,T],\mathscr E)$ of $\mathscr E$-valued continuous functions   defined on $[0,T]$ is endowed with the  sup-norm over $[0,T]$. When $\mathscr D$ is a nonempty open subset of $\mathscr E$, we  denote by  $(P_t^{\mathscr D},t\ge 0)$  the semigroup of the killed process $(X_t,t\ge 0)$:
\begin{equation}\label{eq.KS}
P_t^{\mathscr D}f(\mathsf x)=\mathbf E_{\mathsf  x}[f(X_t)\mathbf 1_{t<\sigma_{\mathscr D}}],  \ f\in b\mathcal B (\mathscr D), \ \mathsf  x\in \mathscr D,
\end{equation}
 where $\sigma_{\mathscr D}=\inf\{t\ge 0, X_t\notin \mathscr D\}$
 is the first exit time of the process   $(X_t,t\ge 0)$  from~$\mathscr D$  ($(P_t^{\mathscr D},t\ge 0)$ will be referred as the killed semigroup). 
For $\mathsf W:\mathscr E\to [1,+\infty)$ and when $\mathscr D$ is open subset of $\mathscr E$, we define the set $\mathcal P_{\mathsf W}(\mathscr D)$ as the set of probability measures $\nu$ over  $\mathscr D$ such that $\nu(\mathsf W)<+\infty$.  We also define $b_{\mathsf W}\mathcal B (\mathscr D)$ the Banach space of measurable functions $f:\mathscr D\to \mathbf R^d$ such that $f/\mathsf W$ is bounded over~$\mathscr D$ (its norm is denoted by $\Vert f \Vert_{\mathsf W}=\sup_{\mathscr E}|f/\mathsf W|$).  We finally denote by $(\mathcal L ,\mathbb D_e(\mathcal L))$ the extended generator of $(X_t,t\ge 0)$ (for a definition, see \cite{guillinqsd,guillinqsd2} and references therein).  
 
As already mentioned, to prove existence and uniqueness of quasi-stationary distributions for the generalized Langevin process, we will use~\cite[Theorem~2.2]{guillinqsd} (for the Nos\'e-Hoover process, we will use an extension of~\cite[Theorem~2.2]{guillinqsd}, see Theorem~\ref{th.ext} below). To this end, we  recall the  conditions introduced in~\cite{guillinqsd} which ensure existence and uniqueness of the quasi-stationary distribution as well as the exponential convergence  in $\mathcal P_{\mathsf W^{1/p}}(\mathscr D)$ towards this quasi-stationary distribution:
\begin{enumerate}
\item[{\bf (C1)}]  There exists $t_0>0$ such that for each $t\ge t_0$, $P_t$ is strong Feller.

\item[{\bf (C2)}]  For every $T>0$, $\mathsf x\in \mathscr E \mapsto\mathbf P_{\mathsf x}(X_{[0,T]}\in \cdot)$  (the law of $X_{[0,T]}:=(X_t)_{t\in [0,T]}$) is continuous from $\mathscr E$ to the space $\mathcal P(\mathcal C([0,T], \mathscr E))$ of probability measures on  $\mathcal C([0,T], \mathscr E)$, equipped with the weak convergence topology.

\item[{\bf (C3)}] There exist a continuous  function $\mathsf W : \mathscr E\to [1,+\infty)$,   with $\mathsf W\in \mathbb D_e(\mathcal L)$,   two sequences of positive constants $(r_n)$ and $(b_n)$ where $r_n\to +\infty$, and an increasing sequence of compact subsets $(K_n)$ of $\mathscr E$, such that
$$
-\mathcal L\mathsf W ({\mathsf x}) \ge r_n \mathsf W({\mathsf x}) - b_n \mathbf 1_{K_n}({\mathsf x}), \ \text{ quasi-everywhere.} 
$$
\item[{\bf (C4)}]  For all $t\ge 0$ and all   $f\in \mathcal C_c^\infty(\mathscr D)$,  $P_t^{\mathscr D}f\in \mathcal C^b(\mathscr D)$.

\item[{\bf (C5)}] There exists $t_1>0$, for all $t\ge t_1$,  ${\mathsf x}\in \mathscr D$ and nonempty open subset $ \mathsf O$ of $\mathscr D$, $
P^{\mathscr D}_t({\mathsf x},\mathsf O)>0$. In addition, there exists  $\mathsf x_0\in \mathscr D$ such that $\mathbf P_{\mathsf x_0}(\sigma_{\mathscr D}<+\infty)>0$.

\end{enumerate}
We recall also the definition of a quasi-stationary distribution (see for instance the classical textbook~\cite{collet2012quasi}).
 \begin{defn}\label{QSD}
A quasi-stationary distribution   of the Markov process $(X_t,t\ge 0)$ in $\mathscr D$ is a probability measure $\mu_{\mathscr D}$ on $\mathscr D$
such that $ 
\mu_{\mathscr D}(\mathcal A)=\mathbf P_{\mu_{\mathscr D}}(X_t\in \mathcal A| t<\sigma_{\mathscr D})$,
 $\forall t>0$, $\forall \mathcal A\in \mathcal B(\mathscr D)$, 
 where $\mathcal B(\mathscr D):=\big \{\mathcal A\cap \mathscr D, \mathcal A\in\mathcal B(\mathscr E)\big \}$.
 \end{defn}
We end this section by recalling a powerful  general convergence result  which will be used many times in this work. 
\begin{prop}(\cite[Lemma 3.2]{wu1999}) 
\label{pr.wu1999}
Assume that a sequence of random variables $(Y_n)_{n\ge 0}$ defined on a probability space $(\Omega, \mathcal F, \mathbf P)$ with values in a Polish space $\mathscr S$    converges in $\mathbf P$-probability to $Y$. Assume also that there exists   a  fixed probability measure $\mu$ such that   for all $n\ge 0$, the law of $Y_n$ writes   $\mathbf P[Y_n\in d\mathsf x ]=p_n(\mathsf x) \mu(  d\mathsf x )$. If $(p_n)_n$ is   uniformly integrable with respect to  $\mu$, then as $n\to +\infty$, $f(Y_n)\to f(Y)$ in $\mathbf P$-probability for all measurable function $f:\mathscr S\to \mathbf R$. 
\end{prop}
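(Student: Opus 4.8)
The plan is to use the uniform integrability of $(p_n)_n$ to handle the one genuine difficulty here, namely that $f$ is merely Borel measurable, so that $f(Y_n)\to f(Y)$ in $\mathbf P$-probability cannot be deduced from $Y_n\to Y$ in $\mathbf P$-probability by a continuity argument. The idea is to replace $f$ by a bounded continuous function $h$ off a set $B$ of small $\mu$-measure, and then to transfer the smallness of $\mu(B)$ both to the laws $p_n\,\mu$ of the $Y_n$ (via uniform integrability) and to the law of $Y$ (via a preliminary absolute continuity step).

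First I would show that the law $\nu:=\mathbf P\circ Y^{-1}$ is absolutely continuous with respect to $\mu$. Since convergence in $\mathbf P$-probability implies convergence in distribution, the Portmanteau theorem gives $\mathbf P(Y\in U)\le\liminf_n\mathbf P(Y_n\in U)=\liminf_n\int_U p_n\,d\mu\le\sup_n\int_U p_n\,d\mu$ for every open $U\subset\mathscr S$. As $\mu$ is a finite Borel measure on a Polish space it is Radon, so any $\mu$-null Borel set $B$ is contained in open sets $U_k$ with $\mu(U_k)\downarrow 0$; uniform integrability of $(p_n)_n$ then forces $\sup_n\int_{U_k}p_n\,d\mu\to 0$, whence $\nu(B)=0$. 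Consequently $\nu\ll\mu$, and there is a modulus $\omega(\delta)\to 0$ (as $\delta\to 0$) with $\mu(A)\le\delta\Rightarrow\nu(A)\le\omega(\delta)$.

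Next, fixing $\eta>0$ and $\epsilon>0$, I would use the uniform integrability of $(p_n)_n$ together with $\int p_n\,d\mu=1$ to produce $\delta>0$ such that $\sup_n\int_A p_n\,d\mu\le\epsilon$ whenever $\mu(A)\le\delta$, shrinking $\delta$ so that also $\omega(\delta)\le\epsilon$. Lusin's theorem (applicable since $\mu$ is a finite Radon measure on a Polish space) then yields a compact $K\subset\mathscr S$ with $\mu(\mathscr S\setminus K)\le\delta$ on which $f$ is continuous, and Tietze's extension theorem gives $h\in\mathcal C^b(\mathscr S)$ with $h=f$ on $K$. Setting $B:=\{f\ne h\}\subset\mathscr S\setminus K$, one has $\mu(B)\le\delta$, hence $\mathbf P(f(Y_n)\ne h(Y_n))=\int_B p_n\,d\mu\le\epsilon$ for every $n$, and $\mathbf P(f(Y)\ne h(Y))=\nu(B)\le\omega(\delta)\le\epsilon$.

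To conclude, on the complement of $\{f(Y_n)\ne h(Y_n)\}\cup\{f(Y)\ne h(Y)\}$ we have $|f(Y_n)-f(Y)|=|h(Y_n)-h(Y)|$, so
\[
\mathbf P\bigl(|f(Y_n)-f(Y)|>\eta\bigr)\le \mathbf P\bigl(f(Y_n)\ne h(Y_n)\bigr)+\mathbf P\bigl(f(Y)\ne h(Y)\bigr)+\mathbf P\bigl(|h(Y_n)-h(Y)|>\eta\bigr);
\]
the first two terms are $\le\epsilon$ by the previous step, and the last one tends to $0$ because $h$ is bounded and continuous and $Y_n\to Y$ in $\mathbf P$-probability (extract an a.s.-convergent subsequence and apply continuity of $h$). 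Letting $n\to\infty$ and then $\epsilon\to 0$ gives the claim. I expect the main obstacle to be precisely the continuous-approximation step: one needs the exceptional set $B$ to be simultaneously small for $\mu$, for all the measures $p_n\,\mu$, and for the law $\nu$ of $Y$ — Lusin's theorem controls $\mu(B)$, uniform integrability transfers this to the $p_n\,\mu$, and the absolute continuity step transfers it to $\nu$; once these three controls are in place everything else is routine.
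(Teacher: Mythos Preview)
The paper does not prove this proposition: it is simply quoted as \cite[Lemma 3.2]{wu1999} and used as a black box throughout. So there is no ``paper's own proof'' to compare against.

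Your argument is correct. The key ingredients are all in place: (i) you correctly deduce $\nu\ll\mu$ from Portmanteau, outer regularity of $\mu$ on the Polish space, and the uniform absolute continuity of $(p_n\mu)_n$ coming from uniform integrability plus $\|p_n\|_{L^1(\mu)}=1$; (ii) Lusin plus Tietze give the bounded continuous approximant $h$ agreeing with $f$ off a set $B$ of small $\mu$-measure (note $f|_K$ is automatically bounded since $K$ is compact and $f|_K$ continuous, so the Tietze extension is indeed bounded); (iii) the three-term split and the continuous mapping theorem for convergence in probability finish the job. One cosmetic remark: you do not need the subsequence extraction in the last step, since $h$ continuous and $Y_n\to Y$ in probability already give $h(Y_n)\to h(Y)$ in probability directly by the continuous mapping theorem; but what you wrote is of course a valid proof of that fact.
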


\subsection{Quasi-stationary distributions for the  generalized Langevin process}

In this section, we study existence and uniqueness of the quasi-stationary distribution for the generalized Langevin equation~\eqref{eq.GLintro}. We first consider in Section \ref{sec.GLTD}, the case when $N=1$ and  $\nabla V$ is locally Lipschitz on $\mathbf R^d$ (see Theorem~\ref{th.1}). In Section \ref{sec.GL-I}, we then consider the case of $N\ge 2$   particles evolving according to  the generalized Langevin equation and interacting through singular potentials (see Theorem~\ref{th.2}). The 
proof of Theorem~\ref{th.1} 
 when $N=1$ and when  $\nabla V$ is locally Lipschitz over $\mathbf R^d$ is very instructive, which explains our choice to split  this section.   

\subsubsection{Generalized Langevin process with locally Lipschitz  drifts} 
 
 \label{sec.GLTD}

 In this section, $N=1$ and $V:\mathbf R^d\to [1,+\infty)$ (thus  $\mathbf O_V=\mathbf R^d$),  and  we  consider the  process $(X_t=(x_t,v_t,z_t),t\ge 0)$  solution  in $\mathscr E=\mathbf R^{d}\times \mathbf R^{d}\times \mathbf R^{d}$ to the generalized Langevin equation \eqref{eq.GLintro} when $N=1$ (see Proposition \ref{pr.existence}). Recall that   $\lambda,\alpha>0$, $\gamma\ge 0$.
The basic assumption of this section is the following.
\medskip

\noindent
\textbf{Assumption}  \textbf{[V{\tiny loc}]}.\textit{
 $N=1$ and  $V:\mathbf R^d\to [1,+\infty)$  belongs to $\mathsf L_{{\rm Lip}}^{1, {\rm loc}}(\mathbf R^d)$. In addition $V$ is  
 coercive (i.e. $V(x)\to +\infty$ as $|x|\to +\infty$).}
\medskip
 
In the following, we simply denote $\mathbf R^{d}\times \mathbf R^{d}\times \mathbf R^{d}$ by $ \mathbf R^{3d}$. 
We denote the Hamiltonian function of the process \eqref{eq.GLintro} by 
\begin{equation}\label{eq.H-GL}
\mathsf H_{{\rm GL}}: (x,v,z)\in \mathbf R^{3d}\mapsto  V(x)+\frac 12 |v|^2+ \frac 12 |z|^2,
\end{equation}
and its infinitesimal generator
\begin{equation}\label{eq.L-GL}
\mathcal L_{{\rm GL}}= v\cdot \nabla _x  + (-\gamma v -  \nabla _x V +\lambda z)\cdot \nabla_v + \gamma \Delta_v- (\alpha z+\lambda v)\cdot \nabla _z+ \alpha \Delta_z.
\end{equation}

 
  \begin{prop}\label{pr.existence}
  Assume {\rm\textbf{[V{\tiny loc}]}}.
For all $\mathsf x_0\in \mathbf R^{3d}$, there exists a  unique strong solution  $(X_t=(x_t,v_t,z_t),t\ge 0)$ to \eqref{eq.GLintro} such that $X_0=\mathsf x_0$ (this process is denoted by $(X_t(\mathsf x_0),t\ge 0)$). In addition, $(X_t(\mathsf x_0),t\ge 0)$ is  a strong Markov process.
  \end{prop}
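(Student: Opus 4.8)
The plan is to establish existence and uniqueness of a global strong solution to the SDE system \eqref{eq.GLintro} with $N=1$ under \textbf{[V{\tiny loc}]}, and then deduce the strong Markov property from the standard theory for SDEs with locally Lipschitz coefficients. First I would rewrite \eqref{eq.GLintro} as an SDE in $\mathbf{R}^{3d}$ of the form $dX_t = b(X_t)\,dt + \sigma\,d\widetilde W_t$, where $\widetilde W$ is the $2d$-dimensional Brownian motion built from $(W,B)$, the diffusion matrix $\sigma$ is constant (hence globally Lipschitz), and the drift $b(x,v,z) = (v,\,-\gamma v - \nabla V(x) + \lambda z,\, -\alpha z - \lambda v)$ is locally Lipschitz on $\mathbf{R}^{3d}$ because $\nabla V$ is locally Lipschitz by assumption. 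Local Lipschitzness of $b$ and $\sigma$ gives, by the classical localization argument (e.g. via stopping times at exit from balls and Itô's formula for the truncated equation), existence and uniqueness of a strong solution up to an explosion time $\tau_\infty = \lim_n \tau_n$, where $\tau_n = \inf\{t\ge 0 : |X_t|\ge n\}$.

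The key step is then to rule out explosion, i.e. to show $\tau_\infty = +\infty$ almost surely. For this I would use the Hamiltonian $\mathsf H_{{\rm GL}}$ from \eqref{eq.H-GL} as a Lyapunov function. A direct computation with the generator \eqref{eq.L-GL} gives
\begin{equation}\label{eq.proof-Lyap}
\mathcal L_{{\rm GL}}\mathsf H_{{\rm GL}} = v\cdot(-\gamma v - \nabla V + \lambda z) + \gamma d + \nabla V\cdot v + (-\alpha z - \lambda v)\cdot z + \alpha d = -\gamma |v|^2 - \alpha |z|^2 + (\gamma+\alpha)d \le (\gamma+\alpha)d,
\end{equation}
where the cross terms $\lambda v\cdot z$ cancel. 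Hence $t\mapsto \mathsf H_{{\rm GL}}(X_{t\wedge\tau_n}) - (\gamma+\alpha)d\,(t\wedge\tau_n)$ is a supermartingale, so $\mathbf E_{\mathsf x_0}[\mathsf H_{{\rm GL}}(X_{t\wedge\tau_n})] \le \mathsf H_{{\rm GL}}(\mathsf x_0) + (\gamma+\alpha)d\,t$. Since $V\ge 1$ is coercive, $\mathsf H_{{\rm GL}}(x,v,z)\to+\infty$ as $|(x,v,z)|\to+\infty$, so $\inf_{|X|\ge n}\mathsf H_{{\rm GL}} =: c_n \to +\infty$; Markov's inequality then gives $\mathbf P_{\mathsf x_0}(\tau_n \le t) \le (\mathsf H_{{\rm GL}}(\mathsf x_0) + (\gamma+\alpha)d\,t)/c_n \to 0$ as $n\to\infty$ for every fixed $t$. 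Therefore $\mathbf P_{\mathsf x_0}(\tau_\infty \le t) = 0$ for all $t$, i.e. $\tau_\infty = +\infty$ a.s., and the strong solution is global.

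Finally, uniqueness of the strong (and in fact pathwise) solution follows from local Lipschitzness via the same localization, and the strong Markov property is then standard for SDEs with locally Lipschitz coefficients driven by Brownian motion and admitting a unique non-exploding strong solution (one can cite the classical references already mentioned in the text, or argue via the uniqueness of the martingale problem after localization). The main obstacle is the non-explosion argument, but the exact cancellation of the $\lambda v\cdot z$ terms in \eqref{eq.proof-Lyap}—which is precisely the structural feature that makes $\mathsf H_{{\rm GL}}$ a genuine Lyapunov function—makes it routine once one notices it; coercivity of $V$ then does the rest. One minor point to be careful about is that $\nabla V$ is only assumed locally Lipschitz (not globally), so all the Itô computations must be carried out on the stopped processes $X_{\cdot\wedge\tau_n}$ before passing to the limit.
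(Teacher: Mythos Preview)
Your proof is correct and follows essentially the same standard non-explosion argument as the paper: local Lipschitz coefficients give a local strong solution, and the Hamiltonian $\mathsf H_{{\rm GL}}$ serves as a Lyapunov function ruling out explosion. Your computation of $\mathcal L_{{\rm GL}}\mathsf H_{{\rm GL}}$ is in fact sharper (you get the uniform bound $(\gamma+\alpha)d$, whereas the paper only records $\mathcal L_{{\rm GL}}\mathsf H_{{\rm GL}}\le c\,\mathsf H_{{\rm GL}}$), and either suffices. The only noteworthy difference is that the paper localizes via the Hamiltonian sublevel sets $\mathscr H_R=\{\mathsf H_{{\rm GL}}<R\}$ rather than Euclidean balls, obtaining the explicit tail estimate $\mathbf P_{\mathsf x}[\sigma_{\mathscr H_R}\le t]\le e^{ct}\mathsf H_{{\rm GL}}(\mathsf x)/R$; this particular inequality is not needed for the proposition itself but is reused throughout the rest of the paper in the ``energy splitting'' approach to \textbf{(C1)}, \textbf{(C2)}, \textbf{(C4)}.
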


 \begin{proof} Note that the coefficients in \eqref{eq.GLintro} are locally Lipschitz. 
  The proof of the non explosion is very standard, but we write it since Equation~\eqref{eq.tauR} will play a key role in this work. Let $c>0$   such that $\mathcal L_{{\rm GL}} \mathsf H_{{\rm GL}} \le c \mathsf H_{{\rm GL}} $ over $\mathbf R^{3d}$. Set for $R\ge 0$, $\sigma_{\mathscr H_R}:= \inf \{t\ge 0, X_t\notin \mathscr H_R\}$, 
   where
  $\mathscr H_R:=\{\mathsf x\in  \mathbf R^{3d}, \mathsf H_{{\rm GL}}(\mathsf x)<R\}$ is open and bounded (since $V$ is coercive). 
      Since $ \mathsf H_{{\rm GL}}(X_{\sigma_{\mathscr H_R}})= R$, it holds by the Itô formula, for all $\mathsf x\in \mathscr H_R$, $R> 0$:  
    \begin{equation}\label{eq.tauR}
\mathbf P_{\mathsf x}[\sigma_{\mathscr H_R}\le t]\le \frac{e^{ct} }{R} \,  \mathsf H_{{\rm GL}}(\mathsf x), \ \forall t\ge 0.
\end{equation}
The proof is complete.
 \end{proof}

  In addition to \textbf{[V{\tiny loc}]}, we will assume the following growth condition on $V$, which basically implies that $V(x)$ behaves like $|x|^k$ as $|x|\to +\infty$, for some $k>1$. 
\medskip

\noindent
\textbf{Assumption}  \textbf{[V{\tiny poly-$x^k$}]}.  \textit{In addition to {\rm\textbf{[V{\tiny loc}]}}, there exist  $k>1 $ and  $M_V,r_V,c_V>0$  such that for all $x\in\mathbf R^{d}$ with $|x|\ge r_V$:}
$$
 \textit{$ c_V|x|^k \leq V(x)\leq M_V |x|^{k} \text{ and } 
   c_V|x|^{k}\leq x\cdot\nabla V(x).$}
$$  
\textit{When $\gamma=0$, we assume moreover that $|\nabla V(x)|\le M_V|x|^{k-1}$ if $|x|\ge r_V$.}
 \medskip

The first main result of this work is the following (see also its note  just below).

 \begin{thm}\label{th.1}
 Assume {\rm\textbf{[V{\tiny loc}]}} and {\rm\textbf{[V{\tiny poly-$x^k$}]}}. 
 Let $\mathscr D=\mathscr O\times \mathbf R^d\times \mathbf R^d$ where $\mathscr O$ is a subdomain of $\mathbf R^d$ (not necessarily smooth nor bounded) such that $ \mathbf R^d\setminus \overline{\mathscr O}$ is nonempty. Assume moreover that $k\in (1,2]$ when $\gamma=0$. For  $\delta \in ((1-\beta)/k,1]$ fixed, let $\mathsf W_\delta:\mathbf R^{3d}\to [1,+\infty)$ be  the Lyapunov function defined in \eqref{eq.Lyapunov2}  where the parameter $\beta>0$ satisfies \eqref{eq.Bbeta} if $\gamma>0$, and  \eqref{eq.Bbeta2} when $\gamma=0$. Then, $\mathbf P_{\mathsf x}(\sigma_{\mathscr D}<+\infty)=1$ for all ${\mathsf x}\in \mathscr D$, and for all $p\in (1,+\infty)$:
 \begin{enumerate}
 \item [i)] There exists a unique quasi-stationary distribution $\mu_{\mathscr D}^{(p)}$ for the process \eqref{eq.GLintro} on $\mathscr D$  in the space $\mathcal P_{\mathsf W_\delta^{1/p}}(\mathscr D)$. 
 \item [ii)] There exists $\lambda_{\mathscr D}^{(p)}>0$  such that for all $t\ge0$, the  spectral radius of $P_t^{\mathscr D}$ equals  
 $$\mathsf r_{sp}(P_t^{\mathscr D}|_{b_{\mathsf W_\delta^{1/p}} \mathcal B(\mathscr D)})=e^{-\lambda_{\mathscr D}^{(p)} t}.$$ In addition, for all $t\ge 0$,  $\mu^{(p)}_{\mathscr D} P_t^{\mathscr D}=e^{-\lambda_{\mathscr D}^{(p)} t}\mu^{(p)}_{\mathscr D}$  and $\mu_{\mathscr D}^{(p)}(\mathsf O)>0$ for all nonempty open subsets $ \mathsf O$ of $\mathscr D$. Furthermore,  there is a unique continuous function $\varphi^{(p)}$ in  $b_{\mathsf W_\delta^{1/p}}\mathcal B(\mathscr D)$   such that   
   $$
  \varphi^{(p)}>0 \text{ on } \mathscr D, \  \mu^{(p)}_{\mathscr D}(\varphi^{(p)})=1,\ \text{ and }   \ P_t^{\mathscr D} \varphi^{(p)}= e^{-\lambda_{\mathscr D}^{(p)} t} \varphi^{(p)} \text{ on }\mathscr D, \ \forall t\ge0.
$$

 \item [iii)] There exist  $M>0$ and $C\ge 1$ such that:
$$
\sup_{\mathcal A\in\mathcal B(\mathscr D)}\big |\mathbf P_\nu[X_t\in \mathcal A| t<\sigma_{\mathscr D}]-\mu^{(p)}_{\mathscr D}(\mathcal A)\big |\le C e^{-M t} \frac{\nu(\mathsf W_\delta^{1/p})}{\nu(\varphi^{(p)})}, \forall t>0,\forall  \nu\in \mathcal P_{\mathsf W_\delta^{1/p}}(\mathscr D).
$$

 \end{enumerate}


 \end{thm}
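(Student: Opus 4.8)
The plan is to verify the five conditions \textbf{(C1)}--\textbf{(C5)} from Section \ref{sec.nota} for the generalized Langevin process $(X_t=(x_t,v_t,z_t),t\ge 0)$ on $\mathscr D=\mathscr O\times\mathbf R^d\times\mathbf R^d$, and then to invoke \cite[Theorem~2.2]{guillinqsd} to obtain items i)--iii) at once. The nontrivial work is concentrated in three places: the Feller-type regularity conditions \textbf{(C1)}, \textbf{(C2)}, \textbf{(C4)}; the irreducibility condition \textbf{(C5)}; and the Lyapunov condition \textbf{(C3)}. First, for \textbf{(C1)}, \textbf{(C2)}, \textbf{(C4)} I would follow the approach announced in the introduction (Section \ref{sec.22}), based on the \emph{energy splitting} Equation~\eqref{eq.splitting}. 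The point is that one cannot use parabolic regularity theory directly (the drift $-\nabla V$ is only locally Lipschitz and the noise is degenerate, acting only on $z$), so one decomposes the dynamics according to the Hamiltonian $\mathsf H_{{\rm GL}}$: on a bounded energy shell $\{\mathsf H_{{\rm GL}}<R\}$ the coefficients are globally Lipschitz and the hypoellipticity of $\mathcal L_{{\rm GL}}$ (the Hörmander bracket condition is readily checked: the noise on $z$, bracketed with the drift, reaches $v$, and bracketed once more reaches $x$) gives smoothness of the transition densities; one then uses the exit estimate \eqref{eq.tauR} to control the contribution of high energies. The strong Feller property of $P_t$ follows from this local smoothing plus a dominated-convergence/uniform-integrability argument (here Proposition~\ref{pr.wu1999} is the natural tool), and \textbf{(C4)} — the (weak, in fact strong) Feller property of the killed semigroup $P_t^{\mathscr D}$ — follows similarly once one argues that for $\mathsf x$ near $\partial\mathscr D$ the exit time $\sigma_{\mathscr D}$ is small with high probability, using continuity of trajectories in the initial condition (which is \textbf{(C2)}, itself a consequence of the local Lipschitz bounds and a Gronwall estimate on the stopped equations combined with the non-explosion bound \eqref{eq.tauR}).

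For \textbf{(C5)}, the support theorem (Stroock--Varadhan) is the right instrument: topological irreducibility $P_t^{\mathscr D}(\mathsf x,\mathsf O)>0$ amounts to showing that the control system associated to \eqref{eq.GLintro} — driving with a smooth control $u$ in place of $\dot B$ and zero in place of $\dot W$ when $\gamma=0$ (or also controlling $\dot W$ when $\gamma>0$) — can steer any point $\mathsf x\in\mathscr D$ to any point of any open set $\mathsf O\subset\mathscr D$ while staying inside $\mathscr D$ up to time $t\ge t_1$. Because $\mathscr O$ is connected and open and the control acts (through the chain $z\to v\to x$) on all degrees of freedom, one constructs the control path by first moving $x$ along a smooth curve in $\mathscr O$ joining the projections of $\mathsf x$ and the target, with $v,z$ chosen to realize the required second-order dynamics, and then adjusting the velocities and the $z$-variable at the end; the accessibility of the full tangent space is exactly the Hörmander condition checked above. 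The second half of \textbf{(C5)}, existence of $\mathsf x_0$ with $\mathbf P_{\mathsf x_0}(\sigma_{\mathscr D}<+\infty)>0$, is immediate since $\mathbf R^d\setminus\overline{\mathscr O}\ne\emptyset$ so $\mathscr O\ne\mathbf R^d$, and by the same controllability one can steer from a point near $\partial\mathscr O$ across the boundary with positive probability.

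The main obstacle, and the second announced contribution of the paper, is \textbf{(C3)}: constructing a Lyapunov function $\mathsf W_\delta$ with $-\mathcal L_{{\rm GL}}\mathsf W_\delta\ge r_n\mathsf W_\delta-b_n\mathbf 1_{K_n}$ for $r_n\to+\infty$, i.e. a \emph{strong return from infinity}. The naive guess $e^{c\mathsf H_{{\rm GL}}}$ fails because $\mathcal L_{{\rm GL}}\mathsf H_{{\rm GL}}$ does not have the right sign at high energy when the noise is degenerate ($\gamma=0$): the Hamiltonian is conserved by the deterministic part except through the $z$-coupling, and the fluctuation term $\alpha\Delta_z$ actually pushes $\mathsf H_{{\rm GL}}$ up. The fix, as indicated, is to take $\mathsf W_\delta=e^{\mathsf F^\delta}$ for a suitable $\delta\in((1-\beta)/k,1]$, where $\mathsf F$ is a modified Hamiltonian of the form $\mathsf F=\mathsf H_{{\rm GL}}+(\text{small cross terms in }x\cdot v,\ v\cdot z,\ \dots)$ — the "twist" introduced in \cite{duong2023asymptotic} — tuned so that the cross terms produce a genuinely negative $-\beta|x|^k$-type contribution (this is where \textbf{[V{\tiny poly-$x^k$}]} and the restriction $k\in(1,2]$ when $\gamma=0$ enter, to control the size of $\nabla V$ against the cross terms), and the parameter $\beta$ is constrained by \eqref{eq.Bbeta}/\eqref{eq.Bbeta2} precisely so that $\mathsf F$ stays comparable to $\mathsf H_{{\rm GL}}$ and $\mathcal L_{{\rm GL}}\mathsf F\le -c\,\mathsf F^{1-1/\delta}\cdot(\dots)$ at infinity. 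Computing $\mathcal L_{{\rm GL}}e^{\mathsf F^\delta}=e^{\mathsf F^\delta}\big(\delta\mathsf F^{\delta-1}\mathcal L_{{\rm GL}}\mathsf F+\delta^2\mathsf F^{2\delta-2}|\nabla_{v,z}\mathsf F|^2_{\text{(weighted)}}+\dots\big)$, one checks that the leading term $\delta\mathsf F^{\delta-1}\mathcal L_{{\rm GL}}\mathsf F$ dominates and is $\le -r(\mathsf x)\mathsf W_\delta$ with $r(\mathsf x)\to+\infty$ along any exhaustion $K_n$; the exponent condition $\delta>(1-\beta)/k$ is exactly what guarantees $\mathsf F^{\delta-1}\times(\text{the }|x|^k\text{ gain})\to\infty$. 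I would organize this as a sequence of lemmas: (a) coercivity and two-sided comparison $\mathsf F\asymp\mathsf H_{{\rm GL}}$; (b) the drift inequality for $\mathsf F$ on $\{|x|\ge r_V\}$; (c) the passage to $e^{\mathsf F^\delta}$ and the resulting \textbf{(C3)}. Once \textbf{(C1)}--\textbf{(C5)} hold and $\mathsf W=\mathsf W_\delta$ is admissible, \cite[Theorem~2.2]{guillinqsd} delivers $\mathbf P_{\mathsf x}(\sigma_{\mathscr D}<+\infty)=1$ and i)--iii) verbatim, for every $p\in(1,+\infty)$, since $\mathsf W_\delta^{1/p}$ is still a Lyapunov function of the same type with $r_n/p\to+\infty$.
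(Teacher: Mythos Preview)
Your overall strategy---verify \textbf{(C1)}--\textbf{(C5)} and invoke \cite[Theorem~2.2]{guillinqsd}---is exactly the paper's, and your treatment of \textbf{(C2)}, \textbf{(C5)}, and \textbf{(C3)} is essentially correct (the Lyapunov $\mathsf F$ is built with cross terms $\mathsf L(x)\cdot v$ and $v\cdot z$ where $\mathsf L(x)\sim x|x|^{\beta-1}$, and your exponent bookkeeping for $\delta>(1-\beta)/k$ is right). Two points, however, are genuine gaps.

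\textbf{(C1).} You write that on a bounded energy shell the coefficients are globally Lipschitz and then ``the hypoellipticity of $\mathcal L_{{\rm GL}}$ (H\"ormander bracket condition\dots) gives smoothness of the transition densities''. H\"ormander's theorem requires $\mathcal C^\infty$ coefficients; under \textbf{[V{\tiny loc}]} you only have $\nabla V$ locally Lipschitz, so even after localization you cannot invoke it. The paper handles this differently in the two regimes. For $\gamma>0$ (Proposition~\ref{pr.SFgamma0}) a global Girsanov formula \eqref{eq.Girsanov} moves the entire drift into the Dol\'eans--Dade exponential, reducing to the free process \eqref{eq.GL0} whose density is explicit; strong Feller then follows from Proposition~\ref{pr.wu1999} and Vitali. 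For $\gamma=0$ (Theorem~\ref{pr.SFgamma-neq-0}) there is no noise on $v$ to absorb $\nabla V$, so the paper localizes to a Lipschitz drift $\mathbf b_R$ and appeals to the Gaussian upper bound of \cite{delarue} (valid for Lipschitz, not smooth, drifts) to get the uniform integrability needed in Proposition~\ref{pr.wu1999}; Lemma~\ref{le.C4-2} then upgrades this to strong Feller for the process killed at $\partial\mathscr H_R$, and \eqref{eq.tauR} lets $R\to\infty$.

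\textbf{(C4).} Your description---``for $\mathsf x$ near $\partial\mathscr D$ the exit time $\sigma_{\mathscr D}$ is small with high probability''---is the wrong direction, and in fact any argument that tracks behaviour near $\partial\mathscr D$ would reintroduce the boundary regularity the paper is trying to avoid. The paper's mechanism (Lemma~\ref{le.C4} and Proposition~\ref{co.SFD}) works entirely in the interior: for $\mathsf x$ in a compact $K\subset\mathscr D$ with $\delta=\mathrm{dist}(K,\partial\mathscr D)$, one shows $\sup_{\mathsf x\in K}\mathbf P_{\mathsf x}[\sigma_{\mathscr B(x,\delta)}\le s]\to 0$ as $s\to 0^+$, i.e.\ the process does \emph{not} leave a small ball immediately. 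By the Markov property this gives $\sup_{K}|P_t^{\mathscr D}f-P_s g_s|\to 0$ as $s\to 0^+$ where $g_s=P_{t-s}^{\mathscr D}f$; since $P_s$ is strong Feller by \textbf{(C1)}, $P_t^{\mathscr D}f$ is a uniform-on-compacts limit of continuous functions. No information about $\partial\mathscr O$ is used.
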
 
 



\medskip

\noindent
 \textbf{Comments on Theorem \ref{th.1}}.  
 Let us recall (see \cite{guillinqsd}) that $\mu_{\mathscr D}^{(p)}$ is independent of $p$, i.e. $\mu_{\mathscr D}^{(p)}=\mu_{\mathscr D}^{(q)}$  and $\lambda_{\mathscr D}^{(p)}=\lambda_{\mathscr D}^{(q)}$ for any $p,q>1$. 
 We now discuss how to choose $\delta>0$ depending on the values of the growth parameter $k>1$ of $V$  in \textbf{[V{\tiny poly-$x^k$}]}, and how $\mathsf W_\delta$  behaves  at high energy.
  \begin{enumerate}
 \item [\textbf -]  When $\gamma>0$ (resp. $\gamma=0$) and \eqref{eq.Bbeta} holds  (resp.  \eqref{eq.Bbeta2} holds), there exists $c>0$ such that  $\mathsf W_\delta\le \exp[ c^\delta \mathsf E^\delta]$, where $\mathsf E(x,v,z)=1+|x|^k+  |v|^2+ |z|^2$.   Hence, the smaller $\delta\in ((1-\beta)/k,1]$ is (see \eqref{eq.Lyapunov2}), the smaller $\mathsf W_\delta$ is. 
Moreover,  for any probability measure $\mu(d\mathsf x)=g(\mathsf x)d\mathsf x$ such that   for some $\kappa,r,K,R>0$,  $g \le K\exp[ -\kappa  \mathsf E^r ] $  on    $\{\mathsf x\in \mathbf R^{3d}, \mathsf E(\mathsf x)>R\}$, it holds: 
$$\int_{\mathbf R^{3d}} \mathsf W_\delta^{1/p} (\mathsf x)\mu(d\mathsf x)<+\infty \text{ for any  $r>\delta$ and $p>1$}.$$  
 \item [\textbf -] Let us first consider the case when $\gamma>0$.  In view of  \eqref{eq.Bbeta}, when $k\ge 2$, one can choose any $\delta\in (0,1)$ in the definition of $\mathsf W_\delta$ in~\eqref{eq.Lyapunov2}. Indeed, in this case   $\min(1,k/2,k-1)=1$ and therefore $\beta>0$ can be chosen as close as desired (from below) to $1$.   
 Let us now consider the case when $\gamma=0$ (recall in this case that $k\in (1,2]$). When $k= 2$, $\beta=1$ and therefore  one can choose any $\delta\in (0,1)$ in the definition of $\mathsf W_\delta$ in~\eqref{eq.Lyapunov2}. 
 \end{enumerate}
 
  The proof of Theorem~\ref{th.1} is given in Section \ref{sec.2V}. 
    \medskip
  
 \noindent
  \textbf{Extension   to non gradient force fields}.  
  In several cases in molecular dynamics the force field $-\nabla V$ is subject to nonequilibrium perturbations (the dynamics is then said to be out of equilibrium~\cite{reimann2002brownian,lelievre2016partial,leimkuhler2016computation}). This is the setting  we would like to treat here. To this end we consider the following assumption on the force field. 
  \medskip

\noindent
\textbf{Assumption}  \textbf{[$\boldsymbol b${\tiny non-gradient}]}.  \textit{
The vector field $\boldsymbol b:\mathbf R^d \to \mathbf R^d$ decomposes over  $\mathbf R^d$ as  $\boldsymbol b=-\nabla V + \boldsymbol \ell$, 
 where  $V$ satisfies {\rm\textbf{[V{\tiny poly-$x^k$}]}}  and  $\boldsymbol \ell :\mathbf R^d \to \mathbf R^d$ is a locally Lipschitz vector field such that for all $x\in \mathbf R^d$, $|\boldsymbol \ell(x)|\le C (|x|^{(k-1)/2}+1)$ for some $C>0$. }
 \medskip
 
 
 \begin{thm}\label{th.1-NE}
When {\rm\textbf{[$\boldsymbol b${\tiny non-gradient}]}} is satisfied, all the assertions of Theorem \ref{th.1} are still valid verbatim for the   solution $(X_t=(x_t,v_t,z_t),t\ge 0)$ to the generalized Langevin equation 
$$
  \left\{
    \begin{array}{ll}
        dx_t&= v_tdt \\
        dv_t&= \boldsymbol b(x_t)dt-\gamma v_tdt+\lambda z_tdt + \sqrt{2\gamma} \, dW_t\\
        dz_t&= -\alpha z_tdt-\lambda v_tdt +  \sqrt{2\alpha}\, dB_t.
    \end{array}
\right.
$$ 
 \end{thm}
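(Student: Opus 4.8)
The plan is to go through the proof of Theorem~\ref{th.1} and check that every ingredient survives the perturbation of the drift by $\boldsymbol\ell$; in particular, that the five conditions \textbf{(C1)}--\textbf{(C5)} (and the non-explosion of the SDE) still hold, after which \cite[Theorem~2.2]{guillinqsd} gives i)--iii). Write $\mathcal L^{\boldsymbol\ell}_{{\rm GL}}=\mathcal L_{{\rm GL}}+\boldsymbol\ell(x)\cdot\nabla_v$ for the generator of the new system. Since $\boldsymbol\ell$ is locally Lipschitz on $\mathbf R^d$, so are the coefficients of the new SDE, whence a unique maximal strong solution. To rule out explosion and recover the analogue of~\eqref{eq.tauR}, I would argue as in Proposition~\ref{pr.existence} once $\mathcal L^{\boldsymbol\ell}_{{\rm GL}}\mathsf H_{{\rm GL}}\le c\,\mathsf H_{{\rm GL}}$ is checked: $\mathcal L_{{\rm GL}}\mathsf H_{{\rm GL}}\le c\,\mathsf H_{{\rm GL}}$ by Proposition~\ref{pr.existence}, and $\boldsymbol\ell(x)\cdot\nabla_v\mathsf H_{{\rm GL}}=\boldsymbol\ell(x)\cdot v$, so by \textbf{[$\boldsymbol b${\tiny non-gradient}]} and Young's inequality,
\[
|\boldsymbol\ell(x)\cdot v|\le C\big(|x|^{(k-1)/2}+1\big)|v|\le C'\big(|x|^{k-1}+|v|^2+1\big)\lesssim \mathsf H_{{\rm GL}},
\]
using $|x|^{k-1}\lesssim V(x)^{(k-1)/k}\le V(x)$ for $|x|\ge r_V$ (by \textbf{[V{\tiny poly-$x^k$}]} and $V\ge1$).

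I would then check \textbf{(C1)}, \textbf{(C2)}, \textbf{(C4)}: for the generalized Langevin process these are obtained by the energy-splitting method of Section~\ref{sec.22}, which, as stressed after Theorem~\ref{th.1}, only asks the drift of the SDE to be locally Lipschitz (true for $\boldsymbol b=-\nabla V+\boldsymbol\ell$); the size of the drift enters only in the high-energy estimates, and there $|\boldsymbol\ell(x)|^2\lesssim|x|^{k-1}+1=o(V(x))$ is negligible in front of the energy, so those arguments go through verbatim. Condition \textbf{(C5)} rests on a controllability statement for the associated control system together with an exit point; since $\boldsymbol\ell$ perturbs only the $v$-equation, a direction already reached from the noise through the chain $z\rightsquigarrow v\rightsquigarrow x$, the control system stays controllable, and the argument of Theorem~\ref{th.1} applies unchanged. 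Hence \textbf{(C1)}, \textbf{(C2)}, \textbf{(C4)}, \textbf{(C5)} all persist.

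The hard part is \textbf{(C3)}: one must show that $\mathsf W_\delta$ from~\eqref{eq.Lyapunov2} still satisfies the drift inequality for $\mathcal L^{\boldsymbol\ell}_{{\rm GL}}$. Split $\mathcal L^{\boldsymbol\ell}_{{\rm GL}}\mathsf W_\delta=\mathcal L_{{\rm GL}}\mathsf W_\delta+\boldsymbol\ell(x)\cdot\nabla_v\mathsf W_\delta$; the first term is handled by the proof of Theorem~\ref{th.1}, which produces, outside increasing compact sets, a negative contribution dominating $r_n\mathsf W_\delta$ with $r_n\to\infty$ (and, when $\gamma>0$, a genuine $|v|^2$-dissipation term). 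For the extra term, since $\mathsf W_\delta$ grows like $\exp(c\,\mathsf E^\delta)$ with $\mathsf E=1+|x|^k+|v|^2+|z|^2$ and is corrected only by lower-order hypocoercive cross-terms, differentiating in $v$ reduces the estimate of $\boldsymbol\ell(x)\cdot\nabla_v\mathsf W_\delta$ to elementary bounds such as
\[
|\boldsymbol\ell(x)\cdot v|\le \varepsilon\,|v|^2+C_\varepsilon\big(|x|^{k-1}+1\big),\qquad |\langle x,\boldsymbol\ell(x)\rangle|\le C\big(|x|^{(k+1)/2}+|x|\big)=o\big(|x|^k\big)=o\big(x\cdot\nabla V(x)\big),
\]
in which the growth condition $|\boldsymbol\ell(x)|\le C(|x|^{(k-1)/2}+1)$ is used in an essential way (via $(k-1)/2<k-1$ and $(k+1)/2<k$). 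Thus, for $\gamma>0$, every $\boldsymbol\ell$-contribution is a small multiple of the $|v|^2$-dissipation already present plus a term of order $\mathsf E^{\delta-1/k}\mathsf W_\delta=o(\mathsf E^\delta\mathsf W_\delta)$, hence dominated by the coercive position term ($x\cdot\nabla V(x)\gtrsim|x|^k$, appearing through the cross-terms of $\mathsf W_\delta$); and for $\gamma=0$ (so $k\in(1,2]$), $\boldsymbol\ell$ grows strictly slower than $\nabla V$ is allowed to, so each $\boldsymbol\ell$-term is of strictly smaller order than the term of $\mathcal L_{{\rm GL}}\mathsf W_\delta$ it is weighed against. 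Either way one absorbs these contributions by enlarging the compact sets and the constants, getting $-\mathcal L^{\boldsymbol\ell}_{{\rm GL}}\mathsf W_\delta\ge\widetilde r_n\mathsf W_\delta-\widetilde b_n\mathbf 1_{\widetilde K_n}$ with $\widetilde r_n\to+\infty$, i.e.\ \textbf{(C3)}. With \textbf{(C1)}--\textbf{(C5)} and non-explosion established, $\mathbf P_{\mathsf x}(\sigma_{\mathscr D}<+\infty)=1$ follows as in Theorem~\ref{th.1} from \textbf{(C3)} and \textbf{(C5)}, and \cite[Theorem~2.2]{guillinqsd} yields i)--iii) with the same parameters. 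The single genuine obstacle is the Lyapunov estimate \textbf{(C3)}, and the whole argument hinges on the exponent $(k-1)/2$ imposed on $\boldsymbol\ell$.
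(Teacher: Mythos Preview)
Your proposal is correct and follows essentially the same route as the paper: check that the extra drift term $\boldsymbol\ell\cdot\nabla_v$ does not spoil any of \textbf{(C1)}--\textbf{(C5)}, the only nontrivial point being \textbf{(C3)}, where the growth bound $|\boldsymbol\ell(x)|\le C(|x|^{(k-1)/2}+1)$ makes every additional term in $\mathcal L^{\boldsymbol\ell}_{{\rm GL}}\mathsf F$ of strictly lower order than the dominant negative contribution $-c(|v|^2+|z|^2+|x|^{k-1+\beta})$ already produced in Propositions~\ref{pr.C3-1} and~\ref{pr.C3-2}. One small organizational difference: the paper proves \textbf{(C3)} first and then, invoking Remark~\ref{re.ReG}, runs the energy-splitting arguments for \textbf{(C1)}, \textbf{(C2)}, \textbf{(C4)}, \textbf{(C5)} with $\mathsf W_1$ playing the role of the energy, whereas you verify directly that $\mathcal L^{\boldsymbol\ell}_{{\rm GL}}\mathsf H_{{\rm GL}}\le c\,\mathsf H_{{\rm GL}}$ and keep $\mathsf H_{{\rm GL}}$ as the energy; both choices are valid and lead to the same conclusion. (A minor imprecision: the cross term you call $\langle x,\boldsymbol\ell(x)\rangle$ is really $\mathsf L(x)\cdot\boldsymbol\ell(x)\sim|x|^{\beta+(k-1)/2}$, to be compared with $|x|^{k-1+\beta}$ rather than with $x\cdot\nabla V(x)$; the required inequality $(k-1)/2<k-1$ still holds since $k>1$.)
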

 Theorem \ref{th.1-NE} is proved in Section \ref{sec.pr2}. 
 

 
 
 \subsubsection{Generalized Langevin process: extension of the results  to singular   interaction potentials} 
  \label{sec.GL-I}
 In this section, we extend the results of the previous section to a system of $N\ge 2$ particles $X_t=(x_t,v_t,z_t)\in \mathbf R^{dN}\times \mathbf R^{dN} \times \mathbf R^{dN} $  whose positions $x_t=(x_t^1, \ldots, x^N_t)\in \mathbf R^{dN} $, velocities $v_t=(v_t^1, \ldots, v^N_t)\in \mathbf R^{dN} $, and noises $z_t=(z_t^1, \ldots, z^N_t)\in \mathbf R^{dN} $ evolve in time $t\ge 0$ through  the generalized Langevin equation \eqref{eq.GLintro} and which interact through  singular  potentials (note that we simply write $\mathbf R^{dN}$ for $(\mathbf R^d)^N$). Let us more precisely  introduce the assumptions on the potential function   $V:\mathbf R^{dN}\to \overline{\mathbf R}$. 
  \medskip

\noindent
\textbf{Assumption {[V{\tiny coercive}]}}. \textit{The set $\mathbf O_V=\{x\in \mathbf R^{dN}, V(x)<+\infty\}$ is a subdomain  of $\mathbf R^{dN}$ and   $V:\mathbf O_V\to [1,+\infty)$ belongs to $ \mathsf L_{{\rm Lip}}^{1, {\rm loc}}(\mathbf O_V)$. In addition, for all $R>0$,  $\mathscr V_R:=\{y\in \mathbf R^{dN}, V(y)<R\}$ 
has compact closure in $\mathbf O_V$.  Finally,   $|\nabla V(x)|\to +\infty$ when $V(x)\to +\infty$.} 
\medskip

Recall that the state space is   $
\mathscr E= \mathbf O_V\times  \mathbf R^{dN} \times \mathbf R^{dN}$. 
 Note that under \textbf{[V{\tiny coercive}]}, $V$ is coercive, i.e. if  $\mathbf O_V$ is unbounded (resp. bounded), $V\to +\infty$ if and only if $x\to    \{\infty\}   \cup \partial   \mathbf O_V$ (resp. $x\to  \partial \mathbf O_V$).  Thus, without loss of generality, we can   assume that 
  $V\ge 1$ over $\mathbf O_V$ and consequently $\mathsf H_{\text{GL}}\ge 1$ over $\mathscr E$. Assumption \textbf{[V{\tiny coercive}]} is more general than  \textbf{[V{\tiny loc}]} since it   allows to consider singular potentials.

  To prove existence and uniqueness of quasi-stationary distributions in weighted spaces (see Theorem~\ref{th.2} below), the starting point is~\cite[Theorem 2.2]{guillinqsd}. The strategy will thus consists of  proving \textbf{(C1)}$\to$\textbf{(C5)} for the process \eqref{eq.GLintro}.   Assumptions  \textbf{(C1)}, \textbf{(C2)}, \textbf{(C4)}, and \textbf{(C5)} will be   proved for the process \eqref{eq.GLintro} when  \textbf{[V{\tiny coercive}]} holds. However, to construct a Lyapunov function satisfying  \textbf{(C3)}, we will  impose     more explicit assumptions on $V$  than    \textbf{[V{\tiny coercive}]} (see \textbf{[V{\tiny sing1}]} below), that we introduce now.


 \medskip
 

For ease of notation we simply denote by  $\{x^i\neq x^j\}$  the set  $ \{x=(x^1,\ldots,x^N)\in \mathbf R^{dN}, \, x^i\neq  x^j \text{ if } i\neq j  \}$.  Since what matters in this section is to treat the interaction potential part of the  potential~$V$ of the system, we will assume  that the confining potential is the simple quadratic function (see also Remark \ref{eq.RRgamma>0-i} below for extensions).
\medskip

\noindent
\textbf{Assumption {\textbf{[V-{\tiny 2}]}}}. 
 \textit{The confining potential $V_{\mathbf c}$ is the quadratic function on $ \mathbf R^d$, i.e. $V_{\mathbf c}(y)=a_0|y|^2/2, \ y\in \mathbf R^d, a_0>0$}. 
\medskip

\noindent
\textbf{Assumption {\textbf{[V-{\tiny int}]}}}. 
\textit{The interaction potential  $V_{\mathbf I} :\mathbf R^d  \to \overline{\mathbf R}$    satisfies $V_{\mathbf I}(0)=+\infty$. In addition, there exist $\mathsf B,\beta>0$ and  a  symmetric function $\Phi\in \mathsf L_{{\rm Lip}}^{1, {\rm loc}}(\mathbf R^d\setminus\{0\})$ such that 
$$V_{\mathbf I}(y)= \frac{\mathsf B}{|y|^\beta}+\Phi(y), \text{   for all~$y\in \mathbf R^d\setminus\{0\}$}.$$
 Furthermore, there exist $r_\Phi, C_\Phi, c_\Phi>0$,  and $0\le q_\Phi<\beta+1$,   such that both $\Phi$ and  $\nabla \Phi$  are bounded on  $\{y\in \mathbf R^d, \, |y|>r_\Phi\}$,     $|\nabla \Phi(y)| \le \frac{C_\Phi}{|y|^{q_\Phi}} + c_\Phi \, \text{ and }\,    \lim_{  |y|\to 0}  |y|^\beta   |\Phi(y)|   =0$.}
\medskip

\noindent
Note that $V_{\mathbf I}$ is a symmetric function. Assumption  \textbf{[V-{\tiny int}]} covers  the   cases of singular potentials   used in molecular dynamics, namely: the  Lennard-Jones potentials  $V_{\mathbf I}(y)= c_1/|y|^{12}-c_2/|y|^6$ as well as Coulomb  potentials    $V_{\mathbf I}(y)=c_3/|x|^{d-2}$  when $d\ge 3$ ($c_1,c_2,c_3>0$). Note that in many applications in molecular dynamics,  $d=3$.  
\medskip

\noindent
\textbf{Assumption {[V{\tiny sing1}]}}. 
\textit{$d\ge 2$ and the potential function $V:  \mathbf R^{dN} \to \overline{\mathbf R}$  writes 
\begin{equation}\label{eq.V-i}
V(x)=\sum_{i=1}^N V_{\mathbf c}(x^i)+ \sum_{i,j=1;  i<j}^N V_{\mathbf I}(x^i-x^j) + V_{\mathbf p}(x),
\end{equation}
 where  
 $V_{\mathbf p}\in \mathsf L_{{\rm Lip}}^{1, {\rm loc}}(\mathbf R^{dN})$, $ {\rm supp}(V_{\mathbf p})\subset  \{ x^i\neq  x^j    \}$, $V_{\mathbf I}$ satisfies {\rm \textbf{[V-{\tiny int}]}},  and $V_{c}$ satisfies {\rm \textbf{[V-{\tiny 2}]}}.}
\medskip
  
  \noindent
   When \textbf{[V{\tiny sing1}]} is satisfied, it holds:
 \begin{enumerate}
 \item  $\{V<+\infty\}= \{ x^i\neq  x^j  \}$  which is open and connected (since $d\ge 2$). Hence 
$$\mathbf O_V=  \{ x^i\neq  x^j  \}.$$
\item $V\in   \mathsf L_{{\rm Lip}}^{1, {\rm loc}}(\mathbf O_V)$.     
  \item    $V$  is coercive on $\mathbf O_V$, i.e.  $V(x)\to +\infty$ if and only if $|x|\to +\infty $ or for some $i\neq j$, $|x^i-x^j|\to 0$, that we write $x\to    \{\infty\}   \cup \partial   \mathbf O_V$.  Therefore, in this case: 
 $$\mathsf H_{\text{GL}}(\mathsf x)\to +\infty \text{ if and only if }  \mathsf x\to \{\infty\} \text{ or } x\to \partial \mathbf O_V.$$
 \item $|\nabla V|$  is coercive on $\mathbf O_V$ (by~\cite[Lemma A.1]{herzog}). 
 \end{enumerate}
Note that   \textbf{[V{\tiny coercive}]} holds when  \textbf{[V{\tiny sing1}]} is satisfied.   We have assumed that $d\ge 2$ in \textbf{[V{\tiny sing1}]} for convenience: all the results  stated under the Assumption \textbf{[V{\tiny sing1}]} extend trivially to the case when $d=1$ provided to choose  in this case $\mathbf O_V$ as a connected component of $\{V<+\infty\}$.


   \begin{prop}\label{pr.ex-GL-i}
   Assume that $V$ satisfies {\rm\textbf{[V{\tiny coercive}]}}.  Then, for all $\mathsf x_0\in \mathscr E$, there exists a  unique strong solution  $(X_t=(x_t,v_t,z_t),t\ge 0)$ to \eqref{eq.GLintro} such that $X_0=\mathsf x_0$ and such that a.s. $X_t\in \mathscr E=\mathbf O_V\times \mathbf R^{dN} \times\mathbf R^{dN}$ for all $t\ge 0$. In addition, $(X_t(\mathsf x_0),t\ge 0)$ is  a strong Markov process.
   \end{prop}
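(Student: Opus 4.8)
The plan is to prove Proposition~\ref{pr.ex-GL-i} in two stages: first establish local existence and uniqueness of a strong solution living in $\mathscr E$, then rule out explosion (including explosion to the boundary $\partial\mathbf O_V$ in finite time). Since $V\in\mathsf L_{{\rm Lip}}^{1,{\rm loc}}(\mathbf O_V)$, the drift of~\eqref{eq.GLintro}, namely $(v,-\nabla V(x)-\gamma v+\lambda z,-\alpha z-\lambda v)$, is locally Lipschitz on the open set $\mathscr E=\mathbf O_V\times\mathbf R^{dN}\times\mathbf R^{dN}$, and the diffusion coefficient is constant. Standard SDE theory (e.g. truncation of the coefficients outside compact subsets of $\mathscr E$ together with patching) then gives, for each $\mathsf x_0\in\mathscr E$, a unique strong solution up to its explosion time $\sigma_\infty$, where $\sigma_\infty$ is the first time the process leaves every compact subset of $\mathscr E$; the strong Markov property on $[0,\sigma_\infty)$ is inherited from the local Lipschitz construction. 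So the whole content is to show $\sigma_\infty=+\infty$ a.s.

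To control explosion I would use the Hamiltonian $\mathsf H_{{\rm GL}}(x,v,z)=V(x)+\tfrac12|v|^2+\tfrac12|z|^2$ as a Lyapunov function, exactly in the spirit of the proof of Proposition~\ref{pr.existence}. A direct computation with~\eqref{eq.L-GL} gives
$$
\mathcal L_{{\rm GL}}\mathsf H_{{\rm GL}} = -\gamma|v|^2 - \alpha|z|^2 + \gamma\,dN + \alpha\,dN \le \gamma\,dN+\alpha\,dN =: c_0,
$$
since the cross terms $v\cdot(-\nabla_x V)$, $v\cdot\nabla_x V$, $\lambda z\cdot v$ and $-\lambda v\cdot z$ cancel in pairs. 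In particular $\mathcal L_{{\rm GL}}\mathsf H_{{\rm GL}}\le c_0\le c_0\,\mathsf H_{{\rm GL}}$ (recall $\mathsf H_{{\rm GL}}\ge1$). The key point, which uses \textbf{[V{\tiny coercive}]} rather than mere coercivity of $V$ on $\mathbf R^{dN}$, is that the sublevel sets
$$
\mathscr H_R := \{\mathsf x\in\mathscr E,\ \mathsf H_{{\rm GL}}(\mathsf x)<R\} = \{x\in\mathbf O_V,\ V(x)<R-\tfrac12|v|^2-\tfrac12|z|^2\}\times\cdots
$$
have compact closure contained in $\mathscr E$: this follows because $\{V<R\}=\mathscr V_R$ has compact closure in $\mathbf O_V$ by \textbf{[V{\tiny coercive}]}, so $\mathscr H_R$ is bounded and its closure does not meet $\partial\mathbf O_V$. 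Setting $\sigma_{\mathscr H_R}=\inf\{t\ge0,\ X_t\notin\mathscr H_R\}$, the localized Itô formula applied to $e^{-c_0 t}\mathsf H_{{\rm GL}}(X_t)$ stopped at $\sigma_{\mathscr H_R}\wedge t$, together with $\mathsf H_{{\rm GL}}(X_{\sigma_{\mathscr H_R}})=R$ on $\{\sigma_{\mathscr H_R}\le t\}$, yields the analogue of~\eqref{eq.tauR}:
$$
\mathbf P_{\mathsf x}[\sigma_{\mathscr H_R}\le t] \le \frac{e^{c_0 t}}{R}\,\mathsf H_{{\rm GL}}(\mathsf x),\qquad \forall t\ge0,\ \forall \mathsf x\in\mathscr H_R.
$$
Since $\bigcup_R\mathscr H_R=\mathscr E$ and $\sigma_{\mathscr H_R}\uparrow\sigma_\infty$ as $R\to+\infty$ (because the $\overline{\mathscr H_R}$ are compact in $\mathscr E$ and exhaust it), letting $R\to+\infty$ gives $\mathbf P_{\mathsf x}[\sigma_\infty\le t]=0$ for every $t$, hence $\sigma_\infty=+\infty$ a.s. Thus the solution is global and stays in $\mathscr E$ for all time, and the strong Markov property extends to all $t\ge0$.

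The main obstacle — really the only nontrivial point — is making precise that the sublevel sets $\mathscr H_R$ exhaust $\mathscr E$ with compact closures, i.e. that the process cannot reach $\partial\mathbf O_V$ (a particle collision) in finite time; this is exactly where the hypothesis in \textbf{[V{\tiny coercive}]} that $\mathscr V_R$ has compact closure in $\mathbf O_V$ is essential, and it replaces the coercivity-on-$\mathbf R^{dN}$ argument used in Proposition~\ref{pr.existence}. Once that topological fact is in hand, the Lyapunov estimate is routine. I would also remark that the same Hamiltonian computation, being an identity up to the constant $c_0$, does not require any of the finer assumptions \textbf{[V{\tiny sing1}]}/\textbf{[V-{\tiny int}]} — only \textbf{[V{\tiny coercive}]} — which is why the proposition is stated at this level of generality.
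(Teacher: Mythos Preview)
Your proposal is correct and follows essentially the same approach as the paper: both use the Hamiltonian $\mathsf H_{{\rm GL}}$ as a Lyapunov function, derive $\mathcal L_{{\rm GL}}\mathsf H_{{\rm GL}}\le c\,\mathsf H_{{\rm GL}}$, and deduce the exit-time bound $\mathbf P_{\mathsf x}[\sigma_{\mathscr H_R}\le t]\le e^{ct}\mathsf H_{{\rm GL}}(\mathsf x)/R$ to rule out explosion. Your write-up is more explicit than the paper's one-line sketch (you compute $\mathcal L_{{\rm GL}}\mathsf H_{{\rm GL}}=-\gamma|v|^2-\alpha|z|^2+(\gamma+\alpha)dN$ exactly and spell out why $\overline{\mathscr H_R}$ is compact in $\mathscr E$ via the assumption that $\overline{\mathscr V_R}\subset\mathbf O_V$), but the strategy is identical.
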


 \begin{proof}
Proposition \ref{pr.ex-GL-i}  follows from the fact that $\mathcal L_{\text{GL}}\mathsf H_{\text{GL}}\le c \mathsf H_{\text{GL}}$ over $\mathscr E$ for some $c>0$. We also mention that this inequality implies the key inequality $
\mathbf P_{\mathsf x}[\sigma_{\mathscr H_R}\le t]\le \frac{e^{ct} }{R} \,  \mathsf H_{{\rm GL}}(\mathsf x)$ valid for all $\mathsf x\in \mathscr H_R$ and $t\ge 0$ ($\sigma_{\mathscr H_R}= \inf \{t\ge 0, X_t\notin \mathscr H_R\}$ is the first exit time from the open and bounded set 
  $\mathscr H_R:=\{\mathsf x\in  \mathbf R^{3dN}, \mathsf H_{{\rm GL}}(\mathsf x)<R\}\subset \mathscr E$). 
 \end{proof}

   
 The main result of this section is the following extension of Theorem~\ref{th.1} to the case of   singular potentials~$V$. 
 \begin{thm}\label{th.2}
Assume {\rm\textbf{[V{\tiny sing1}]}}.  Let $\mathscr O$  be a subdomain of $\mathbf O_V$   such that $ \mathbf O_V\setminus \overline{\mathscr O}$ is nonempty, and   set $\mathscr D=\mathscr O\times \mathbf R^d\times \mathbf R^d$.
Let also   $\delta \in (0,1]$. Then, for all $p\in (1,+\infty)$, Items i)$\to$iii) in Theorem~\ref{th.1} are  valid verbatim  for the killed semigroup $( P_t^{\mathscr D} , t\ge 0)$ of the process $(X_t,t\ge 0)$ solution to~\eqref{eq.GLintro} over~$\mathscr E=\mathbf O_V\times \mathbf R^{dN} \times\mathbf R^{dN}$ (see Proposition~\ref{pr.ex-GL-i})   with the Lyapunov function $\mathsf W_\delta :\mathscr E\to [1,+\infty)$ defined in \eqref{eq.lyapunovint} (and which satisfies the upper bound   \eqref{eq.Up-GL-i1}). 
  \end{thm}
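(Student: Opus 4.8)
\textbf{Plan of proof of Theorem~\ref{th.2}.}
The strategy is the one announced in the text: verify the five conditions \textbf{(C1)}$\to$\textbf{(C5)} for the generalized Langevin process~\eqref{eq.GLintro} on the state space $\mathscr E = \mathbf O_V\times\mathbf R^{dN}\times\mathbf R^{dN}$ and the domain $\mathscr D=\mathscr O\times\mathbf R^{dN}\times\mathbf R^{dN}$, with the particular Lyapunov function $\mathsf W_\delta$ of~\eqref{eq.lyapunovint}, and then invoke~\cite[Theorem~2.2]{guillinqsd} to obtain Items i)$\to$iii). The non-explosion and strong Markov property are granted by Proposition~\ref{pr.ex-GL-i}, and in particular we have at our disposal the quantitative escape bound $\mathbf P_{\mathsf x}[\sigma_{\mathscr H_R}\le t]\le \frac{e^{ct}}{R}\mathsf H_{\rm GL}(\mathsf x)$ from Hamiltonian sublevel sets, which will be used repeatedly to localise and to pass to the limit $R\to+\infty$.

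First I would establish the regularity conditions \textbf{(C1)}, \textbf{(C2)}, \textbf{(C4)} under \textbf{[V{\tiny coercive}]} alone, using the \emph{energy splitting} approach described in Section~\ref{sec.22} (the splitting Equation~\eqref{eq.splitting}): localise in a Hamiltonian sublevel set $\mathscr H_R$, where the drift is Lipschitz and bounded, so that the process coincides up to time $\sigma_{\mathscr H_R}$ with a nice SDE on $\mathbf R^{3dN}$ for which strong Feller/Feller-continuity and continuity of path laws are classical (via Girsanov or parametrix-type estimates for the hypoelliptic generator $\mathcal L_{\rm GL}$, whose Hörmander condition is immediate from the bracket structure $v\cdot\nabla_x$, $z\cdot\nabla_v$, $\partial_z$); then use the escape bound to show the error from the event $\{\sigma_{\mathscr H_R}\le t\}$ is uniformly small on compacts as $R\to+\infty$, together with Proposition~\ref{pr.wu1999} to upgrade convergence in probability to the required continuity statements. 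For \textbf{(C4)} one applies the same localisation inside $\mathscr O$ and uses that $\sigma_{\mathscr D}=\sigma_{\mathscr D\cap\mathscr H_R}$ on $\{t<\sigma_{\mathscr H_R}\}$; crucially, \emph{no regularity of $\partial\mathscr O$ is used}, only that $X_t$ is a.s.\ continuous and exits $\mathscr O$ through position coordinates. For \textbf{(C5)}, topological irreducibility of the killed semigroup follows from the Stroock–Varadhan support theorem applied to the control system associated with~\eqref{eq.GLintro}: given any target open set one steers $z$, hence $v$, hence $x$, using the controllability of the chain of integrators, staying inside $\mathscr O$; and since $\mathbf O_V\setminus\overline{\mathscr O}\neq\emptyset$ one can also steer the position out of $\mathscr O$, giving $\mathbf P_{\mathsf x_0}(\sigma_{\mathscr D}<+\infty)>0$ for some $\mathsf x_0$, which combined with the strong Feller property and irreducibility yields $\mathbf P_{\mathsf x}(\sigma_{\mathscr D}<+\infty)=1$ everywhere.

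The main work, and the main obstacle, is \textbf{(C3)}: constructing a Lyapunov function $\mathsf W_\delta$ with a strong return from infinity, valid \emph{simultaneously} at high energy (where $|x|\to\infty$) and near the singular set $\partial\mathbf O_V$ (where some $|x^i-x^j|\to 0$), and with the sharp upper bound~\eqref{eq.Up-GL-i1}. Here I would follow the hint in the text and take $\mathsf W_\delta = e^{\mathsf F^\delta}$ (up to constants), where $\mathsf F$ is the modified Hamiltonian of~\cite{duong2023asymptotic}: $\mathsf F$ differs from $\mathsf H_{\rm GL}$ by a carefully chosen cross term (roughly of the form $\varepsilon\, v\cdot z$ plus lower-order corrections) designed so that $\mathcal L_{\rm GL}\mathsf F \le -c\,\mathsf F + C$ on $\mathscr E$, exploiting the structure~\eqref{eq.V-i}: the quadratic confinement $V_{\mathbf c}$ gives linear dissipation in $x$, the singular part $\mathsf B/|y|^\beta$ of $V_{\mathbf I}$ is handled because $\Phi$ and $\nabla\Phi$ have the mild growth/blow-up controlled by $q_\Phi<\beta+1$ and $|y|^\beta|\Phi(y)|\to0$, and the perturbation $V_{\mathbf p}$ is supported away from collisions. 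Then a direct computation gives $\mathcal L_{\rm GL}(e^{\mathsf F^\delta}) = e^{\mathsf F^\delta}\big(\delta\mathsf F^{\delta-1}\mathcal L_{\rm GL}\mathsf F + \delta^2\mathsf F^{2\delta-2}\Gamma(\mathsf F) + \text{l.o.t.}\big)$ where the carré-du-champ term $\Gamma(\mathsf F)=\gamma|\nabla_v\mathsf F|^2+\alpha|\nabla_z\mathsf F|^2$ grows at most like $\mathsf F$; since $\delta\le1$, the negative term $-c\,\delta\,\mathsf F^{\delta}$ dominates $\mathsf F^{2\delta-1}$ at infinity, so $-\mathcal L_{\rm GL}\mathsf W_\delta\ge r_n\mathsf W_\delta$ outside a compact $K_n$ with $r_n\to+\infty$, which is exactly \textbf{(C3)} (the membership $\mathsf W_\delta\in\mathbb D_e(\mathcal L)$ follows again by localisation on $\mathscr H_R$ and the escape bound). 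The delicate points are the sign of $\mathcal L_{\rm GL}\mathsf F$ near the singularities — which forces the precise conditions in \textbf{[V-{\tiny int}]} — and checking the explicit bound~\eqref{eq.Up-GL-i1} on $\mathsf W_\delta$ in terms of both $|x|$ and the collision distances. Once \textbf{(C1)}$\to$\textbf{(C5)} are in hand, \cite[Theorem~2.2]{guillinqsd} delivers Items i)$\to$iii) verbatim, and the independence of $\mu_{\mathscr D}^{(p)}$ and $\lambda_{\mathscr D}^{(p)}$ of $p$ is quoted from~\cite{guillinqsd} as in the comments after Theorem~\ref{th.1}.
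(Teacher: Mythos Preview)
Your overall plan matches the paper's: verify \textbf{(C1)}--\textbf{(C5)} via the energy-splitting approach of Section~\ref{sec.22} and then invoke~\cite[Theorem~2.2]{guillinqsd}. For the regularity and irreducibility conditions \textbf{(C1)}, \textbf{(C2)}, \textbf{(C4)}, \textbf{(C5)} your sketch is adequate and close to what the paper actually does in Proposition~\ref{pr.PP-S}.

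The gap is in your treatment of \textbf{(C3)}. You correctly point to the modified Hamiltonian of~\cite{duong2023asymptotic}, but your description of it---``a cross term roughly of the form $\varepsilon\,v\cdot z$ plus lower-order corrections''---misses the essential ingredient, and your explanation of why it works (``the singular part $\mathsf B/|y|^\beta$ is handled because $\Phi$ and $\nabla\Phi$ have mild growth/blow-up'') is incorrect. The actual $\mathsf F_0$ (see~\eqref{eq.F0-GL-I}) contains, besides $x\cdot v$ and $v\cdot z$, the crucial term $-\mathfrak b\,\mathsf J_{\mathfrak R}(\mathsf x)\,v\cdot\mathsf G(x)$ with $\mathsf G^i(x)=\sum_{j\neq i}(x^i-x^j)/|x^i-x^j|$ from~\cite{lu2019geometric}. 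Applying $\mathcal L_{\rm GL}$ to this term produces $\mathfrak b\,\mathsf J_{\mathfrak R}\,\nabla V\cdot\mathsf G$, and the key computation (cf.~\cite[Lemma~4.2]{lu2019geometric} and~\eqref{eq.nablax-x}) shows that $\nabla V\cdot\mathsf G \lesssim -\sum_{i\neq j}|x^i-x^j|^{-(\beta+1)}=-\mathfrak K_{\beta+1}(x)$ near collisions. \emph{This} is the mechanism giving the strong return from the singular set $\partial\mathbf O_V$; the hypotheses on $\Phi$ in \textbf{[V-{\tiny int}]} (in particular $q_\Phi<\beta+1$) only ensure that the perturbation does not destroy it. A $v\cdot z$ cross term alone, or even together with $x\cdot v$, yields no dissipation in the collision direction, so your sketch as written would not close.

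There is a second subtlety you do not address: in the case $\gamma=0$ the prefactor $\mathsf J_{\mathfrak R}$ cannot be constant. The paper takes $\mathsf J_{\mathfrak R}^2=\mathfrak R^6|z|^2+|v|^2+2V(x)+\mathfrak R^2$ (see~\eqref{eq.Int3}) precisely so that the good term $\mathfrak b\,\nabla V\cdot\mathsf G\,\mathsf J_{\mathfrak R}\lesssim -\mathfrak K_{\beta+1}(x)\sqrt{|z|^2+1}$ absorbs the bad term $-\mathfrak R^2\mathfrak b\,z\cdot\nabla V$ arising from $\mathcal L_{\rm GL}(v\cdot z)$, which blows up like $|z|\,\mathfrak K_{\beta+1}(x)$ near collisions. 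This is not a ``lower-order correction'' but the heart of the $\gamma=0$ argument (Proposition~\ref{pr.Cint-2}).
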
 
  The proof of Theorem~\ref{th.2} is made in Section \ref{sec.GL-singular}.

   \begin{remarks}\label{eq.RRgamma>0-i}
Let us mention that when $\gamma>0$ (resp. $\gamma=0$), with minor changes in the proof of \textbf{(C3)} in Section \ref{sec.GL-singular}, one deduces that  Theorem~\ref{th.2}  is still valid when $V_{\mathbf c}$ satisfies {\rm \textbf{[V{\tiny poly-$x^k$}]}}   (resp. when $V_{\mathbf c}$ satisfies {\rm \textbf{[V{\tiny poly-$x^k$}]}}   with $k\in (1,2]$), and for  $\delta \in ((1-\beta)/k,1]$, see \eqref{eq.Bbeta} and \eqref{eq.Bbeta2} for the definition of $\beta$.
 \end{remarks}
 %
 %
 %
  

 
 \subsection{Quasi-stationary distribution for the Nosé-Hoover process}
 
 \subsubsection{Main result}
 In this section, we consider a system of $N\ge 2$ particles with positions $x_t=(x^1_t,\ldots,x^N_t)\in \mathbf R^{dN}$ and velocities  $v_t=(v^1_t,\ldots,v^N_t)\in \mathbf R^{dN}$ evolving  according to the Nos\'e-Hoover process~\eqref{eq.NHintro} where $V$ satisfies  \textbf{[V{\tiny coercive}]}. 
  Recall that the state space is  
 \begin{equation}\label{eq.EE-NH}
 \mathscr E= \mathbf O_V\times \mathbf R^{dN}\times \mathbf R.
 \end{equation} 
The Hamiltonian function of the process \eqref{eq.NHintro} is denoted by 
\begin{equation}\label{eq.H-NH}
\mathsf H_{{\rm NH}}: (x,v,y)\in \mathscr E\mapsto  V(x)+\frac 12 |v|^2+ \frac 12 |y|^2,
\end{equation}
and its infinitesimal generator by
\begin{equation}\label{eq.L-NH}
\mathcal L_{{\rm NH}}= v\cdot \nabla _x  - ( y+\gamma)v\cdot \nabla_v-\nabla V\cdot  \nabla_v + \gamma   \Delta_v +  (|v|^2-d N  )\partial_y.
\end{equation}

  \begin{prop}\label{pr.existence-NHC2}
  Assume that $V$ satisfies {\rm\textbf{[V{\tiny coercive}]}}.
For all $\mathsf x_0\in \mathscr E$, there exists a  unique strong solution  $(X_t=(x_t,v_t,y_t),t\ge 0)$ to \eqref{eq.NHintro} such that $X_0=\mathsf x_0$.  In addition, a.s. $X_t(\mathsf x_0)\in \mathscr E$ for all $t\ge 0$, and $(X_t(\mathsf x_0),t\ge 0)$ is  a strong Markov process.
  \end{prop}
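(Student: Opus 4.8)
The plan is to follow verbatim the strategy of Propositions~\ref{pr.existence} and~\ref{pr.ex-GL-i}, the only genuinely new input being the computation of $\mathcal L_{{\rm NH}}\mathsf H_{{\rm NH}}$. Since $V\in \mathsf L_{{\rm Lip}}^{1,{\rm loc}}(\mathbf O_V)$ under \textbf{[V{\tiny coercive}]}, the drift and the (constant) diffusion coefficient of~\eqref{eq.NHintro} are locally Lipschitz on the open set $\mathscr E=\mathbf O_V\times\mathbf R^{dN}\times\mathbf R$, so classical SDE theory provides, for each $\mathsf x_0\in\mathscr E$, a unique strong solution $(X_t,t\ge 0)$ defined up to the explosion/collision time $\tau_\infty:=\lim_{R\to+\infty}\sigma_{\mathscr H_R}$, where $\mathscr H_R:=\{\mathsf x\in\mathscr E,\ \mathsf H_{{\rm NH}}(\mathsf x)<R\}$ and $\sigma_{\mathscr H_R}=\inf\{t\ge 0,\ X_t\notin \mathscr H_R\}$. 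By \textbf{[V{\tiny coercive}]}, $\mathscr V_R=\{V<R\}$ has compact closure in $\mathbf O_V$, hence $\overline{\mathscr H_R}$ is a compact subset of $\mathscr E$; consequently $\{\tau_\infty<+\infty\}$ is precisely the event that the process either explodes or reaches $\partial\mathbf O_V$ (a collision between particles) in finite time, and it suffices to prove $\mathbf P_{\mathsf x_0}[\tau_\infty<+\infty]=0$.

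The key point is that $\mathsf H_{{\rm NH}}$ is a global Lyapunov function on $\mathscr E$. A direct computation from~\eqref{eq.L-NH}, in which the cross terms $v\cdot\nabla V-\nabla V\cdot v$ and $-y|v|^2+|v|^2 y$ cancel, gives
\[
\mathcal L_{{\rm NH}}\mathsf H_{{\rm NH}}=\gamma\big(dN-|v|^2\big)-dN\,y\ \le\ \gamma\, dN+dN\,|y|\ \le\ c\,\mathsf H_{{\rm NH}}\quad\text{on }\mathscr E,
\]
for some $c>0$, using $|y|\le \tfrac12(1+y^2)\le \tfrac32\,\mathsf H_{{\rm NH}}$ together with $\mathsf H_{{\rm NH}}\ge 1$. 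Applying Itô's formula to $e^{-c(t\wedge\sigma_{\mathscr H_R})}\mathsf H_{{\rm NH}}(X_{t\wedge\sigma_{\mathscr H_R}})$ — the stochastic integral being a true martingale since the stopped process stays in the bounded set $\overline{\mathscr H_R}$ where $\mathsf H_{{\rm NH}}$ and its derivatives are bounded — and using that $\mathsf H_{{\rm NH}}(X_{\sigma_{\mathscr H_R}})=R$ on $\{\sigma_{\mathscr H_R}<\tau_\infty\}$, one obtains exactly as in~\eqref{eq.tauR}
\[
\mathbf P_{\mathsf x}[\sigma_{\mathscr H_R}\le t]\ \le\ \frac{e^{ct}}{R}\,\mathsf H_{{\rm NH}}(\mathsf x),\qquad \forall\,\mathsf x\in\mathscr H_R,\ \forall\,t\ge 0,\ R>0.
\]
Letting $R\to+\infty$ yields $\mathbf P_{\mathsf x}[\tau_\infty\le t]=0$ for every $t\ge 0$, hence $\tau_\infty=+\infty$ a.s. This establishes global existence and that a.s. $X_t\in\mathscr E$ for all $t\ge 0$; uniqueness follows from the local pathwise uniqueness together with the non-explosion/non-collision just proved.

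The strong Markov property is then standard for an SDE with locally Lipschitz coefficients on $\mathscr E$ admitting a unique, globally defined strong solution (e.g.\ via the flow property of the solution stopped at the exit times of an exhausting sequence of relatively compact open subsets of $\mathscr E$, combined with pathwise uniqueness). I do not expect a serious obstacle: the only point requiring mild care is that $\mathsf H_{{\rm NH}}$ is proper on $\mathscr E$ rather than on the ambient Euclidean space, so that the single a priori estimate above simultaneously rules out escape to infinity and collisions — this is exactly where the relative compactness of $\mathscr V_R$ in $\mathbf O_V$ from \textbf{[V{\tiny coercive}]} enters, and where one must note that the term $-dN\,y$ in $\mathcal L_{{\rm NH}}\mathsf H_{{\rm NH}}$, though of indefinite sign, is still controlled linearly by $\mathsf H_{{\rm NH}}$.
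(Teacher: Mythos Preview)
Your proof is correct and follows essentially the same approach as the paper: the paper simply notes that $\mathcal L_{{\rm NH}}\mathsf H_{{\rm NH}}\le c\,\mathsf H_{{\rm NH}}$ on $\mathscr E$ and refers back to the argument of Proposition~\ref{pr.existence}, whereas you have spelled out the computation of $\mathcal L_{{\rm NH}}\mathsf H_{{\rm NH}}$ (which matches~\eqref{eq.CHH}) and the It\^o/Khasminskii non-explosion step in full detail. The additional remarks you make about $\overline{\mathscr H_R}$ being compact in $\mathscr E$ under \textbf{[V{\tiny coercive}]}, so that the single estimate simultaneously rules out explosion and collisions, are exactly the points the paper leaves implicit.
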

 \begin{proof}
Since for some $c>0$, $\mathcal L_{\text{NH}}\mathsf H_{\text{NH}}\le c \mathsf H_{\text{NH}}$ over $\mathscr E$, the proof is the same as Proposition~\ref{pr.existence},     and thus omitted. Again, one has  $\mathbf P_{\mathsf x}[\sigma_{\mathscr H_R}\le t]\le \frac{e^{ct} }{R} \,  \mathsf H_{{\rm NH}}(\mathsf x)$   for all $\mathsf x\in \mathscr H_R$ and $t\ge 0$, where  $\sigma_{\mathscr H_R}= \inf \{t\ge 0, X_t\notin \mathscr H_R\}$ is the first exit time from the open and bounded subset 
  $\mathscr H_R:=\{\mathsf x \in \mathbf R^{2dN+1}, \mathsf H_{{\rm NH}}(\mathsf x)<R\}\subset \mathscr E$. 
 \end{proof}


As already explained in the introduction, to prove   existence and uniqueness of quasi-stationary distributions in weighted spaces (see Theorem~\ref{th.3} below), the starting point is Theorem~\ref{th.ext} below.  Assumptions  \textbf{(C1)}, \textbf{(C2)}, \textbf{(C4)}, and \textbf{(C5')}  (defined at the beginning of  Section \ref{sec.extC5}) will be  proved when  \textbf{[V{\tiny coercive}]} holds. To prove \textbf{(C3)},  we will rely on the work~\cite{herzogNH}. For that reason, we impose   similar   extra assumptions on the potential~$V$. 
 \medskip

\noindent
\textbf{Assumption {[V{\tiny sing2}]}}.
\textit{The condition } \textbf{[V{\tiny coercive}]}  \textit{holds. In addition: $V\in \mathcal C^2(\mathbf O_V)$, and  for some  $\zeta\in (1,2)$ and $\delta \in (1/2,1]$: } 
\begin{equation}\label{eq.Conds}
 \frac{|  \text{{\rm Hess}}V(x)|}{|\nabla V(x)|^\zeta}\to 0   \text{ and } \frac{|\nabla V(x)|^{2-\zeta }}{ V(x)^{1-\delta}}\to +\infty \text{ when $V(x)\to +\infty$}. 
\end{equation}
Note that if there exists such a $\zeta\in (1,2)$, then $\delta=1$ always satisfies the second condition in \eqref{eq.Conds}. As it will be clear,   the better $\delta$'s are those closed to $1/2$, since  much smaller Lyapunov functions can be constructed in this case.


 Assumption \textbf{[V{\tiny coercive}]} differs from Assumption  \textit{\textbf{normal}} in~\cite{herzogNH} since:   (i) $V$ is much less regular (ii)  we have an extra condition in \eqref{eq.Conds} (namely the one involving $\delta>0$), and (iii)   because  we do not assume that  $\int_{\mathscr E} e^{-\mathsf H_{{\rm NH}}(\mathsf x)}d\mathsf x<+\infty$ (since this will be automatically satisfied, see   Remark~\ref{eq.Reoo}). The last (extra) condition involving $\delta>0$ in \eqref{eq.Conds} will be needed in order to build a smaller Lyapunov function than in~\cite{herzogNH}. Indeed, with such a $\delta>0$, we will build Lyapunov function $\mathsf W$ such that  $\mathsf W\le e^{ c^\delta \mathsf H_{{\rm NH}}^\delta} \text{ on } \mathscr E$ for some $c>0$ independent of $\delta>0$.

 \begin{remarks}\label{eq.Reoo} 
It will be shown in this work   that   the nonkilled process $(P_t,t\ge 0)$ is strong Feller and  satisfies the topological transitivity \cite[Eq. (2.2)]{wu2001}.
In addition,   under  {\rm \textbf{[V{\tiny coercive}]}},  there exists a Lyapunov function $\mathsf W$ over $\mathscr E$ such that $\mathcal L_{{\rm NH}}\mathsf W\le -c+b\mathbf 1_K$ (see \cite{herzogNH}). Therefore, since the measure $\mu_{Gibbs}(d\mathsf x)=e^{-\mathsf H_{{\rm NH}}(\mathsf x)}d\mathsf x$ is invariant,  $\int_{\mathscr E} e^{-\mathsf H_{{\rm NH}}(\mathsf x)}d\mathsf x$ is finite.  
 \end{remarks}

   
 The main result of this section is the following and provides existence and  uniqueness of the quasi-stationary distribution of the Nos\'e-Hoover process~\eqref{eq.NHintro} in a domain $\mathscr D=\mathscr O\times \mathbf R^{dN}\times \mathbf R$, as well as the exponential convergence towards this quasi-stationary distribution. 
 
 \begin{thm}\label{th.3}
Assume  that $V$ satisfies {\rm \textbf{[V{\tiny sing2}]}}. Let $\delta \in (1/2,1]$ be as in the latter condition in \eqref{eq.Conds}.  Let $\mathscr D=\mathscr O\times \mathbf R^{dN}\times \mathbf R$ where $\mathscr O$ is a subdomain of $\mathbf O_V$ such that $ \mathbf O_V\setminus \overline{\mathscr O}$ is nonempty.   Then, for all $p\in (1,+\infty)$, Items i)$\to$iii) in Theorem~\ref{th.1} are  valid verbatim  for the killed semigroup $( P_t^{\mathscr D}, t\ge 0)$ of the process $(X_t,t\ge 0)$, the solution to~\eqref{eq.NHintro} over~$\mathscr E=\mathbf O_V\times \mathbf R^{dN} \times \mathbf R $ (see Proposition~\ref{pr.existence-NHC2}), with the Lyapunov function~$\mathsf W_\delta:\mathscr E\to [1,+\infty)$      defined in~\eqref{eq.Lyapunov-NH}, which satisfies the upper bound~\eqref{eq.Upp-NH}. 
  \end{thm}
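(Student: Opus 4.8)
\textbf{Proof plan for Theorem \ref{th.3}.} The strategy is to apply Theorem~\ref{th.ext}, so the plan is to verify its hypotheses \textbf{(C1)}, \textbf{(C2)}, \textbf{(C3)}, \textbf{(C4)}, and \textbf{(C5')} for the Nos\'e-Hoover process~\eqref{eq.NHintro} on $\mathscr D=\mathscr O\times \mathbf R^{dN}\times \mathbf R$ under assumption \textbf{[V{\tiny sing2}]}. First I would establish the regularity conditions \textbf{(C1)}, \textbf{(C2)}, and \textbf{(C4)} using the approach of Section~\ref{sec.22}, which is designed precisely to handle SDEs in $\mathbf R^l$ with locally Lipschitz (non-smooth) drifts on arbitrary subdomains. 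The key ingredients here are the energy-splitting equation~\eqref{eq.splitting} and the analysis of the process at low and high values of the Hamiltonian $\mathsf H_{{\rm NH}}$: at low energy one localizes and uses the nondegeneracy of the noise in the velocity variable together with the Hörmander bracket condition (the vector fields $\partial_v$ and $[v\cdot\nabla_x - (y+\gamma)v\cdot\nabla_v - \nabla V\cdot\nabla_v + (|v|^2-dN)\partial_y,\ \partial_v]$ span, since iterated brackets reach $\nabla_x$ and $\partial_y$), and at high energy one uses the a priori exit estimate $\mathbf P_{\mathsf x}[\sigma_{\mathscr H_R}\le t]\le \tfrac{e^{ct}}{R}\mathsf H_{{\rm NH}}(\mathsf x)$ from Proposition~\ref{pr.existence-NHC2} to control the contribution of trajectories that escape to high energy. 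Strong/weak Feller for $P_t$ and $P_t^{\mathscr D}$, and continuity of paths in law, then follow by the coupling/approximation arguments of Section~\ref{sec.22}, without any regularity assumption on $\partial\mathscr O$.

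Next I would verify \textbf{(C5')}. This is where the Nos\'e-Hoover process departs from the generalized Langevin case: because the drift in the $y$-equation, $|v|^2 - dN$, has a sign that depends only on $|v|$, the thermostat variable $y_t$ cannot be steered freely, and the full topological irreducibility \textbf{(C5)} fails; hence the need for Theorem~\ref{th.ext} and the weaker \textbf{(C5')}. I would establish topological transitivity of the killed semigroup and the existence of a point $\mathsf x_0\in\mathscr D$ with $\mathbf P_{\mathsf x_0}(\sigma_{\mathscr D}<+\infty)>0$ by an explicit control argument: steering $x$ across $\partial\mathscr O$ is possible because, given any target direction, one can choose a control for the Brownian forcing that makes $v$ point outward while keeping the trajectory inside $\mathscr E$ for a short time; the accessibility from any starting point to a fixed small ball then follows from the Stroock--Varadhan support theorem applied to the (hypoelliptic) control system, combined with the Feller regularity already proved.

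Then comes \textbf{(C3)}, which I expect to be the main obstacle. The construction in~\cite{herzogNH} provides a Lyapunov function with $\mathcal L_{{\rm NH}}\mathsf W\le -c+b\mathbf 1_K$, but this only gives return from infinity at a fixed rate, whereas \textbf{(C3)} requires the enhanced estimate $-\mathcal L_{{\rm NH}}\mathsf W_\delta\ge r_n\mathsf W_\delta - b_n\mathbf 1_{K_n}$ with $r_n\to+\infty$ along an exhaustion by compacts, i.e. a \emph{strong} return from infinity, and moreover one wants $\mathsf W_\delta$ as small as possible, of order $e^{c^\delta\mathsf H_{{\rm NH}}^\delta}$. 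The plan is to take $\mathsf W_\delta=e^{\mathsf G^\delta}$ for a suitably \emph{modified} Hamiltonian $\mathsf G$ built from $\mathsf H_{{\rm NH}}$ plus a cross term between $v$ and $\nabla V(x)$ (to generate dissipation in the $x$-direction, exploiting that $|\nabla V|$ is coercive) and a correction in the $y$-variable; the precise form is~\eqref{eq.Lyapunov-NH}. One then computes $\mathcal L_{{\rm NH}}\mathsf W_\delta = \mathsf W_\delta\big(\delta\mathsf G^{\delta-1}\mathcal L_{{\rm NH}}\mathsf G + \delta(\delta-1)\mathsf G^{\delta-2}|\nabla_v\mathsf G|^2\cdot\gamma + \delta^2\mathsf G^{2\delta-2}\gamma|\nabla_v\mathsf G|^2\big)$ and must show the bracketed quantity tends to $-\infty$ as $\mathsf H_{{\rm NH}}\to+\infty$. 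The delicate points are: controlling the Hessian term via the first condition in~\eqref{eq.Conds} ($|\text{Hess}V|/|\nabla V|^\zeta\to 0$), ensuring the $x$-dissipation dominates via the second condition ($|\nabla V|^{2-\zeta}/V^{1-\delta}\to+\infty$), and checking that near the singular set $\partial\mathbf O_V$ (where $|\nabla V|\to\infty$) the cross term does not destabilize the estimate—this requires carefully balancing the weights $\mathsf G^{\delta-1}$ against the growth of $|\nabla V|$. Once the drift condition is verified on $\{\mathsf H_{{\rm NH}}>R_n\}$ for an exhausting sequence $R_n\to+\infty$, with $r_n$ the infimum of $-(\text{bracketed quantity})$ over that set (which tends to $+\infty$), \textbf{(C3)} follows with $K_n=\{\mathsf H_{{\rm NH}}\le R_n\}\cap\overline{\mathscr D}$ and suitable $b_n$. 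With \textbf{(C1)}--\textbf{(C5')} in hand, Theorem~\ref{th.ext} yields Items i)--iii) verbatim, and the upper bound~\eqref{eq.Upp-NH} on $\mathsf W_\delta$ gives the stated integrability in $\mathcal P_{\mathsf W_\delta^{1/p}}(\mathscr D)$.
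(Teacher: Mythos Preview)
Your overall strategy matches the paper's: apply Theorem~\ref{th.ext} by verifying \textbf{(C1)}--\textbf{(C4)} and \textbf{(C5')}, using the energy-splitting approach for the regularity conditions and a modified Lyapunov function for \textbf{(C3)}. Two points, however, are incomplete in ways that matter.

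For \textbf{(C1)}, you cannot invoke H\"ormander's bracket condition directly on the full Nos\'e-Hoover generator, since under \textbf{[V{\tiny coercive}]} the drift $-\nabla V-(y+\gamma)v$ is neither smooth nor globally bounded. The paper (Theorem~\ref{th.NH-C}) first localizes to $\mathscr H_R$ and then uses a \emph{local Girsanov formula}~\eqref{eq.Gir-NH} to transfer to the reference process $dx_t^0=v_t^0\,dt$, $dv_t^0=\Sigma\,dB_t$, $dy_t^0=(|v_t^0|^2-dN)\,dt$, whose coefficients \emph{are} smooth so that H\"ormander applies; strong Feller is then pulled back via the Girsanov density and Vitali. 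More seriously, your description of the Lyapunov function for \textbf{(C3)} omits the decisive ingredient. A function of the shape ``$\mathsf H_{{\rm NH}}$ plus a cross term $v\cdot\nabla V/|\nabla V|^\zeta$ plus a correction in $y$'' cannot yield \textbf{(C3)} in the region $\{y\to-\infty,\ |v|^2\le dN,\ V(x)\text{ bounded}\}$: there $\mathcal L_{{\rm NH}}\mathsf H_{{\rm NH}}=-ydN-\gamma|v|^2+\gamma dN\sim |y|dN>0$, the $v\cdot\nabla V$ term gives no $y$-dissipation, and a naive $y$-correction such as $-\mathfrak h_0(y)|y|^2$ (the $y$-part of the paper's $\Phi$) vanishes for $y\le 1$. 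The actual construction behind~\eqref{eq.Lyapunov-NH} is $\mathsf F_{{\rm NH}}=\mathfrak h_*\mathsf H_{{\rm NH}}+\Psi+\epsilon_\Phi\Phi$, where $\Psi=\Psi_0+\Psi_1+\Psi_2$ is Herzog's elaborate perturbation from~\cite{herzogNH} (Dawson's integral, multiple cutoffs), designed precisely to produce the term $-\tfrac{dN}{8D_{\rm m}^2}(\mathfrak g_1+\mathfrak g_2)|y|$ that controls $y\to-\infty$; see~\eqref{eq.ePsi2} and the case analysis around~\eqref{eq.Lo}. Your cross term and $y$-correction correspond to the paper's \emph{additional} perturbation $\Phi$, which is layered on top of $\Psi$ to obtain the $x$-return (via $-\epsilon_\Phi\mathfrak h\circ V\,|\nabla V|^{2-\zeta}$ and the second condition in~\eqref{eq.Conds}) and to upgrade the $y\ge 0$ bound to $-\mathfrak b|y|$. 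Without $\Psi$, the construction fails.
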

 Theorem~\ref{th.3} is proved in  Section \ref{sec.NH-p}. 
  
 \subsubsection{On Assumption {\rm \textbf{[V{\tiny sing2}]}}}
In this section, we  provide some examples of singular  potential functions used in molecular dynamics which satisfy \textbf{[V{\tiny sing2}]}.   To this end, let us consider a potential $V$  of the form  \eqref{eq.V-i} where:
 \begin{enumerate}
  \item[\textbf -] The perturbation potential $V_{\mathbf p}\in \mathcal C^2_c(\mathbf O_V)$. 
 \item[\textbf -]  The confining potential $V_{\mathbf c}$ is $\mathcal C^2$ over $\mathbf R^d$ and  behaves   at infinity as $|x|^k$ with  $k>1$. 
  \item[\textbf -]   Either  $V_{\mathbf I}\equiv 0$ over $\mathbf R^{dN}$  (in this  case,  $\mathbf O_V=\mathbf R^{dN}$) or  $V_{\mathbf I}$ satisfies \textbf{[V{\tiny int}]} with $\Phi \in \mathcal C^2(\mathbf R^d\setminus \{0\})$ (in this case,  $\mathbf O_V$ is given by $\{ x^i\neq  x^j  \}$).
 \end{enumerate}
As already seen, Assumption \textbf{[V{\tiny coercive}]}  is satisfied.  Since in addition $ (k-2)/(k-1)<1$ and  $1< (\beta+2)/(\beta+1)<2$,  the first condition in \eqref{eq.Conds} is also always satisfied for any $\zeta \in (1+ \frac{1}{\beta +1},2)$ (using also~\cite[Lemma A.1]{herzog}).  The last condition in \eqref{eq.Conds} writes 
$$\delta>\max\Big [1/2,1-\frac{k-1}k(2-\zeta), 1-\frac{\beta+1}\beta(2-\zeta)\Big ].$$ Note that $2-\zeta\in (0,  \frac{\beta}{\beta +1})$. The best choice of $\zeta$ is to take it small enough  such that     $2-\zeta$ is close to $\frac{\beta}{\beta +1}$. In conclusion,    \textbf{[V{\tiny sing2}]} holds in this case  if 
$$\max\Big [1/2,1-\frac{k-1}k\frac{\beta}{\beta +1}\Big ] <\delta\le 1.$$
If $\beta \ge 1$,  by choosing the external potential $V_{\mathbf c}$ such that  $k\gg 1$, this previous condition simply becomes  empty, i.e. one can choose any $\delta \in (1/2,1]$.



 
  \section{Generalized Langevin process: proof of Theorem \ref{th.1}}
  \label{sec.2V}
   
  This   section is   dedicated  to the proof of Theorem~\ref{th.1} (see also Subsection \ref{sec.pr2} for the proof of Theorem~\ref{th.1-NE}). We recall that  we will rely  on~\cite[Theorem~2.2]{guillinqsd} and, the strategy thus consists  to show that      \textbf{(C1)}$\to$\textbf{(C5)} hold. 
  
  In all this section, we assume that \textbf{[V{\tiny loc}]} holds. 

\subsection{On the conditions  \textbf{(C1)},  \textbf{(C2)}, and \textbf{(C4)}}
 
Assumption \textbf{(C2)} is proved in Section~\ref{sec.C2-}, and  \textbf{(C1)} and \textbf{(C4)} are proved in Section~\ref{sec.CC}  (see Proposition~\ref{pr.SFgamma0}, Theorem~\ref{pr.SFgamma-neq-0}, and Proposition~\ref{co.SFD}). We first provide  in Section~\ref{sec.22}  
 explanations of the method we use in this work   to check the regularity conditions \textbf{(C1)},  \textbf{(C2)}, and \textbf{(C4)} for the process \eqref{eq.GLintro} as well as for the process \eqref{eq.NHintro}. 
  \subsubsection{The  energy splitting  approach  to  prove the regularity conditions}
  \label{sec.22}
 
In this section, we explain the  strategy developed in this work  to prove      \textbf{(C1)},  \textbf{(C2)},  and  \textbf{(C4)} in a  weak regularity setting. 
  
    \medskip
  
  \noindent
  \textbf{On condition \textbf{(C1)}.}   We first observe that if  $V\in \mathcal C^\infty(\mathbf R)$,  using the H\"ormander's theory,   it is straightforward to deduce  \textbf{(C1)}.   When $\nabla V$ is only locally Lipschitz, one must argue differently. 
When $\gamma>0$, thanks to  the global Girsanov formula \eqref{eq.Girsanov},  we can make the non smooth term $\nabla V$ appear only in the    so-called Doléans-Dade exponential. With this formula,  the proof of \textbf{(C1)}  relies  on  tools from the theory of SDEs. When 
 $\gamma=0$, we cannot argue as previously. This is because  there is no Brownian motion anymore in the velocity variable $v_t$ in \eqref{eq.GLintro}. As explained in the introduction above, we will thus adopt a different approach to prove \textbf{(C1)} (and also \textbf{(C4)}).
 
 This approach is  based on a rather natural strategy which  can be summarized as  the study of the behavior of the process  at low and at high energy $\mathsf H_{\text{GL}}$ with respect to  initial conditions in the compact sets. To explain this approach,  the  starting point consists of  splitting the measure $\mathbf P_{\mathsf x}[X_t \in \cdot]$ as follows:
\begin{align} 
\label{eq.splitting}
\mathbf P_{\mathsf x}[X_t \in \cdot]&=\mathbf P_{\mathsf x}[X_t \in \cdot, \sigma_{\mathscr H_R}\le t ]+\mathbf P_{\mathsf x}[X_t \in \cdot, t<\sigma_{\mathscr H_R}  ] =:    \rho^R_{\mathsf x}(\cdot) + \theta^R_{\mathsf x}(\cdot).
\end{align}
 The asymptotic behavior  of  the function $ R  \mapsto   \rho^R_{\mathsf x}(\cdot)$ as $R\to +\infty$ uniformly w.r.t the initial conditions in the compact sets provides the behavior of the process w.r.t  high energies $\mathsf H_{\text{GL}}$, while the  continuity property of  $ \mathsf x\mapsto   \theta^R_{\mathsf x}(\cdot)$ at fixed $R>0$ describes  the  behavior of the process  at low energy w.r.t the initial conditions. 
The approach then relies on the observation that   \textbf{(C1)} holds if 
\begin{equation}\label{eq.rho}
\lim_{R\to +\infty} \rho^R_{\mathsf x}(\cdot)\to 0 \text{ uniformly with respect to  } \mathsf x \text{ in the compact sets},
\end{equation}
 and if for all $R>0$, 
 \begin{equation}\label{eq.theta}
  \mathsf x\mapsto   \theta^R_{\mathsf x}(\cdot) \text{  is continuous for the $\tau$-topology over } \mathscr H_R,
  \end{equation} 
  i.e. for the topology corresponding to convergence against   test functions $f\in b\mathcal B(\mathscr H_R)$. 
Equation \eqref{eq.rho} is in our setting a consequence of   \eqref{eq.tauR}, which we recall follows from the fact that $\mathcal L_{{\rm GL}} \mathsf H_{{\rm GL}} \le c \mathsf H_{{\rm GL}} $ (see also Remark~\ref{re.ReG}). The behavior of the process  at low energy (namely \eqref{eq.theta}) is  trickier to deduce. It will be obtained by combining  mainly two general     ingredients:     the  powerful  convergence    result Proposition~\ref{pr.wu1999} which, we recall, requires in particular uniform integrability of the transition probabilities, and    the uniform continuity   of the distribution functions near $0^+$ of exit times from $\mathbf R^{3d}$-balls  uniformly w.r.t the initial conditions   (see Lemma \ref{le.C4-2}).     The Markov property of the process \eqref{eq.GLintro} will be  also fully exploited.


\begin{remarks} (On condition \eqref{eq.theta})
\label{sec.reSF}
There are many ways to check   \eqref{eq.theta} and the chosen way    depends on the considered process. Let us provide  some of them. As far as solutions to stochastic differential equations driven by   Brownian noises are concerned,  the uniform integrability of the transition probability required in Proposition~\ref{pr.wu1999} can be obtained with   so-called Gaussian upper bound (as we did in the proof of Theorem \ref{pr.SFgamma-neq-0} below), see~\cite{baldi,delarue,menozzi2011parametrix} and the references therein. To circumvent situations where there is no Gaussian upper bounds, one can use other tools.  
In such cases,  \eqref{eq.theta} can   be for instance  obtained with the help of a  local Girsanov formula combined with techniques from stochastic calculus, as we did in the proof of Theorem \ref{th.NH-C} below for the   Nosé-Hoover process. 
Another way   to prove  \eqref{eq.theta} is to use derivative formulas of Bismut–Elworthy–Li’s type, which has been derived for several  kinds of processes (in finite and infinite dimensions) and whose literature is very rich,  see e.g.~\cite{peszat1995strong,takeuchi2010bismut,guillin2012degenerate,zhang2013derivative} and the references therein. This is the technique employed in~\cite[Proposition 2]{guillin2024large}    to prove  \eqref{eq.theta} (and more generally \textbf{(C1)} and \textbf{(C4)}) for solutions to stochastic differential equations driven by $\alpha$-stable processes or in~\cite{guillinFK} for Feynman-Kac semigroups associated with jump processes. In~\cite{guillin2024Dyson}, \eqref{eq.theta} (and actually \textbf{(C1)}$\to$\textbf{(C5)}) is proved for a system of particules with log-singular interactions subject to collisions\footnote{The drift is thus, in this model,  infinite for an infinitely number of times.} using~\cite[Corollary~2.2]{schilling2012strong}, which can be seen as  an alternative to Proposition~\ref{pr.wu1999}, and an underlying  sequence of processes approximating (in distribution) the system.   
\end{remarks}
  
  \noindent
  \textbf{On condition \textbf{(C4)}.} 
    The proof of the continuity of the mapping $\mathsf x\mapsto  \mathbf P_{\mathsf x}[X_t \in \cdot, t<\sigma_{\mathscr D}]$ (i.e. the proof of \textbf{(C4)}), where $\mathscr D= \mathscr O\times \mathbf R^m$,  requires extra analysis  since it involves studying  the continuity property of $\mathsf x\mapsto \sigma_{\mathscr D}(\mathsf x)$. 
The techniques developed for the kinetic Langevin  process~\eqref{eq.K}  in \cite{guillinqsd,guillinqsd2} (see also \cite{ramilarxiv1,ramilarxiv2})  to prove  \textbf{(C4)}  require    $\partial \mathscr O \cap \mathbf O_V$ to be~$\mathcal C^2$. These techniques cannot longer be used without regularity assumption on  $\partial \mathscr O$. To prove  \textbf{(C4)},  we will  use  the  energy splitting approach (so-called because based on the energy splitting Equation  \eqref{eq.splitting})  described previously. However, we mention that to check \textbf{(C4)}, we   have to  study    the  distribution functions near $0^+$ of exit times from $\mathbf R^d$-balls   uniformly w.r.t the initial conditions, which is the purpose of  Lemma~\ref{le.C4}.   
 \medskip

 We mention that  the energy splitting approach  is also used at many other places in this work, and in particular  it is the starting point  for the proofs of
 \begin{itemize}
 \item[\textbf -] Lemma~\ref{le.C4} as well as in the proof of \textbf{(C2)}   (see  the proof of  Proposition~\ref{pr.C2}).
 \item[\textbf -] Conditions \textbf{(C1)}, \textbf{(C2)}, and \textbf{(C4)} for both the generalized Langevin process~\eqref{eq.GLintro} and the Nosé-Hoover process~\eqref{eq.NHintro}    when $V$ is a singular  potential  (see the proofs of Proposition \ref{pr.PP-S},  Theorem~\ref{th.NH-C}, and Proposition~\ref{pr.Vs3-t}).
  \end{itemize}

 \begin{remarks}\label{re.ReG}
 Finally, we  mention that the energy splitting approach can be also  applied to solutions of other solutions to SDEs as soon  as  there is a Lyapunov function  $\mathsf V$ such that $\mathcal L \mathsf V\le c\mathsf V$ ($c>0$, and $\mathcal L$ being the extended generator of the process). What replaces $\mathsf H$ in this case is  the energy~$\mathsf V$.  
  \end{remarks}

  \subsubsection{On Assumption  \textbf{(C2)}} 
\label{sec.C2-}

 In this section, we check Assumption \textbf{(C2)}. 
 
  \begin{prop}\label{pr.C2}  Assume {\rm\textbf{[V{\tiny loc}]}}. 
 Let  $t\ge 0$ and $(\mathsf x_n)_n$ in $\mathbf R^{3d}$ such that $\mathsf x_n\to \mathsf x\in \mathbf R^{3d}$ as $n\to +\infty$. Then, for all $\epsilon>0$, 
  $\mathbf P  [\sup_{s\in [0,t]}| X_s(\mathsf x_n)-X_s(\mathsf x)|\ge \epsilon   ]\to 0$ as $n\to +\infty$. 
  In particular, Assumption \textbf{(C2)} is satisfied.
   \end{prop}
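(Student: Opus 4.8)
The plan is to prove the pathwise $L^2$-type stability estimate $\mathbf P[\sup_{s\in[0,t]}|X_s(\mathsf x_n)-X_s(\mathsf x)|\ge\epsilon]\to 0$ by a localization-plus-Gr\"onwall argument, which is the standard route when the drift is only locally Lipschitz and no global Lipschitz bound is available. The only structural input we have is that $\mathcal L_{\mathrm{GL}}\mathsf H_{\mathrm{GL}}\le c\,\mathsf H_{\mathrm{GL}}$ and hence the uniform exit-time bound \eqref{eq.tauR}; this replaces the usual a priori moment bounds on the solution and is precisely the energy-splitting idea of Section~\ref{sec.22}.

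First I would fix $t>0$ and $\epsilon>0$, and reduce to initial conditions in a fixed compact set: since $\mathsf x_n\to\mathsf x$, there is $R_0>0$ with $\mathsf H_{\mathrm{GL}}(\mathsf x_n),\mathsf H_{\mathrm{GL}}(\mathsf x)< R_0$ for all $n$. For $R\ge R_0$ let $\sigma^n_R:=\sigma_{\mathscr H_R}(\mathsf x_n)$ and $\sigma_R:=\sigma_{\mathscr H_R}(\mathsf x)$ be the exit times of $X(\mathsf x_n)$, $X(\mathsf x)$ from $\mathscr H_R$, and set $\tau^n_R:=\sigma^n_R\wedge\sigma_R$. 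By \eqref{eq.tauR}, $\mathbf P[\sigma^n_R\le t]\le e^{ct}R_0/R$ and $\mathbf P[\sigma_R\le t]\le e^{ct}R_0/R$, so $\sup_n\mathbf P[\tau^n_R\le t]\le 2e^{ct}R_0/R$, which can be made $\le\eta$ by choosing $R$ large. Then I split
\begin{align*}
\mathbf P\Big[\sup_{s\in[0,t]}|X_s(\mathsf x_n)-X_s(\mathsf x)|\ge\epsilon\Big]
&\le \mathbf P[\tau^n_R\le t]+\mathbf P\Big[\sup_{s\in[0,t\wedge\tau^n_R]}|X_s(\mathsf x_n)-X_s(\mathsf x)|\ge\epsilon\Big].
\end{align*}
The first term is $\le 2e^{ct}R_0/R\le\eta$. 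For the second term, on the interval $[0,\tau^n_R]$ both processes stay in the bounded set $\overline{\mathscr H_R}$, on which the drift of \eqref{eq.GLintro} is Lipschitz with some constant $L_R$ (the linear parts $-\gamma v$, $\lambda z$, $-\alpha z$, $-\lambda v$ are globally Lipschitz, and $\nabla V$ is Lipschitz on the compact set $\overline{\mathscr H_R}$ by \textbf{[V{\tiny loc}]}). The two SDEs \eqref{eq.GLintro} driven by the \emph{same} Brownian motions $(W,B)$ have identical diffusion coefficients, so the difference $D_s:=X_{s\wedge\tau^n_R}(\mathsf x_n)-X_{s\wedge\tau^n_R}(\mathsf x)$ satisfies $D_s=(\mathsf x_n-\mathsf x)+\int_0^{s\wedge\tau^n_R}(\text{drift diff.})\,du$ with no stochastic integral; hence $|D_s|\le|\mathsf x_n-\mathsf x|+L_R\int_0^s|D_u|\,du$, and Gr\"onwall gives $\sup_{s\in[0,t]}|D_s|\le|\mathsf x_n-\mathsf x|\,e^{L_R t}$ almost surely. (Alternatively, if $\gamma>0$ one may keep the stochastic term and use Doob's maximal inequality plus Gr\"onwall on $\mathbf E\sup_{s\le u}|D_s|^2$, but since the noise coefficients are constant and cancel, the deterministic Gr\"onwall argument is cleaner.) Therefore the second term is $0$ as soon as $|\mathsf x_n-\mathsf x|\,e^{L_R t}<\epsilon$, which holds for all $n$ large. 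Letting $n\to+\infty$ then $\eta\to 0$ yields the claim.

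It remains to deduce \textbf{(C2)}: the bound just proved states that $X_{[0,t]}(\mathsf x_n)\to X_{[0,t]}(\mathsf x)$ in $\mathbf P$-probability in $\mathcal C([0,t],\mathbf R^{3d})$ with the sup-norm; convergence in probability of random variables in a Polish space implies convergence in distribution, so $\mathbf P_{\mathsf x_n}(X_{[0,t]}\in\cdot)\to\mathbf P_{\mathsf x}(X_{[0,t]}\in\cdot)$ weakly in $\mathcal P(\mathcal C([0,T],\mathscr E))$, which is exactly \textbf{(C2)}. I expect the only mildly delicate point to be the bookkeeping around the stopping time $\tau^n_R$ (making sure the Gr\"onwall inequality is applied to the stopped processes and that the drift-Lipschitz constant $L_R$ is indeed uniform over $\overline{\mathscr H_R}$, which is where compactness of sublevel sets of $\mathsf H_{\mathrm{GL}}$ — i.e. coercivity of $V$ — is used); everything else is routine. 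Note this argument is uniform in nothing beyond what is needed, and uses \textbf{[V{\tiny loc}]} only through local Lipschitzness of $\nabla V$ and coercivity of $V$, so it transfers verbatim to the singular and Nos\'e--Hoover settings once the corresponding energy inequality $\mathcal L\mathsf H\le c\mathsf H$ is available.
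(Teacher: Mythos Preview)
Your proof is correct and follows essentially the same approach as the paper: localize both trajectories inside the energy sublevel set $\mathscr H_R$ using the exit-time bound \eqref{eq.tauR}, observe that the constant diffusion coefficients cancel in the difference so that a deterministic Gr\"onwall argument applies on $\overline{\mathscr H_R}$ (where the drift is Lipschitz), and then let $R\to\infty$. The paper's write-up is slightly terser but the decomposition and the key ingredients are identical.
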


 \begin{proof} 
 We denote by $\mathbf a$ the drift of the equation \eqref{eq.GLintro}, namely for  $\mathsf x=(x,v,z)\in \mathbf R^{3d}$, 
       $$
       \mathbf a(\mathsf x)= \begin{pmatrix}
v  \\
-\nabla V(x)  -\gamma v +\lambda z\\
 -\alpha z -\lambda v
\end{pmatrix}.$$
Note that  $\mathbf a$ is a locally Lipschitz vector field over $\mathbf R^{3d}$. 
 Let  $R_0>0$ such that  $\mathsf x_n,\mathsf x\in  \mathscr H_R=\{\mathsf x, \mathsf H_{{\rm GL}}(\mathsf x)<R\}$ for all $R\ge R_0$ and $n\ge 0$. In the following we assume that $R\ge  R_0$. By Gronwall's inequality, we have for all $R\ge R_0$, when $t<\sigma_{\mathscr H_R}(\mathsf x)\wedge \sigma_{\mathscr H_R}(\mathsf  x_n)$, 
$$\sup_{s\in [0,t]}| X_s(\mathsf x)-X_s(\mathsf x_n)|\le |\mathsf x-\mathsf x_n|e^{\mathbf a_R t},$$
for some  $\mathbf a_R>0$. Thus,  one has for all $\epsilon>0$ and $R\ge R_0$, as $n\to \infty$, 
$$\mathbf p_n(R):=\mathbf P\big [\sup_{s\in [0,t]}| X_s(\mathsf x)-X_s(\mathsf x_n)|\ge \epsilon, t< \sigma_{\mathscr H_R}(\mathsf x)\wedge \sigma_{\mathscr H_R}(\mathsf  x_n)\big]\to 0.$$
Consequently, it holds:
\begin{align*}
&\mathbf P\big [\sup_{s\in [0,t]}| X_s(\mathsf x)-X_s(\mathsf x_n)| \ge \epsilon\big]\\
&=\mathbf p_n(R)  + \mathbf P\big [\sup_{s\in [0,t]}| X_s(\mathsf x)-X_s(\mathsf x_n)|\ge  \epsilon, t\ge  \sigma_{\mathscr H_R}(\mathsf x)\wedge \sigma_{\mathscr H_R}(\mathsf x_n)\big]\\
&\le \mathbf p_n(R) + \mathbf P\big [t\ge  \sigma_{\mathscr H_R}(\mathsf x)\wedge \sigma_{\mathscr H_R}(\mathsf x_n)]\\
&\le \mathbf p_n(R) + \mathbf P\big [t\ge  \sigma_{\mathscr H_R}(\mathsf x) ]+ \mathbf P\big [t\ge   \sigma_{\mathscr H_R}(\mathsf x_n)]\\
&\le \mathbf p_n(R)+ 2R_0\frac{e^{ct} }{R}, 
\end{align*}
where we have used \eqref{eq.tauR} to get the last inequality together with  the fact that $\mathsf H_{{\rm GL}}(\mathsf x_n)< R_0$ and $\mathsf H_{{\rm GL}}(\mathsf x)< R_0$. Let us now consider $\delta>0$ and $\epsilon>0$. Pick $R_\delta>0$ such that $2R_0\frac{e^{ct} }{R_\delta}\le \delta/2$. For this  fixed $R_\delta$, $\mathbf p_n(R_\delta)\to 0$ as $n\to +\infty$, and thus, there exists $N_\delta\ge 1 $ such that  for all $n\ge N_\delta$, $\mathbf p_n(R_\delta)\le\delta/2$. The proof of the proposition is complete.
  \end{proof}

   \subsubsection{On Assumptions \textbf{(C1)} and \textbf{(C4)}}
 \label{sec.CC}

 In this section,  we prove that   \textbf{(C1)} and \textbf{(C4)} are satisfied for the process \eqref{eq.GLintro} when \textbf{[V{\tiny loc}]} holds. 
 \medskip

 \noindent
\textbf{On Assumption~\textbf{(C1)}}. 
  In this section, we prove~\textbf{(C1)}.   
  \medskip
  
  \noindent
  \underline{Assumption \textbf{(C1)} when $\gamma>0$}.   
 
 \begin{prop}\label{pr.SFgamma0}
Assume  {\rm\textbf{[V{\tiny loc}]}} and  $\gamma>0$. Then, the nonkilled semigroup $P_t$ of the solution to the generalized Langevin equation \eqref{eq.GLintro}  is strong Feller for every $t>0$. 
\end{prop}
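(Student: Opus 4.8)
The plan is to exploit the global Girsanov transformation mentioned in the text (Equation~\eqref{eq.Girsanov}) to remove the non-smooth drift term $\nabla V(x_t)$ from the dynamics, reducing the strong Feller property to a statement about a process whose drift is smooth (in fact affine in the state variable), for which strong Feller follows from classical hypoellipticity together with absolute continuity of the laws. First I would introduce, for a fixed horizon $t>0$ and a fixed large energy level $R$, the process $(\widetilde X_s)_{s\le t}$ governed by the same SDE \eqref{eq.GLintro} but with the term $-\nabla V(x_s)$ deleted, i.e. with the linear Ornstein--Uhlenbeck-type drift $b_0(x,v,z)=(v,\,-\gamma v+\lambda z,\,-\alpha z-\lambda v)$ and the same Brownian motions $W,B$; since $\gamma>0$ this process is elliptic-in-the-limit (the noise in the $v$-component propagates to $x$ and $z$ through the commutators of the drift), hypoelliptic in H\"ormander's sense with smooth — indeed constant-rank — coefficients, so its semigroup $\widetilde P_t$ is strong Feller by the standard Malliavin-calculus/H\"ormander argument. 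The second ingredient is that, locally in the energy sublevel set $\mathscr H_R$ where $V$ is Lipschitz and hence $\nabla V$ is bounded, the laws of $(X_s)_{s\le t}$ and $(\widetilde X_s)_{s\le t}$ started from the same point are mutually absolutely continuous on the event $\{t<\sigma_{\mathscr H_R}\}$, with Radon--Nikodym density (Dol\'eans--Dade exponential)
\[
D^R_t=\exp\!\Big(\tfrac1{\sqrt{2\gamma}}\int_0^{t\wedge\sigma_{\mathscr H_R}}\nabla V(x_s)\cdot dW_s-\tfrac1{4\gamma}\int_0^{t\wedge\sigma_{\mathscr H_R}}|\nabla V(x_s)|^2\,ds\Big),
\]
which is a genuine martingale because $\nabla V$ is bounded on $\mathscr H_R$ (Novikov's condition).

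Next I would run the energy-splitting decomposition \eqref{eq.splitting}: for $f\in b\mathcal B(\mathbf R^{3d})$,
\[
P_tf(\mathsf x)=\mathbf E_{\mathsf x}\!\big[f(X_t)\mathbf 1_{\sigma_{\mathscr H_R}\le t}\big]+\mathbf E_{\mathsf x}\!\big[f(X_t)\mathbf 1_{t<\sigma_{\mathscr H_R}}\big]=\rho^R_{\mathsf x}(f)+\theta^R_{\mathsf x}(f).
\]
By \eqref{eq.tauR}, $|\rho^R_{\mathsf x}(f)|\le \|f\|_\infty\, e^{ct}\mathsf H_{\rm GL}(\mathsf x)/R$, which goes to $0$ as $R\to\infty$ uniformly for $\mathsf x$ in compacts; this is exactly \eqref{eq.rho}. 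It remains to show $\theta^R_{\mathsf x}(f)$ is continuous in $\mathsf x$ on $\mathscr H_R$ for each fixed $R$ and each bounded measurable $f$ — i.e. \eqref{eq.theta}. Writing $\theta^R_{\mathsf x}(f)=\widetilde{\mathbf E}_{\mathsf x}[f(\widetilde X_t)\,D^R_t\,\mathbf 1_{t<\widetilde\sigma_{\mathscr H_R}}]$ under the law of the smooth process, I would argue continuity in two stages. The $\widetilde P_t$-part already gives strong Feller for the smooth dynamics, so the map $\mathsf x\mapsto\widetilde{\mathbf E}_{\mathsf x}[g(\widetilde X_{[0,t]})]$ is continuous for bounded continuous $g$ on path space; to upgrade this to bounded measurable $f$ evaluated at time $t$ I invoke Proposition~\ref{pr.wu1999}: the time-$t$ marginals of $\widetilde X_t(\mathsf x_n)$ have densities w.r.t. Lebesgue measure (hypoellipticity), these converge in probability along a coupling (this is where Proposition~\ref{pr.C2}, i.e. \textbf{(C2)}, is used, transferred to $\widetilde X$ by the same Gronwall estimate since $b_0$ is globally Lipschitz), and the densities are uniformly integrable because locally the heat-kernel-type Gaussian bounds for the hypoelliptic smooth process give an $L^{1+\epsilon}_{\rm loc}$ bound on the densities uniform in $\mathsf x$ over a compact. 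The weight $D^R_t\mathbf 1_{t<\widetilde\sigma_{\mathscr H_R}}$ is handled by the same uniform-integrability machinery: $D^R_t$ has bounded moments of every order uniformly in $\mathsf x\in\mathscr H_R$ (again because $\nabla V$ is bounded on $\mathscr H_R$), so the products $f(\widetilde X_t)D^R_t\mathbf 1_{t<\widetilde\sigma_{\mathscr H_R}}$ are uniformly integrable, and Proposition~\ref{pr.wu1999} gives convergence of expectations. Continuity of $\theta^R$ then follows, and combining with the uniform smallness of $\rho^R$ yields strong Feller of $P_t$ for every $t>0$.

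The main obstacle I anticipate is the handling of the indicator $\mathbf 1_{t<\sigma_{\mathscr H_R}}$ (equivalently $\mathbf 1_{t<\widetilde\sigma_{\mathscr H_R}}$), since the exit-time functional $\mathsf x\mapsto\sigma_{\mathscr H_R}(\mathsf x)$ is not continuous pointwise, only a.s.\ continuous, and one must ensure that for the limiting initial condition the process does not spend positive-probability time sitting on $\partial\mathscr H_R=\{\mathsf H_{\rm GL}=R\}$ at time $t$; this is where an absolute-continuity/non-degeneracy argument for $\mathsf H_{\rm GL}(X_t)$ (the time-$t$ law has a density, so $\mathbf P_{\mathsf x}(\mathsf H_{\rm GL}(X_t)=R)=0$) together with a uniform-in-initial-condition control of exit-time distribution functions near any fixed level — in the spirit of Lemma~\ref{le.C4-2} and Lemma~\ref{le.C4} — must be deployed to upgrade a.s.\ continuity of the exit time to continuity of $\theta^R_{\mathsf x}(f)$. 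Once that technical point is in place, the rest is bookkeeping with Novikov, Gronwall, and Proposition~\ref{pr.wu1999}.
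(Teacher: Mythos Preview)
Your approach is workable but differs from the paper's in a way worth noting. The paper does \emph{not} use the energy splitting \eqref{eq.splitting} for the full process $X$ when $\gamma>0$; instead it applies a \emph{global} Girsanov formula \eqref{eq.Girsanov} to the reference process $X^0$ solving $dx^0_t=v^0_t\,dt$, $dv^0_t=\sqrt{2\gamma}\,dW_t$, $dz^0_t=\sqrt{2\alpha}\,dB_t$ --- i.e.\ with \emph{all} drift in $(v,z)$ removed, not just $\nabla V$. The entire unbounded vector field $\mathbf F=(-\nabla V-\gamma v+\lambda z,\,-\alpha z-\lambda v)$ is absorbed into the Dol\'eans--Dade exponential $\mathscr M_t$. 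One then shows $\mathsf z\mapsto f(X^0_t(\mathsf z))$ and $\mathsf z\mapsto \mathscr M_t(\mathsf z)$ are continuous in probability (the second via a truncation of $\mathbf F$ along exit from large balls for the simple process $X^0$), and concludes by Vitali using $\mathbf E[\mathscr M_t]\equiv 1$. There is no exit-time indicator $\mathbf 1_{t<\sigma_{\mathscr H_R}}$ anywhere in this argument --- that is precisely the complication the paper's choice of reference process sidesteps.

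Your route (energy splitting for $X$, local Girsanov on $\mathscr H_R$, linear reference process) is essentially the strategy the paper reserves for $\gamma=0$ (Theorem~\ref{pr.SFgamma-neq-0}) and Nos\'e--Hoover (Theorem~\ref{th.NH-C}). It can be made to work, but your proposed handling of the indicator via ``$\mathbf P_{\mathsf x}(\mathsf H_{\rm GL}(X_t)=R)=0$'' is not sufficient: the event $\{\sigma_{\mathscr H_R}\le t\}$ depends on the whole path, not on the terminal point, so non-degeneracy of the time-$t$ marginal does not control it. The correct device is the Chung-type Markov-property decomposition you hint at with Lemma~\ref{le.C4-2}: write $\theta^R_{\mathsf x}(f)=\widetilde{\mathbf E}_{\mathsf x}[\mathbf 1_{s<\widetilde\sigma_{\mathscr H_R}}D^R_s\,\psi_s(\widetilde X_s)]$ for small $s$, show this differs from $\widetilde{\mathbf E}_{\mathsf x}[D^R_s\,\psi_s(\widetilde X_s)]$ by a term bounded (after Cauchy--Schwarz and Doob) by $\sup_{\mathsf z\in K}\mathbf P_{\mathsf z}[\widetilde\sigma_{\mathscr H_R}\le s]\to 0$, and use strong Feller of $\widetilde P_s$ weighted by $D^R_s$ for the continuous approximant. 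This is exactly what the paper does in the proof of Theorem~\ref{th.NH-C}, Step~2. So your approach is valid once the indicator is handled properly, but the paper's global-Girsanov route for $\gamma>0$ is shorter and avoids this detour entirely.
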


The proof of Proposition \ref{pr.SFgamma0} we give below  actually only requires that $V$ is coercive and  differentiable, and that  $\nabla V$ is   just continuous (and in this case, one rather considers the weak solution to \eqref{eq.GLintro}). 


\begin{proof}

Let   $(X_t^0=(x_t^0,v_t^0,z^0_t),t\ge 0)$ in $\mathbf R^{3d}$ be the strong solution to the stochastic differential equation
 \begin{equation}\label{eq.GL0}
       dx_t^0= v_t^0dt, \
     dv_t^0=\sqrt{2\gamma}  \, dW_t, \
       dz_t^0= \sqrt{2\alpha}\, dB_t.
 \end{equation}  
We denote by $\mathbf F$ the following vector field over $\mathbf R^{3d}$ and by $\boldsymbol \Sigma$ the following square constant matrix of size $2d$:
       $$
       \mathbf F(\mathsf x)= \begin{pmatrix} 
-\nabla V(x)  -\gamma v +\lambda z\\
 -\alpha z -\lambda v
\end{pmatrix} \text{ and } \boldsymbol \Sigma= \begin{pmatrix}
 \sqrt{2\gamma}\, I_{\mathbf R^d}&0\\
0&\sqrt{2\alpha}\, I_{\mathbf R^d}
\end{pmatrix}.$$
Arguing as in~\cite[Lemma 1.1]{wu2001}, the following  Girsanov's formula holds for all $T>0$,  
\begin{equation}\label{eq.Girsanov}
\frac{d \mathbf P_{\mathsf x}}{d\mathbf P_{\mathsf x}^0}{  \Big|_{\mathcal F_T}}= \mathscr  M_T(\mathsf x)  ,
\end{equation}
where $ \mathbf P_{\mathsf x}$ (resp. $ \mathbf P^0_{\mathsf x}$)  is the law of  $(X_t(\mathsf x),t\ge 0)$ (resp. of $(X_t^0(\mathsf x),t\ge 0)$), and for $t\ge 0$, 
  \begin{align*}
     \mathscr  M_t(\mathsf x)&=\exp\Big[    \int_0^t \!  \boldsymbol \Sigma^{-1}\mathbf F(X_s^0(\mathsf x) ) \cdot   d\boldsymbol w_s-\frac 12 \int_0^t |\boldsymbol \Sigma^{-1}\mathbf F(X_s^0(\mathsf x)) |^2 ds   \Big],
 \end{align*}
 is the   Doléans-Dade exponential (true) martingale, and $\boldsymbol w_s=  (B_s,W_s)^T$. 
 
  Fix $t>0$. By \eqref{eq.Girsanov},   $P_t$ is strong Feller if  $\mathsf z\in \mathbf R^{3d}\mapsto \mathbf E_{\mathsf z}[f(X_t^0)\mathscr  M_t]$ is continuous, which we shall prove now. Note that  a.s.    $ \sup_{s\in [0,t]}|X_s^0(\mathsf z)-X_s^0(\mathsf x)|\to 0$ as $\mathsf z\to \mathsf x\in \mathbf R^{3d}$. Note also that  $(X_t^0,t\ge 0)$ has a smooth density with respect to  the Lebesgue measure.  
Then, using Proposition~\ref{pr.wu1999}, we deduce that for any $f\in b\mathcal B(\mathbf R^{3d})$, $\mathsf z\in \mathbf R^{3d}\mapsto  f(X_t^0(\mathsf z))$ is continuous in $\mathbf P$-probability. 

\begin{sloppypar}
 On the other hand, $\mathsf z\in \mathbf R^{3d}\mapsto\mathscr  M_t(\mathsf z)$ is continuous in $\mathbf P$-probability.  Since $\mathbf F$ is unbounded and not globally Lipschitz, we provide   the proof of this claim. 
 Set for   $R\ge 1$,   
 $$\sigma_{\mathsf B(0,R)}^0(\mathsf z):=\inf\{s\ge 0, X_s^0(\mathsf z)\notin \mathsf  B(0,R)\},$$
  where $\mathsf  B(0,R):=\{\mathsf y\in \mathbf R^{3d}, |\mathsf y|< R\}$. 
If $|\mathsf z|\le \sqrt R$, $\mathbf P_{\mathsf z}[\sigma_{\mathsf B(0,R)}^0\le t]\le \mathbf h(R)$, where 
$$\mathbf h(R):=\mathbf  P\Big [\sup_{s\in [0,t]}|\boldsymbol  \Sigma \boldsymbol  w_s|\ge R/2-\sqrt R\Big ]+ \mathbf  P\Big [\sqrt{2\gamma}\int_0^t |W_s|ds\ge R/2-t\sqrt R   -\sqrt R\Big ].$$
We have $\mathbf h(R) \to 0$ as $R\to +\infty$. Let $\mathsf z_n\to \mathsf z\in \mathbf R^{3d}$, and $R_{\mathsf z}\ge 1$ such that  $|\mathsf z_n|+|\mathsf z|\le \sqrt{R}_{\mathsf z} $, $\forall n\ge 0$. Pick    $c>0$. Let $\epsilon>0$ and $R_\epsilon>R_{\mathsf z}$ such that $ \mathbf h(R_\epsilon)\le \epsilon/4$. Consider also a continuous bounded vector field $\mathbf F_{R_\epsilon}:\mathbf R^{3d}\to \mathbf R^{2d}$   such that $\mathbf F_{R_\epsilon}=\mathbf F $ on $ \mathsf  B(0,R_\epsilon+1)$.  Set $\boldsymbol \delta_n(\mathbf F):=|\int_0^t   \boldsymbol \Sigma^{-1}[\mathbf F(q_s^0(\mathsf z_n) )- \mathbf F(q_s^0(\mathsf z) )]\cdot   d\boldsymbol w_s|$.  Then,  for all $n\ge 0$, the quantity $ \mathbf P  [\boldsymbol \delta_n(\mathbf F)  >c ]$ is bounded by 
\begin{align*}
 &   \mathbf P[\sigma_{\mathsf B(0,R_\epsilon)}^0(\mathsf z_n)\wedge \sigma_{\mathsf B(0,R_\epsilon)}^0(\mathsf z)\le t] + \mathbf P [\boldsymbol \delta_n(\mathbf  F) >c, \ t<\sigma_{\mathsf B(0,R_\epsilon)}^0(\mathsf z)  \wedge \sigma_{\mathsf B(0,R_\epsilon)}^0(\mathsf z_n) ]\\
&\le    \mathbf P[\sigma_{\mathsf B(0,R_\epsilon)}^0(\mathsf z_n) \le t]+\mathbf P[\sigma_{\mathsf B(0,R_\epsilon)}(\mathsf z) \le t] + \mathbf P [\boldsymbol \delta_n(   \mathbf F_{R_\epsilon})> c, \ t<\sigma_{\mathsf B(0,R_\epsilon)}^0(\mathsf z)  \wedge \sigma_{\mathsf B(0,R_\epsilon)}^0(\mathsf z_n)   ]\\
&\le \frac{\epsilon}2  + \mathbf P  [\boldsymbol \delta_n(   \mathbf F_{R_\epsilon})> c  ].
\end{align*}
\end{sloppypar}
Since $\mathbf P  [\boldsymbol \delta_n( \mathbf   F_{R_\epsilon})> c  ]\to 0$ as $n\to +\infty$ by well-known convergence properties of stochastic integrals,
 we finally get that there exists $N_\epsilon\ge 0$, for all $n\ge N_\epsilon$,  $\mathbf P  [\boldsymbol \delta_n(   \mathbf F)>c  ]\le \epsilon$.  With similar arguments, we also prove that the mapping $\mathsf x\in \mathbf R^{3d}\mapsto  \int_0^t | \mathbf F(q_s^0(\mathsf x) )) |^2ds$ is continuous in $\mathbf P$-probability. The claim then follows applying  the continuous mapping theorem.

Since in addition,  $\mathbf E[\mathscr  M_t(\mathsf x)]=1$ for all $\mathsf x$,   
we have that $\mathsf x\mapsto \mathbf E[\mathscr  M_t(\mathsf x)]$ is continuous in $L^1(\mathbf P)$  (see the Vitali convergence theorem  \cite[Proposition 3.12]{kallenberg}) and hence,  $\mathsf z\in \mathbf R^{3d}\mapsto \mathbf E_{\mathsf z}[f(X_t^0)\mathscr  M_t]$ is continuous.  The proof of \textbf{(C1)} is complete when $\gamma>0$. 
 \end{proof}

  

  \noindent
  \underline{Assumption \textbf{(C1)} when $\gamma=0$}.   
   
For $\mathsf x=(x,v,z)\in \mathbf R^{3d}$, we denote by 
\begin{equation}\label{eq.BBoo}
\sigma_{\mathscr B(x,\delta)}(\mathsf x)=\inf \{t\ge 0, x_t(\mathsf x)\notin \mathscr B(x,\delta)\},
\end{equation}
 the first exit time for the process $(x_t(\mathsf x),t\ge 0)$ from the open ball $\mathscr B(x,\delta)$ in $\mathbf R^d$ centered at $x\in \mathbf R^d$ and of radius $\delta>0$.   When there is no confusion, we simply write it  $\sigma_{\mathscr B(x,\delta)}$.

 We start with  two crucial lemmas  which hold for any $\gamma\ge 0$ and which will be fundamental to prove \textbf{(C1)} when $\gamma=0$ and   \textbf{(C4)} when $\gamma\ge0$ . 
 
 \begin{lem}\label{le.C4}
 Assume {\rm\textbf{[V{\tiny loc}]}}  and $\gamma \ge0$.  It holds for all compact subset $K$ of $\mathbf R^{3d}$ and $\delta>0$, 
$$ \lim_{s\to 0^+} \sup_{\mathsf x=(x,v,z)\in K}\mathbf P_{\mathsf x}[\sigma_{\mathscr B(x,\delta)}\le s]=0.$$
\end{lem}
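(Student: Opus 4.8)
The plan is to control the position displacement uniformly over a compact set by freezing the dynamics on a slightly larger ball, on which the drift is Lipschitz with a uniform constant. First I would fix a compact set $K\subset \mathbf R^{3d}$ and $\delta>0$, and enlarge $K$ to an open bounded set $U$ containing $\{\mathsf x=(x,v,z): (x',v,z)\in K \text{ for some } x' \text{ with } |x'-x|\le \delta\}$ together with a neighbourhood of it; concretely, pick $R>0$ large enough that $K\subset \mathscr H_{R}$ and that the closed $\delta$-tube (in the $x$-variable) around $\mathscr H_R$ sits inside $\mathscr H_{R'}$ for some $R'>R$. On $\mathscr H_{R'}$ the coefficients of \eqref{eq.GLintro} are Lipschitz with a constant $L=L_{R'}$ depending only on $R'$ (since $\nabla V$ is locally Lipschitz and the other terms are linear). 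The point of working inside $\mathscr H_{R'}$ is that \eqref{eq.tauR} gives $\mathbf P_{\mathsf x}[\sigma_{\mathscr H_{R'}}\le s]\le e^{cs}R/R'$ uniformly for $\mathsf x\in\mathscr H_R$, so up to a small error we may assume the process has not left $\mathscr H_{R'}$ before time $s$.

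The main estimate is then a moment bound on $\sup_{u\in[0,s]}|x_u(\mathsf x)-x|$. Writing $x_u-x=\int_0^u v_r\,dr$ and $v_u - v = \int_0^u(-\nabla V(x_r)-\gamma v_r+\lambda z_r)\,dr+\sqrt{2\gamma}\,W_u$, and similarly for $z_u$, one gets on the event $\{u<\sigma_{\mathscr H_{R'}}\}$ that $|v_u|,|z_u|$ and $|\nabla V(x_u)|$ are bounded by a constant $C_{R'}$ depending only on $R'$. Hence, using Doob's inequality for the martingale parts (only present when $\gamma>0$, and of size $O(\sqrt s)$) and Gronwall, there is a constant $C=C(R',\gamma,t_0)$ such that
\begin{equation}\label{eq.sup-est}
\mathbf E\big[\sup_{u\in[0,s]}|x_u(\mathsf x)-x|\,;\,s<\sigma_{\mathscr H_{R'}}\big]\le C\,(s+\sqrt s\,)\le 2C\sqrt s
\end{equation}
for all $s\le 1$ and all $\mathsf x\in\mathscr H_R$. (When $\gamma=0$ this is even cleaner: the position displacement is $\int_0^u v_r\,dr$ with $|v_r|\le C_{R'}$, so $\sup_{u\le s}|x_u-x|\le C_{R'}s$ deterministically on that event, and no Doob step is needed.) Crucially the constant in \eqref{eq.sup-est} does not depend on $\mathsf x$, only on $K$ (through $R,R'$) and $\delta$.

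Finally I would combine the two pieces. For $\mathsf x=(x,v,z)\in K$,
\begin{equation}\label{eq.combine}
\mathbf P_{\mathsf x}[\sigma_{\mathscr B(x,\delta)}\le s]\le \mathbf P_{\mathsf x}[\sigma_{\mathscr H_{R'}}\le s]+\mathbf P_{\mathsf x}\big[\textstyle\sup_{u\in[0,s]}|x_u-x|\ge\delta\,;\,s<\sigma_{\mathscr H_{R'}}\big]\le \frac{e^{cs}R}{R'}+\frac{2C\sqrt s}{\delta},
\end{equation}
using Markov's inequality and \eqref{eq.sup-est} in the last step. The right-hand side of \eqref{eq.combine} is independent of $\mathsf x\in K$. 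Letting $s\to 0^+$ it tends to $e^{0}R/R' = R/R'$, which can be made arbitrarily small by choosing $R'$ large at the outset (with $K\subset\mathscr H_R$ fixed); more precisely, given $\eta>0$ one first picks $R'$ with $R/R'<\eta/2$ and the enlarged-tube condition, then picks $s_0$ with $2C\sqrt{s_0}/\delta<\eta/2$ and $e^{cs_0}\le 2$, so that $\sup_{\mathsf x\in K}\mathbf P_{\mathsf x}[\sigma_{\mathscr B(x,\delta)}\le s]<\eta$ for $s\le s_0$. This proves the claim. The only mildly delicate point is bookkeeping the enlargement: one must be sure that staying in $\mathscr H_{R'}$ really does force $\nabla V$, $v$, $z$ to be bounded along the path, which is exactly the coercivity of $V$ in \textbf{[V{\tiny loc}]} making $\mathscr H_{R'}$ bounded — so $\overline{\mathscr H_{R'}}$ is compact and all continuous functions are bounded there.
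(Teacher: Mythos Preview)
Your argument is correct and in fact more direct than the paper's. Both proofs begin with the same energy-splitting step via $\sigma_{\mathscr H_{R'}}$ and the bound \eqref{eq.tauR}, but diverge afterwards. The paper, after localizing, splits on whether $|x_s-x|<\delta/2$ and applies the strong Markov property at $\sigma_{\mathscr B(x,\delta)}$ to reduce to an estimate on the displacement from a point on $\partial\mathscr B(x,\delta)$; this template is designed to be reused verbatim for Lemma~\ref{le.C4-2} and the Nos\'e--Hoover case, where the exit ball lives in full phase space and the variables carry direct Brownian noise. You instead observe that $\{\sigma_{\mathscr B(x,\delta)}\le s\}=\{\sup_{u\le s}|x_u-x|\ge\delta\}$ and that on $\{s<\sigma_{\mathscr H_{R'}}\}$ the velocity is bounded by $\sqrt{2R'}$, so $\sup_{u\le s}|x_u-x|\le \sqrt{2R'}\,s$ \emph{deterministically}. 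This is cleaner for the present lemma precisely because $dx_t=v_t\,dt$ has no martingale part.

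Two small remarks on your write-up. First, the Doob/Gronwall step and the $\sqrt{s}$ are unnecessary: the bound $\sup_{u\le s}|x_u-x|\le C_{R'}s$ on $\{s<\sigma_{\mathscr H_{R'}}\}$ holds for \emph{all} $\gamma\ge0$, not just $\gamma=0$, since the position equation never sees the Brownian motion directly. Second, the ``$\delta$-tube inside $\mathscr H_{R'}$'' condition you impose is never used and can be dropped; all you need is $K\subset\mathscr H_{R'}$ so that \eqref{eq.tauR} applies uniformly over $K$. With these simplifications your proof becomes: given $\epsilon>0$, pick $R'$ with $e^{c}\sup_{K}\mathsf H_{{\rm GL}}/R'<\epsilon$, then for $s<\min(1,\delta/\sqrt{2R'})$ the second term in \eqref{eq.combine} vanishes identically and $\sup_{\mathsf x\in K}\mathbf P_{\mathsf x}[\sigma_{\mathscr B(x,\delta)}\le s]<\epsilon$.
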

We will perform the proof when $\gamma=0$. It is straightforward to adapt the proof to the case   $\gamma>0$. 


 \begin{proof}
Assume $\gamma=0$.  Fix $\delta>0$ and  let $R_K>0$  such that $K \subset \mathscr H_{R_K}$. Let $\mathsf x=(x,v,z)\in K$ and $R\ge R_K$. 
We have using  \eqref{eq.tauR}, for all $s\in [0,1]$,
 \begin{align*}
 \mathbf P_{\mathsf x}\big [\sigma_{\mathscr B(x,\delta)}\le s\big ]
 &\le  \mathbf P_{\mathsf x}\big [\sigma_{\mathscr B(x,\delta)}\le s, s<\sigma_{\mathscr H_R}\big ]+ \frac{e^{c} }{R} \sup_{\mathsf  y\in K} \mathsf H_{{\rm GL}}(\mathsf y).
\end{align*}
In addition, it holds
 \begin{align*}
   \mathbf P_{\mathsf x}[\sigma_{\mathscr B(x,\delta)}\le s, s<\sigma_{\mathscr H_R}]&\le \mathbf P_{\mathsf x}\big [\sigma_{\mathscr B(x,\delta)}\le s, |x_s-x|< \delta/2, s<\sigma_{\mathscr H_R}\big ]\\
   &\quad +   \mathbf P_{\mathsf x}\big [  |x_s-x|\ge \delta/2, s<\sigma_{\mathscr H_R}\big].
   \end{align*}
   Note that when $X_0=\mathsf x$, $\sigma_{\mathscr B(x,\delta)}\le s$, and  $|x_s-x|< \delta/2$, it holds  $|x_s-x_{ \sigma_{\mathscr B(x,\delta)}} |\ge \delta/2$. Then,  by the strong Markov property: 
    \begin{align*}
     &\mathbf P_{\mathsf x}\big [\sigma_{\mathscr B(x,\delta)}\le s, |x_s-x|< \delta/2, s<\sigma_{\mathscr H_R}\big ] \\ 
     &\le  \mathbf P_{\mathsf x}\big [\sigma_{\mathscr B(x,\delta)}\le s, |x_{s}-x_{\sigma_{\mathscr B(x,\delta)}}|\ge  \delta/2, s<\sigma_{\mathscr H_R}\big ]\\
     &= \mathbf P_{\mathsf x}\big [\{\sigma_{\mathscr B(x,\delta)}\le s\}\cap \{ |x_{s}-x_{\sigma_{\mathscr B(x,\delta)}}|\ge  \delta/2\}\cap \{ s<\sigma_{\mathscr H_R}\}\cap \{X_u\in \mathscr H_R, \forall u\in [0,\sigma_{\mathscr B(x,\delta) }]\}\big ] \\
     &= \mathbf E_{\mathsf x}\Big [\mathbf1_{\sigma_{\mathscr B(x,\delta) }(\mathsf x) \le s} \mathbf 1_{X_u\in \mathscr H_R, \forall u\in [0,\sigma_{\mathscr B(x,\delta) }]} \\
     &\quad \times    \mathbf P_{X_{\sigma_{\mathscr B(x,\delta) }(\mathsf x) } (\mathsf x)}\big [|x_{s-\sigma_{\mathscr B(x,\delta)}(\mathsf x) }-x_0|\ge \delta/2,  s-\sigma_{\mathscr B(x,\delta)}(\mathsf x) <\sigma_{\mathscr H_R} \big ]  \Big ] := \mathbf p(\mathsf x, s,  R).
   \end{align*}

   
 Let us now consider $\epsilon>0$. We choose  $R_{\epsilon,K}= 3e^{c}\sup_{\mathsf y\in K} \mathsf H_{{\rm GL}}(\mathsf y)/ \epsilon+R_K$. Then, using~\eqref{eq.tauR}, one has for all $\mathsf x\in K$ and $s\in [0,1]$,
\begin{align}
 \label{eq.inter}
 \mathbf P_{\mathsf x}\big [\sigma_{\mathscr B(x,\delta)}\le s\big ]  \le  \mathbf p(\mathsf x, s,  R_{\epsilon,K}) +  \mathbf P_{\mathsf x}\big [  |x_s-x|\ge \delta/2, s<\sigma_{\mathscr H_{R_{\epsilon,K}}}\big] + \frac{\epsilon}{3}.
\end{align}
Let us now study the term $\mathbf p(\mathsf x, s,  R_{\epsilon,K})$. To this end, let $s\in [0,1]$ and let us consider an initial condition $\mathsf z=(x_{\mathsf z},  v_{\mathsf z}, z_{\mathsf z})$ such that $x_{\mathsf z}\in \partial \mathscr B(x,\delta)$, $\mathsf z\in \mathscr H_{R_{\epsilon,K}}$, and   $u\in [0,s]$. We now study the term $\mathbf P_{\mathsf z}[|x_{s-u}-x_{\mathsf z}|\ge \delta/2, s-u<\sigma_{\mathscr H_{R_{\epsilon,K}}}]$. 

Note that    $x_{s- u}=x_{\mathsf z}+\int_0^{s-u} v_t\, dt$, and thus, 
 \begin{align*}
 x_{s- u}-x_{\mathsf z}&=   \int_0^{s- u} \Big [v_{\mathsf z}-\int_0^t  \nabla V(x_a)  da \Big] dt \\
 &\quad + \int_0^{s- u}  \Big [\int_0^t \lambda \big ( z_{\mathsf z} + \int_0^a  [-\alpha z_h-\lambda v_h] dh  + \sqrt{2\alpha}B_a\big  ) da \Big] dt.
 \end{align*}
  Assume now that 
  $s- u<\sigma_{\mathscr H_{R_{\epsilon,K}}}(\mathsf z)$, 
    i.e. 
     \begin{equation}\label{eq.Y02}
 \text{$X_h(\mathsf z)\in \mathscr H_{R_{\epsilon,K}}$ for all $h\in [0,s- u]$.}
 \end{equation}  
  Let $c_{\epsilon,K}>0$ be such that for all  $\mathsf x=(x,v,z)\in \mathscr H_{R_{\epsilon,K}}$,     $
  | \nabla V(x)|+|\mathsf x| \le c_{\epsilon,K}$.
We then have $|\mathsf z|\le c_{\epsilon,K}$  and, from \eqref{eq.Y02},
  $| \nabla V(x_h)|+ |X_h(\mathsf z)|\le c_{\epsilon,K}$ for all $h\in [0,s- u]$.  
In what follows $c_{\epsilon,K}>0$ is a constant  which only  depends on $\epsilon>0$ and  $K$,   which  can  change  from occurence to another. Hence,   when $X_0=\mathsf z$, $|x_{s- u}- x_{\mathsf z} |\ge \frac{\delta}2 $, and $s-u<\sigma_{\mathscr H_{R_{\epsilon,K}}}(\mathsf z)$, it holds since $0\le s-u\le s\le 1$:
   \begin{align*}
\frac \delta2 \le  |x_{s- u}-x_{\mathsf z}|&\le       \int_0^{s- u } \Big [ |v_{\mathsf z}|+ \int_0^t |\nabla V(x_a)| da \Big] dt \\
  &\quad + \int_0^{s- u}  \Big [\int_0^t \lambda \big ( |z_{\mathsf z}| + \int_0^a  [\alpha |z_h|+\lambda |v_h|] dh  + \sqrt{2\alpha} |B_a|\big  ) da \Big] dt\\
  &\le c_{\epsilon,K} s +  \sqrt{2\alpha}\int_0^s \int_0^t |B_a|da \, dt, 
 \end{align*}
 and consequently $ \sqrt{2\alpha}\int_0^{s} \int_0^t |B_a|da \, dt\ge  \frac \delta 2 -c_{\epsilon,K} s$. 
 Set $s_{\epsilon,K,\delta}=\delta/( 4c_{\epsilon,K})$. For $s\in [0,s_{\epsilon,K,\delta}]$,  one has by Markov's inequality, 
\begin{align*}
\mathbf P_{\mathsf z}\big [|x_{s- u}-x_{\mathsf z}|\ge \delta/2,  s-u<\sigma_{\mathscr H_{R_{\epsilon,K}}} \big ]&\le \mathbf P \Big [ \sqrt{2\alpha} \int_0^{s} \int_0^t |B_u|du\,  dt\ge \frac{\delta}4 \Big ]\le  \mathbf m_s,
\end{align*}
where $\mathbf m_s=2s^2 \sqrt{2\alpha}\, /\delta$. 
Hence,   
 we deduce that for any $s\in [0,s_{\epsilon,K,\delta}]$, 
 $$\mathbf p(\mathsf x, s, R_{\epsilon,K})\le  \mathbf m_s.$$
 With the same arguments, up to choosing $s_{\epsilon,K,\delta}>0$ smaller, for every $s\in [0,s_{\epsilon,K,\delta}]$, it holds 
 $$\sup_{\mathsf x\in K} \mathbf P_{\mathsf x}  [  |x_s-x|\ge \delta/2, s<\sigma_{\mathscr H_{R_{\epsilon,K}}} ] \le\mathbf m_s.$$ 
 Coming back to \eqref{eq.inter}, we have thus proved that  for all compact set $K\subset \mathbf R^{3d}$,  $\delta>0$, $\epsilon>0$, there exists $s_{\epsilon,K,\delta}>0$, for all $s\in [0,s_{\epsilon,K,\delta}]$,  and all  $\mathsf x\in K$, 
  $\mathbf P_{\mathsf x}\big [\sigma_{\mathscr B(x,\delta)}\le s\big ]\le \epsilon$. The proof of Lemma \ref{le.C4} is   complete. 
\end{proof}
  
 Let $\mathbf b:\mathbf R^d\to \mathbf R^d$ be a globally Lipschitz vector field.  Introduce the unique strong solution $(\bar X_t= (\bar x_t,\bar v_t, \bar z_t),t\ge 0)$ to \eqref{eq.GLintro} when $\nabla V$ replaced by $\mathbf b$, i.e. the solution to
\begin{equation}\label{eq.GLR-2}
  \left\{
    \begin{array}{ll}
        d\bar x _t&= \bar v_tdt \\
        d \bar v _t&= \mathbf b (\bar  x _t)dt-\gamma \bar v_tdt+\lambda \bar z _tdt+ \sqrt{2\gamma} \, dW_t \\
        d\bar  z _t&= -\alpha \bar z _tdt-\lambda \bar v _tdt +  \sqrt{2\alpha}\, dB_t.
    \end{array}
\right.
\end{equation}
  For $\mathsf x \in \mathbf R^{3d}$, we now denote by 
 $$\bar \sigma_{\mathsf B(\mathsf x,\delta)}(\mathsf x)=\inf \{t\ge 0, \bar X_t(\mathsf x)\notin \mathsf   B(\mathsf x,\delta)\},$$
 the first exit time for the process $(\bar X_t(\mathsf x),t\ge 0)$ from the   ball  $\mathsf  B(\mathsf x,\delta)=\{\mathsf y\in \mathbf R^{3d}, |\mathsf y-\mathsf x|< \delta\}$. 
  We have the following result which will be used in the proof \textbf{(C1)}.

  \begin{lem}\label{le.C4-2}
 Assume {\rm\textbf{[V{\tiny loc}]}} and $\gamma \ge0$.  Then, it holds for all compact subset $K$ of $\mathbf R^{3d}$ and $\delta>0$,  $$\lim_{s\to 0^+} \sup_{\mathsf x_0\in K}\mathbf P_{\mathsf x_0}[\bar \sigma_{\mathsf  B(\mathsf x_0,\delta)}\le s]=0.$$ 
\end{lem}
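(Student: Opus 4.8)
The plan is to reduce the statement to an estimate that does not involve the drift $\mathbf{b}$ in an essential way, by freezing the drift on a large ball. First I would fix a compact set $K\subset\mathbf{R}^{3d}$ and $\delta>0$, and choose $R_K>0$ so that the $\delta$-neighborhood of $K$ is contained in $\mathsf{B}(0,R_K)$; on $\mathsf{B}(0,R_K+1)$ the vector field $\mathbf{b}$ is bounded, say by some constant $c_K$, and we may replace $\mathbf{b}$ by a globally bounded, globally Lipschitz vector field agreeing with it there, which does not change the process up to the exit time $\bar\sigma_{\mathsf B(\mathsf x_0,\delta)}$ for $\mathsf x_0\in K$ (since the exit ball stays inside $\mathsf{B}(0,R_K+1)$ once $\delta\le 1$ say). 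So without loss of generality $\mathbf{b}$ is globally bounded by $c_K$, and then every component of the drift of \eqref{eq.GLR-2} is under control on the event that $\bar X$ has not yet exited $\mathsf B(\mathsf x_0,\delta)$.

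Next I would write the three components explicitly on $[0,s]$: for $X_0=\mathsf x_0=(x_0,v_0,z_0)$,
\begin{align*}
\bar x_t-x_0&=\int_0^t \bar v_u\,du,\\
\bar v_t-v_0&=\int_0^t(\mathbf b(\bar x_u)-\gamma\bar v_u+\lambda\bar z_u)\,du+\sqrt{2\gamma}\,W_t,\\
\bar z_t-z_0&=\int_0^t(-\alpha\bar z_u-\lambda\bar v_u)\,du+\sqrt{2\alpha}\,B_t.
\end{align*}
On the event $\{t<\bar\sigma_{\mathsf B(\mathsf x_0,\delta)}\}$ with $t\le s\le 1$ one has $|\bar X_u|\le |\mathsf x_0|+\delta\le \sup_K|\cdot|+\delta=:c_K'$ for $u\le t$, so the integrand in each drift integral is bounded by a constant $c_K''$ depending only on $K$ (and on $\alpha,\lambda,\gamma,\delta$). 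Hence on this event $\sup_{u\le s}|\bar X_u-\mathsf x_0|\le c_K'' s+\sqrt{2\gamma}\sup_{u\le s}|W_u|+\sqrt{2\alpha}\sup_{u\le s}|B_u|$, and therefore, for $s$ small enough that $c_K''s\le \delta/2$,
$$\mathbf P_{\mathsf x_0}[\bar\sigma_{\mathsf B(\mathsf x_0,\delta)}\le s]\le \mathbf P_{\mathsf x_0}\Big[\sup_{u\le s}|\bar X_u-\mathsf x_0|\ge \delta\Big]\le \mathbf P\Big[\sqrt{2\gamma}\sup_{u\le s}|W_u|+\sqrt{2\alpha}\sup_{u\le s}|B_u|\ge \delta/2\Big].$$
The right-hand side is independent of $\mathsf x_0\in K$ and tends to $0$ as $s\to 0^+$ by continuity of Brownian paths (or Markov's inequality together with Doob's maximal inequality), which gives the uniform limit. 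A small care point is that the argument reducing to bounded $\mathbf b$ has to be carried out before invoking the bound, i.e. one should first truncate the drift, then run the above estimate for the truncated equation, and finally observe that truncated and untruncated processes coincide up to the relevant exit time when $\mathsf x_0\in K$; this is exactly the step analogous to the introduction of $\mathbf F_{R_\epsilon}$ in the proof of Proposition~\ref{pr.SFgamma0}.

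The main obstacle, such as it is, is purely bookkeeping: one must make sure the constants $c_K',c_K'',s_{K,\delta}$ depend only on $K$ and $\delta$ (and the fixed parameters $\alpha,\lambda,\gamma$) and not on the particular point $\mathsf x_0\in K$, which forces one to take the $\delta$-enlargement of $K$ and to truncate $\mathbf b$ on a ball large enough to contain that enlargement uniformly. Unlike Lemma~\ref{le.C4}, there is no need here for the energy-splitting device or the strong Markov property, because the globally Lipschitz (indeed globally bounded, after truncation) drift already yields a clean pathwise bound valid on the whole event $\{t<\bar\sigma_{\mathsf B(\mathsf x_0,\delta)}\}$; the only genuinely stochastic input is the smallness of $\sup_{u\le s}|W_u|$ and $\sup_{u\le s}|B_u|$, which is standard. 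Hence I expect the proof to be short.
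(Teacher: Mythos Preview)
Your approach is correct and is in fact simpler than the paper's, but one step in your write-up is circular as stated. You derive the bound $\sup_{u\le s}|\bar X_u-\mathsf x_0|\le c_K'' s+\sqrt{2\gamma}\sup_{u\le s}|W_u|+\sqrt{2\alpha}\sup_{u\le s}|B_u|$ \emph{on the event} $\{s<\bar\sigma_{\mathsf B(\mathsf x_0,\delta)}\}$, and then use it to bound the probability of the \emph{complementary} event. The clean fix is to stop at $\tau:=\bar\sigma_{\mathsf B(\mathsf x_0,\delta)}$: for every $t\le s\wedge\tau$ the process lies in $\overline{\mathsf B(\mathsf x_0,\delta)}$, so the drift integrands are bounded by $c_K''$ on $[0,s\wedge\tau]$, whence $|\bar X_{s\wedge\tau}-\mathsf x_0|\le c_K'' s+\sqrt{2\gamma}\sup_{u\le s}|W_u|+\sqrt{2\alpha}\sup_{u\le s}|B_u|$. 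On $\{\tau\le s\}$ the left side equals $\delta$, and your inclusion follows. (Incidentally, the preliminary truncation of $\mathbf b$ is harmless but unnecessary: $\mathbf b$ is already globally Lipschitz by hypothesis, and the argument only uses its boundedness on the $\delta$-neighborhood of $K$.)

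The paper proceeds quite differently. It first uses Gr\"onwall (valid because the drift of \eqref{eq.GLR-2} is globally Lipschitz) to get a \emph{global} bound $|\bar X_s(\mathsf x)|\le C(1+\sup_{u\le 1}(|B_u|+|W_u|))$ for all $s\in[0,1]$ and $\mathsf x$ in a fixed ball, with no reference to the exit time. It then runs the same strong-Markov decomposition as in Lemma~\ref{le.C4}: split $\{\tau\le s\}$ according to whether $|\bar X_s-\mathsf x_0|<\delta/2$ or not, and on the first piece restart at time $\tau$ to force the process to travel $\delta/2$ in time $\le s$ from a point on $\partial\mathsf B(\mathsf x_0,\delta)$. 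Your route bypasses both the Gr\"onwall step and the strong Markov property by reading off the conclusion directly from $|\bar X_\tau-\mathsf x_0|=\delta$; it is shorter and more transparent for this particular lemma. The paper's route, on the other hand, is written to parallel Lemma~\ref{le.C4} exactly, which makes the two proofs uniform and reuses the same machinery later (e.g.\ in the proof of Theorem~\ref{th.NH-C}).
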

  \begin{proof}   Let $r>0$ be such that the $\delta$-neighborhood $K_\delta$ of $K$ is included in $\mathsf B(0,r)$.   
   Since the drift   in~\eqref{eq.GLR-2}   is globally Lipschitz,  there exists $c>0$, for all $s\in [0,1]$ and  $\mathsf x\in \mathsf B(0,r)$, $|\bar X_s(\mathsf x)|\le r +c(1+\int_0^s|\bar X_u(\mathsf x)|du) + c Y$, where  $Y:= \sup_{u\in [0,1]}(|B_u|+|W_u|)$. Hence, by Grönwall's inequality, $|\bar X_s(\mathsf x)|\le  C(1+ Y)$.  We have   
   $$\mathbf P_{\mathsf x_0}[\bar \sigma_{\mathsf B(\mathsf x_0,\delta)}\le s ]\le \mathbf P_{\mathsf x_0}\big [\bar \sigma_{\mathsf B(\mathsf x_0,\delta)}\le s, |\bar X_s-\bar X_0|< \delta/2\big ]  +   \mathbf P_{\mathsf z}\big [  |\bar X_s-\bar X_0|\ge \delta/2\big],$$
    and 
     \begin{align*}
     &\mathbf P_{\mathsf x_0}\big [\bar \sigma_{\mathsf B(\mathsf x_0,\delta)}\le s, |\bar X_s-\bar X_0|< \delta/2\big ]  \\ 
     &\le    \mathbf E_{\mathsf x_0}\Big [\mathbf 1_{\bar \sigma _{\mathsf B(\mathsf x_0,\delta) }(\mathsf x_0) \le s}     \mathbf P_{\bar X_{\bar \sigma_{\mathsf B(\mathsf x_0,\delta)}(\mathsf x_0) } (\mathsf x_0)}\big [|\bar X_{s-\bar \sigma_{\mathsf B(\mathsf x_0,\delta)}(\mathsf x_0) }-\bar X _0|\ge \delta/2  ]  \Big ].
   \end{align*} 
Let  $\mathsf x_0\in K$,   $\mathsf x=(x,v,z)\in \partial \mathsf B(\mathsf x_0,\delta) $ ($\Rightarrow \mathsf x\in K_\delta$), and $u\in [0,s]$. It holds  $ \mathbf P_{\mathsf x } \big [|\bar X_{s-u }- \mathsf x|\ge \delta/2  ]\le \mathbf P_{\mathsf x } \big [|\bar x _{s-u }-  x|\ge \delta/2  ]+ \mathbf P_{\mathsf x } \big [|\bar v _{s-u }- v|\ge \delta/2  ]+ \mathbf P_{\mathsf x } \big [|\bar z _{s-u }-z|\ge \delta/2  ]$.  Moreover, there exists $C>0$,  $\mathbf P_{\mathsf x } \big [|\bar z _{s-u }-  z|\ge \delta/2  ]\le \mathbf d_s:=\mathbf P  \big [  \sqrt{2\alpha} \sup_{t\in [0,s]} |B_t| + Cs (Y+1)\ge \delta/2  ]\to 0$ when $s\to 0^+$. The other terms are treated similarly to deduce the expected result.  
  \end{proof}

  \noindent
\textbf{Note} (alternative proofs of  Lemmas \ref{le.C4} and \ref{le.C4-2}). Since the processes we consider in this work have  a.s. continuous sample paths and are non explosive, we mention that another way to prove Lemmas \ref{le.C4} and \ref{le.C4-2} is to use~\cite[Lemma 4.3]{Girsanov} which requires \textit{local uniform stochastic continuity}~\cite[Eq. (3.4)]{Girsanov}.  For instance, to prove  the local uniform stochastic continuity for the generalized Langevin  process $(X_t,t\ge 0)$ (see \eqref{eq.GLintro}), namely that for any       compact subset $K$ of $\mathbf R^{3d}$,
$$
\sup_{\mathsf x\in K}\mathbf P [|X_s(\mathsf x)-\mathsf x|>0]\to 0 \text{ as } s\to 0^+,$$
  one first writes  for $R>0$ sufficiently large, say $R>R_K$, 
 $\mathbf P_{\mathsf x}[|X_s -\mathsf x|>0]=\mathbf P_{\mathsf x}[|X_s -\mathsf x|>0, s<\sigma_{\mathscr H_R}]+\mathbf P_{\mathsf x}[|X_s -\mathsf x|>0, \sigma_{\mathscr H_R}\le s]$,    $s\in (0,1]$ (note that this is the strategy described in  \eqref{eq.splitting}). Then, the desired result follows using  \eqref{eq.tauR} (to treat the term $\mathbf P_{\mathsf x}[|X_s -\mathsf x|>0, \sigma_{\mathscr H_R}\le s]$) together with - to treat the term $\mathbf P_{\mathsf x}[|X_s -\mathsf x|>0, s< \sigma_{\mathscr H_R}]$ -  the fact that the drift is bounded when: $s<\sigma_{\mathscr H_R}(\mathsf x)$ and   all the initial conditions $\mathsf x$ are in $K$. The details are very closed to those used in the proof of Proposition \ref{pr.C2}, and are thus left to the reader. Note that one advantage of the first proofs we gave of  Lemmas~\ref{le.C4} and~\ref{le.C4-2}    can be applied to processes with jumps, see e.g.~\cite[Section 6.2]{guillin2024large}. 
 We finally mention that  Lemmas \ref{le.C4} and \ref{le.C4-2} can also be proved using the same arguments as those used to prove~\cite[Lemma 1]{guillin2024large}  which is based on  Itô calculus and provides a rate of convergence when $s\to 0$. 
 \medskip

  We are now ready to prove  the following result. 
 \begin{thm}\label{pr.SFgamma-neq-0}
 Assume {\rm\textbf{[V{\tiny loc}]}}. Let $\gamma=0$. Then,  for every $t>0$, the nonkilled semigroup $P_t$ of the generalized Langevin process~\eqref{eq.GLintro} is strong Feller over $ \mathbf R^{3d}$. 
\end{thm}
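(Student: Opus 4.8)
The plan is to run the energy splitting approach of Section~\ref{sec.22}, reducing the strong Feller property of $P_t$ to that of the semigroup of a \emph{globally} Lipschitz modification of \eqref{eq.GLintro}, for which a Girsanov transform in the $z$-variable is available even though $\gamma=0$. First I would record that bounded Borel functions are uniform limits of simple functions, so that $P_t$ is strong Feller as soon as $\mathsf x\mapsto P_t(\mathsf x,\cdot)$ is continuous for the $\tau$-topology; hence it suffices to prove $P_tf(\mathsf x_n)\to P_tf(\mathsf x)$ whenever $\mathsf x_n\to\mathsf x$ in $\mathbf R^{3d}$ and $f\in b\mathcal B(\mathbf R^{3d})$. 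Fix such $\mathsf x_n\to\mathsf x$, all in a compact $K$, and fix $R$ so large that $K\subset\mathscr H_R$ and $\overline{\mathscr H_R}\subset\mathsf B(0,\rho_R)$ for some $\rho_R>0$. Let $\mathbf b$ be a globally Lipschitz vector field on $\mathbf R^d$ with $\mathbf b=-\nabla V$ on $\mathsf B(0,\rho_R)$, and let $(\bar X_t,t\ge0)$ be the associated process \eqref{eq.GLR-2} with $\gamma=0$, with semigroup $\bar P_t$. By pathwise uniqueness, $X(\mathsf y)$ and $\bar X(\mathsf y)$ coincide on $[0,\sigma_{\mathscr H_R}]$, so by \eqref{eq.tauR},
\[
\bigl|P_tf(\mathsf y)-\bar P_tf(\mathsf y)\bigr|\le 2\|f\|_\infty\,\mathbf P_{\mathsf y}[\sigma_{\mathscr H_R}\le t]\le \frac{2\|f\|_\infty e^{ct}}{R}\sup_{K}\mathsf H_{{\rm GL}},\qquad \mathsf y\in K.
\]
Consequently $\limsup_n|P_tf(\mathsf x_n)-P_tf(\mathsf x)|\le\limsup_n|\bar P_tf(\mathsf x_n)-\bar P_tf(\mathsf x)|+\tfrac{4\|f\|_\infty e^{ct}}{R}\sup_K\mathsf H_{{\rm GL}}$, and since $R$ is arbitrary the theorem reduces to showing that $\bar P_t$ is strong Feller.

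For the modified process, which now has globally Lipschitz coefficients, I would mimic the proof of Proposition~\ref{pr.SFgamma0}, but performing the Girsanov change of measure on the \emph{$z$-equation}, which is legitimate precisely because $z$ carries the Brownian motion even when $\gamma=0$. Removing the drift $-\alpha z-\lambda v$ of the $z$-equation one obtains, on a fixed probability space carrying a Brownian motion $\beta$, the ``free in $z$'' process $\tilde X_t(\mathsf x)$ with $\tilde z_t=z_0+\sqrt{2\alpha}\,\beta_t$ and $(\tilde x,\tilde v)$ solving the random ODE $\dot{\tilde x}=\tilde v$, $\dot{\tilde v}=\mathbf b(\tilde x)+\lambda\tilde z$, together with a (true-martingale) density $\mathscr N_t(\mathsf x)$ so that $\bar P_tf(\mathsf x)=\mathbf E[f(\tilde X_t(\mathsf x))\,\mathscr N_t(\mathsf x)]$; the true-martingale property and the exponential integrability of $\mathscr N_t$ are obtained as for \eqref{eq.Girsanov}, via \cite[Lemma 1.1]{wu2001} and the moment bounds available for the globally Lipschitz system. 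Continuity of $\mathsf x\mapsto\bar P_tf(\mathsf x)$ will then follow from Vitali's theorem once one checks (i) $\mathsf x\mapsto\mathscr N_t(\mathsf x)$ is continuous in probability and $\{\mathscr N_t(\mathsf x_n)\}_n$ is uniformly integrable — handled exactly as the corresponding claim in Proposition~\ref{pr.SFgamma0} (continuity in probability by localization to balls and stochastic-integral convergence; uniform integrability from $\mathbf E[\mathscr N_t(\mathsf x)]=1$ and Scheffé's lemma) — and (ii) $\mathsf x\mapsto f(\tilde X_t(\mathsf x))$ is continuous in probability for every $f\in b\mathcal B(\mathbf R^{3d})$.

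Claim~(ii) is the heart of the matter and the step I expect to be the main obstacle, since Hörmander's $C^\infty$ theory is unavailable in the weak regularity setting and the hypoelliptic regularization must be carried out by hand for merely Lipschitz coefficients. By Proposition~\ref{pr.C2} (Grönwall, the coefficients being now globally Lipschitz) one has $\tilde X_t(\mathsf x_n)\to\tilde X_t(\mathsf x)$ in probability, so by Proposition~\ref{pr.wu1999} it suffices to show that $\operatorname{Law}(\tilde X_t(\mathsf x_n))=p_n\,\mu$ for a fixed reference probability $\mu$ with $(p_n)$ uniformly $\mu$-integrable. Here one uses that the noise enters only $z$, reaches $v$ by one time-integration and $x$ by a second: $\tilde X_t(\mathsf x)$ is a globally Lipschitz, hence Malliavin-differentiable, functional of $\beta_{[0,t]}$, and its Malliavin covariance matrix is governed near the diagonal by the non-degenerate Gaussian vectors $\sqrt{2\alpha}\bigl(\tfrac12\lambda(t-r)^2,\ \lambda(t-r),\ 1\bigr)$ and is therefore a.s.\ invertible; this gives absolute continuity of $\operatorname{Law}(\tilde X_t(\mathsf x))$ with respect to Lebesgue measure, and the Malliavin integration-by-parts formula together with moment bounds that are uniform over $\mathsf x$ in the compact $K$ — where the uniform-over-compacts control of exit times from small balls in Lemma~\ref{le.C4-2} is used to keep the process, and hence $\mathbf b$ and its a.e.\ derivative, in a fixed ball with probability close to one — yields a uniform $L^q$-bound on the densities for some $q>1$, hence the required uniform integrability. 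Combining (i) and (ii) through Vitali's theorem gives the strong Feller property of $\bar P_t$, and therefore, by the reduction of the first paragraph, of $P_t$.
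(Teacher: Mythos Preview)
Your reduction via the energy splitting \eqref{eq.splitting} and \eqref{eq.tauR} to the strong Feller property of the semigroup $\bar P_t$ of a globally Lipschitz modification of \eqref{eq.GLintro} is correct and is exactly what the paper does. The difficulty is entirely in your step~(ii), and there your argument has a genuine gap.

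After your Girsanov in the $z$-variable, the process $\tilde X$ still carries the non-smooth term $\mathbf b(\tilde x)$ in the $\tilde v$-equation. You then invoke ``the Malliavin integration-by-parts formula \dots\ yields a uniform $L^q$-bound on the densities''. This is where the sketch breaks down: the standard Malliavin IBP that produces $L^q$ bounds on the density requires $\tilde X_t\in\mathbb D^{k,p}$ for $k\ge 2$ (in fact $k$ of order $3d$) together with $L^p$ bounds on the inverse Malliavin matrix. With $\mathbf b$ merely Lipschitz one only gets $\tilde X_t\in\mathbb D^{1,p}$; second Malliavin derivatives are not available in general, so the IBP machinery does not run. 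Bouleau--Hirsch gives absolute continuity, but no quantitative bound, hence no uniform integrability to feed into Proposition~\ref{pr.wu1999}. Your appeal to Lemma~\ref{le.C4-2} here does not help either: that lemma controls $\mathbf P_{\mathsf x}[\bar\sigma_{\mathsf B(\mathsf x,\delta)}\le s]$ for \emph{small} $s$, not the probability of staying in a fixed ball up to the fixed time $t$, and in any case localizing the path does not manufacture higher Malliavin differentiability of $\mathbf b$.

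The paper avoids this obstacle by a different, and much shorter, route: it does \emph{not} perform a further Girsanov but applies directly to the globally Lipschitz process $\bar X^R$ the Gaussian upper bound of \cite[Theorem~1.1]{delarue}, valid precisely for degenerate Kolmogorov-type SDEs with Lipschitz drift, obtaining
\[
\bar p_t^{\,R}(\mathsf z,\mathsf y)\le C_t^R\exp\bigl(-c_t^R|\boldsymbol\theta_t^R(\mathsf z)-\mathsf y|^2\bigr),
\]
from which the uniform integrability required by Proposition~\ref{pr.wu1999} is immediate (see \eqref{eq.GLE-gauss}--\eqref{eq.pRe}). This yields the strong Feller property of the nonkilled semigroup of $\bar X^R$; the paper then passes to the killed-on-$\mathscr H_R$ quantity $\mathbf E_{\mathsf z}[f(\bar X^R_t)\mathbf 1_{t<\bar\sigma_{\mathscr H_R}}]$ via Lemma~\ref{le.C4-2} and a Chung--Zhao argument, and concludes by letting $R\to\infty$. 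Note that your cleaner reduction $|P_tf-\bar P_tf|\le 2\|f\|_\infty\mathbf P[\sigma_{\mathscr H_R}\le t]$ would let you skip this last step, but it does not help with the core density estimate. In short: replace your Girsanov-plus-Malliavin argument by a direct citation of \cite{delarue}; the rest of your outline is sound.
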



\begin{proof}
Let $f\in b\mathcal B(\mathbf R^{3d})$ and $t>0$ be fixed. 
We want to prove that $\mathsf z\mapsto \mathbf E_{\mathsf z}[f(X_t)]$ is continuous over $\mathbf R^{3d}$. 

We write for $\mathsf x\in \mathbf R^{3d}$ and $R>0$ (see \eqref{eq.splitting}):
$$\mathbf E_{\mathsf x}[f(X_t)]=\mathbf E_{\mathsf x}[f(X_t)\mathbf 1_{t<\sigma_{\mathscr H_R}}]+\mathbf E_{\mathsf x}[f(X_t)\mathbf 1_{t\ge \sigma_{\mathscr H_R}}].$$
   Since the open set $\mathscr V_R=\{y\in \mathbf R^d, V(y)<R\}$ is relatively compact and $\nabla V$ is locally Lipschitz, we can consider\footnote{See e.g. \cite{mcshane}.} a globally  Lipschitz  vector field 
$\mathbf b_R:\mathbf R^d\to \mathbf R^d$  such that $\mathbf b_R= -\nabla V$ in a neighborhood of  $\overline{\mathscr V}_R$.  
Let us then introduce the unique strong solution $(\bar X^{R}_t= (\bar x^{R}_t,\bar v^{R}_t,\bar z^{R}_t),t\ge 0)$ to \eqref{eq.GLR-2} when $\mathbf b$ replaced by $\mathbf b_R$ and $\gamma=0$. 
Note that $(x,v,z)\in \mathscr H_R\Rightarrow x\in \mathscr V_R$.
Therefore,    the processes $(X_t,t\ge 0)$ and $(\bar X^{R}_t,t\ge 0)$ coincide in law  up to their first exit time from $\mathscr H_R$, and thus, denoting by $\bar \sigma_{\mathscr H_R}:=\inf\{t\ge 0, \bar X^R_t\notin\mathscr H_R \}$, we deduce that:
\begin{equation}\label{eq.S0}
\mathbf E_{\mathsf x}[f(X_t)]=\mathbf E_{\mathsf x}[f(\bar X^R_t)\mathbf 1_{t<\bar \sigma_{\mathscr H_R}}]+\mathbf E_{\mathsf x}[f(X_t)\mathbf 1_{t\ge \sigma_{\mathscr H_R}}].
\end{equation}
 Let us now assume that $\mathsf x_n\to \mathsf x\in \mathbf R^{3d}$. Let $R_0>0$ such that $\mathsf x_n,\mathsf x\in \mathscr H_{R_0}$.

Let us now prove \eqref{eq.theta}, i.e.  that for all $R>0$, as $n\to +\infty$,
\begin{equation}\label{eq.Exn}
\mathbf E_{\mathsf x_n}[f(\bar X^{R}_t)\mathbf 1_{t<\bar \sigma_{\mathscr H_R}}] \to \mathbf E_{\mathsf x}[f(\bar X^{R}_t)\mathbf 1_{t<\bar \sigma_{\mathscr H_R}}]. 
\end{equation}
To prove \eqref{eq.Exn}, we first prove that the nonkilled semigroup of 
$(\bar X^{R}_t ,t\ge 0)$ is strong Feller, i.e. that    
\begin{equation}
\label{SF-e}
\text{for all $R>0$ }, \mathsf z\in \mathbf R^{3d}\mapsto \mathbf E_{\mathsf z}[f(\bar X^{R}_t)] \text{ is continuous.}
\end{equation}
Note that  since $\mathbf b_R$ is globally Lipschitz, it holds  as $n\to +\infty$:
\begin{equation}
\label{SF-e1}
\sup_{s\in [0,t]}|\bar X_s^{R}(\mathsf x_n)\to \bar X_s^{R}(\mathsf x)|\to 0 \text{ in $\mathbf P$-probability}.
\end{equation}  
Using again that $\mathbf b_{R}$ is globally Lipschitz over $\mathbf R^{d}$,
we can use~\cite[Theorem 1.1]{delarue} to deduce that  for all $\mathsf z\in \mathbf R^{3d}$, $\bar X_t^{R}(\mathsf z)$ admits a density $\bar  p_t^{R}(\mathsf z,\mathsf y)$ (with respect to  the Lebesgue measure $d\mathsf y$ over $\mathbf R^{3d}$) which  moreover satisfies the following Gaussian upper bound:
\begin{equation}\label{eq.GLE-gauss}
\bar p_t^{R}(\mathsf z,\mathsf y)\le C^R_t\exp( -c^R_t |\boldsymbol \theta^{R}_t(\mathsf z)-\mathsf y|^2),
\end{equation}
for some $C^R_t,c^R_t>0$ independent of $\mathsf z, \mathsf y\in \mathbf R^{3d}$, and where $(\boldsymbol  \theta_t^R(\mathsf z)=(\boldsymbol x_t(\mathsf z),\boldsymbol v^R_t(\mathsf z),\boldsymbol z^R_t(\mathsf z)) ,t\ge 0)$ is the  solution   for $t\ge 0$ of the deterministic equation: 
$$
         \dot{\boldsymbol x} ^R_t=    \boldsymbol v^R_t , \, 
       \dot{\boldsymbol v} ^R_t= \mathbf b_R (  \boldsymbol x^R_t)  +\lambda \boldsymbol z^R_t,\,         \dot{\boldsymbol z}^R_t= -\alpha \boldsymbol z^R_t-\lambda \boldsymbol v^R_t,$$
with initial condition $\boldsymbol \theta^{R}_0(\mathsf z)=\mathsf z$. 
 
On the other hand, since $(\mathsf x_n)_n$ is bounded and  $\mathsf z\mapsto \boldsymbol  \theta^{R}_t(\mathsf z)$ is continuous (because $ \mathbf b_{R}$ is globally Lipschitz), it follows that 
there  exists $K>0$ such that for all $n$, $|\boldsymbol \theta^{R}_t(\mathsf x_n)|\le K$. Thus  
\begin{equation}\label{eq.pRe}
\bar p_t^{R}(\mathsf x_n,\mathsf y)\le C^R_t \exp( c^R_t K^2)  \exp( 2c^R_t K|\mathsf y|) \exp( -c^R_t |\mathsf y|^2).
\end{equation}
Let $\mu( d\mathsf y)= \kappa \exp(-|\mathsf y|) d\mathsf y$ (where $ \kappa^{-1}  = \int \exp(-|\mathsf y|) d\mathsf y$), and set  $\bar  q^{R}_t(\mathsf z,\mathsf y) = \kappa^{-1} \exp(|\mathsf y|)\bar p_t^{R}(\mathsf z,\mathsf y)$. 
It then follows from \eqref{eq.pRe} that for any fixed $R>0$, 
\begin{align*}
 (\bar  q^{R}_t(\mathsf x_n,\mathsf y))_n \text{ is $\mu$-uniformly integrable}.
\end{align*}
Consequently, for any $f\in b\BB(\mathbf R^{3d})$,  by Proposition~\ref{pr.wu1999}, as $n\to +\infty$, $f(\bar X_t^{R}(\mathsf x_n))\to f(\bar X_t^{R}(\mathsf x))$ in probability, and then, because $f$ is bounded, this convergence  also  holds in   any $r$th mean ($r\ge 1$).  This proves \eqref{SF-e}.

 We are now in position to prove Equation \eqref{eq.Exn}. The proof is inspired from the one of~\cite[Theorem~2.2]{chung2001brownian} (see also~~\cite[Section 5.2]{Girsanov}). 
Let  $R>0$, $K$ be  a compact subset of $\mathscr H_R$, $t>0$, and $\delta:= \text{dist}(K, \partial \mathscr H_R)>0$. 
Let also  $0\le s\le t$, $\mathsf z\in \mathscr H_R$, and $f\in b\mathcal B(\mathscr H_R)$. Then, by the Markov property, it holds  
$$\mathbf E_{\mathsf z}[f(\bar X^{R}_t)\mathbf 1_{t<\bar \sigma_{\mathscr H_R}}]= \mathbf E_{\mathsf z}[\mathbf 1_{s<\bar \sigma_{\mathscr H_R}} \psi_s(\bar X^R_s)],$$ where $\psi_s(\mathsf z)=\mathbf E_{\mathsf z}[\mathbf 1_{t-s<\bar \sigma_{\mathscr H_R}} f(\bar X^R_{t-s})]$. Therefore:   
\begin{align*}
 \sup_{\mathsf z\in K}| \mathbf E_{\mathsf z}[f(\bar X^{R}_t)\mathbf 1_{t<\bar \sigma_{\mathscr H_R}}] -\mathbf E_{\mathsf z}[\psi_s(\bar X^{R}_t)]| &\le \Vert f\Vert_{\infty}\sup_{\mathsf z\in K} \mathbf P_{\mathsf z}[\bar \sigma_{\mathscr H_R}\le s] \le  \Vert f\Vert_{\infty}\sup_{\mathsf z\in K} \mathbf P_{\mathsf z}[\bar \sigma^R_{\mathsf B(\mathsf z,\delta)}\le s].
\end{align*}
which tends to $0$ as $s\to 0^+,$ thanks to Lemma~\ref{le.C4-2}, 
where $$\bar \sigma^R_{\mathsf B(\mathsf z,\delta)}(\mathsf x)=\inf \{t\ge 0, \bar X^R_t(\mathsf x)\notin \mathsf   B(\mathsf x,\delta)\}.$$
 Hence, using \eqref{SF-e},  one deduces that  $\mathsf z\in \mathscr H_R \mapsto    \mathbf E_{\mathsf z}[f(\bar X^{R}_t)\mathbf 1_{t<\bar \sigma_{\mathscr H_R}}]$ is continuous.

 
We now conclude the proof of Theorem~\ref{pr.SFgamma-neq-0}. Consider  $\epsilon >0$. Thanks to \eqref{eq.tauR} and because $\mathsf H_{{\rm GL}}$ is locally bounded, there exists  $R_\epsilon\ge R_0$ such that
\begin{equation}\label{eq.S1}
|\mathbf E_{\mathsf x}[f(X_t)\mathbf 1_{t\ge \sigma_{\mathscr H_{R_\epsilon}}}]|+\sup_{n\ge 0}|\mathbf E_{\mathsf x_n}[f(X_t)\mathbf 1_{t\ge \sigma_{\mathscr H_{R_\epsilon}}}]|\le  \epsilon/2.
\end{equation} 
Using \eqref{eq.Exn}, there exists  $N_\epsilon\in \mathbf N$ such that for all $n\ge N_\epsilon$,  $$ \big|\mathbf E_{\mathsf x_n}[f(\bar X^{R_\epsilon}_t)\mathbf 1_{t<\bar\sigma_{\mathscr H_{R_\epsilon}}}] - \mathbf E_{\mathsf x}[f(\bar X^{R_\epsilon}_t)\mathbf 1_{t<\bar\sigma_{\mathscr H_{R_\epsilon}}}]\big|\le \epsilon/2.$$
  Using \eqref{eq.S0} and \eqref{eq.S1},  for all $n\ge N_\epsilon$, $|\mathbf E_{\mathsf x_n}[f(X_t)]-\mathbf E_{\mathsf x}[f(X_t)]|\le \epsilon$. This concludes the proof of Theorem~\ref{pr.SFgamma-neq-0}. 
 \end{proof}

 \medskip

 \noindent
\textbf{On Assumption \textbf{(C4)}}.   Let us consider a subdomain $\mathscr D$ of $\mathbf R^{3d}$ of the form
 $$\mathscr D= \mathscr O\times \mathbf R^d \times \mathbf R^d,$$
 where $\mathscr O$ is a subdomain  of $\mathbf R^d$. Thanks to Lemma \ref{le.C4}, we can check \textbf{(C4)}. 
 
 \begin{prop}\label{co.SFD} Assume {\rm \textbf{[V{\tiny loc}]}}. 
Then, for all $t>0$ and $\gamma \ge 0$,  $P_t^{\mathscr D}$  is strong Feller. In particular \textbf{(C4)} holds. 
 \end{prop}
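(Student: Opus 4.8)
The plan is to prove \textbf{(C4)}, i.e. that for all $t>0$ and $\gamma\ge 0$ and all $f\in\mathcal C_c^\infty(\mathscr D)$ (in fact all $f\in b\mathcal B(\mathscr D)$), the map $\mathsf x\in\mathscr D\mapsto P_t^{\mathscr D}f(\mathsf x)=\mathbf E_{\mathsf x}[f(X_t)\mathbf 1_{t<\sigma_{\mathscr D}}]$ is continuous and bounded. Boundedness is immediate. For continuity we use the energy splitting approach of Section~\ref{sec.22}: fix $\mathsf x_n\to\mathsf x$ in $\mathscr D$, pick $R_0$ with $\mathsf x_n,\mathsf x\in\mathscr H_{R_0}$, and for $R\ge R_0$ write
$$P_t^{\mathscr D}f(\mathsf x)=\mathbf E_{\mathsf x}[f(X_t)\mathbf 1_{t<\sigma_{\mathscr D}}\mathbf 1_{t<\sigma_{\mathscr H_R}}]+\mathbf E_{\mathsf x}[f(X_t)\mathbf 1_{t<\sigma_{\mathscr D}}\mathbf 1_{t\ge\sigma_{\mathscr H_R}}].$$
By \eqref{eq.tauR}, the second term is bounded by $\|f\|_\infty e^{ct}R^{-1}\sup_K\mathsf H_{\rm GL}$ uniformly over any compact $K$ containing the $\mathsf x_n$, so it is $\le\epsilon/2$ for $R=R_\epsilon$ large, uniformly in $n$. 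So it suffices to show that for each fixed $R$, $\mathsf x\mapsto\mathbf E_{\mathsf x}[f(X_t)\mathbf 1_{t<\sigma_{\mathscr D}}\mathbf 1_{t<\sigma_{\mathscr H_R}}]$ is continuous on $\mathscr H_R$.

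The key reduction is that on the event $\{t<\sigma_{\mathscr H_R}\}$ the process stays in $\mathscr H_R$, where $-\nabla V$ agrees with a globally Lipschitz field $\mathbf b_R$ (as in the proof of Theorem~\ref{pr.SFgamma-neq-0}); hence $X$ coincides in law with the globally Lipschitz process $\bar X^R$ of \eqref{eq.GLR-2} up to the exit from $\mathscr H_R$, and moreover the domain constraint $\{t<\sigma_{\mathscr D}\}$ can be expressed through $\bar X^R$ as well on this event, since $\mathscr D\cap\mathscr H_R$ agrees with $(\mathscr O\times\mathbf R^d\times\mathbf R^d)\cap\mathscr H_R$. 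Thus
$$\mathbf E_{\mathsf x}[f(X_t)\mathbf 1_{t<\sigma_{\mathscr D}}\mathbf 1_{t<\sigma_{\mathscr H_R}}]=\mathbf E_{\mathsf x}[f(\bar X^R_t)\mathbf 1_{t<\bar\sigma_{\mathscr D}}\mathbf 1_{t<\bar\sigma_{\mathscr H_R}}]=\mathbf E_{\mathsf x}[\mathbf 1_{t<\bar\sigma_{\mathscr D}\wedge\bar\sigma_{\mathscr H_R}}f(\bar X^R_t)].$$
Note $\bar\sigma_{\mathscr D}\wedge\bar\sigma_{\mathscr H_R}=\bar\sigma_{\mathscr D\cap\mathscr H_R}$, so we are reduced to the continuity of $\mathsf z\mapsto\mathbf E_{\mathsf z}[f(\bar X^R_t)\mathbf 1_{t<\bar\sigma_{\mathscr G}}]$ on the bounded open set $\mathscr G=\mathscr D\cap\mathscr H_R$ for the globally Lipschitz process $\bar X^R$, which has a density with Gaussian upper bound \eqref{eq.GLE-gauss}, hence a strong Feller nonkilled semigroup (as shown in the proof of Theorem~\ref{pr.SFgamma-neq-0}).

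For this last step I would run the argument from the end of the proof of Theorem~\ref{pr.SFgamma-neq-0} (itself modeled on~\cite[Theorem~2.2]{chung2001brownian}), now with the killing region $\mathscr G$ rather than $\mathscr H_R$. Fix a compact $K\subset\mathscr G$, set $\delta=\operatorname{dist}(K,\partial\mathscr G)>0$, and for $0\le s\le t$ use the Markov property:
$$\mathbf E_{\mathsf z}[f(\bar X^R_t)\mathbf 1_{t<\bar\sigma_{\mathscr G}}]=\mathbf E_{\mathsf z}[\mathbf 1_{s<\bar\sigma_{\mathscr G}}\psi_s(\bar X^R_s)],\qquad \psi_s(\mathsf z)=\mathbf E_{\mathsf z}[\mathbf 1_{t-s<\bar\sigma_{\mathscr G}}f(\bar X^R_{t-s})],$$
so that $\sup_{\mathsf z\in K}|\mathbf E_{\mathsf z}[f(\bar X^R_t)\mathbf 1_{t<\bar\sigma_{\mathscr G}}]-\mathbf E_{\mathsf z}[\psi_s(\bar X^R_t)]|\le\|f\|_\infty\sup_{\mathsf z\in K}\mathbf P_{\mathsf z}[\bar\sigma_{\mathscr G}\le s]\le\|f\|_\infty\sup_{\mathsf z\in K}\mathbf P_{\mathsf z}[\bar\sigma^R_{\mathsf B(\mathsf z,\delta)}\le s]$, which tends to $0$ as $s\to0^+$ by Lemma~\ref{le.C4-2}. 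Since $\mathsf z\mapsto\mathbf E_{\mathsf z}[\psi_s(\bar X^R_t)]$ is continuous by the strong Feller property \eqref{SF-e} of $\bar X^R$, uniform approximation by continuous functions gives continuity of $\mathsf z\mapsto\mathbf E_{\mathsf z}[f(\bar X^R_t)\mathbf 1_{t<\bar\sigma_{\mathscr G}}]$ on $K$, hence on all of $\mathscr G$, hence the desired continuity of $P_t^{\mathscr D}f$ on $\mathscr D$. The main obstacle — and the point requiring care — is the use of Lemma~\ref{le.C4-2} with the radius $\delta$ depending on $K$ and $\partial\mathscr G$: one must note that $K$ being a compact subset of the \emph{open} set $\mathscr G$ forces $\delta>0$, and that the balls $\mathsf B(\mathsf z,\delta)$ for $\mathsf z\in K$ are contained in a fixed compact neighbourhood of $K$ so that the uniform smallness of $\mathbf P_{\mathsf z}[\bar\sigma^R_{\mathsf B(\mathsf z,\delta)}\le s]$ really does follow from Lemma~\ref{le.C4-2}; the irregularity of $\partial\mathscr O$ is irrelevant precisely because this estimate only involves exit from small balls around interior points.
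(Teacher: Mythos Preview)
Your proof is correct, but it takes a longer route than the paper's. The paper's argument is a three-line affair that works directly with the original process $X$: by the Markov property $P_t^{\mathscr D}f(\mathsf x)=\mathbf E_{\mathsf x}[\mathbf 1_{s<\sigma_{\mathscr D}}\,g_s(X_s)]$ with $g_s(\mathsf z)=\mathbf E_{\mathsf z}[\mathbf 1_{t-s<\sigma_{\mathscr D}}f(X_{t-s})]$; then $P_s g_s$ is continuous by \textbf{(C1)} (already established for the nonkilled semigroup of $X$ in Proposition~\ref{pr.SFgamma0} and Theorem~\ref{pr.SFgamma-neq-0}); and the approximation error on a compact $K\subset\mathscr D$ is controlled by $\sup_{\mathsf x\in K}\mathbf P_{\mathsf x}[\sigma_{\mathscr D}\le s]\le\sup_{\mathsf x\in K}\mathbf P_{\mathsf x}[\sigma_{\mathscr B(x,\delta)}\le s]\to 0$ via Lemma~\ref{le.C4}. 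The crucial simplification is that, because $\mathscr D=\mathscr O\times\mathbf R^d\times\mathbf R^d$, leaving $\mathscr D$ forces the \emph{position} component to leave a ball in $\mathbf R^d$, so Lemma~\ref{le.C4} (position-ball exits for $X$ itself) is exactly the right tool and no passage to $\bar X^R$ is needed.

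Your approach instead re-performs the energy splitting with $\mathscr H_R$, replaces $X$ by the globally Lipschitz $\bar X^R$, and then invokes the strong Feller property of $\bar X^R$ and Lemma~\ref{le.C4-2} (full $\mathbf R^{3d}$-ball exits for $\bar X^R$). This is logically sound, but it duplicates work already absorbed into \textbf{(C1)}: once \textbf{(C1)} is known, the Chung--Walsh style approximation can be run for $X$ directly, and the product structure of $\mathscr D$ lets you use the simpler Lemma~\ref{le.C4} rather than Lemma~\ref{le.C4-2}. (Minor slip: in your uniform approximation you write $\mathbf E_{\mathsf z}[\psi_s(\bar X^R_t)]$ where you mean $\mathbf E_{\mathsf z}[\psi_s(\bar X^R_s)]$.) Your route would be the natural one if \textbf{(C1)} were not yet available; here it is, so the paper's shortcut is preferable.
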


 \begin{proof}   
 Pick    a  compact subset $K$  of $\mathscr D$ and $t>0$.  Set $\delta:= \text{dist}(K, \partial \mathscr D)>0$ (where $\partial \mathscr D=\partial \mathscr O\times \mathbf R^d \times \mathbf R^d$). 
Using the Markov property, one has for $0\le s\le t$, $\mathsf x\in \mathscr D$, and $f\in b\mathcal B(\mathscr D)$, $P_t^{\mathscr D}f(\mathsf x)= \mathbf E_{\mathsf x}[\mathbf 1_{s<\sigma_{\mathscr D}} g_s(X_s)]$, where $g_s(\mathsf z)=\mathbf E_{\mathsf z}[\mathbf 1_{t-s<\sigma_{\mathscr D}} f(X_{t-s})]$. By \textbf{(C1)}, $P_sg_s$ is continuous over $\mathbf R^{3d}$. In addition, it holds:
$\sup_{\mathsf x\in K}| P_t^{\mathscr D}f(\mathsf x)-P_sg_s(\mathsf x)|\le \Vert f\Vert_{\infty}\sup_{\mathsf x\in K} \mathbf P_{\mathsf x}[\sigma_{\mathscr D}\le s]\le  \Vert f\Vert_{\infty}\sup_{\mathsf x\in K} \mathbf P_{\mathsf x}[\sigma_{\mathscr B(x,\delta)}\le s]\to 0$ as $s\to 0^+$, thanks to Lemma \ref{le.C4}. Thus $P_t^{\mathscr D}f$ is continuous. 
\end{proof}



 \subsection{On Assumption \textbf{(C5)}} 
 \label{sec.S5GL}
 
 In this section,  we check \textbf{(C5)}. To do so, the  well-known starting point is  the  knowledge  of the support of the law of the trajectories of the Brownian motion. To check \textbf{(C5)}, we thus  construct   suitable control curves.  The details of the proof of  \textbf{(C5)} will be given since  $\nabla V$ is only locally Lipschitz.  In all this section,  $\mathscr D= \mathscr O\times \mathbf R^d \times \mathbf R^d$ and $\mathscr O$ is a subdomain of $\mathbf R^d$ such that  $ \mathbf R^d\setminus \overline{\mathscr O}$ is nonempty.  

 \subsubsection{The case when $\gamma=0$}
 To prove Assumption \textbf{(C5)}, we will  construct, for any  $  T>0$ and any  two points $\mathsf x_0,\mathsf x_T\in \mathscr D$, a so-called control curve $\mathfrak{l}\in [0,T]\to  \mathscr O$ joining $\mathsf x_0$ to $\mathsf x_T$ (this will be done using local polynomial interpolations).
 Note here that we need to impose that the range of the control curve  $\gamma$  lies in $\mathscr O$ to ensure the condition that $t<\sigma_{\mathscr D}(\mathsf x_0)$ when $X_{[0,T]}(\mathsf x_0)$ is sufficiently close to $\gamma$ on $[0,T]$ (for the supremum norm over $[0,T]$). 
 
 We start with the following lemma.
 
 \begin{lem}\label{loc.construction}
 Let us consider $t_*>0$, $\mathsf x_0=(x_0,y_0,z_0)\in \mathbf R^{3d}$ and $\mathsf x_1=(x_1,v_1,z_1)\in \mathbf R^{3d}$. Then, there exists a smooth curve $ \ell:[0,t_*]\to \mathbf R^d$ such that $ \ell(0)=x_0$, $\dot \ell(0)=v_0$, $\ddot  \ell(0)=z_0$, $ \ell(t_*)=x_1$, $\dot  \ell(t_*)=v_1$, $\ddot  \ell(t_*)=z_1$, and 
 $$\sup_{s\in [0,t_*]}|\ell(s)-x_0|\le C|x_0-x_1|+ C(|v_0|+|v_1|)t_*+ C(|z_0|+|z_1|)t_*^2,$$
 where $C>0$ is a universal constant. 
 \end{lem}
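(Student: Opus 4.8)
The plan is to construct $\ell$ explicitly as a degree-$5$ polynomial (a quintic Hermite interpolation), rescaled to the interval $[0,t_*]$. First I would reduce to the unit interval: write $\ell(s) = x_0 + \tau(x_1-x_0) + v_0\, t_* \, p_1(\tau) + v_1 \, t_* \, p_2(\tau) + z_0 \, t_*^2 \, p_3(\tau) + z_1\, t_*^2 \, p_4(\tau)$ with $\tau = s/t_* \in [0,1]$, where $p_1,\dots,p_4$ are fixed universal polynomials (independent of all data and of $t_*$) chosen so that the six Hermite conditions hold. Concretely, differentiating in $s$ brings down factors $1/t_*$, so the scaling is arranged precisely so that $\dot\ell(0) = v_0$, $\dot\ell(t_*) = v_1$ (this forces $p_1'(0) = 1$, $p_1'(1) = 0$, $p_2'(0) = 0$, $p_2'(1) = 1$, and $p_1,p_2,p_3,p_4$ and their first derivatives vanish appropriately at the endpoints, plus $p_3''(0) t_*^2 \cdot (1/t_*^2)$ contributes to $\ddot\ell(0)$, etc.). The point is that there is a fixed linear system — the Hermite interpolation problem on $[0,1]$ with prescribed value, first, and second derivatives at $0$ and $1$ — which has a unique quintic solution, so the $p_i$ exist and are universal.

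Next I would verify the six boundary conditions at $s=0$ and $s=t_*$. At $s=0$: $\ell(0) = x_0$ is immediate since $\tau=0$ kills the $(x_1-x_0)$ term and $p_i(0)=0$ by the choice of $p_i$; $\dot\ell(0) = \frac{1}{t_*}\big[(x_1-x_0)\cdot 0 + v_0 t_* p_1'(0) + \dots\big]$, and with $p_1'(0)=1$ and the other derivative-values at $0$ vanishing this gives $v_0$ (the $z_0 t_*^2 p_3'(0)/t_* = t_* z_0 p_3'(0)$ term vanishes because $p_3'(0)=0$); and $\ddot\ell(0) = \frac{1}{t_*^2}\big[z_0 t_*^2 p_3''(0) + \dots\big] = z_0$ with $p_3''(0)=1$ and all other second-derivative-values at $0$ vanishing. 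The conditions at $s=t_*$ are checked identically using the values of the $p_i$ and their derivatives at $\tau=1$. This is the routine linear-algebra part: one simply records the quintic Hermite basis.

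Finally, for the sup bound: since each $p_i$ is a fixed continuous function on the compact interval $[0,1]$, set $C_0 := \max_i \sup_{\tau\in[0,1]} |p_i(\tau)|$, a universal constant. Then for $s\in[0,t_*]$,
$$
|\ell(s) - x_0| \le |\tau|\,|x_1-x_0| + t_*|p_1(\tau)|\,|v_0| + t_*|p_2(\tau)|\,|v_1| + t_*^2|p_3(\tau)|\,|z_0| + t_*^2|p_4(\tau)|\,|z_1|,
$$
and bounding $|\tau|\le 1$ and each $|p_i(\tau)|\le C_0$ yields
$$
\sup_{s\in[0,t_*]}|\ell(s)-x_0| \le |x_0-x_1| + C_0(|v_0|+|v_1|)t_* + C_0(|z_0|+|z_1|)t_*^2,
$$
so any $C \ge \max(1,C_0)$ works, completing the proof. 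I do not anticipate a genuine obstacle here — the only mild care needed is bookkeeping the $1/t_*$ factors from differentiation so that the scaling in front of $p_3,p_4$ is $t_*^2$ rather than $t_*$, and confirming that the Hermite system is nonsingular (it is, being a classical interpolation problem). If one prefers to avoid writing out the quintic basis, an alternative is to build $\ell$ piecewise or by integrating a suitably chosen acceleration profile, but the direct polynomial construction is cleanest and makes the universal constant manifest.
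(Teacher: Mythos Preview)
Your approach is essentially the same as the paper's: both construct $\ell$ as a degree-five polynomial (quintic Hermite interpolation). The paper simply writes out $\ell(s)=\sum_{k=0}^5 c_k s^k$ with the six coefficients solved explicitly in terms of $x_0,x_1,v_0,v_1,z_0,z_1,t_*$, while you rescale to $[0,1]$ and use a Hermite basis; the sup bound then follows from the explicit form of the coefficients exactly as in your final paragraph.

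There is one slip in your ansatz: the term $\tau(x_1-x_0)$ is wrong. The derivative of $\tau$ with respect to $\tau$ is $1$, not $0$, so your computation ``$\dot\ell(0)=\frac{1}{t_*}[(x_1-x_0)\cdot 0+\dots]$'' is incorrect --- as written, $\dot\ell(0)$ picks up an unwanted $(x_1-x_0)/t_*$ term (and likewise $\ddot\ell$ at the endpoints). The fix is to replace $\tau$ by the quintic Hermite basis function $q(\tau)$ satisfying $q(0)=0$, $q(1)=1$, $q'(0)=q'(1)=q''(0)=q''(1)=0$ (explicitly $q(\tau)=10\tau^3-15\tau^4+6\tau^5$). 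With $q(\tau)(x_1-x_0)$ in place of $\tau(x_1-x_0)$, all six boundary conditions are genuinely met, and your sup-bound argument goes through verbatim since $\sup_{[0,1]}|q|=1$. This is a bookkeeping error, not a gap in strategy.
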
 
In the following, we say that such a curve $\gamma$ joins $\mathsf x_0$ to $\mathsf x_T$. 
 \begin{proof}   Choose $\ell(s)=\sum_{k=0}^5 c_k s^k$ where 
 $c_0=x_0$, $c_1=v_1$, $c_2=z_0/2$, $c_3= -10x_0/t_*^3-6v_0/t_*^2-3z_0/(2t_*)+10x_1/t_*^3-4v_1/t_*^2+z_1/(2t_*)$, $c_4= 15x_0/t_*^4+8v_0/t_*^3+3z_0/(2t_*^2) -15x_1/t_*^4+7v_1/t_*^3-z_1/t_*^2$, $c_5=-6x_0/t_*^5-3v_0/t_*^4-z_0/(2t_*^3)+6x_1/t_*^5-3v_1/t_*^4+z_1/(2t_*^3)$. 
 \end{proof}
  
 \begin{lem}\label{loc.construction2}
 Let us consider $T>0$, $\mathsf x_0=(x_0,y_0,z_0)\in \mathscr D$, and $\mathsf x_T=(x_T,v_T,z_T)\in \mathscr D$. Then, there exists a $\mathcal C^2$ and piecewise $\mathcal C^3$ curve $ \mathfrak{l}:[0,T]\to \mathbf R^d$,  such that $\mathfrak{l}(0)=x_0$, $\dot{\mathfrak{l}}(0)=v_0$, $\ddot{\mathfrak{l}}(0)=z_0$, $\mathfrak{l}(T)=x_T$, $\dot {\mathfrak{l}}(T)=v_T$, $\ddot{\mathfrak{l}}(T)=z_T$, and  
 $${\rm  Ran} (\mathfrak{l}) \subset \mathscr O.$$
 \end{lem}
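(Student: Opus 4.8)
The plan is to build the curve $\mathfrak{l}$ by first choosing a polygonal path of points inside $\mathscr O$ connecting $x_0$ to $x_T$, and then interpolating between consecutive points using the local construction of Lemma~\ref{loc.construction}, matching positions, velocities and accelerations at the junction points. First I would use that $\mathscr O$ is open and \emph{connected}: there exists a continuous path $\mathsf c:[0,1]\to\mathscr O$ with $\mathsf c(0)=x_0$ and $\mathsf c(1)=x_T$. Since $\mathrm{Ran}(\mathsf c)$ is a compact subset of the open set $\mathscr O$, it has a positive distance $3\rho>0$ to $\partial\mathscr O$ (with the convention $\mathrm{dist}(\cdot,\emptyset)=+\infty$ if $\mathscr O=\mathbf R^d$, in which case the argument is only easier). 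By uniform continuity of $\mathsf c$, pick $0=s_0<s_1<\cdots<s_M=1$ such that $|\mathsf c(s_{j})-\mathsf c(s_{j-1})|\le\rho$ for all $j$, and set $p_j:=\mathsf c(s_j)$, so $p_0=x_0$, $p_M=x_T$, each $p_j\in\mathscr O$, and consecutive $p_j$'s are $\rho$-close, with the whole segment $[p_{j-1},p_j]$ together with a $\rho$-ball around it staying in $\mathscr O$.

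Next I would fix the time mesh $0=T_0<T_1<\cdots<T_M=T$ with step $T_j-T_{j-1}=T/M$, and prescribe intermediate ``jet data'' $(p_j,w_j,u_j)$ for $j=1,\ldots,M-1$, with $(p_0,w_0,u_0)=(x_0,v_0,z_0)$ and $(p_M,w_M,u_M)=(x_T,v_T,z_T)$; the interior velocities $w_j$ and accelerations $u_j$ can be taken to be $0$ (any fixed bounded choice works). On each subinterval $[T_{j-1},T_j]$ I apply Lemma~\ref{loc.construction} with $t_*=T/M$, left data $(p_{j-1},w_{j-1},u_{j-1})$ and right data $(p_j,w_j,u_j)$, obtaining a smooth piece $\ell_j$; concatenating the $\ell_j$ gives a globally $\mathcal C^2$, piecewise $\mathcal C^3$ curve $\mathfrak{l}:[0,T]\to\mathbf R^d$ matching the required boundary jets at $0$ and $T$ (the $\mathcal C^2$ matching at the interior nodes $T_j$ is exactly what the prescribed jet data at $p_j$ guarantees). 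The bound in Lemma~\ref{loc.construction} gives, on $[T_{j-1},T_j]$,
\[
\sup_{s\in[T_{j-1},T_j]}|\mathfrak{l}(s)-p_{j-1}|\le C|p_{j-1}-p_j|+C(|w_{j-1}|+|w_j|)\tfrac TM+C(|u_{j-1}|+|u_j|)\tfrac{T^2}{M^2}.
\]

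The main obstacle — and the only real point — is ensuring $\mathrm{Ran}(\mathfrak{l})\subset\mathscr O$: this is where $M$ must be chosen large enough. The first term on the right above is $\le C\rho$ for all $j$, which is not automatically $\le\rho$; so the clean way is to first rescale, i.e. choose the mesh so that $|p_{j-1}-p_j|\le\rho/(2C)$ (refine the polygonal path further), and then, having fixed those finitely many $p_j$ and the bounded interior jet data, take $M$ (equivalently $T/M$) small enough that the two remaining terms are each $\le\rho/4$, uniformly in $j$. Then $\sup_{s\in[T_{j-1},T_j]}|\mathfrak{l}(s)-p_{j-1}|\le\rho$, so $\mathfrak{l}(s)$ lies in the $\rho$-ball around $p_{j-1}$, which by construction is contained in $\mathscr O$. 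Hence $\mathrm{Ran}(\mathfrak{l})\subset\mathscr O$, and $\mathfrak{l}$ has all the asserted regularity and boundary-jet properties, completing the proof. (Note the boundary velocities/accelerations $v_0,z_0,v_T,z_T$ are fixed given data and affect only finitely many endpoint estimates, so they cause no difficulty.)
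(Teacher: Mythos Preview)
Your proof is correct and follows essentially the same strategy as the paper's: subdivide a continuous path in $\mathscr O$ from $x_0$ to $x_T$ into many short pieces and interpolate between consecutive nodes via Lemma~\ref{loc.construction} with matched jet data, so that the displacement bound there forces each piece to stay in a small ball around its starting node. The only wrinkle is your phrase ``having fixed those finitely many $p_j$ \ldots take $M$ small enough'': the number of nodes and $M$ are the \emph{same} parameter and cannot be chosen independently; the paper (and the clean fix) simply sends the single refinement parameter to infinity, so that $|p_{j-1}-p_j|$ and $T/M$ tend to $0$ simultaneously and all three terms in the bound become $\le\rho$.
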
 
\begin{proof} 
The set $\mathscr O$ is path-connected since it is connected and open in $\mathbf R^d$. 
Let $\Psi: [0,T]\to \mathscr O$ be an injective curve such that $\Psi(0)=x_0$ and $\Psi(T)=x_T$. Let $\delta>0$ be such that  the closure of the $\delta$-neighborhood $\mathcal U_\delta$ of Ran$(\Psi)$ is included in~$\mathscr O$. 

Let $\epsilon >0$. Take a subdivision of   Ran$(\Psi)$     containing $N_\epsilon+1\in \mathbf N^*$ points $\{x_0,x_1^\epsilon,$ $\ldots, x_{N_\epsilon-1}^\epsilon, x_T\}$ with $|x_j^\epsilon-x_{j+1}^\epsilon|\le \epsilon$, $j\in \{0,\ldots, N_\epsilon-1\}$ (with $x_{0}^\epsilon:=x_0$ and $x_{N_\epsilon}^\epsilon:=x_T$).  

Note that $N_\epsilon \to +\infty$ as $\epsilon \to 0^+$. Set now $t_*^\epsilon=T/N_\epsilon$.  Pick two unimportant points $v,z\in \mathbf R^d$. By Lemma~\ref{loc.construction}, we can  consider  $N_\epsilon$ curves $\ell^\epsilon_0: [0,t_*^\epsilon]\to \mathbf R^d$ joining 
$\mathsf x_0$ to $\mathsf x^\epsilon_1=(x_1^\epsilon, v,z)$, $\ell^\epsilon_1: [0,t_*^\epsilon]\to \mathbf R^d$ joining 
 $\mathsf x_1^\epsilon$ to $\mathsf x_2^\epsilon=(x_2^\epsilon, v,z)$, \ldots, $\ell^\epsilon_{N_\epsilon-1}: [0,t_*^\epsilon]\to \mathbf R^d$ joining 
 $\mathsf x_{N_\epsilon-1}^\epsilon=(x_{N_\epsilon-1}^\epsilon, v,z)$ to $\mathsf x_T$. The curve $\mathfrak{l}^\epsilon$ is then defined on $
 [0,T]$ by:
 $$\text{for all  $t\in [0,T]$}, \mathfrak{l}^\epsilon(t)= \ell^\epsilon_j(t-jt^\epsilon_*) \text{ if } jt^\epsilon_*\le t<(j+1)t^\epsilon_*, j\in \{0, \ldots, N_\epsilon-1\},$$
 and $\mathfrak{l}^\epsilon(T)= x_T$. 
Note that by Lemma~\ref{loc.construction}, if $\epsilon$ is small enough (say $\epsilon<\epsilon_0$), for all $j\in \{0, \ldots, N_\epsilon-1\}$, $\text{ Ran} (\ell_j^\epsilon)\subset  \mathcal U_\delta$, 
so that $\text{Ran} (\mathfrak{l}^\epsilon)\subset \mathscr O$. 
The proof of the lemma is complete choosing any $\mathfrak{l}^\epsilon$, with $0<\epsilon<\epsilon_0$.  
\end{proof}

 
We are now ready to prove that Assumption \textbf{(C5)} is satisfied when
 $\gamma=0$. 

 \begin{prop}\label{pr.P2C5}
 Assume  $\gamma=0$ and {\rm\textbf{[V{\tiny loc}]}}. Then, for any $T>0$, $\mathsf x_0=(x_0,v_0,z_0)\in \mathscr D$ and any nonempty open subset $\mathsf O$ of  $\mathscr D$,  it holds:
\begin{equation}
\label{eq.P2}
\mathbf P_{\mathsf x_0}[X_T\in \mathsf O, T<\sigma_{\mathscr D}]>0.
\end{equation}
Moreover,   \textbf{(C5)} holds.
 \end{prop}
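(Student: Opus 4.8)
The plan is to construct an explicit control for the driving noise $B$ that steers the process from $\mathsf x_0$ into a small ball around a prescribed point of $\mathsf O$ while keeping the whole trajectory inside $\mathscr D$, and then to invoke the full topological support of Brownian motion in $\mathcal C([0,T],\mathbf R^d)$. Fix $T>0$, $\mathsf x_0=(x_0,v_0,z_0)\in\mathscr D$, a nonempty open set $\mathsf O\subset\mathscr D$, a target point $\mathsf x_T=(x_T,v_T,z_T)\in\mathsf O$, and $\rho>0$ with $\overline{\mathsf B(\mathsf x_T,\rho)}\subset\mathsf O$. Since $\gamma=0$, the only source of randomness in~\eqref{eq.GLintro} is $\sqrt{2\alpha}\,dB$ in the $z$-equation, so a position path $t\mapsto\mathfrak l(t)$ forces the velocity $\dot{\mathfrak l}$ and then the auxiliary variable through $z^{\mathrm c}_t:=\lambda^{-1}\big(\ddot{\mathfrak l}(t)+\nabla V(\mathfrak l(t))\big)$. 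Applying Lemma~\ref{loc.construction2} with position/velocity endpoint data $x_0,v_0$ at time $0$ and $x_T,v_T$ at time $T$ and with acceleration endpoint data $\lambda z_0-\nabla V(x_0)$ and $\lambda z_T-\nabla V(x_T)$, one gets a $\mathcal C^2$ and piecewise $\mathcal C^3$ curve $\mathfrak l:[0,T]\to\mathbf R^d$ with $\mathrm{Ran}(\mathfrak l)\subset\mathscr O$ such that $Y_t:=(\mathfrak l(t),\dot{\mathfrak l}(t),z^{\mathrm c}_t)$ satisfies $Y_0=\mathsf x_0$, $Y_T=\mathsf x_T$. Since $\mathbf O_V=\mathbf R^d$ and $\nabla V$ is locally Lipschitz, $t\mapsto\nabla V(\mathfrak l(t))$ is Lipschitz on $[0,T]$ (because $\mathrm{Ran}(\mathfrak l)$ is compact), hence $z^{\mathrm c}$ is Lipschitz, and the path
\[
B^{\mathrm c}_t:=\frac{1}{\sqrt{2\alpha}}\Big(z^{\mathrm c}_t-z_0+\int_0^t\big(\alpha z^{\mathrm c}_s+\lambda\dot{\mathfrak l}(s)\big)\,ds\Big),\qquad t\in[0,T],
\]
is continuous and piecewise $\mathcal C^1$ with $B^{\mathrm c}_0=0$; a direct check shows that $Y$ solves the integral form of~\eqref{eq.GLintro} (with $\gamma=0$) with $B$ replaced by $B^{\mathrm c}$.

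Next I would compare $X_t=X_t(\mathsf x_0)$ with $Y_t$ through a localized Grönwall estimate. Let $\mathcal K_0:=\mathrm{Ran}(Y)$, which is compact and contained in $\mathscr D$, fix $r_0\in(0,\rho)$ small enough that the closed $r_0$-neighbourhood $\mathcal K$ of $\mathcal K_0$ is still contained in $\mathscr D$, and let $L$ be a Lipschitz constant of the drift $\mathbf a$ of~\eqref{eq.GLintro} on $\mathcal K$. Setting $\tau:=\inf\{t\ge0:X_t\notin\mathcal K\}$, subtracting the two integral equations on $[0,\tau\wedge T]$, where both $X$ and $Y$ stay in $\mathcal K$, yields
\[
|X_t-Y_t|\le L\int_0^t|X_s-Y_s|\,ds+\sqrt{2\alpha}\,\sup_{u\in[0,T]}|B_u-B^{\mathrm c}_u|,
\]
so Grönwall gives $\sup_{t\in[0,\tau\wedge T]}|X_t-Y_t|\le\sqrt{2\alpha}\,e^{LT}\sup_{u\in[0,T]}|B_u-B^{\mathrm c}_u|$. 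Choosing $\eta>0$ with $\sqrt{2\alpha}\,e^{LT}\eta<r_0$, on the event $\{\sup_{u\in[0,T]}|B_u-B^{\mathrm c}_u|<\eta\}$ the trajectory $X$ stays within distance $r_0$ of $\mathcal K_0$ on $[0,T]$; hence $\tau\ge T$, so $X_{[0,T]}(\mathsf x_0)\subset\mathcal K\subset\mathscr D$ (i.e. $T<\sigma_{\mathscr D}$) and $|X_T(\mathsf x_0)-\mathsf x_T|<r_0<\rho$, i.e. $X_T(\mathsf x_0)\in\mathsf O$. Therefore $\{X_T\in\mathsf O,\ T<\sigma_{\mathscr D}\}\supset\{\sup_{u\in[0,T]}|B_u-B^{\mathrm c}_u|<\eta\}$. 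Since $(B_t)_{t\in[0,T]}$ is a standard $\mathbf R^d$-Brownian motion, its law has full support in $\{w\in\mathcal C([0,T],\mathbf R^d):w(0)=0\}$ for the uniform norm, and $B^{\mathrm c}$ belongs to this space, so $\mathbf P[\sup_{u\in[0,T]}|B_u-B^{\mathrm c}_u|<\eta]>0$. This proves~\eqref{eq.P2}.

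Finally, \textbf{(C5)} follows. Its first part is exactly~\eqref{eq.P2} with $T$ renamed $t$, so any $t_1>0$ works. For the second part, since $\mathbf R^d\setminus\overline{\mathscr O}\neq\emptyset$, pick $x_1\in\mathbf R^d\setminus\overline{\mathscr O}$ and $\rho'>0$ with $\overline{\mathsf B(x_1,\rho')}\subset\mathbf R^d\setminus\overline{\mathscr O}$, fix any $\mathsf x_0=(x_0,v_0,z_0)\in\mathscr D$, and repeat the construction with a single curve given by Lemma~\ref{loc.construction} on $[0,T]$ joining the boundary data $(x_0,v_0,\lambda z_0-\nabla V(x_0))$ at time $0$ to the boundary data $(x_1,0,-\nabla V(x_1))$ at time $T$, so that the associated control trajectory $Y$ ends at $(x_1,0,0)$; here no constraint on the range of the curve is needed. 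The same localized Grönwall comparison shows that on a positive-probability event $\sup_{t\in[0,T]}|X_t-Y_t|<\rho'/2$, so the position component of $X_T$ lies in $\mathsf B(x_1,\rho'/2)\subset\mathbf R^d\setminus\overline{\mathscr O}$ and thus $X_T\notin\mathscr D$; hence $\mathbf P_{\mathsf x_0}[\sigma_{\mathscr D}\le T]>0$ and a fortiori $\mathbf P_{\mathsf x_0}[\sigma_{\mathscr D}<+\infty]>0$.

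The main obstacle is the merely local Lipschitz regularity of $\nabla V$: neither a support theorem nor a global continuity statement for the solution map is available, so the comparison of $X$ with the control trajectory must be run only up to the exit time $\tau$ of a fixed compact neighbourhood $\mathcal K$ of the control's range, and one has to verify that a sufficiently small perturbation of the noise prevents that exit before time $T$. The geometry of the possibly very irregular domain $\mathscr O$ is entirely handled by Lemmas~\ref{loc.construction} and~\ref{loc.construction2}, whose polynomial interpolation keeps the control curve inside $\mathscr O$.
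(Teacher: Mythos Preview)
Your argument is correct and actually more direct than the paper's. Both proofs begin identically: use Lemma~\ref{loc.construction2} to produce a $\mathcal C^2$, piecewise $\mathcal C^3$ curve $\mathfrak l:[0,T]\to\mathscr O$ with the right endpoint data, build the associated control trajectory $Y_t=(\mathfrak l,\dot{\mathfrak l},z^{\mathrm c})$, and then invoke the full support of Brownian motion. The difference lies in how the locally Lipschitz drift is handled. The paper replaces $-\nabla V$ by a globally Lipschitz field $\mathbf b_{\mathfrak l}$ coinciding with $-\nabla V$ near $\mathrm{Ran}(\mathfrak l)$, passes via Girsanov to a still simpler auxiliary process $(\bar X^\circ_t)$ (dropping the $-\alpha z-\lambda v$ drift in the $z$-equation), and proves positivity for that process. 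You instead keep the original process and run the Gr\"onwall comparison only on $[0,\tau\wedge T]$, where $\tau$ is the exit time from a compact neighbourhood $\mathcal K$ of $\mathrm{Ran}(Y)$; the local Lipschitz constant on $\mathcal K$ then suffices, and you close the argument by showing $\tau\ge T$ on the small-noise event. This avoids both the Lipschitz extension and the Girsanov step, at no cost in generality. One minor inaccuracy: $B^{\mathrm c}$ is Lipschitz rather than piecewise $\mathcal C^1$ (since $\nabla V\circ\mathfrak l$ is only Lipschitz), but this is immaterial because the support theorem only requires $B^{\mathrm c}\in\mathcal C([0,T],\mathbf R^d)$ with $B^{\mathrm c}_0=0$.
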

 
 
 \begin{proof}  
With Lemma~\ref{loc.construction2} at hand, such  a result can be proved with the Stroock-Varadhan support theorem~\cite{SV,ben}. Since, to the best of our knowledge,  this result is usually stated with rather  stringent assumptions on the regularity and on  the boundedness of the drift, we give a rather natural   proof which has the advantage  to only use  the support  of the law of the trajectory of the standard Brownian motion (actually we only use  the fact that sufficiently smooth curves are in its support) and which applies to locally Lipschitz drifts. 
 Pick $\mathsf x_T\in  \mathsf O$. Using  Lemma~\ref{loc.construction2}, one can consider a $\mathcal C^2$ and piecewise $\mathcal C^3$ curve $ \mathfrak{l}:[0,T]\to \mathscr O$  such that $\mathfrak{l}(0)=x_0$, $\dot{\mathfrak{l}}(0)=v_0$, $\ddot{\mathfrak{l}}(0)=\lambda z_0-\nabla V(x_0)$, $\mathfrak{l}(T)=x_T$, $\dot {\mathfrak{l}}(T)=v_T$, $\ddot{\mathfrak{l}}(T)=\lambda z_T-\nabla V(x_T)$.  Let us then define the  function $\mathsf h: [0,T]\to \mathbf R^d$  by
 $$
 \mathsf h(t) =\frac{1}{\lambda \sqrt{2\alpha}}\, \frac{d}{dt}[\ddot{\mathfrak{l}}+\nabla V({\mathfrak{l}})](t), \ t\in [0,T].
 $$
  Note that $  \mathsf  h \in \mathcal L^{\infty}([0,T],\mathbf R^d)$ since $t\mapsto [\ddot{\mathfrak{l}}+\nabla V({\mathfrak{l}})](t)$ belongs to the Sobolev space $ \mathcal W^{1,\infty}([0,T],\mathbf R^d)$. 
Recall that the support (for the supremum norm over $[0,T]$) of the law of the trajectory of the standard Brownian motion on $[0,T]$ is the closure   of the set 
\begin{equation}\label{eq.Law}
\mathcal S_{{\rm supp}}([0,T],\mathbf R^d)=\Big \{ t\in [0,T]\mapsto \int_0^t u(s) ds<+\infty,  u \in \mathcal L^2([0,T],\mathbf R^{d})\Big \}.
\end{equation}
Therefore, for all $\eta>0$,    $\mathbf P[\mathcal A_\eta]>0$, where $\mathcal A_\eta:=\{\sup_{t\in [0,T]}|B_t-\int_0^th(s)ds|<\eta\}$. 
 Let us consider   an open bounded  neighborhood  $\mathcal V_{\mathfrak{l}}$ of  Ran$(\mathfrak{l})$  in $\mathbf R^d$ such that $\overline{\mathcal V}_{\mathfrak{l}}\subset \mathscr O$ (note that in particular  $\mathsf x_0,\mathsf x_T\in \mathcal V_{\mathfrak{l}}$). 
 For all  $\epsilon>0$ small enough (say $\epsilon \in (0,\epsilon_{\mathfrak{l}})$, $\epsilon_{\mathfrak{l}}>0$), the following two conditions are satisfied: 
  \begin{itemize}
  \item[\textbf a.] For any $\mathfrak{c} : [0,T]\to \mathbf R^d$   such that  $\sup_{s\in [0,T]}|\mathfrak{c} (s)-\mathfrak{l}(s)|\le \epsilon$, Ran$(\mathfrak{c} )\subset \mathcal  V_{\mathfrak{l}}$. 
  \item[\textbf b.]  For any $\mathsf x\in \mathbf R^{3d}$ such that $\mathsf x\in \mathsf B(\mathsf x_T,\epsilon)$, $\mathsf x\in  \mathsf O$. 
   \end{itemize} 
Denote  by $\mathscr D_{\mathfrak{l}}=\mathcal V _{\mathfrak{l}}\times \mathbf R^d\times \mathbf R^d \subset \mathscr D$.  Let us  consider  a globally  Lipschitz  vector field 
$\mathbf b_{\mathfrak{l}}:\mathbf R^d\to \mathbf R^d$  such that $\mathbf b_{\mathfrak{l}}= -\nabla V$ in a neighborhood of  $\overline{\mathcal V}_{\mathfrak{l}}$.    
Recall   $(\bar X^{\mathfrak{l}}_t= (\bar x^{\mathfrak{l}}_t,\bar v^{\mathfrak{l}}_t,\bar z^{\mathfrak{l}}_t),t\ge 0)$ is the solution to \eqref{eq.GLR-2} when $\mathbf b=\mathbf b_{\mathfrak{l}}$.  
Note that $
\mathbf P_{\mathsf x_0}[X_T\in  \mathsf O, T<\sigma_{\mathscr D}]\ge \mathbf P_{\mathsf x_0}[X_T\in \mathsf O,   T<\sigma_{\mathscr D_{\mathfrak{l}}}] =\mathbf P_{\mathsf x_0}[ X^{\mathfrak{l}}_T\in  \mathsf O, T<\bar \sigma _{\mathscr D_{\mathfrak{l}}}]$ (where $\bar \sigma _{\mathscr D_{\mathfrak{l}}}$ is the first exit time of the process $\bar X^{\mathfrak{l}}$ from $\mathscr D_{\mathfrak{l}}$). To prove \eqref{eq.P2}, it is therefore enough to show  that $\mathbf P_{\mathsf x_0}[ X^{\mathfrak{l}}_T\in  \mathsf O, T<\bar \sigma _{\mathscr D_{\mathfrak{l}}}]>0$, which is the purpose of what follows.  

   Let $(\bar X^\circ_t=(x_t^\circ,v_t^\circ,z_t^\circ)\in \mathbf R^{3d},t\ge 0)$ be the solution to 
\begin{equation}\label{eq.GL-G}
        d x_t^\circ=   v_t^\circ dt, \  d  v_t^\circ = \mathbf b_{\mathfrak{l}} (  x_t^\circ)dt +\lambda  z^\circ_tdt , \ d  z^\circ_t=   \sqrt{2\alpha}\, dB_t.
\end{equation} 
Define $f_t(\mathsf x)= -\alpha z_t^\circ(\mathsf x)-\lambda v_t^\circ (\mathsf x)$, $t\ge 0$.  Recall that since $\mathbf b_{\mathfrak{l}} $ is globally Lipschitz (see the beginning of the proof of Lemma \ref{le.C4-2}),  for all $\mathsf x\in \mathbf R^{3d}$ and $T>0$, there exists $C>0$, 
$$\sup_{s\in [0,T]}|\bar X^\circ_s| \le C(1+ \mathsf Y_T)$$ where $\mathsf Y_T:= \sup_{s\in [0,T]}|B_s |$. Hence, for  $\epsilon>0$ small enough, 
$\mathbf E  [e^{\epsilon|f_t(\mathsf x)|^2} ]\le C \mathbf E  [e^{C\epsilon \mathsf Y_T^2}  ]<+\infty$.  
 Therefore, one can use~\cite[Theorem~3.1 in Section~7]{friedman1975} and~\cite[Theorem~1.1  in Section~7]{friedman1975}, to deduce that the law of $(\bar X^{\mathfrak{l}}_t,t\in [0,T])$    is equivalent 
to the law of   $(\bar X^\circ_t,t\in [0,T])$. 
Let us thus prove that $\mathbf P_{\mathsf x_0}[ \bar X^\circ_T\in  \mathsf O, T<\bar \sigma^\circ _{\mathscr D_{\mathfrak{l}}}]>0$ (where $\bar \sigma^\circ _{\mathscr D_{\mathfrak{l}}}:=\inf\{t\ge 0,\bar X^\circ_t\notin \mathscr D_{\mathfrak{l}}\}$). 

 Set $\mathfrak n(t)=(\mathfrak{l}(t),\dot{\mathfrak{l}}(t),\lambda^{-1} (\ddot{\mathfrak{l}}(t)+\nabla V({\mathfrak{l}}(t))) )$, for $t\in [0,T]$. Note that 
 for $t\in [0,T]$, 
 $$ \mathsf h(t) =\frac{1}{\lambda \sqrt{2\alpha}}\, \frac{d}{dt}[\ddot{\mathfrak{l}}-  \mathbf b_{\mathfrak{l}} ({\mathfrak{l}})](t).$$  
Then, using that  $\mathbf b_{\mathfrak{l}}$ is globally Lipschitz and that $\mathfrak n(0)= \mathsf x_0$,     it is straightforward to get that  $\sup_{s\in [0,T]}|\bar X^\circ_s-\mathfrak n(t) | \le C_T \eta$ on $\mathcal A_\eta$. Choose $\eta>0$ such that $C\eta<\epsilon_{\mathfrak{l}}$. By Item \textbf{a} above,  $T<\bar \sigma^\circ _{\mathscr D_{\mathfrak{l}}}$. In addition, since  $\mathfrak n(T)=\mathsf x_T$, by Item \textbf{b} above, $\bar X^\circ_T\in \mathsf O$. Therefore, $\mathbf P_{\mathsf x_0}[ \bar X^\circ_T\in \mathsf O, T<\bar \sigma^\circ _{\mathscr D_{\mathfrak{l}}}]>0$. This ends the proof of \eqref{eq.P2}. 

Since $ \mathbf R^d\setminus \overline{\mathscr O}\neq \emptyset$, we also prove with similar arguments  that for all  $\mathsf z\in \mathscr D$,  $\mathbf P_{\mathsf z}[\sigma_{\mathscr D}<+\infty]>0$. This ends the proof of Proposition \ref{pr.P2C5}.
 \end{proof}

 \subsubsection{The case when $\gamma>0$}
 In this section we prove the following result.  
  \begin{prop}\label{pr.SFgamma-not0}
Assume $\gamma>0$ and {\rm\textbf{[V{\tiny loc}]}}. Then,    \textbf{(C5)} holds. 
\end{prop}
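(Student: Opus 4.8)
The plan is to prove Proposition~\ref{pr.SFgamma-not0} by essentially the same control-theoretic strategy used for the case $\gamma=0$ (Proposition~\ref{pr.P2C5}), but with the added benefit that the velocity equation now carries its own Brownian motion $W_t$, so the control curve can be chosen even more freely. As in the $\gamma=0$ case, it suffices to establish that for every $T>0$, every $\mathsf x_0=(x_0,v_0,z_0)\in\mathscr D$ and every nonempty open subset $\mathsf O$ of $\mathscr D$,
\begin{equation}\label{eq.P2gamma}
\mathbf P_{\mathsf x_0}[X_T\in\mathsf O,\ T<\sigma_{\mathscr D}]>0,
\end{equation}
together with the existence of some $\mathsf z\in\mathscr D$ with $\mathbf P_{\mathsf z}[\sigma_{\mathscr D}<+\infty]>0$; the latter follows exactly as in Proposition~\ref{pr.P2C5} from the assumption $\mathbf R^d\setminus\overline{\mathscr O}\neq\emptyset$, using a control curve that leaves $\mathscr O$. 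So I would focus on \eqref{eq.P2gamma}.

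First I would pick $\mathsf x_T=(x_T,v_T,z_T)\in\mathsf O$ and, using Lemma~\ref{loc.construction2}, build a $\mathcal C^2$, piecewise $\mathcal C^3$ curve $\mathfrak l:[0,T]\to\mathscr O$ with $\mathrm{Ran}(\mathfrak l)\subset\mathscr O$ and the boundary data $\mathfrak l(0)=x_0,\ \dot{\mathfrak l}(0)=v_0$ and $\mathfrak l(T)=x_T,\ \dot{\mathfrak l}(T)=v_T$ (the second-derivative endpoint conditions can be prescribed at will, so I would choose them compatibly with the $z$-components below). Then I would localize: choose an open bounded neighborhood $\mathcal V_{\mathfrak l}$ of $\mathrm{Ran}(\mathfrak l)$ with $\overline{\mathcal V}_{\mathfrak l}\subset\mathscr O$, set $\mathscr D_{\mathfrak l}=\mathcal V_{\mathfrak l}\times\mathbf R^d\times\mathbf R^d\subset\mathscr D$, and take a globally Lipschitz $\mathbf b_{\mathfrak l}$ agreeing with $-\nabla V$ on a neighborhood of $\overline{\mathcal V}_{\mathfrak l}$, with $(\bar X^{\mathfrak l}_t,t\ge0)$ the solution of \eqref{eq.GLR-2} with $\mathbf b=\mathbf b_{\mathfrak l}$; as in Proposition~\ref{pr.P2C5}, $\mathbf P_{\mathsf x_0}[X_T\in\mathsf O,\ T<\sigma_{\mathscr D}]\ge\mathbf P_{\mathsf x_0}[\bar X^{\mathfrak l}_T\in\mathsf O,\ T<\bar\sigma_{\mathscr D_{\mathfrak l}}]$. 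Next I would define the $z$-part of the target trajectory by reading it off the $v$-equation of \eqref{eq.GLintro}: with $\mathbf b=\mathbf b_{\mathfrak l}$, set
$$\mathfrak z(t)=\lambda^{-1}\big(\ddot{\mathfrak l}(t)+\gamma\dot{\mathfrak l}(t)-\mathbf b_{\mathfrak l}(\mathfrak l(t))\big),\qquad t\in[0,T],$$
which is a well-defined bounded measurable (indeed piecewise Lipschitz) function; the corresponding control put on $W$ is then the derivative (in the weak sense) of $\mathfrak z$ in the $z$-equation, and the control put on $B$ is computed from the $z$-equation of \eqref{eq.GLintro}. Concretely, I would apply a Girsanov-type argument analogous to Proposition~\ref{pr.SFgamma0}: since $\gamma>0$ and $\alpha>0$, the noise $\boldsymbol\Sigma$ is invertible, so on the event that the process stays in the bounded set $\mathscr D_{\mathfrak l}$ the law of $(\bar X^{\mathfrak l}_t,t\in[0,T])$ is equivalent to that of the simpler process where the non-smooth drift is switched off; then I reduce \eqref{eq.P2gamma} to showing that the explicit Gaussian-driven process stays $\epsilon$-close to $\mathfrak n(t):=(\mathfrak l(t),\dot{\mathfrak l}(t),\mathfrak z(t))$ uniformly on $[0,T]$ with positive probability, which follows from the support theorem for Brownian motion (the support of the law of $(B,W)$ for the sup-norm on $[0,T]$ being the closure of $\mathcal L^2_0([0,T],\mathbf R^{2d})$, cf.~\eqref{eq.Law}) together with a Grönwall estimate: if the driving noise is within $\eta$ of the prescribed control in sup-norm, then $\sup_{s\in[0,T]}|\bar X^\circ_s-\mathfrak n(s)|\le C_T\eta$. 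Choosing $\eta$ small enough that $C_T\eta$ is below the $\epsilon_{\mathfrak l}$ guaranteeing both $T<\bar\sigma^\circ_{\mathscr D_{\mathfrak l}}$ and $\bar X^\circ_T\in\mathsf O$ (via the analogues of Items \textbf{a} and \textbf{b} in the proof of Proposition~\ref{pr.P2C5}) closes the argument.

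Finally, the second half of \textbf{(C5)} — existence of $\mathsf x_0\in\mathscr D$ with $\mathbf P_{\mathsf x_0}[\sigma_{\mathscr D}<+\infty]>0$ — is handled verbatim as in Proposition~\ref{pr.P2C5}: since $\mathbf R^d\setminus\overline{\mathscr O}$ is nonempty, pick a point $\mathsf x_1$ whose position coordinate lies outside $\overline{\mathscr O}$, build a control curve steering the position from $x_0$ across $\partial\mathscr O$ to that point, and apply the same Girsanov/support-theorem reasoning to conclude the exit happens with positive probability. The main obstacle, as in the $\gamma=0$ case, is purely bookkeeping: one must make sure the control curve's regularity ($\mathcal C^2$, piecewise $\mathcal C^3$) is exactly what is needed so that the derived controls on $B$ and $W$ lie in $L^\infty([0,T])\subset L^2([0,T])$ and hence are genuinely in the support, and one must carefully track that the Girsanov density is a true martingale on the bounded localized domain (which is why one localizes to $\mathscr D_{\mathfrak l}$ first); none of this is conceptually hard given $\gamma>0$, so the proof is actually shorter than in the degenerate case. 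I would present it as: ``The proof follows the lines of that of Proposition~\ref{pr.P2C5}, and is in fact simpler since the noise is non-degenerate in the velocity variable; we only indicate the modifications,'' and then give the construction of $\mathfrak z$ and the Girsanov reduction.
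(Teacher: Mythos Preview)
Your approach is correct and would go through, but it is more laborious than the paper's. The key simplification you miss is that the \emph{global} Girsanov formula~\eqref{eq.Girsanov}, already established in the proof of Proposition~\ref{pr.SFgamma0}, makes the law of $(X_t,t\in[0,T])$ equivalent to that of the completely decoupled free process $(X_t^0,t\in[0,T])$ of~\eqref{eq.GL0} on all of $\mathbf R^{3d}$. Hence it suffices to prove~\textbf{(C5)} for $X^0$, and for that process the $v$- and $z$-components are just scaled Brownian motions, independent of one another, with $x$ the time integral of $v$. This buys two things: first, no localization to $\mathscr D_{\mathfrak l}$ or auxiliary Lipschitz drift $\mathbf b_{\mathfrak l}$ is needed; second, the target curve for $z$ can be any smooth $\mathfrak u:[0,T]\to\mathbf R^d$ with $\mathfrak u(0)=z_0$, $\mathfrak u(T)=z_T$, chosen \emph{independently} of the position curve $\mathfrak e$, rather than being dictated by the velocity equation as in your $\mathfrak z$. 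Consequently the position curve need only be $\mathcal C^1$ and piecewise $\mathcal C^2$ (so that $\ddot{\mathfrak e}\in L^\infty$ serves as the control on $W$), not $\mathcal C^2$ piecewise $\mathcal C^3$ as you require. The support-theorem step is then a one-line computation: on $\{\sup_t|\sqrt{2\gamma}W_t-\int_0^t\ddot{\mathfrak e}|<\eta\}\cap\{\sup_t|\sqrt{2\alpha}B_t-\int_0^t\dot{\mathfrak u}|<\eta\}$ one has $\sup_{s\in[0,T]}|X^0_s-(\mathfrak e,\dot{\mathfrak e},\mathfrak u)(s)|\le C_T\eta$ directly, with no Gr\"onwall needed. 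Your route via localization and the coupled $\mathfrak z$ works, but duplicates effort already spent in Proposition~\ref{pr.SFgamma0}.
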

\begin{proof}   
  In view of~\eqref{eq.Girsanov}, it is enough to prove  \textbf{(C5)} for the process $(X_t^0,t\ge 0)$ defined in~\eqref{eq.GL0}.
Let $T>0$, $\mathsf x_0=(x_0,v_0,z_0)\in \mathscr D$,  $ \mathsf O$ be a nonempty  open subset of   $\mathscr D$, and  $\mathsf x_T=(x_T,v_T, z_T)\in  \mathsf O$. 
Let also  $\mathfrak e :[0,T]\to \mathbf R^d$ be  $\mathcal C^1$ and piecewise $\mathcal C^2$  such that $\mathfrak e (0)=x_0$, $\dot{\mathfrak e }(0)=v_0$,  $\mathfrak e (T)=x_T$, $\dot{\mathfrak e }(T)=v_T$, and   ${\rm  Ran} (\mathfrak e )\subset \mathscr O$.   
  Let $\mathfrak u:[0,T]\to \mathbf R^d$ be a smooth curve with $\mathfrak u(0)=z_0 $ and $\mathfrak u(T)=z_T $. Define  $\mathfrak m (t)=(\mathfrak e (t),  \dot{\mathfrak e }(t),    \mathfrak u (t))$ (note that $\mathfrak m (0)=\mathsf x_0$ and $ {\mathfrak m }(T)=\mathsf x_T$). For any $\eta>0$, the event 
 $\mathcal A_\eta:= \{\sup_{t\in [0,T]}|\sqrt{2\gamma}W_t-\int_0^t\ddot{\mathfrak e } |<\eta \}\cap  \{\sup_{t\in [0,T]}|\sqrt{2\alpha}B_t-\int_0^t\dot{\mathfrak u} |<\eta \}$
   has positive probability (see~\eqref{eq.Law}). A straightforward computation shows that when $X_0^0=\mathsf x_0$, $\mathcal A_{\epsilon/C_T}\subset \{\sup_{s\in [0,T]}|X_s^0-\mathfrak m (s)|<\epsilon\}$ for some   $C_T>0$. Hence,  $\mathbf P_{\mathsf x_0}[\sup_{s\in [0,T]}|X_s^0-\mathfrak m (s)|<\epsilon]>0$. 
    This concludes the proof of Proposition~\ref{pr.SFgamma-not0}. 
\end{proof}



  \subsection{On Assumption \textbf{(C3)}} 
 
\label{sec.CC3}

  In this  section, we  construct  two Lyapunov functions $\mathsf W_\delta:\mathbf R^{3d}\to [1,+\infty)$  satisfying Assumption   \textbf{(C3)}, where $\delta>0$ is a parameter to be chosen later.  
%

 
Assume  \textbf{[V{\tiny poly-$x^k$}]}. 
 We define  the vector field $\mathsf L$ as follows. Let $\chi:\mathbf R^d\to [0,1]$ be a smooth   function such that $\chi(x)=0$ if $|x|\le 1$ and $\chi(x)=1$ if $|x|\ge 2$. 
 We define $\mathsf J(x)=x \,  |x|^{\beta-1} \chi(x), \ \beta\in [0,1]$.  Note that $\mathsf J$ is $\mathcal C^1$ and  the first derivatives of $\mathsf J$ are bounded over $\mathbf R^d$ (because $\beta\le 1$), say by $C_{\mathsf J}:=\sup_{\mathbf R^d} |\text{Jac} (\mathsf J)|_2>0$ (where $|\mathsf M|_2:=\sup \{|\mathsf My|, |y|=1\}$, $\mathsf M\in \mathcal M_d(\mathbf R)$).  
One then sets:
$$
\mathsf L = \kappa \mathsf J, \ \kappa:= \frac{\lambda}{2C_{\mathsf J}},
$$
so that (and this will be used in the case $\gamma=0$ below),
\begin{equation}\label{eq.lambda2}
\mathfrak C_{\mathsf L}:=\sup_{\mathbf R^d} |\text{Jac} (\mathsf  L)|_2\le \lambda/2.
\end{equation}
   For all $(x,v,z)\in \mathbf R^{3d}$, $\mathfrak b\ge 0$, and $\mathfrak h,\mathfrak a>0$, we define (see \eqref{eq.H-GL}), $\mathsf F_0(x,v,z)=\mathfrak h\mathsf H_{{\rm GL}}(x,v,z)+\mathfrak a\mathsf L(x)\cdot v+\mathfrak b v\cdot z$. 
The parameter $\beta>0$ will be chosen such that 
\begin{equation}\label{eq.inf}
\inf_{\mathbf R^{3d}}\mathsf F_0 \in \mathbf R.
\end{equation}
We then set  
$\mathsf F_{{\rm GL}} =\mathsf F_0 -\inf_{\mathbf R^{3d}}\mathsf F_0 +1$ and 
  \begin{equation}\label{eq.Lyapunov2}
  \mathsf W_\delta =\exp\big [ \mathsf F_{{\rm GL}} ^\delta\big ], \ \text{ where }  \frac{1-\beta}{k}<\delta  \le 1.
   \end{equation}
In the following, for ease of notation we simply write $\mathsf F$ for $\mathsf F_{{\rm GL}}$. Since  $\mathsf F^{1-\delta}\ge 1$, a straightforward computation  implies that over $\mathbf R^{3d}$,
\begin{equation}\label{eq.WF}
\frac{\mathcal L_{{\rm GL}}\mathsf W_\delta}{ \mathsf W_\delta}\le \frac{\delta }{\mathsf F^{1-\delta}}\Big[\mathcal L_{{\rm GL}} \mathsf F+ \delta \gamma |\nabla _v \mathsf F|^2+\delta \alpha |\nabla _z \mathsf F|^2\Big],
\end{equation}
  where $\mathcal L_{{\rm GL}}\mathsf F= v\cdot \nabla _x \mathsf F + (-\gamma v -  \nabla _x V +\lambda z)\cdot \nabla_v\mathsf F + \gamma \Delta_v\mathsf F- (\alpha z+\lambda v)\cdot \nabla _z\mathsf F+ \alpha \Delta_z \mathsf F$.
We also have   $\nabla _x \mathsf F(x,v,z)= \mathfrak h\nabla _x V + \mathfrak a \text{ Jac}(\mathsf L)(x)\, v  $, $\nabla _v \mathsf F(x,v,z)= \mathfrak hv + \mathfrak a\mathsf L(x)+\mathfrak b z$, and $\nabla _z \mathsf F(x,v,z)= \mathfrak hz+\mathfrak b v$.  Consequently, one has for all $(x,v,z)\in \mathbf R^{3d}$:
  \begin{align}
  \nonumber
  \mathcal L_{{\rm GL}}\mathsf F(x,v,z)&=-\lambda \mathfrak b |v|^2 -\gamma \mathfrak h |v|^2+\mathfrak a v\cdot \text{Jac}(\mathsf L)(x) v-\gamma\mathfrak a \mathsf L(x)\cdot v-\mathfrak a \nabla V(x)\cdot \mathsf L(x)\\
\nonumber
  &\quad -\alpha \mathfrak h |z|^2+ \lambda \mathfrak b |z|^2+\lambda \mathfrak a z\cdot \mathsf L(x)  -\gamma \mathfrak b v\cdot z-  \mathfrak b  \nabla V(x)\cdot z  -\alpha \mathfrak b z\cdot v\\
  \label{eq.E-l1}
  &\quad + \mathfrak h d (\gamma+\alpha)
  \end{align}
  and 
  \begin{align}\label{eq.E-l2}
  (\delta \gamma |\nabla _v \mathsf F|^2+\delta \alpha |\nabla _z \mathsf F|^2)(x,v,z)= \delta \gamma|  \mathfrak hv + \mathfrak a\mathsf L(x)+\mathfrak b z|^2+ \delta \alpha  | \mathfrak hz+\mathfrak b v|^2.
    \end{align}

 \subsubsection{The case when $\gamma>0$}
    
    In this section, $\gamma>0$, 
\begin{equation}\label{eq.Bbeta}
 \mathfrak b=0, \, k>1, \, 0< \beta< \min(1,k/2,k-1).
\end{equation}
Let us now check \eqref{eq.inf}. Let $p_0,q_0>1$ such that $1/p_0+1/q_0=1$.   
Then, using \textbf{[V{\tiny poly-$x^k$}]}, for all $(x,v,z)\in \mathbf R^{3d}$, one has  if $|x|\ge r_V$,
$$\mathsf F_0(x,v,z) \ge  c_V  {\mathfrak  h}|x|^k+ \frac {\mathfrak  h}2 |v|^2+\frac  {\mathfrak  h}2 |z|^2- 
\frac{\mathfrak a  \kappa }{p_0} |x|^{\beta p_0}-\frac{\mathfrak a \kappa }{q_0} |v|^{q_0}.$$
Pick $\epsilon >0$ small enough such that $p_0=k/\beta-\epsilon>1$ and 
$q_0=(k-\epsilon\beta)/(k-\epsilon \beta-\beta)<2$ (which is possible since the latter quantity converges to $k/(k-\beta)<2$ as $\epsilon \to 0^+$, by \eqref{eq.Bbeta}). 
Note that $\beta p_0<k$. Thus, for any $\mathfrak h, \mathfrak a>0$, there exists  $C,c>0$ such that for all $(x,v,z)\in \mathbf R^{3d}$, 
$$
 \mathsf F_0(x,v,z) \ge  c (|x|^k+  |v|^2 + |z|^2)  
 -C.
$$
 Thus   \eqref{eq.inf} holds. Note also that  for any $\mathfrak h, \mathfrak a>0$,  there exist  $c'>0$,     for all  $(x,v,z)\in \mathbf R^{3d}$, 
 \begin{equation}\label{eq.F3}
  \mathsf F_0(x,v,z) \le  c' ( |x|^k+  |v|^2+ |z|^2+1) .
  \end{equation}
 


    
  \begin{prop}\label{pr.C3-1}
  Assume $\gamma>0$, {\rm \textbf{[V{\tiny poly-$x^k$}]}},  $k>1$,  and \eqref{eq.Bbeta}.   
   Then, for any $\mathfrak h,\mathfrak a>0$  small enough (these conditions are made explicit in the proof),   \textbf{(C3)} is satisfied with the function $\mathsf W_\delta$ defined in \eqref{eq.Lyapunov2}. 
  \end{prop}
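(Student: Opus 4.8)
The goal is to verify condition \textbf{(C3)}: find sequences $r_n\to+\infty$, $b_n>0$, and compact sets $K_n\uparrow\mathbf R^{3d}$ with $-\mathcal L_{{\rm GL}}\mathsf W_\delta\ge r_n\mathsf W_\delta-b_n\mathbf 1_{K_n}$. Since $\mathsf W_\delta=\exp[\mathsf F^\delta]$ and, by \eqref{eq.WF}, $\mathcal L_{{\rm GL}}\mathsf W_\delta/\mathsf W_\delta\le \delta\mathsf F^{\delta-1}[\mathcal L_{{\rm GL}}\mathsf F+\delta\gamma|\nabla_v\mathsf F|^2+\delta\alpha|\nabla_z\mathsf F|^2]$, it suffices to show that the bracketed quantity tends to $-\infty$ as fast as needed relative to $\mathsf F^{1-\delta}$; more precisely, it suffices to prove that
$$
\mathcal L_{{\rm GL}}\mathsf F+\delta\gamma|\nabla_v\mathsf F|^2+\delta\alpha|\nabla_z\mathsf F|^2 \le -c\,\mathsf E + C
$$
over $\mathbf R^{3d}$ for some $c,C>0$, where $\mathsf E(x,v,z)=1+|x|^k+|v|^2+|z|^2$. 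Indeed, combining this with $\mathsf F\le c'\mathsf E$ from \eqref{eq.F3} and $\mathsf F\ge c\mathsf E-C$ (established just above), one gets $\mathcal L_{{\rm GL}}\mathsf W_\delta/\mathsf W_\delta\le -c''\mathsf E^\delta+C''\le -c''\mathsf F^\delta/(c')^\delta + C''$, and since $\mathsf F^\delta\to+\infty$ off compact sets (as $\mathsf F$ is coercive by \eqref{eq.F3} and the lower bound), a standard argument splitting $\mathbf R^{3d}$ into $\{\mathsf F^\delta\le M_n\}$ and its complement gives the required inequality with $r_n\to+\infty$.

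So the heart of the matter is the pointwise drift estimate. With $\mathfrak b=0$ as in \eqref{eq.Bbeta}, expression \eqref{eq.E-l1} simplifies to
$$
\mathcal L_{{\rm GL}}\mathsf F=-\gamma\mathfrak h|v|^2+\mathfrak a\,v\cdot\text{Jac}(\mathsf L)(x)v-\gamma\mathfrak a\,\mathsf L(x)\cdot v-\mathfrak a\,\nabla V(x)\cdot\mathsf L(x)-\alpha\mathfrak h|z|^2+\lambda\mathfrak a\,z\cdot\mathsf L(x)+\mathfrak h d(\gamma+\alpha),
$$
and \eqref{eq.E-l2} becomes $\delta\gamma|\mathfrak h v+\mathfrak a\mathsf L(x)|^2+\delta\alpha\mathfrak h^2|z|^2$. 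First I would use $\mathfrak a\,v\cdot\text{Jac}(\mathsf L)(x)v\le \mathfrak a C_{\mathsf L}|v|^2\le \tfrac{\lambda\mathfrak a}{2}|v|^2$ (or indeed $\mathfrak a\mathfrak C_{\mathsf L}|v|^2$ from \eqref{eq.lambda2}), which is absorbed into $-\gamma\mathfrak h|v|^2$ once $\mathfrak a$ is small relative to $\gamma\mathfrak h$. Next, the term $-\mathfrak a\,\nabla V(x)\cdot\mathsf L(x)$: since $\mathsf L(x)=\kappa x|x|^{\beta-1}\chi(x)$ equals $\kappa x|x|^{\beta-1}$ for $|x|\ge 2$, assumption \textbf{[V{\tiny poly-$x^k$}]} gives $\nabla V(x)\cdot\mathsf L(x)=\kappa|x|^{\beta-1}\,x\cdot\nabla V(x)\ge \kappa c_V|x|^{k-1+\beta}$ for large $|x|$, which is the main negative term controlling the $x$-direction (note $k-1+\beta>\max(\beta,2\beta,\dots)$ hence dominates all cross terms in $x$). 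The remaining cross terms $\gamma\mathfrak a|\mathsf L(x)||v|$, $\lambda\mathfrak a|z||\mathsf L(x)|$, and the square $\delta\gamma\mathfrak a^2|\mathsf L(x)|^2$ all involve $|\mathsf L(x)|\le\kappa|x|^\beta$; by Young's inequality with the right exponents (using $\beta<k/2$ and $2\beta<k$, i.e. $|x|^{2\beta}\le \varepsilon|x|^k+C_\varepsilon$, and $|x|^\beta|v|\le\varepsilon(|x|^{k}+|v|^2)+C_\varepsilon$ via $2\beta<k$ hence $\beta\cdot\frac{2}{2-\text{stuff}}$...), each is bounded by a small multiple of $|x|^{k-1+\beta}+|v|^2+|z|^2$ plus a constant, provided $\mathfrak a$ is small. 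Similarly $-\alpha\mathfrak h|z|^2+\delta\alpha\mathfrak h^2|z|^2\le-\tfrac{\alpha\mathfrak h}{2}|z|^2$ for $\mathfrak h$ small. Collecting everything, for $\mathfrak h,\mathfrak a$ sufficiently small one obtains $\mathcal L_{{\rm GL}}\mathsf F+\delta\gamma|\nabla_v\mathsf F|^2+\delta\alpha|\nabla_z\mathsf F|^2\le -c(|x|^{k-1+\beta}+|v|^2+|z|^2)+C$.

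Finally I would reconcile the exponent $k-1+\beta$ with the needed $\mathsf E\sim|x|^k$: here is precisely where the constraint $\delta>(1-\beta)/k$ in \eqref{eq.Lyapunov2} enters. We have the cleaner bound $\mathcal L_{{\rm GL}}\mathsf W_\delta/\mathsf W_\delta\le \delta\mathsf F^{\delta-1}(-c|x|^{k-1+\beta}-c|v|^2-c|z|^2+C)$; using $\mathsf F\le c'\mathsf E\le c''(|x|^k+|v|^2+|z|^2+1)$ one checks that $\mathsf F^{\delta-1}|x|^{k-1+\beta}\ge c|x|^{k-1+\beta}/(|x|^k+1)^{1-\delta}\to+\infty$ exactly when $k-1+\beta-k(1-\delta)>0$, i.e. $\delta>(1-\beta)/k$; and $\mathsf F^{\delta-1}(|v|^2+|z|^2)$ is handled by noting that on the region where $|v|^2+|z|^2$ dominates $|x|^k$ one has $\mathsf F^{\delta-1}(|v|^2+|z|^2)\gtrsim(|v|^2+|z|^2)^\delta\to\infty$. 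Hence $-\mathcal L_{{\rm GL}}\mathsf W_\delta/\mathsf W_\delta\to+\infty$ off compact sets, which yields \textbf{(C3)} with the stated $K_n,r_n,b_n$. The main obstacle is the bookkeeping in the Young-inequality step: one must track the smallness conditions on $\mathfrak h$ and $\mathfrak a$ carefully and verify that the exponent inequalities $2\beta<k$, $\beta<k/2$, $\beta<k-1$ (all from \eqref{eq.Bbeta}) are exactly what is needed to absorb every cross term, and that $\mathsf L\in\mathbb D_e(\mathcal L_{{\rm GL}})$ (which follows since $\mathsf L$ is $\mathcal C^1$ with bounded derivatives and $\mathsf F$ then has at most polynomial growth, placing $\mathsf W_\delta$ in the extended domain via Itô's formula and the localization \eqref{eq.tauR}).
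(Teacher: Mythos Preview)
Your proposal is correct and follows essentially the same approach as the paper's proof: with $\mathfrak b=0$, you bound $\mathcal L_{{\rm GL}}\mathsf F+\delta\gamma|\nabla_v\mathsf F|^2+\delta\alpha|\nabla_z\mathsf F|^2$ by $-c(|x|^{k-1+\beta}+|v|^2+|z|^2)+C$ for small $\mathfrak h,\mathfrak a$ via the growth hypothesis and Young's inequality, then use $\delta>(1-\beta)/k$ together with \eqref{eq.F3} to conclude $\mathcal L_{{\rm GL}}\mathsf W_\delta/\mathsf W_\delta\to-\infty$. Note only that your opening ``Plan'' overstates the intermediate target as $-c\,\mathsf E+C$ with $\mathsf E\sim|x|^k$, which is strictly stronger than what you (and the paper) actually establish---but you correctly recover from this in your final paragraph, where the constraint on $\delta$ does the work.
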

      
  
  \begin{proof} 
%
  Recall that  $\mathfrak b=0$. Let $(x,v,z)\in \mathbf R^{3d}$ with  $|x|\ge \max (2,r_V)$. 
  Using   \textbf{[V{\tiny poly-$x^k$}]}, \eqref{eq.E-l1} and \eqref {eq.E-l2},   one gets
  \begin{align*}
  &(\mathcal L_{{\rm GL}} \mathsf F+ \delta \gamma |\nabla _v \mathsf F|^2+\delta \alpha |\nabla _z \mathsf F|^2)(x,v,z)\\
  &\le -\gamma \mathfrak h |v|^2+\mathfrak a \mathfrak C_{\mathsf L}  |v|^2+\gamma\mathfrak a  \kappa |x|^{\beta}   |v|-\mathfrak a  \kappa c_V |x|^{k-1+\beta} \\
  &\quad +2\delta \gamma  \mathfrak h^2|v|^2 + 2\delta \gamma \mathfrak a^2|\mathsf L(x)|^2+\lambda \mathfrak a \kappa |x|^{\beta} |z|-\alpha \mathfrak h |z|^2 +\delta \alpha \mathfrak h^2 |z|^2+ d\mathfrak h (\alpha + \gamma) .
  \end{align*}
  Choose $\mathfrak h>0$ small enough such that $ -\gamma \mathfrak h+2\delta \gamma  \mathfrak h^2<0$ and $-\alpha \mathfrak h+\delta \alpha \mathfrak h^2<0$. Then, choose $\mathfrak a>0$ small enough such that $\mathfrak C_{\mathsf L}\mathfrak a -\gamma \mathfrak h+2\delta \gamma  \mathfrak h^2<0$. Fix now such $\mathfrak h, \mathfrak a>0$.
Recall that $   |x|^{\beta}  |y|\le  \frac{1}{p_0} |x|^{\beta p_0}+\frac{1}{q_0} |y|^{q_0}= o( |x|^k+|y|^2)$ as $|x|+|y|\to +\infty$. In addition $|\mathsf L(x)|^2=\kappa^2|x|^{2\beta}$ and $2\beta <k-1+\beta$ since $\beta<k-1$ (see \eqref{eq.Bbeta}).  This implies that there exist $C,c>0$ such that 
 \begin{align*}
  (\mathcal L_{{\rm GL}} \mathsf F+ \delta \gamma |\nabla _v \mathsf F|^2+\delta \alpha |\nabla _z \mathsf F|^2)(x,v,z)&\le -c (|v|^2+|z|^2+ |x|^{k-1+\beta}) + C.
  \end{align*}
Using in addition \eqref{eq.WF},  \eqref{eq.F3}, and \eqref{eq.Bbeta}, we deduce that 
  $ \mathcal L_{{\rm GL}}\mathsf W_\delta/ \mathsf W_\delta\to -\infty$ as $|x|+|v|+|z|\to +\infty$. This concludes the proof of the proposition.
  \end{proof}


     \subsubsection{The case when $\gamma=0$}
    
    In this section $\gamma=0$,  
   \begin{equation}\label{eq.Bbeta2}
\mathfrak b=\mathfrak a>0,\,  k\in (1,2],\text {and }   \beta=k-1 .
\end{equation}
    Let us check \eqref{eq.inf}. 
 Let $p_1=  k/(k-1)>1$  and $q_1=p_1/(p_1-1)=k\le 2$. 
Using  \textbf{[V{\tiny poly-$x^k$}]}, we have for   all $(x,v,z)\in \mathbf R^d$, if $|x|\ge r_V$,
\begin{align*}
\mathsf F_0(x,v,z) \ge  c_V  {\mathfrak  h}|x|^k+ \frac {\mathfrak  h-\mathfrak a}2 |v|^2+\frac  {\mathfrak  h-\mathfrak a}2 |z|^2- 
\frac{\mathfrak a\kappa }{p_1} |x|^{ k}-\frac{\mathfrak a \kappa }{k} |v|^{k}.
\end{align*}
Then, for $\mathfrak h>0$, choose $\mathfrak  a>0$ small enough such that  
  \begin{equation}\label{eq.abh2}
 \frac{\mathfrak a\kappa }{p_1}< c_V  {\mathfrak  h}  \text{ and } 
 \frac{\mathfrak a\kappa }{k} +\frac{\mathfrak a}{2} <\frac{\mathfrak h}{2}. 
 \end{equation}
 Then  \eqref{eq.inf} holds. Note also that when \eqref{eq.Bbeta2} is satisfied, $\mathsf F_0(x,v,z) \le  c' ( |x|^k+  |v|^2+ |z|^2)  +C'$. 
      \begin{prop}\label{pr.C3-2}
       Assume $\gamma=0$, {\rm\textbf{[V{\tiny poly-$x^k$}]}},  $k\in (1,2]$,  and \eqref{eq.Bbeta2}.       Then, for any $\mathfrak h,\mathfrak a>0$  small enough (these conditions are made explicit in the proof),   \textbf{(C3)} is satisfied with the function $\mathsf W_\delta$ defined in \eqref{eq.Lyapunov2}. 
  \end{prop}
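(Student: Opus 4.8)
The plan is to follow exactly the scheme of the proof of Proposition~\ref{pr.C3-1}: we show that $\mathcal{L}_{\rm GL}\mathsf{W}_\delta/\mathsf{W}_\delta\to-\infty$ as $|x|+|v|+|z|\to+\infty$, which immediately produces \textbf{(C3)} (take $K_n$ a large closed ball, let $r_n\to+\infty$, and set $b_n:=\sup_{K_n}|\mathcal{L}_{\rm GL}\mathsf{W}_\delta+r_n\mathsf{W}_\delta|$; the membership $\mathsf{W}_\delta\in\mathbb{D}_e(\mathcal{L}_{\rm GL})$ follows as in the case $\gamma>0$, since $\mathsf{W}_\delta=\exp(\mathsf{F}^\delta)$ with $\mathsf{L}\in\mathcal{C}^\infty$ and $\nabla V$ locally Lipschitz, so that $\mathcal{L}_{\rm GL}\mathsf{W}_\delta$ is continuous and the usual localized Dynkin argument applies). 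Putting $\gamma=0$ and $\mathfrak{b}=\mathfrak{a}$ in \eqref{eq.WF}, \eqref{eq.E-l1}, \eqref{eq.E-l2}, the whole problem reduces to bounding
\[
\Xi(x,v,z):=\mathcal{L}_{\rm GL}\mathsf{F}(x,v,z)+\delta\alpha\,|\mathfrak{h}z+\mathfrak{a}v|^2
\]
from above by $-c\,(|v|^2+|z|^2)-c\,|x|^{2(k-1)}\mathbf{1}_{\{|x|\ge\max(2,r_V)\}}+C$ for suitably small $\mathfrak{h},\mathfrak{a}>0$: indeed, by \eqref{eq.WF} and the growth bound $\mathsf{F}\le c'(|x|^k+|v|^2+|z|^2+1)$ recorded just after \eqref{eq.abh2}, one has $\mathcal{L}_{\rm GL}\mathsf{W}_\delta/\mathsf{W}_\delta\le\delta\,\mathsf{F}^{\delta-1}\,\Xi$, and since $\mathsf{F}^{1-\delta}\ge1$ this ratio tends to $-\infty$ at large $|v|$ or $|z|$ (as $-(|v|^2+|z|^2)/\mathsf{F}^{1-\delta}\le-c\,(|v|^2+|z|^2)^{\delta}$) and at large $|x|$ because the choice $\beta=k-1$ gives $-|x|^{2(k-1)}/\mathsf{F}^{1-\delta}\le-c\,|x|^{2(k-1)-k(1-\delta)}=-c\,|x|^{k(\delta-(2-k)/k)}$, the exponent being positive precisely because $\delta>(2-k)/k=(1-\beta)/k$ as imposed in \eqref{eq.Lyapunov2}.

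The heart of the matter is the term-by-term estimate of $\Xi$ for $|x|\ge\max(2,r_V)$. Using \textbf{[V{\tiny poly-$x^k$}]} and $\mathsf{L}(x)=\kappa x|x|^{k-2}$ on this region, the term $-\mathfrak{a}\nabla V(x)\cdot\mathsf{L}(x)=-\mathfrak{a}\kappa|x|^{k-2}\,x\cdot\nabla V(x)$ supplies the good quantity $\le-\mathfrak{a}\kappa c_V|x|^{2(k-1)}$; the curvature term is controlled via \eqref{eq.lambda2} by $\mathfrak{a}\,v\cdot\mathrm{Jac}(\mathsf{L})(x)v\le\mathfrak{a}\mathfrak{C}_{\mathsf{L}}|v|^2\le\tfrac{\lambda}{2}\mathfrak{a}|v|^2$; and the cross terms $\lambda\mathfrak{a}\,z\cdot\mathsf{L}(x)$, $-\mathfrak{a}\nabla V(x)\cdot z$ (here we invoke the extra hypothesis $|\nabla V(x)|\le M_V|x|^{k-1}$ available when $\gamma=0$) and $-\alpha\mathfrak{a}\,z\cdot v$, together with $\delta\alpha|\mathfrak{h}z+\mathfrak{a}v|^2\le2\delta\alpha\mathfrak{h}^2|z|^2+2\delta\alpha\mathfrak{a}^2|v|^2$, are all split by Young's inequality, the $z$–$v$ term being split asymmetrically so as to charge $|v|^2$ with an arbitrarily small weight $\epsilon_0\mathfrak{a}$ and $|z|^2$ with the correspondingly large weight $C_{\epsilon_0}\mathfrak{a}$, and the $|x|$–$z$ terms producing $\epsilon_i|z|^2+C_{\epsilon_i}|x|^{2(k-1)}$. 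Collecting coefficients, the $|v|^2$-coefficient is $\mathfrak{a}\big(-\lambda+\mathfrak{C}_{\mathsf{L}}+\epsilon_0+2\delta\alpha\mathfrak{a}\big)$, the $|z|^2$-coefficient is $-\alpha\mathfrak{h}(1-2\delta\mathfrak{h})+\lambda\mathfrak{a}+C_{\epsilon_0}\mathfrak{a}+\epsilon_1+\epsilon_2$, and the $|x|^{2(k-1)}$-coefficient is $-\mathfrak{a}\kappa c_V+C_{\epsilon_1}(\lambda\mathfrak{a}\kappa)^2+C_{\epsilon_2}(\mathfrak{a}M_V)^2$. The region $|x|\le\max(2,r_V)$ is handled identically using that $\nabla V$, $\mathsf{L}$, $\mathrm{Jac}(\mathsf{L})$ are bounded there, which gives $\Xi\le-c(|v|^2+|z|^2)+C$ on it.

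Hence, fixing $\delta\in((2-k)/k,1]$, the order of choices is: first $\mathfrak{h}>0$ small so that $1-2\delta\mathfrak{h}\ge\tfrac12$ (and \eqref{eq.inf} and \eqref{eq.abh2} are compatible); then the asymmetry constant in the $z$–$v$ Young split and the remaining $\epsilon_i$'s small enough that their $|z|^2$-contributions fall below $\tfrac{\alpha\mathfrak{h}}{8}$; and finally $\mathfrak{a}>0$ small enough (in particular fulfilling \eqref{eq.abh2}) that all three collected coefficients above are strictly negative — which is possible because the gains $-\lambda\mathfrak{a}$, $-\alpha\mathfrak{h}(1-2\delta\mathfrak{h})$, $-\mathfrak{a}\kappa c_V$ dominate the competing positive terms, those being $o_{\mathfrak{a}}(1)\cdot\mathfrak{a}$, $O(\mathfrak{a})$, and $O(\mathfrak{a}^2)$ respectively. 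This yields $\Xi\le-c(|v|^2+|z|^2)-c\,|x|^{2(k-1)}\mathbf{1}_{\{|x|\ge\max(2,r_V)\}}+C$ over $\mathbf{R}^{3d}$, whence \textbf{(C3)}. I expect the main obstacle to be precisely this delicate bookkeeping of the $|v|^2$-terms: with $\gamma=0$ there is no friction on the velocity, so the only source of $|v|^2$-dissipation is the $-\lambda\mathfrak{b}|v|^2$ term created by the coupling $\mathfrak{b}\,v\cdot z$, and one must keep \emph{every} other $|v|^2$-contribution (the curvature term, via the sharp bound $\mathfrak{C}_{\mathsf{L}}\le\lambda/2$ of \eqref{eq.lambda2}, and the $v$-share of the $z$–$v$ split) strictly below $\lambda\mathfrak{a}$, while simultaneously absorbing the full $|x|$-growth of the three cross terms into the single gain $-\mathfrak{a}\kappa c_V|x|^{2(k-1)}$ — which forces $\beta=k-1$ and, since $\mathsf{J}\in\mathcal{C}^1$ with bounded derivative and \eqref{eq.inf} both require $\beta\le1$, the restriction $k\le2$.
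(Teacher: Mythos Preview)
Your proposal is correct and follows essentially the same route as the paper's proof: both set $\gamma=0$, $\mathfrak b=\mathfrak a$, $\beta=k-1$, use \eqref{eq.lambda2} to control the Jacobian term, split the three cross terms $\lambda\mathfrak a\,z\cdot\mathsf L(x)$, $-\mathfrak a\,\nabla V(x)\cdot z$, $-\alpha\mathfrak a\,z\cdot v$ by Young's inequality, and then choose $\mathfrak h$ small (so that $-\alpha\mathfrak h+2\delta\alpha\mathfrak h^2<0$) followed by $\mathfrak a$ small to make all three coefficients negative; the endgame via \eqref{eq.WF} and the condition $\delta>(2-k)/k$ is identical. The only difference is cosmetic: the paper uses a single Young parameter $\eta=\sqrt{\mathfrak a}$ throughout, which makes the $|z|^2$-contributions scale like $\sqrt{\mathfrak a}$ and the $|x|^{2(k-1)}$- and $|v|^2$-contributions like $\mathfrak a^{3/2}$, so the smallness hierarchy is automatic, whereas you introduce separate constants $\epsilon_0,\epsilon_1,\epsilon_2$ and fix them sequentially before sending $\mathfrak a\to0$ --- both parametrizations yield the same estimate.
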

     \begin{proof}  Recall    $\mathfrak b=\mathfrak a$. In the following, $(x,v,z)\in \mathbf R^{3d}$,  $|x|\ge \max (2,r_V)$, and $\eta =\sqrt{\mathfrak a}$.
Using  \textbf{[V{\tiny poly-$x^k$}]}, \eqref{eq.E-l1}, and \eqref {eq.E-l2}, one has:
    \begin{align*}
  & (\mathcal L_{{\rm GL}} \mathsf F+ \delta \gamma |\nabla _v \mathsf F|^2+\delta \alpha |\nabla _z \mathsf F|^2)(x,v,z)\\
  &\le \alpha \mathfrak h d -\lambda \mathfrak a |v|^2  +\mathfrak a v\cdot \text{Jac}(\mathsf L)(x) v-\mathfrak a  \nabla V(x)\cdot \mathsf L(x)-\alpha \mathfrak h |z|^2  \\
   &\quad  + \lambda \mathfrak a |z|^2+\lambda \mathfrak a  |z||\mathsf L(x)|  + \mathfrak a|\nabla V(x)||z| +\alpha \mathfrak a |z|| v| +  \delta \alpha  | \mathfrak hz+\mathfrak a v|^2\\
   &\le  \alpha \mathfrak h d -\lambda \mathfrak a |v|^2  +\mathfrak a \mathfrak C_{\mathsf L}| v|^2-\mathfrak a \kappa c_V |x|^{2(k-1)} -\alpha  \mathfrak h |z|^2  \\
   &\quad  + \lambda \mathfrak a |z|^2+\lambda  \kappa \mathfrak a   |z| |x|^{k-1}  +  \mathfrak a M_V  |x|^{k-1}|z| +\alpha \mathfrak a |z|| v| +  \delta \alpha  | \mathfrak hz+\mathfrak a v|^2\\
   &\le \alpha \mathfrak h d -\lambda \mathfrak a |v|^2  +\mathfrak a \mathfrak C_{\mathsf L}| v|^2-\mathfrak a  \kappa c_V |x|^{2(k-1)} -\alpha \mathfrak h |z|^2 + \lambda \mathfrak a |z|^2+ \lambda \mathfrak a  \kappa  {|z|^2}/{2 \eta}+  \lambda \mathfrak a \eta \kappa  {|x|^{2(k-1)}}/{2}  \\
   &\quad    +  \mathfrak a M_V \eta  {|x|^{2(k-1)}}/2+\mathfrak a M_V  {|z|^2}/{2\eta}  +\alpha \mathfrak a   {|z|^2}/{2\eta } + \alpha \mathfrak a \eta  {|v|^2}/{2}+ 2 \delta \alpha   \mathfrak h^2|z|^2+2 \delta \alpha \mathfrak a^2 |v|^2\\
   &\le  \alpha \mathfrak h d+|v|^2\big[-\lambda \mathfrak a+ \mathfrak a \mathfrak C_{\mathsf L}+2 \delta \alpha \mathfrak a^2+\alpha {\mathfrak a^{3/2}}/2 \big]+ |x|^{2(k-1)}\big [  -\kappa c_V\mathfrak a  + { \lambda \kappa  } \mathfrak a^{3/2}/2+    M_V   \mathfrak a^{3/2}/2\big]\\
   &\quad +  |z|^{2}\big [-\alpha \mathfrak h +2\delta \alpha \mathfrak h^2+ {\lambda\kappa } \sqrt{\mathfrak a } /2+  {M_V  }  \sqrt{\mathfrak a}/2+{\alpha}\sqrt{\mathfrak a}/2+\lambda \mathfrak a\big].
   \end{align*}
   Let $\mathfrak h>0$ such that  $-\alpha \mathfrak h +2\delta \alpha \mathfrak h^2<0$. 
Using also \eqref{eq.lambda2}, it holds $-\lambda \mathfrak a+ \mathfrak a \mathfrak C_{\mathsf L}\le -\mathfrak \lambda \mathfrak a/2$.  
We then choose $\mathfrak a>0$ such that \eqref{eq.abh2} holds,  $-\mathfrak \lambda \mathfrak a/2+2 \delta \alpha \mathfrak a^2+\alpha  {\mathfrak a^{3/2}}/2 <0$, $ -\kappa c_V\mathfrak a  + { \lambda \kappa  }  \mathfrak a^{3/2}/2+    {M_V  } \mathfrak a^{3/2}/2<0$, and $-\alpha \mathfrak h+2\delta \alpha \mathfrak h^2+\frac{\lambda\kappa }{2}  \sqrt{\mathfrak a } + \frac{M_V  }2 \sqrt{\mathfrak a}+\frac{\alpha}2\sqrt{\mathfrak a}+\lambda \mathfrak a<0$. With the same arguments as those used at the end of the proof of Proposition \ref{pr.C3-1} and since $\delta>(2-k)/k$, we obtain that 
  $ \mathcal L_{{\rm GL}}\mathsf W_\delta/ \mathsf W_\delta\to -\infty$ as $|x|+|v|+|z|\to +\infty$. This concludes the proof of the proposition.
     \end{proof}

 \subsection{Proof of Theorem \ref{th.1-NE}}
 \label{sec.pr2}
 To prove  Theorem \ref{th.1-NE}, one first proves   \textbf{(C3)}. This is done with  a slight adaptation of the computations made in Section \ref{sec.CC3} and it indeed turns out that the  Lyapunov functions $\mathsf W_\delta$    constructed   in Section \ref{sec.CC3} (when $\boldsymbol \ell=0$, see more precisely \eqref{eq.Lyapunov2}) still satisfy \textbf{(C3)} with the generator  $\mathcal L=\mathcal L_{{\rm GL}}+\boldsymbol \ell \cdot \nabla _v$ when  $\boldsymbol \ell$ satisfies \textbf{[$\boldsymbol b${\tiny non-gradient}]}. 
Once \textbf{(C3)} is proved, one shows   the conditions  \textbf{(C2)},  \textbf{(C1)}, \textbf{(C4)}, \textbf{(C5)} (in this order) with the same arguments as those used in Section~\ref{sec.2V} (which rely on the approach introduced in Section~\ref{sec.22}) considering the coercive  function $\mathsf W_1$ as an energy of the system    instead of the Hamiltonian $\mathsf H_{{\rm GL}}$ (see Remark~\ref{re.ReG}). The  proof of Theorem \ref{th.1-NE} is then a consequence of~\cite[Theorem~2.2]{guillinqsd}.

\section{Generalized Langevin process with singular potentials: proof of Theorem~\ref{th.2}}
\label{sec.GL-singular}

  In this section, we prove Theorem~\ref{th.2}. We will again use~\cite[Theorem~2.2]{guillinqsd}, and  the strategy thus consists  of  showing that 
 \textbf{(C1)}$\to$\textbf{(C5)} are satisfied for the process \eqref{eq.GLintro} on the state space $\mathscr E=\mathbf O_V\times  \mathbf R^{dN} \times \mathbf R^{dN}$ given by    Proposition \ref{pr.ex-GL-i}.

 
 \subsection{On Assumptions  \textbf{(C1)}, \textbf{(C2)}, \textbf{(C4)}, and \textbf{(C5)} for the generalized Langevin process with singular potentials} 
 \begin{prop}\label{pr.PP-S}
Assume that $V$  satisfies  {\rm \textbf{[V{\tiny coercive}]}}.  Then, the nonkilled  semigroup of the process \eqref{eq.GLintro}  (see Proposition \ref{pr.ex-GL-i}) satisfies \textbf{(C1)} and  \textbf{(C2)}. Let $\mathscr D=\mathscr O \times 
  \mathbf R^{dN} \times \mathbf R^{dN}$ where $\mathscr O$ is a subdomain of $\mathbf O_V$  such that $\mathbf O_V\setminus \overline{\mathscr O}$ is nonempty. Then, the  killed   process \eqref{eq.GLintro}  satisfies \textbf{(C4)} and  \textbf{(C5)}.   
 \end{prop}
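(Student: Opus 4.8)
The plan is to re-run, with only cosmetic changes, the arguments of Section~\ref{sec.2V} that established \textbf{(C1)}, \textbf{(C2)}, \textbf{(C4)} and \textbf{(C5)} for~\eqref{eq.GLintro} under \textbf{[V{\tiny loc}]}: those proofs use nothing about $V$ beyond three facts, all valid here under \textbf{[V{\tiny coercive}]} --- the drift of~\eqref{eq.GLintro} is locally Lipschitz on $\mathbf O_V$; $\mathsf H_{{\rm GL}}$ is coercive, so the sublevel sets $\mathscr H_R=\{\mathsf H_{{\rm GL}}<R\}$ are open with compact closure in $\mathscr E$ while $\mathscr V_R=\{V<R\}$ has compact closure in $\mathbf O_V$; and the non-explosion estimate $\mathbf P_{\mathsf x}[\sigma_{\mathscr H_R}\le t]\le e^{ct}R^{-1}\mathsf H_{{\rm GL}}(\mathsf x)$ of Proposition~\ref{pr.ex-GL-i} holds. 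Hence the energy-splitting approach of Section~\ref{sec.22} applies. For \textbf{(C2)} one repeats the proof of Proposition~\ref{pr.C2}: Gr\"onwall's inequality on $\{t<\sigma_{\mathscr H_R}(\mathsf x)\wedge\sigma_{\mathscr H_R}(\mathsf x_n)\}$ together with the non-explosion estimate gives $\sup_{[0,t]}|X_s(\mathsf x_n)-X_s(\mathsf x)|\to0$ in $\mathbf P$-probability whenever $\mathsf x_n\to\mathsf x$ in $\mathscr E$.

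For \textbf{(C1)}, fix $t>0$, $f\in b\mathcal B(\mathscr E)$ and $\mathsf x_n\to\mathsf x$ in $\mathscr E$, and split $P_tf(\mathsf x)=\mathbf E_{\mathsf x}[f(X_t)\mathbf 1_{t<\sigma_{\mathscr H_R}}]+\mathbf E_{\mathsf x}[f(X_t)\mathbf 1_{t\ge\sigma_{\mathscr H_R}}]$ as in~\eqref{eq.splitting}; the second term is $\le\|f\|_\infty e^{ct}R^{-1}\sup_K\mathsf H_{{\rm GL}}$, hence negligible uniformly over a fixed compact $K$ as $R\to\infty$. Since $\overline{\mathscr V_R}$ is compact in $\mathbf O_V$ and $\nabla V$ is locally Lipschitz there, pick a globally Lipschitz $\mathbf b_R\colon\mathbf R^{dN}\to\mathbf R^{dN}$ equal to $-\nabla V$ near $\overline{\mathscr V_R}$ and let $(\bar X^R_t)$ solve~\eqref{eq.GLR-2} with $\mathbf b=\mathbf b_R$; since $(x,v,z)\in\mathscr H_R\Rightarrow x\in\mathscr V_R$, the two processes agree in law up to their first exit from $\mathscr H_R$, so $\mathbf E_{\mathsf x}[f(X_t)\mathbf 1_{t<\sigma_{\mathscr H_R}}]=\mathbf E_{\mathsf x}[f(\bar X^R_t)\mathbf 1_{t<\bar\sigma_{\mathscr H_R}}]$. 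The semigroup of $(\bar X^R_t)$ is strong Feller --- when $\gamma=0$ via the Gaussian density bound of~\cite[Theorem~1.1]{delarue} as in Theorem~\ref{pr.SFgamma-neq-0}, and when $\gamma>0$ via the Girsanov/Vitali argument of Proposition~\ref{pr.SFgamma0}, which now applies because $\mathbf b_R$ has linear growth --- and $\mathsf z\in\mathscr H_R\mapsto\mathbf E_{\mathsf z}[f(\bar X^R_t)\mathbf 1_{t<\bar\sigma_{\mathscr H_R}}]$ is continuous by combining this with Lemma~\ref{le.C4-2} and the Markov property exactly as in the derivation of~\eqref{eq.Exn}. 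Letting $R\to\infty$ yields \textbf{(C1)}, in fact $P_t$ strong Feller for every $t>0$.

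For \textbf{(C4)} one reproduces the proof of Proposition~\ref{co.SFD}: if $\widetilde K\subset\mathscr D$ is compact, $K\subset\mathscr O$ its compact projection onto the position coordinates and $\delta=\mathrm{dist}(K,\mathbf R^{dN}\setminus\mathscr O)>0$, then $\{s<\sigma_{\mathscr B(x,\delta)}(\mathsf x)\}\subset\{s<\sigma_{\mathscr D}(\mathsf x)\}$ for $\mathsf x=(x,v,z)\in\widetilde K$; writing $P^{\mathscr D}_tf(\mathsf x)=\mathbf E_{\mathsf x}[\mathbf 1_{s<\sigma_{\mathscr D}}g_s(X_s)]$ with $g_s(\mathsf z)=\mathbf E_{\mathsf z}[\mathbf 1_{t-s<\sigma_{\mathscr D}}f(X_{t-s})]$, the map $P_sg_s$ is continuous by \textbf{(C1)} and $\sup_{\widetilde K}|P^{\mathscr D}_tf-P_sg_s|\le\|f\|_\infty\sup_{\widetilde K}\mathbf P_{\mathsf x}[\sigma_{\mathscr B(x,\delta)}\le s]\to0$ as $s\to0^+$, since the proof of Lemma~\ref{le.C4} goes through ($|\nabla V|+|\cdot|$ is bounded on the relevant $\mathscr H_R$, which has compact closure in $\mathscr E$). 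For \textbf{(C5)} one follows Propositions~\ref{pr.P2C5} and~\ref{pr.SFgamma-not0}: $\mathscr O$ is open and connected in $\mathbf R^{dN}$, hence path-connected, so Lemma~\ref{loc.construction2} furnishes a $\mathcal C^2$, piecewise $\mathcal C^3$ control curve $\mathfrak l\colon[0,T]\to\mathscr O$ with the prescribed jet data at the endpoints; its range is compact in the open set $\mathscr O\subset\mathbf O_V$, so one fixes a globally Lipschitz $\mathbf b_{\mathfrak l}$ equal to $-\nabla V$ near some $\overline{\mathcal V_{\mathfrak l}}\subset\mathscr O$ and, by the support-theorem and change-of-measure arguments (via~\cite{friedman1975} when $\gamma=0$, via~\eqref{eq.Girsanov} when $\gamma>0$), deduces $\mathbf P_{\mathsf x_0}[X_T\in\mathsf O,\,T<\sigma_{\mathscr D}]>0$; taking instead a curve leaving $\mathscr O$ inside $\mathbf O_V$ (possible since $\mathbf O_V\setminus\overline{\mathscr O}\ne\emptyset$) yields $\mathbf P_{\mathsf x_0}[\sigma_{\mathscr D}<+\infty]>0$ for some $\mathsf x_0$.

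The step needing the most care is \textbf{(C1)}: one must check that the localisation onto $\mathscr H_R$ together with the globally-Lipschitz modification $\mathbf b_R$ is compatible with the singularity of $V$ --- namely, that $\overline{\mathscr V_R}\subset\mathbf O_V$ is compact (exactly what \textbf{[V{\tiny coercive}]} provides) so that $\mathbf b_R$ exists, and that the \emph{global} Girsanov change of measure used for $\gamma>0$ in the smooth setting must be replaced here by its localised version, because the Gaussian reference process of Proposition~\ref{pr.SFgamma0} is not confined to $\mathbf O_V$ and $\int_0^t|\nabla V(x^0_s)|^2\,ds$ need not be finite.
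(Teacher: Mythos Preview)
Your proposal is correct and follows essentially the same route as the paper: energy splitting via~\eqref{eq.splitting}, localisation by a globally Lipschitz $\mathbf b_R$ equal to $-\nabla V$ near $\overline{\mathscr V_R}$ (which \textbf{[V{\tiny coercive}]} makes compact in $\mathbf O_V$), and then the arguments of Section~\ref{sec.2V} verbatim for \textbf{(C2)}, \textbf{(C4)}, \textbf{(C5)}.

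The one point where you and the paper diverge slightly is the $\gamma>0$ case of \textbf{(C1)}. The paper offers two options: either extend the Gaussian upper bound of~\cite[Theorem~1.1]{delarue} to the process $(\bar X^R_t)$ with noise in both $v$ and $z$, or use the local-Girsanov machinery of Theorem~\ref{th.NH-C} (introduce a \emph{bounded} Lipschitz $\mathbf F_R$ equal to the full drift $\mathbf F$ near $\mathscr H_R$, so the Dol\'eans--Dade exponential is trivially a true martingale). You take a third route: apply the Girsanov/Vitali argument of Proposition~\ref{pr.SFgamma0} directly to $(\bar X^R_t)$, observing that since $\mathbf b_R$ is globally Lipschitz the combined drift $\mathbf F_R(\mathsf x)=(\mathbf b_R(x)-\gamma v+\lambda z,\,-\alpha z-\lambda v)$ has linear growth, so the proof of Proposition~\ref{pr.SFgamma0} (which already handles unbounded, non-globally-Lipschitz $\mathbf F$) goes through unchanged. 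This is a legitimate and arguably cleaner shortcut; your final paragraph correctly identifies why the \emph{global} Girsanov of~\eqref{eq.Girsanov} cannot be used on the singular process itself.
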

 
 Before turning to the proof of Proposition \ref{pr.PP-S},  we briefly  explain why, in the case $
  \gamma>0$,   we have not been able to  adapt the  arguments used in the proof of  Proposition~\ref{pr.SFgamma0}  to  prove~\textbf{(C1)}  in the  singular potential  setting of Proposition \ref{pr.PP-S}.   First notice that  the global Girsanov formula~\eqref{eq.Girsanov}   can be extended  to the case when $V$ is singular by adding the  condition (on   its  right hand side) that  $t<  \sigma^0_{\mathbf O_V}$, where $\sigma^0_{\mathbf O_V} =\inf \{t\ge 0, x_t^0\notin \mathbf O_V\}$ (see \eqref{eq.GL0}). Recall that  $\nabla V$ is very large near   $\partial \mathbf O_V$ and  the problem is that, roughly speaking, there is no reason for the probability of the event $\{$``the process \eqref{eq.GL0} visits  the region  $\partial \mathbf O_V$''$\}$ to be   small. We  will  rather use once again the energy splitting approach introduced in Section~\ref{sec.22} as a starting point.

  \begin{proof} 
We  will only focus on the proof of \textbf{(C1)}. The conditions   \textbf{(C2)}, \textbf{(C4)}, and \textbf{(C5)} are checked   with the same tools  as those used   in the previous section when $V$ satisfies  \textbf{[V{\tiny poly-$x^k$}]}.  To prove~\textbf{(C1)} for any $\gamma\ge 0$, the starting point is  \eqref{eq.splitting}. Equation~\eqref{eq.rho} is then a consequence of  the inequality $\mathbf P_{\mathsf z}[\sigma_{\mathscr H_R}\le t]\le  {e^{ct} }  \,  \mathsf H_{{\rm GL}}(\mathsf z)/R$ derived in the proof of Proposition~\ref{pr.ex-GL-i}.  
When $\gamma=0$, \eqref{eq.theta} is proved with the same arguments as those used in the proof of Theorem~\ref{pr.SFgamma-neq-0}. To this end, we recall that one    begins by introducing  for  $R>0$ the   solution $(\bar X_t^R,t\ge 0)$ of~\eqref{eq.GLR-2} over $\mathbf R^{3dN}$  with $\mathbf b=\mathbf b_R$ and $\gamma=0$, where  
$\mathbf b_R:\mathbf R^{dN}\to \mathbf R^{dN}$  is a globally Lipschitz vector field such that  $\mathbf b_R= -\nabla V$ in a neighborhood of  $\{\mathsf x\in \mathbf R^{dN}, V(\mathsf x)<R\}$. 

 When~$\gamma>0$,   \eqref{eq.theta}  can   also be proved  with the same arguments as those used in the proof  of Theorem~\ref{pr.SFgamma-neq-0} by  extending  the Gaussian upper bound~\cite[Theorem 1.1]{delarue} to the   solution $(\bar X_t^R,t\ge 0)$ of~\eqref{eq.GLR-2} over $\mathbf R^{3dN}$  with $\mathbf b=\mathbf b_R$ and $\gamma>0$. 
One way to avoid such a technical extension is   to actually  use   the  arguments which will be  used to prove Theorem~\ref{th.NH-C} below for the Nosé-Hoover process and which do not rely on a Gaussian upper bound.    In this case, the starting point  is to introduce  for  $R>0$ the  solution $(\hat X^R_t=(\hat x_t^R,\hat v_t^R,\hat z_t^R), t\ge 0)$  over $\mathbf R^{3dN}$  to 
$$d  \hat x^R_t = \hat v^R_t dt, \ d\hat q^R_t=\mathbf F_R(\hat X_t^R)dt+ \boldsymbol  \Sigma \, d\boldsymbol w_t,$$ 
where $\hat q_t^R= (\hat v_t^R,\hat z_t^R)^T\in \mathbf R^{dN}\times  \mathbf R^{dN}$ and where $\mathbf F_R: \mathbf R^{3dN}\to \mathbf R^{2dN}$ is a bounded and globally Lipschitz  vector field   such that  $\mathbf F_R=\mathbf F$ in a neighborhood of   $\mathscr H_R$. The second step consists of using a Girsanov formula  which links   the law of the process $(\hat X^R_t=(\hat x_t^R,\hat v_t^R,\hat z_t^R), t\ge 0)$ before its  first exit time from $\mathscr H_R$ with the law of the    solution  over $\mathbf R^{3dN}$  to 
$$d  \hat x_t = \hat v_t dt, \ d\hat q_t= \boldsymbol  \Sigma \, d\boldsymbol w_t.$$  
Then, with this Girsanov formula at hand, and using the same arguments as those used in the second and third step of the proof of Theorem~\ref{th.NH-C}, one deduces \eqref{eq.theta} when $\gamma>0$. 
  \end{proof}


%
  \subsection{On Assumption \textbf{(C3)} for the generalized Langevin process with singular potentials} 
 In all this section, we assume that \textbf{[V{\tiny sing1}]} holds.
 We  will also assume without loss of generality that $V_{\mathbf p}\equiv 0$ (see \eqref{eq.V-i}), up to restricting the following computations to $\mathsf x=(x,v,z)\in \mathscr E$ with  $x\in \mathbf O_V \cap  {\rm supp}(V_{\mathbf p})^c$. 
 
 \subsubsection{Preliminary computations}
     We start by constructing two Lyapunov functions $\mathsf W_\delta:\mathscr E\to [1,+\infty)$ 
 satisfying Assumption   \textbf{(C3)}, where $0<\delta\leq 1$. 
   Recall that $\mathsf H_{{\rm GL}}(x,v,z)= V(x)+\frac 12 |v|^2+ \frac 12 |z|^2$, for $\mathsf x=(x,v,z)\in \mathscr E$ (see \eqref{eq.H-GL}). Let  $\mathfrak c\geq 0$, and $\mathfrak h\ge 0$, $\mathfrak b\ge 0$, $\mathfrak R>0$.  Consider the  function constructed in \cite{duong2023asymptotic}:
\begin{align}
\label{eq.F0-GL-I}
\mathsf F_0(x,v,z)&=\mathfrak h\mathsf H_{{\rm GL}}(x,v,z)+\mathfrak b \mathfrak R \, x\cdot v+\mathfrak c \mathfrak R^2 v\cdot z \quad -\mathfrak b  \mathsf J_\mathfrak R(x,v,z) v\cdot \mathsf G(x),
\end{align}
 where $\mathsf J_\mathfrak R:\mathscr E \to \mathbf R_+^*$ is a sufficiently smooth  function which depends on $\mathfrak R$ (so that $\mathcal L_{{\rm GL}}\mathsf F$ below is well defined) which  will be specified later (depending on the case $\gamma>0$ or $\gamma=0$), and where $\mathsf G: \mathbf O_V\to \mathbf R^{dN}$ was introduced in~\cite{lu2019geometric} and is defined by: 
 $$ \mathsf G(x)=(\mathsf G^1(x), \ldots, \mathsf G^N(x))^T, \text{ with } \  \mathsf G^i(x)= \sum_{j=1, j\neq i}^N \frac{x^i-x^j}{|x^i-x^j|}, i\in \{1,\ldots, N\}.$$ 
 The parameters  will be chosen in particular such that 
 \begin{equation}\label{eq.finite}
\inf_\mathscr E \mathsf F_0\in\mathbf{R}.
\end{equation}
 We then set $\mathsf F_{{\rm GL}}=\mathsf F_0-\inf_\mathscr E \mathsf F_0+1$ and we define over $\mathscr E$:
 \begin{equation}\label{eq.lyapunovint}
\mathsf W_\delta=\exp \big [\mathsf F_{{\rm GL}}^\delta \big ], \ \text{ where } 0<\delta\leq 1.
\end{equation}
For any fixed $\mathfrak R>0$, the function   $\mathsf J_\mathfrak R$ will satisfy $\mathsf J_\mathfrak R^2\le C \mathsf H_{\text{GL}}$ over $\mathscr E$, for some $C>0$. Consequently,  for any fixed parameters $\mathfrak h, \mathfrak b,\mathfrak R>0$, there exists $c>0$, $\mathsf F_0\le c\mathsf H_{\text{GL}}$ over $\mathscr E$, which implies that for all $\delta \in (0,1]$ (recall that $\mathsf H_{\text{GL}}\ge 1$), 
\begin{equation}\label{eq.Up-GL-i1}
\mathsf W_\delta \le e^{ c^\delta \mathsf H_{{\rm GL}} ^\delta} \text{ on } \mathscr E.
\end{equation}
In the following, and for ease of notation, we again simply write $\mathsf F$ for $\mathsf F_{{\rm GL}}$.

Recall that   $\mathcal L_{{\rm GL}}\mathsf F= v\cdot \nabla _x \mathsf F + (-\gamma v -  \nabla _x V +\lambda z)\cdot \nabla_v\mathsf F + \gamma \Delta_v\mathsf F- (\alpha z+\lambda v)\cdot \nabla _z\mathsf F+ \alpha \Delta_z \mathsf F$ and  also that  over $\mathscr E$, one has:
\begin{equation}\label{eq.WF-i}
\frac{\mathcal L_{{\rm GL}}\mathsf W_\delta}{ \mathsf W_\delta}\le \frac{\delta }{\mathsf F^{1-\delta}}\Big[\mathcal L_{{\rm GL}} \mathsf F+ \delta \gamma |\nabla _v \mathsf F|^2+\delta \alpha |\nabla _z \mathsf F|^2\Big].
\end{equation} 
On the other hand it holds on  $\mathscr E$:
$$
  \left\{
    \begin{array}{ll}
      \partial_{x^i} \mathsf F &= \mathfrak h\partial_{x^i} V  + \mathfrak b\mathfrak R v^i -\mathfrak b\,   \mathsf J_\mathfrak R  \, \partial_{x^i} (v\cdot \mathsf G)  -\mathfrak b  \,  v\cdot \mathsf G\,  \partial_{x^i} \mathsf J_\mathfrak R , \\
           \partial_{v^i } \mathsf F &= 
 \mathfrak hv^i  + \mathfrak b \mathfrak Rx^i+\mathfrak c \mathfrak R^2 z^i-\mathfrak b\mathsf J_\mathfrak R \mathsf G^i   
  -\mathfrak b (v\cdot \mathsf G ) \partial_{v^i }\mathsf J_\mathfrak R , \\
  \Delta_v \mathsf F&=  \mathfrak hNd-2\mathfrak b \nabla _v \mathsf J_{\mathfrak R} \cdot \mathsf G  -\mathfrak b (v\cdot \mathsf G ) \Delta_v \mathsf J_{\mathfrak R}, \\
        \partial_{z^i} \mathsf F &= \mathfrak hz^i+\mathfrak c \mathfrak R^2 v^i -\mathfrak b (v\cdot \mathsf G  )\partial_{z^i}\mathsf J_\mathfrak R,\\
        \Delta_z \mathsf F&= \mathfrak hNd -\mathfrak b (v\cdot \mathsf G ) \Delta_z \mathsf J_{\mathfrak R},
          \end{array}
\right.
$$
where $$\partial_{x^i} (v\cdot \mathsf G ) =\sum_{j=1; j\neq i}^N\Big[\frac{v^i -v^j}{|x^i-x^j|}-\frac{(v^i -v^j)\cdot(x^i-x^j)}{|x^i-x^j|^3}(x^i-x^j)\Big].$$

Thus, one has for all $(x,v,z)\in \mathscr E$:
\begin{align}
\nonumber
\mathcal L_{{\rm GL}}\mathsf F&=    \mathfrak h Nd(\gamma  + \alpha)+  (-\alpha \mathfrak h+\mathfrak c \mathfrak R^2 \lambda )|z|^2+  (-\gamma \mathfrak h-\lambda \mathfrak c \mathfrak R^2+ \mathfrak b \mathfrak R)|v|^2- \mathfrak b \mathfrak R\nabla _xV\cdot x\\
\nonumber
&\quad - \gamma \mathfrak b \mathfrak R\,  x\cdot v -\gamma \mathfrak c \mathfrak R^2 v\cdot z-\alpha \mathfrak c\mathfrak R^2 v\cdot z + \lambda \mathfrak b \mathfrak  R\,  x\cdot z -   \mathfrak c \mathfrak R^2  \, z\cdot \nabla _xV\\
\nonumber
&\quad - \mathfrak b (v\cdot \mathsf G) \, v\cdot \nabla_x \mathsf J_{\mathfrak R}  + \alpha  \mathfrak b (v\cdot \mathsf G) z\cdot \nabla_z \mathsf J_{\mathfrak R} + \lambda \mathfrak b (v\cdot \mathsf G) v\cdot \nabla_z \mathsf J_{\mathfrak R}  \\
\nonumber
&\quad -2 \gamma \mathfrak b\mathsf G \cdot \nabla _v\mathsf J_{\mathfrak R}     -\lambda \mathfrak b (v\cdot \mathsf G) \, z\cdot \nabla_v \mathsf J_{\mathfrak R} + \gamma \mathfrak b(v\cdot \mathsf G) \, v\cdot \nabla_v \mathsf J_{\mathfrak R} + \mathfrak b (v\cdot \mathsf G) \, \nabla_x V\cdot \nabla_v \mathsf J_{\mathfrak R}    \\
\nonumber
&\quad -\gamma \mathfrak b (v\cdot \mathsf G) \Delta_v \mathsf J_{\mathfrak R}- \alpha \mathfrak b (v\cdot \mathsf G) \Delta_z \mathsf J_{\mathfrak R}\\
\label{eq.L-i_GL}
&\quad + \gamma \mathfrak b (v\cdot \mathsf G) \mathsf J_{\mathfrak R} +  \mathfrak b  \nabla_x V\cdot \mathsf G \, \mathsf J_{\mathfrak R} -\lambda \mathfrak b z\cdot \mathsf G \, \mathsf J_{\mathfrak R} - \mathfrak b \mathsf J_{\mathfrak R} v\cdot \nabla_x(v\cdot \mathsf G ).
 \end{align}
 We conclude this section with some estimates which be used later. 
 First notice that  
 for all $\epsilon>0$, there exist $C_{\mathbf {I}},c_{\mathbf {I}}>0$, for all $y\in   \mathbf R^d\setminus \{0\}$,
\begin{equation}\label{eq.LI}
  - c_{\mathbf {I}}+   \frac{\mathsf B}{1+\epsilon}  \vert y\vert^{-\beta} \le \mathsf  V_{\mathbf{I}}(y)\le   C_{\mathbf {I}}+ (1+\epsilon)\mathsf B \vert y\vert^{-\beta},
  \end{equation}
  so that for $x\in  \mathbf O_V$:
  $$ V(x)\ge  {a_0}|x|^2/2+\frac{\mathsf B}{2}\sum_{i,j=1;i<j}^N \vert x^i-x^j\vert^{-\beta} -C.$$ 
 We also recall that from~\cite[Section 4]{lu2019geometric} (see the computation of $\mathbf p \cdot \nabla_{\mathbf q}\Psi$ there), $v\cdot \nabla_x(v\cdot \mathsf G)\ge 0$ over $\mathscr E$. Therefore, since $\mathfrak b\ge 0$ and $\mathsf J_{\mathfrak R}\ge 0$, the last term in \eqref{eq.L-i_GL} is nonpositive, i.e.
\begin{equation}\label{eq.b<==}
 - \mathfrak b \mathsf J_{\mathfrak R} v\cdot \nabla_x(v\cdot \mathsf G ) \le 0 \ \text{ over $\mathscr E$}.
 \end{equation}

 \subsubsection{The case when $\gamma>0$}
In this section, $\gamma>0$,
\begin{equation}\label{eq.Int1}
 \mathfrak h>\max(\mathfrak b,  {\mathfrak b}/{a_0}), \, \mathfrak c=0, \, \mathfrak R=1, \, \text{and}\ \mathsf J_\mathfrak R\equiv 1.
\end{equation}
We mention that extra conditions will be assumed on $\mathfrak h,\mathfrak b>0$ in the proof of Proposition~\ref{pr.Cint-1}.
From \eqref{eq.F0-GL-I}, it holds in particular $\mathsf F_0=\mathfrak h\mathsf H_{{\rm GL}}+\mathfrak b \mathfrak R \, x\cdot v -\mathfrak b  v\cdot \mathsf G$. 
Let us  first check \eqref{eq.finite}. 
Using \eqref{eq.LI}, it holds  over $ \mathscr E$,
$$\mathsf F_0 \ge  \frac{a_0\mathfrak h-\mathfrak b}{2} |x|^2+  \frac{\mathsf B\mathfrak  h}{2}\sum_{i,j=1;i<j}^N \frac{1}{|x^i-x^j|^\beta}+\frac{\mathfrak  h-\mathfrak  b}{2}|v|^2-\mathfrak b(N-1)\sum_{i=1}^N|v^i |+ \frac  {\mathfrak  h}2 |z|^2-C.$$
Therefore, in view of \eqref{eq.Int1}, the condition \eqref{eq.finite} is satisfied when \textbf{[V{\tiny sing1}]} holds.

\begin{prop}\label{pr.Cint-1}
Assume $\gamma>0$, {\rm \textbf{[V{\tiny sing1}]}}, and \eqref{eq.Int1}. Then, under additional conditions on the parameters  $\mathfrak h,\mathfrak b>0$ (these conditions are made explicit in the proof below),   Assumption \textbf{(C3)} is satisfied with the function $\mathsf W_\delta$ defined in \eqref{eq.lyapunovint} and which satisfies the upper bound~\eqref{eq.Up-GL-i1}. 
\end{prop}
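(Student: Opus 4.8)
The plan is to verify \textbf{(C3)} by showing that the candidate Lyapunov function $\mathsf W_\delta=\exp[\mathsf F_{{\rm GL}}^\delta]$ of \eqref{eq.lyapunovint}, built from $\mathsf F_0$ in \eqref{eq.F0-GL-I} with the choices \eqref{eq.Int1} ($\mathfrak c=0$, $\mathfrak R=1$, $\mathsf J_\mathfrak R\equiv 1$), satisfies $\mathcal L_{{\rm GL}}\mathsf W_\delta(\mathsf x)/\mathsf W_\delta(\mathsf x)\to-\infty$ as $\mathsf H_{{\rm GL}}(\mathsf x)\to+\infty$, i.e. (by \textbf{[V{\tiny coercive}]}) as $\mathsf x\to\{\infty\}\cup\partial\mathbf O_V$ in $\mathscr E$. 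From such an estimate \textbf{(C3)} follows in the usual way: take $K_n=\overline{\mathscr H_n}$, which is compact in $\mathscr E$ since $V$ is coercive on $\mathbf O_V$; $r_n=-\sup_{\mathscr E\setminus\mathscr H_n}\mathcal L_{{\rm GL}}\mathsf W_\delta/\mathsf W_\delta\to+\infty$; and $b_n=r_n\sup_{K_n}\mathsf W_\delta+\sup_{K_n}|\mathcal L_{{\rm GL}}\mathsf W_\delta|<+\infty$ (finite because $\mathrm{Hess}\,V$ is locally bounded, being the a.e. derivative of the locally Lipschitz $\nabla V$). One also records that $\mathsf W_\delta\in\mathbb D_e(\mathcal L_{{\rm GL}})$: since $\nabla V\in\mathsf L_{{\rm Lip}}^{1,{\rm loc}}(\mathbf O_V)$ and $\mathsf G$ is smooth on $\mathbf O_V$, the function $\mathsf F=\mathsf F_{{\rm GL}}$ is $\mathcal C^1$ on $\mathscr E$ with locally Lipschitz gradient, hence so is $\mathsf W_\delta$ (recall $\mathsf F\ge 1$), and the generalized Itô formula localized by the stopping times $\sigma_{\mathscr H_R}$ of Proposition~\ref{pr.ex-GL-i} gives $\mathsf W_\delta\in\mathbb D_e(\mathcal L_{{\rm GL}})$ with $\mathcal L_{{\rm GL}}\mathsf W_\delta$ given by the classical expression (cf.~\cite{guillinqsd2}). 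Throughout we may assume $V_{\mathbf p}\equiv 0$ as explained at the beginning of this section.

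The main computation exploits the drastic simplification brought by $\mathsf J_\mathfrak R\equiv 1$: in \eqref{eq.L-i_GL} every term carrying a derivative of $\mathsf J_\mathfrak R$ vanishes, so with $\mathfrak c=0$, $\mathfrak R=1$,
\begin{align*}
\mathcal L_{{\rm GL}}\mathsf F&= \mathfrak h Nd(\gamma+\alpha)-\alpha\mathfrak h|z|^2+(-\gamma\mathfrak h+\mathfrak b)|v|^2-\mathfrak b\,\nabla_xV\cdot x-\gamma\mathfrak b\,x\cdot v+\lambda\mathfrak b\,x\cdot z\\
&\quad+\gamma\mathfrak b\,(v\cdot\mathsf G)+\mathfrak b\,\nabla_xV\cdot\mathsf G-\lambda\mathfrak b\,z\cdot\mathsf G-\mathfrak b\,v\cdot\nabla_x(v\cdot\mathsf G),
\end{align*}
while $\nabla_{v^i}\mathsf F=\mathfrak hv^i+\mathfrak bx^i-\mathfrak b\mathsf G^i$ and $\nabla_{z^i}\mathsf F=\mathfrak hz^i$, whence $\delta\gamma|\nabla_v\mathsf F|^2+\delta\alpha|\nabla_z\mathsf F|^2\le 3\delta\gamma\mathfrak h^2|v|^2+3\delta\gamma\mathfrak b^2|x|^2+\delta\alpha\mathfrak h^2|z|^2+C$ (using $|\mathsf G|\le\sqrt N(N-1)$). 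The term $-\mathfrak b\,v\cdot\nabla_x(v\cdot\mathsf G)$ is $\le 0$ by \eqref{eq.b<==} and is dropped. From $-\mathfrak b\,\nabla_xV\cdot x$ one extracts, via $\nabla V_{\mathbf c}(x^i)\cdot x^i=a_0|x^i|^2$, the coercive term $-\mathfrak ba_0|x|^2$, at the price of the strictly milder positive singular term $+\mathfrak b\beta\mathsf B\sum_{i<j}|x^i-x^j|^{-\beta}$ and of the $\Phi$-contributions, which by \textbf{[V-{\tiny int}]} (recall $q_\Phi<\beta+1$) are $O(|x|)+o\big(\sum_{i<j}|x^i-x^j|^{-(\beta+1)}\big)$. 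The genuine singular gain comes from $\mathfrak b\,\nabla_xV\cdot\mathsf G$: the whole point of inserting $-\mathfrak b\,v\cdot\mathsf G$ in $\mathsf F_0$ is that, by the properties of the geometric vector field $\mathsf G$ from~\cite{lu2019geometric} (see also~\cite[Lemma A.1]{herzog}), $\nabla_xV\cdot\mathsf G\le -c\sum_{i<j}|x^i-x^j|^{-(\beta+1)}+C(1+|x|)$ on $\mathbf O_V$ for some $c>0$, the off-diagonal terms of $\mathsf G$ and the $\Phi$-part being lower order. The remaining $\gamma\mathfrak b(v\cdot\mathsf G)$, $-\lambda\mathfrak bz\cdot\mathsf G$ are $O(|v|)$, $O(|z|)$, and $-\gamma\mathfrak b\,x\cdot v+\lambda\mathfrak b\,x\cdot z$ is handled by Young's inequality.

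Collecting these bounds, I would fix the parameters in order: first $\mathfrak h>0$ small enough that $3\delta\gamma\mathfrak h^2<\tfrac14\gamma\mathfrak h$ and $\delta\alpha\mathfrak h^2<\tfrac14\alpha\mathfrak h$; then a Young parameter $\eta>0$ small enough that $a_0>\tfrac12(\gamma+\lambda)\eta$; finally $\mathfrak b>0$ small enough to meet \eqref{eq.Int1} (i.e. $\mathfrak h>\max(\mathfrak b,\mathfrak b/a_0)$) together with $3\delta\gamma\mathfrak b<a_0-\tfrac12(\gamma+\lambda)\eta$ and $\mathfrak b\big(1+\tfrac{\gamma+\lambda}{2\eta}\big)<\tfrac34\min(\gamma,\alpha)\mathfrak h$. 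With these choices the coefficients of $|x|^2$, $|v|^2$ and $|z|^2$ are all strictly negative and the coefficient of $\sum_{i<j}|x^i-x^j|^{-(\beta+1)}$ equals $-c\mathfrak b<0$, so that for some $c_0,C_0>0$,
\[
\mathcal L_{{\rm GL}}\mathsf F+\delta\gamma|\nabla_v\mathsf F|^2+\delta\alpha|\nabla_z\mathsf F|^2\ \le\ -c_0\Big(|x|^2+|v|^2+|z|^2+\textstyle\sum_{i<j}|x^i-x^j|^{-(\beta+1)}\Big)+C_0\qquad\text{on }\mathscr E.
\]
Since $\mathsf H_{{\rm GL}}(\mathsf x)\le C_1\big(|x|^2+|v|^2+|z|^2+\sum_{i<j}|x^i-x^j|^{-(\beta+1)}\big)+C_1$ by \eqref{eq.LI} (using $|y|^{-\beta}\le 1+|y|^{-(\beta+1)}$ and the boundedness of $\Phi$ away from $0$), and $1\le\mathsf F\le c_2\mathsf H_{{\rm GL}}$ (as recorded just before \eqref{eq.Up-GL-i1}), inserting this into \eqref{eq.WF-i} yields $\mathcal L_{{\rm GL}}\mathsf W_\delta/\mathsf W_\delta\le -c_3\mathsf H_{{\rm GL}}^{\delta}+C_3\to-\infty$ as $\mathsf H_{{\rm GL}}(\mathsf x)\to+\infty$, completing the verification of \textbf{(C3)}. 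The main obstacle is precisely the near-collision estimate for $\mathsf G$: one must ensure that $\nabla_xV\cdot\mathsf G$ is coercively negative of order $\sum_{i<j}|x^i-x^j|^{-(\beta+1)}$ despite the off-diagonal contributions in $\mathsf G$ and the auxiliary potential $\Phi$ — this is the one place where the structural assumption \textbf{[V-{\tiny int}]} and the specific geometry of $\mathsf G$ are indispensable, and it is what dictates the form of $\mathsf F_0$.
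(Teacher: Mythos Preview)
Your proof is correct and follows essentially the same route as the paper's: you specialize \eqref{eq.L-i_GL} with $\mathsf J_\mathfrak R\equiv 1$, drop the nonpositive term \eqref{eq.b<==}, use the Lu--Mattingly inequality from~\cite{lu2019geometric} to extract the coercive singular contribution $-c\,\mathfrak K_{\beta+1}(x)$ from $\nabla_xV\cdot\mathsf G$, and then balance the quadratic terms by choosing $\mathfrak h,\mathfrak b$ small. The only cosmetic differences are that the paper treats $-\mathfrak b\,\nabla_xV\cdot(x-\mathsf G)$ as a single block (rather than splitting $\nabla_xV\cdot x$ and $\nabla_xV\cdot\mathsf G$) and uses the $\mathfrak b^{3/2}/\mathfrak b^{1/2}$ form of Young's inequality in place of your free parameter $\eta$; both lead to the same conclusion.
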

\begin{proof}
Assume $\gamma>0$. The goal is to show that one can find parameters such that $ {\mathcal L_{{\rm GL}}\mathsf W_\delta}/{ \mathsf W_\delta} \to -\infty$ as  the energy $\mathsf H_{\text{GL}}(\mathsf x)\to +\infty$. 
We start by providing an upper bound on $\mathcal L_{{\rm GL}}\mathsf F$ (see \eqref{eq.WF-i}), adapting  essentially   the same  computations as in~\cite[Lemma 3.5]{duong2023asymptotic} (see also \cite{lu2019geometric}). 
 By \eqref{eq.L-i_GL}, \eqref{eq.b<==}, and \eqref{eq.Int1}, one has for all $\mathsf x=(x,v,z)\in \mathscr E$,
 \begin{align}
\nonumber
\mathcal L_{{\rm GL}}\mathsf F&\le     \mathfrak h Nd(\gamma  + \alpha) -\alpha \mathfrak h |z|^2- |v|^2(\gamma \mathfrak h - \mathfrak b )- \mathfrak b  \nabla _xV\cdot x\\
\nonumber
&\quad - \gamma \mathfrak b \,  x\cdot v  + \lambda \mathfrak b \,  x\cdot z + \gamma \mathfrak b (v\cdot \mathsf G)  +  \mathfrak b  \nabla_x V\cdot \mathsf G   -\lambda \mathfrak b z\cdot \mathsf G.
 \end{align}
 Because $C^\infty_{\mathsf G}:=\sup_{\mathscr E}|\mathsf G|<+\infty$, we deduce that 
 \begin{align}
\nonumber
\mathcal L_{{\rm GL}}\mathsf F&\le     \mathfrak h Nd(\gamma  + \alpha)-\alpha \mathfrak h |z|^2- |v|^2(\gamma \mathfrak h - \mathfrak b )- \mathfrak b  \nabla _xV\cdot (x-\mathsf G)\\
\nonumber
&\quad - \gamma \mathfrak b \,  x\cdot v  + \lambda \mathfrak b \,  x\cdot z + \gamma  \mathfrak b  C^\infty_{\mathsf G} |v|   +C^\infty_{\mathsf G} \lambda \mathfrak b |z| .
 \end{align}
 Let us now deal with the term  $\mathfrak b  \nabla _xV\cdot (x-\mathsf G)$. Recall that since $V_{\mathbf I}(x)=V_{\mathbf I}(-x)$ (see also \eqref{eq.V-i}), 
$$V(x)=\sum_{i=1}^N V_{\mathbf c}(x^i)+ \frac 12\sum_{i,j=1;  i\neq j}^N V_{\mathbf I}(x^i-x^j).$$ 
Using also that for any symmetric vector field  
 $\mathsf v:\mathbf R^d\to \mathbf R^d$, 
 $$\sum_{i,j=1; i\neq j}^N \mathsf v(x^i-x^j)\cdot x^i=\frac  12\sum_{i,j=1; i\neq j}^N \mathsf v(x^i-x^j)\cdot (x_i-x_j),$$  we deduce that  for all $x\in \mathbf O_V$:
 \begin{align}
 \nonumber
  \nabla _xV\cdot (x-\mathsf G(x))&=\sum_{i=1}^N\Big[ a_0x^i+\frac 12 \sum_{j=1;j\neq i}^N \nabla V_{\mathbf I}(x^i-x^j)\Big]\cdot\Big[  x^i- \sum_{j=1;j\neq i}^N \frac{x^i-x^j}{|x^i-x^j|}\Big]\\
\nonumber
  &=a_0|x|^2-\frac{a_0}2\sum_{i,j=1; i\neq j} ^N|x^i-x^j|+ \frac 14\sum_{i,j=1; i\neq j} ^N\nabla V_{\mathbf I}(x^i-x^j)\cdot(x^i-x^j)\\
  \label{eq.nablax-x}
  &\quad +\frac 12\sum_{i=1}^N \Big[\sum_{j=1;j\neq i}^N (-\nabla V_{\mathbf I})(x^i-x^j)\Big]\cdot \Big[\sum_{k=1;k\neq i}^N \frac{x^i-x^k}{|x^i-x^k|}\Big]. 
 \end{align}
 Denote by $I_{\mathbf I}(x)$ the last term in the previous equality. 
 Note that   $|\nabla V_{\mathbf I}(y)\cdot y|\le C_{\mathbf I}/|y|^{\beta}+ c_{\mathbf I}$ (for some $C_{\mathbf {I}},c_{\mathbf {I}}>0$).
 On the other hand, with the same computations as those made to prove~\cite[Lemma~4.2]{lu2019geometric}, for any $q>1$,
 $$ \sum_{i=1}^N \Big[\sum_{j=1;j\neq i}^N \frac{x^i-x^j}{|x^i-x^j|^{q}}\Big]\cdot \Big[\sum_{k=1;k\neq i}^N \frac{x^i-x^k}{|x^i-x^k|}\Big]\ge \mathfrak K_{q-1}(x):=\sum_{i,j=1:i\neq j}^N \frac{1}{|x^i-x^j|^{q-1}}.$$
 Therefore   $I_{\mathbf I}(x)\ge  \frac{ \mathsf B \beta}2\mathfrak K_{\beta+1}(x)-c (\mathfrak K_{q_\Phi}(x)+1)$ (for some $c>0$).
We set $$  \mathsf q^*=\beta+1, \, \mathsf c^*=\frac{ \mathsf B \beta}2.$$ 
 Therefore, using all the previous estimates and the inequality $ \kappa \mathfrak b |a\cdot b|\le \mathfrak b^{3/2} a^2+ \kappa^2\mathfrak b^{1/2} b^2/4$, we have   for all $\mathsf x=(x,v,z)\in \mathscr E$: 
\begin{align*} 
\mathcal L_{{\rm GL}}\mathsf F(\mathsf x)&\le C - (a_0\mathfrak b  -2\mathfrak b^{3/2} )|x|^2  - \mathfrak b \mathsf c^* \mathfrak K_{\mathsf q^*}(x) -(\alpha \mathfrak h - \lambda^2 \mathfrak b^{1/2}/4) |z|^2\\
&\quad - |v|^2(\gamma \mathfrak h- \mathfrak b -\gamma^2 \mathfrak b^{1/2}/4 )+ \mathfrak L( \mathsf x),
\end{align*} 
 where   $\mathfrak L\in \mathcal C^0( \mathscr E)$ and $ \mathfrak L( \mathsf x)= o( |\mathsf  x|^2 +\mathfrak K_{\mathsf q^*}(x))$   when  $\mathsf  x\to \{\infty\}$ or $x\to   \partial \mathbf O_V$, 
 i.e. when $\mathsf H_{\text{GL}}(\mathsf x)\to +\infty$. 
In addition, over $\mathscr E$, 
 $$\delta \gamma |\nabla _v \mathsf F|^2+\delta \alpha |\nabla _z \mathsf F|^2\le 3\delta\gamma \mathfrak h^2|v|^2+3\delta\gamma\mathfrak b^2|x|^2+\delta\alpha\mathfrak h^2|z|^2+C.$$ 
 This implies that $\mathcal L_{{\rm GL}}\mathsf F(\mathsf x)+\delta \gamma |\nabla _v \mathsf F|^2+\delta \alpha |\nabla _z \mathsf F|^2\le C - (a_0\mathfrak b  -2\mathfrak b^{3/2}-3\delta\gamma\mathfrak b^2 )|x|^2  - \mathfrak b \mathsf c^* \mathfrak K_{\mathsf q^*}(x) -(\alpha \mathfrak h -\delta\alpha\mathfrak h^2- \lambda^2 \mathfrak b^{1/2}/4) |z|^2
  - |v|^2(\gamma \mathfrak h -\mathfrak b -3\delta\gamma \mathfrak h^2-\gamma^2 \mathfrak b^{1/2}/4 )+ \mathfrak L(\mathsf  x)$. Choose $\mathfrak h>0$ small enough such that $\gamma \mathfrak h  -3\delta\gamma \mathfrak h^2>0$ and 
 $\alpha \mathfrak h -\delta\alpha\mathfrak h^2>0$. Then, in addition to $\mathfrak h>\max(\mathfrak b,  {\mathfrak b}/{a_0})$, choose $\mathfrak b>0$ small enough such that $a_0\mathfrak b  -2\mathfrak b^{3/2}-3\delta\gamma\mathfrak b^2  >0$, $\alpha \mathfrak h -\delta\alpha\mathfrak h^2-  \lambda^2 \mathfrak b^{1/2}/4>0$,   and $\gamma \mathfrak h -\mathfrak b -3\delta\gamma \mathfrak h^2-\gamma^2 \mathfrak b^{1/2}/4>0 $. Then, one has  over $\mathscr E$,
 $$\mathcal L_{{\rm GL}}\mathsf F(\mathsf x)+\delta \gamma |\nabla _v \mathsf F|^2+\delta \alpha |\nabla _z \mathsf F|^2\le C -c| \mathsf x|^2 - \mathfrak b \mathsf c^*\mathfrak K_{\mathsf q^*}(x) + \mathfrak L( \mathsf  x).$$ 
 
On the other hand,     $\mathsf F \le C(|\mathsf x|^2+ \mathfrak K_{\beta}(x)+1)$ over $\mathscr E$.
Since $0\le 1-\delta<1$, by \eqref{eq.WF-i}, we deduce that $ {\mathcal L_{{\rm GL}}\mathsf W_\delta}/{ \mathsf W_\delta} \to -\infty$ as    $\mathsf  x\to \{\infty\}$ or $x\to   \partial \mathbf O_V$. This concludes the proof of the proposition. 
 \end{proof}
 
     
 \subsubsection{The case when $\gamma=0$}
 In this section, $\gamma=0$, 
\begin{equation}\label{eq.Int3}
 \mathfrak R>0, \ \mathfrak c=\mathfrak b, \ \, \mathsf J_\mathfrak R^2(\mathsf x)=\mathfrak R^6|z|^2+|v|^2+2  V(x)+\mathfrak R^2, \ \text{ for }  \mathsf x=(x,v,z)\in \mathscr E.
\end{equation} 
The function $\mathsf J_\mathfrak R $ above was introduced  in~\cite{duong2023asymptotic} when $\gamma=0$  to cancel  the effect caused by the term $|z|/|x|^{\beta+1}$  which comes from the term  $z\cdot \nabla V_{\mathbf I}$ in the computations of $\mathcal L_{{\rm GL}}(v\cdot z)$, see at the end of the second line of \eqref{eq.L-i_GL} (recall that this term does not exist when $\gamma>0$ because we chose $\mathfrak c=0$ in this case).   

We start by checking \eqref{eq.finite}. Using that $\sqrt{a+b}\le \sqrt a+\sqrt b$ ($a,b\ge 0$), one has:
\begin{align*}
\mathsf F_0&\ge \big[\mathfrak h -\frac{\mathfrak bC^\infty_{\mathsf G}}{\sqrt 2}\big]V(x)- \frac{\mathfrak b\mathfrak R}{2}|x|^2+\big[\frac{\mathfrak h}{2} -\frac{\mathfrak b\mathfrak R}{2}-\frac{\mathfrak b\mathfrak R^2}{2}-\frac{\mathfrak bC^\infty_{\mathsf G}}{2}(\mathfrak R^3+\sqrt 2 +2)\big]|v|^2\\
&\quad- \mathfrak b\mathfrak RC^\infty_{\mathsf G} |v| + \big[\frac{\mathfrak h}{2}  - \frac{\mathfrak b\mathfrak R^2}{2}-\frac{\mathfrak b\mathfrak R^3C^\infty_{\mathsf G}}{2} \big]|z|^2.
\end{align*}
We have 
$$\big[\mathfrak h -\frac{\mathfrak bC^\infty_{\mathsf G}}{\sqrt 2}\big]V(x)- \frac{\mathfrak b\mathfrak R}{2}|x|^2= \frac{a_0}{2}\big[\mathfrak h -\frac{\mathfrak bC^\infty_{\mathsf G}}{\sqrt 2}\big]|x|^2- \frac{\mathfrak b\mathfrak R}{2}|x|^2 + \big[\mathfrak h -\frac{\mathfrak bC^\infty_{\mathsf G}}{\sqrt 2}\big] \sum_{i,j=1;i<j}^NV_{\mathbf I}(x^i-x^j).$$ 
Then, \eqref{eq.finite} holds if $\mathfrak h,\mathfrak b, \mathfrak R>0$ satisfy:
\begin{equation} \label{eq.cond-h1}
\frac{a_0}{2}\big[\mathfrak h -\frac{\mathfrak bC^\infty_{\mathsf G}}{\sqrt 2}\big] - \frac{\mathfrak b\mathfrak R}{2}>0,
\end{equation}
and
\begin{equation} \label{eq.cond-h1-2}
 \frac{\mathfrak h}{2}  - \frac{\mathfrak b\mathfrak R^2}{2}-\frac{\mathfrak b\mathfrak R^3C^\infty_{\mathsf G}}{2}>0, \ \frac{\mathfrak h}{2} -\frac{\mathfrak b\mathfrak R}{2}-\frac{\mathfrak b\mathfrak R^2}{2}-\frac{\mathfrak bC^\infty_{\mathsf G}}{2}(\mathfrak R^3+\sqrt 2 +2)>0.
\end{equation}
  
\begin{prop}\label{pr.Cint-2}
Assume $\gamma=0$ and {\rm \textbf{[V{\tiny sing1}]}}. Then, for some appropriately chosen $\mathfrak h,\mathfrak b, \mathfrak R>0$ (these conditions are made explicit in the proof),   \textbf{(C3)} is satisfied with the function $\mathsf W_\delta$ defined in \eqref{eq.lyapunovint}. 
\end{prop}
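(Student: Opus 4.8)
The plan is to prove that $\mathcal L_{{\rm GL}}\mathsf W_\delta/\mathsf W_\delta\to-\infty$ as $\mathsf H_{{\rm GL}}(\mathsf x)\to+\infty$ (i.e.\ as $\mathsf x\to\{\infty\}$ or $x\to\partial\mathbf O_V$), and then to deduce \textbf{(C3)} by the same bookkeeping as in the locally Lipschitz case. Since $\gamma=0$, formula \eqref{eq.WF-i} reduces this to showing that $\mathcal L_{{\rm GL}}\mathsf F+\delta\alpha|\nabla_z\mathsf F|^2\to-\infty$: indeed, from \eqref{eq.F0-GL-I}, \eqref{eq.Int3}, the bound $\mathsf J_\mathfrak R^2\le C\mathsf H_{{\rm GL}}$ and \eqref{eq.LI} one gets $1\le\mathsf F\le C(1+|x|^2+|v|^2+|z|^2+\mathfrak K_\beta(x))$ on $\mathscr E$ (which is also how one checks \eqref{eq.Up-GL-i1}), so the prefactor $\delta/\mathsf F^{1-\delta}$, being a negative power strictly smaller than $1$ of a quantity of that size, cannot offset a gain of order $|x|^2+|v|^2+|z|^2+\mathfrak K_{\beta+1}(x)$.

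Next I would feed into \eqref{eq.L-i_GL} (with $\gamma=0$, $\mathfrak c=\mathfrak b$ and $\mathsf J_\mathfrak R$ as in \eqref{eq.Int3}) the identities $\nabla_x\mathsf J_\mathfrak R=\nabla_xV/\mathsf J_\mathfrak R$, $\nabla_v\mathsf J_\mathfrak R=v/\mathsf J_\mathfrak R$, $\nabla_z\mathsf J_\mathfrak R=\mathfrak R^6z/\mathsf J_\mathfrak R$ together with the corresponding expressions for $\Delta_v\mathsf J_\mathfrak R,\Delta_z\mathsf J_\mathfrak R$, the elementary lower bounds $\mathsf J_\mathfrak R\ge\mathfrak R^3|z|$, $\mathsf J_\mathfrak R\ge\sqrt{2V}$, $\mathsf J_\mathfrak R\ge|v|$, $\mathsf J_\mathfrak R\ge\mathfrak R\ge1$, and the sign information \eqref{eq.b<==}. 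The only contributions that are not manifestly harmless are then the confining term $-\mathfrak b\mathfrak R\,\nabla_xV\cdot x$ and the singular cross term $-\mathfrak b\mathfrak R^2\,z\cdot\nabla_xV$ produced by $\mathcal L_{{\rm GL}}(v\cdot z)$; symmetrising exactly as in \eqref{eq.nablax-x} and using \textbf{[V-{\tiny int}]} and \eqref{eq.LI}, the singular part of the latter is controlled by $C\mathfrak b\mathfrak R^2\sum_{i\ne j}|z|\,|x^i-x^j|^{-(\beta+1)}$ up to milder $\mathfrak K_{q_\Phi}$-type terms and an absorbable $|z|\,|x|$ term.

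The crux — and the reason for the precise form \eqref{eq.Int3} of $\mathsf J_\mathfrak R$ — is that this singular cross term is swallowed by the good term $\mathfrak b\,\nabla_xV\cdot\mathsf G\,\mathsf J_\mathfrak R$ appearing in the last line of \eqref{eq.L-i_GL}. By the computation of \cite[Lemma~4.2]{lu2019geometric} (the same used for \eqref{eq.nablax-x}) and \textbf{[V-{\tiny int}]}, one has $\nabla_xV\cdot\mathsf G\le-\tfrac{\beta\mathsf B}{2}\mathfrak K_{\beta+1}(x)+C(1+\mathfrak K_{q_\Phi}(x))+\tfrac{a_0}{2}\sum_{i\ne j}|x^i-x^j|$ on $\mathbf O_V$; multiplying by the strictly positive $\mathsf J_\mathfrak R$, the dominant piece $-\tfrac{\beta\mathsf B}{2}\mathfrak K_{\beta+1}\mathsf J_\mathfrak R$ is large in three complementary ways: $\mathsf J_\mathfrak R\gtrsim\mathfrak R^3|z|$ makes it dominate $C\mathfrak b\mathfrak R^2\sum_{i\ne j}|z|\,|x^i-x^j|^{-(\beta+1)}$ once $\mathfrak R$ is large; $\mathsf J_\mathfrak R\gtrsim\sqrt V\gtrsim\sqrt{\mathfrak K_\beta(x)}$ makes it dominate the $\mathfrak K_{q_\Phi}$-contributions since $q_\Phi<\beta+1$; and $\mathsf J_\mathfrak R\ge1$ leaves a genuine residual $-c\,\mathfrak b\,\mathfrak K_{\beta+1}(x)$. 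Its positive $|x|^2$-contribution, which is $\lesssim a_0^{3/2}|x|^2$ with no factor $\mathfrak R$, is in turn beaten by $-\mathfrak b\mathfrak R\,\nabla_xV\cdot x\le-\tfrac12a_0\mathfrak b\mathfrak R|x|^2+C$ once $\mathfrak R$ is chosen large enough; this fixes $\mathfrak R$.

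It then remains to dispose of the non-singular terms: the diagonal terms $(-\alpha\mathfrak h+\mathfrak b\mathfrak R^2\lambda)|z|^2$ and $(-\lambda\mathfrak b\mathfrak R^2+\mathfrak b\mathfrak R)|v|^2$, the Laplacian-of-$\mathsf J_\mathfrak R$ terms, and $\delta\alpha|\nabla_z\mathsf F|^2\le C(1+|v|^2+|z|^2)$ (using $|z|/\mathsf J_\mathfrak R\le\mathfrak R^{-3}$), are made strictly negative in $|v|^2+|z|^2$ by first fixing $\mathfrak h$ and then taking $\mathfrak b$ small enough so that simultaneously \eqref{eq.cond-h1}--\eqref{eq.cond-h1-2}, hence \eqref{eq.finite}, hold; all remaining cross terms ($x\cdot v$, $x\cdot z$, $v\cdot z$, $|v|$, $|z|$, and the $\mathfrak K_{q_\Phi},\mathfrak K_\beta$ terms with $q_\Phi<\beta+1$) are $o(|x|^2+|v|^2+|z|^2+\mathfrak K_{\beta+1}(x))$ along $\mathsf H_{{\rm GL}}\to+\infty$ and are absorbed by Young's inequality. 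This produces $C,c>0$ with $\mathcal L_{{\rm GL}}\mathsf F+\delta\alpha|\nabla_z\mathsf F|^2\le C-c(|x|^2+|v|^2+|z|^2+\mathfrak K_{\beta+1}(x))$ on $\mathscr E$, whence the desired limit via \eqref{eq.WF-i}. Finally, since $\mathsf H_{{\rm GL}}$ is coercive on $\mathscr E$ by \textbf{[V{\tiny coercive}]}, the sets $K_n:=\{\mathsf H_{{\rm GL}}\le n\}$ are compact in $\mathscr E$ and increase to $\mathscr E$, the limit yields $r_n\to+\infty$ with $-\mathcal L_{{\rm GL}}\mathsf W_\delta\ge r_n\mathsf W_\delta$ on $\mathscr E\setminus K_n$, one sets $b_n:=r_n\sup_{K_n}\mathsf W_\delta+\sup_{K_n}|\mathcal L_{{\rm GL}}\mathsf W_\delta|$, and $\mathsf W_\delta\in\mathbb D_e(\mathcal L_{{\rm GL}})$ follows from $\mathcal L_{{\rm GL}}\mathsf W_\delta\le c\mathsf W_\delta$ and the nonexplosion estimate of Proposition~\ref{pr.ex-GL-i} by the usual stopping argument. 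I expect the third paragraph to be the main obstacle: extracting from the single good term $\mathfrak b\,\nabla_xV\cdot\mathsf G\,\mathsf J_\mathfrak R$ enough negativity to tame simultaneously the $|z|$-singular cross term $z\cdot\nabla_xV$, the $\mathfrak K_{q_\Phi}$-terms and the $|x|^2$-balance is exactly what the tailored form \eqref{eq.Int3} of $\mathsf J_\mathfrak R$, following \cite{duong2023asymptotic}, is built for.
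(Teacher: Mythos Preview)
Your proposal is correct and follows essentially the same route as the paper: reduce via \eqref{eq.WF-i} to bounding $\mathcal L_{{\rm GL}}\mathsf F+\delta\alpha|\nabla_z\mathsf F|^2$, feed the derivatives of $\mathsf J_\mathfrak R$ into \eqref{eq.L-i_GL}, and use the good term $\mathfrak b\,\nabla_xV\cdot\mathsf G\,\mathsf J_\mathfrak R$ (with $\mathsf J_\mathfrak R\ge\mathfrak R^3|z|$) to kill the singular piece of $-\mathfrak b\mathfrak R^2 z\cdot\nabla_xV$ once $\mathfrak R$ is large, then close the diagonal balance by taking $\mathfrak b$ small. One small caveat: when you write ``first fixing $\mathfrak h$ and then taking $\mathfrak b$ small'', be explicit that $\mathfrak h$ must itself be chosen small (so that $-\alpha\mathfrak h+3\delta\alpha\mathfrak h^2<0$), since the $|z|^2$-contribution $3\delta\alpha\mathfrak h^2$ from $\delta\alpha|\nabla_z\mathsf F|^2$ does not carry a factor $\mathfrak b$; the paper makes this choice first, then fixes $\mathfrak R$ large, then $\mathfrak b$ small, and tracks the cross terms via Young parameters $\epsilon_i=\mathfrak b^{\pm1/2}$ rather than a bare ``$o$'' statement.
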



\begin{proof}
We start by providing an upper bound on  the term $\mathcal L_{{\rm GL}}\mathsf F$ in \eqref{eq.WF-i}, for which we need to track the involved constants to get explicit conditions on the parameters $\mathfrak h, \mathfrak b, \mathfrak R>0$ (the computations are at some places similar  to  those made in the proof of \cite[Lemma 3.6]{duong2023asymptotic}). The following identities will be needed:
$$\nabla _{x} \mathsf J_\mathfrak R=  \frac{\nabla _xV}{ \mathsf J_\mathfrak R}, \, 
      \nabla _{v} \mathsf J_\mathfrak R =  \frac{v}{\mathsf J_\mathfrak R}, 
      \nabla _{z} \mathsf J_\mathfrak R =  \frac{\mathfrak R^6\, z}{ \mathsf J_\mathfrak R}, \text{and } \Delta _{z} \mathsf J_\mathfrak R= \frac{\mathfrak R^6 dN}{ \mathsf J_\mathfrak R}-  \frac{\mathfrak R^{12}|z|^2}{\mathsf J_\mathfrak R^3}.$$
     By \eqref{eq.L-i_GL} and \eqref{eq.b<==}, when $\gamma=0$ and $\mathfrak c=\mathfrak b$, we have   for all $\mathsf x=(x,v,z)\in \mathscr E$,
\begin{align}
\nonumber
\mathcal L_{{\rm GL}}\mathsf F&\le     \mathfrak h Nd  \alpha+  (-\alpha \mathfrak h+\mathfrak b \mathfrak R^2 \lambda )|z|^2+ (-\lambda \mathfrak b \mathfrak R^2+\mathfrak b \mathfrak R) |v|^2- \mathfrak b \mathfrak R\nabla _xV\cdot x\\
\nonumber
&\quad  -\alpha \mathfrak b\mathfrak R^2 v\cdot z + \lambda \mathfrak b \mathfrak  R\,  x\cdot z -\mathfrak R^2 \mathfrak b z\cdot \nabla _xV\\
\nonumber
&\quad - \mathfrak b (v\cdot \mathsf G) \, v\cdot \nabla_x \mathsf J_{\mathfrak R}  + \alpha  \mathfrak b (v\cdot \mathsf G) z\cdot \nabla_z \mathsf J_{\mathfrak R} + \lambda \mathfrak b (v\cdot \mathsf G) v\cdot \nabla_z \mathsf J_{\mathfrak R}  \\
\nonumber
&\quad   -\lambda \mathfrak b (v\cdot \mathsf G) \, z\cdot \nabla_v \mathsf J_{\mathfrak R} + \mathfrak b (v\cdot \mathsf G) \, \nabla_x V\cdot \nabla_v \mathsf J_{\mathfrak R}     - \alpha \mathfrak b (v\cdot \mathsf G) \Delta_z \mathsf J_{\mathfrak R}\\
\label{eq.L-i_GL-gamma0}
&\quad   +  \mathfrak b  \nabla_x V\cdot \mathsf G \, \mathsf J_{\mathfrak R} -\lambda \mathfrak b z\cdot \mathsf G \, \mathsf J_{\mathfrak R} .
 \end{align}
We now provide upper bounds on the terms appearing in  \eqref{eq.L-i_GL-gamma0} over $\mathscr E$ as follows: 
 \begin{itemize}
 \item Following the computations in \eqref{eq.nablax-x}, we have 
\begin{align*}
 - \mathfrak b \mathfrak R\nabla _xV\cdot x&=- a_0 \mathfrak b \mathfrak R|x|^2-\frac{ \mathfrak b \mathfrak R}4 \sum_{i,j=1; i\neq j} ^N\nabla V_{\mathbf I}(x^i-x^j)\cdot(x^i-x^j)\\
&\le - a_0 \mathfrak b \mathfrak R |x|^2 + \mathfrak L( x)
\end{align*}
 and  $ \nabla_x V\cdot \mathsf G \, \mathsf J_{\mathfrak R}\le -\big[\mathsf c^*\mathfrak K_{\mathsf q^*}(x)-\mathfrak L( x) \big]\mathsf J_{\mathfrak R}$, 
 where $\mathfrak L\in \mathcal C^0( \mathbf O_V)$ and $\mathfrak L(   x)= o( |x|^2 +\mathfrak K_{\mathsf q^*}(x))$   when  $ x\to \{\infty\}\cup  \partial \mathbf O_V$. 
 In particular, it holds over $\mathscr E$:
 $$\mathsf c^*\mathfrak K_{\mathsf q^*}(x)-\mathfrak L( x) \ge \frac{\mathsf c^*}{2}\mathfrak K_{\mathsf q^*}(x)-C.$$ 
 In addition, since $\mathsf J_\mathfrak R\ge \mathfrak R \mathsf f (z)$ (where $\mathsf f(z)=\sqrt{\mathfrak R^{4}|z|^2+1}$), one deduces that: 
  $$\mathfrak b  \nabla_x V\cdot \mathsf G \, \mathsf J_{\mathfrak R}\le -  \frac{\mathsf c^*\mathfrak b \mathfrak R}{2}\mathfrak K_{\mathsf q^*}(x) \mathsf f(z)  +C\mathfrak b \mathfrak R \mathsf f(z).$$
Roughly speaking, the term $\mathfrak b  \nabla_x V\cdot \mathsf G \, \mathsf J_{\mathfrak R}$ can absorb the bad one $- \mathfrak R^2 \mathfrak b z\cdot \nabla _xV$ when $x\to \partial \mathbf O_V$.  
Note that there exists $C>0$ such that $|\nabla V_{\mathbf I}(y)|\le  \mathsf A /|y|^{\mathsf q^*} +C$ over $\mathbf R^d\setminus \{0\}$ ($\mathsf A=\mathsf B \beta+1$). 
Then,  one has 
\begin{align*}
|\mathfrak R^2\mathfrak b z\cdot \nabla _xV|&\le  
\mathfrak R^2\mathfrak ba_0|z||x|+ \mathsf A \mathfrak R^2 \mathfrak b |z|\mathfrak K_{\mathsf q^*}(x)+C\mathfrak R^2 \mathfrak b |z|\\
&\le   
\mathfrak R^2 \mathfrak ba_0|z||x|+  \mathsf A\mathfrak b \mathsf f(z)\mathfrak K_{\mathsf q^*}(x)+C\mathfrak R^2 \mathfrak b |z|,
\end{align*}
 so that the function $\mathfrak b  \nabla_x V\cdot \mathsf G \, \mathsf J_{\mathfrak R}- \mathfrak R^2 \mathfrak b z\cdot \nabla _xV$ is bounded from above over $\mathscr E$ by 
\begin{align*}
\ \ \ \  \ \ \ \ -  \mathfrak b \Big[ \frac{\mathsf c^* \mathfrak R}{2}-  \mathsf A \Big]\mathfrak K_{\mathsf q^*}(x)\mathsf f(z)    +C\mathfrak b \mathfrak R \mathsf f(z) + \mathfrak R^2 a_0\frac{\mathfrak b^{1/2}}{2}|z|^2+\mathfrak R^2 a_0\frac{\mathfrak b^{3/2}}{2} |x|^2.
\end{align*}
Note also that $ \mathsf f(z)=o( |\mathsf x|^2 +\mathfrak K_{\mathsf q^*}(x))$ when $\mathsf x\to \{\infty\}$ or $x\to \partial \mathbf O_V$ (i.e. when $\mathsf H_{\text{GL}}(\mathsf x)\to +\infty$). 
  \item For any  $\epsilon_0,\epsilon_1>0$ to be chosen later,  
$$\alpha \mathfrak b\mathfrak R^2 |v\cdot z| + \lambda \mathfrak b \mathfrak  R\,  |x\cdot z|\le \alpha \mathfrak b\mathfrak R^2\Big [\frac{|v|^2}{2\epsilon_0}+ \frac{\epsilon_0}2|z|^2\Big ]+ \lambda \mathfrak b \mathfrak  R\Big [\frac{|x|^2}{2\epsilon_1}+ \frac{\epsilon_1}2|z|^2\Big ].$$   
  \item Using that $\mathsf G$ is bounded, we have:
$$
|   (v\cdot \mathsf G) \, v\cdot \nabla_x \mathsf J_{\mathfrak R}|\le  C^\infty_\mathsf G   \frac{|v|^2(a_0|x|+C)}{\sqrt {|v|^2+2V(x)+\mathfrak R^2}}+ C^\infty_\mathsf G \frac{C|v|^2\mathfrak K_{\beta+1}(x)}{\sqrt {|v|^2+ 2V(x)+\mathfrak R^2}},$$
where we recall that over $\mathbf O_V$, $V(x)\ge  a_0|x|^2/2+\mathsf B\mathfrak K_{\beta}(x)/2-C$. 
Hence,
$$|   (v\cdot \mathsf G) \, v\cdot \nabla_x \mathsf J_{\mathfrak R}| = o( |\mathsf x|^2+ \mathfrak K_{\mathsf q^*}(x)) \text{ as }  \mathsf H_{\text{GL}}(\mathsf x)\to +\infty.$$
  With similar arguments, one also has:  
  \begin{align*}
  &|(v\cdot \mathsf G) z\cdot \nabla_z \mathsf J_{\mathfrak R}|+|(v\cdot \mathsf G) v\cdot \nabla_z \mathsf J_{\mathfrak R}| +|(v\cdot \mathsf G) z\cdot \nabla_v \mathsf J_{\mathfrak R}|+|(v\cdot \mathsf G) \, \nabla_x V\cdot \nabla_v \mathsf J_{\mathfrak R} | \\
  &\quad + |\mathfrak b (v\cdot \mathsf G) \Delta_z \mathsf J_{\mathfrak R}| = o( |\mathsf x|^2+ \mathfrak K_{\mathsf q^*}(x)) \text{ as } \mathsf H_{\text{GL}}(\mathsf x)\to +\infty. 
     \end{align*}

   \item Using that $ \mathsf J_{\mathfrak R}\le \mathfrak R^3|z|+|v|+\sqrt 2\, \sqrt  V(x)+\mathfrak R$ and $V(x)\le a_0|x|^2/2+ S_{\mathbf{int}}(x)+C$ over $\mathbf O_V$ (where $S_{\mathbf{int}}= 2\mathfrak K_{\beta}$), one has:
  \begin{align*}
\ \ \ \ \ \ \ \ \  \lambda \mathfrak b \Big|z\cdot \frac{\mathsf G}{C^\infty_\mathsf G } \, \mathsf J_{\mathfrak R} \Big|&\le \lambda \mathfrak b \mathfrak R^3|z|^2+ \lambda \mathfrak b |z||v|+ \lambda \mathfrak b \sqrt 2\, |z|\sqrt{V}(x)+\lambda \mathfrak b \mathfrak R|z|\\
  &\le   |z|^2\Big[\lambda \mathfrak b \mathfrak R^3+ \frac{\lambda \mathfrak b    \sqrt{a_0}}{2\epsilon_2} + \frac{\lambda \mathfrak b }{2\epsilon_3}+   \lambda \frac{\mathfrak b }{\sqrt 2}\Big]+ \lambda \mathfrak b (\mathfrak R+C   \sqrt 2 )|z|  \\ &\quad  +   \frac{\lambda \mathfrak b  \epsilon_2 \sqrt{a_0}}{2} |x|^2+
  \frac{\lambda \mathfrak b\epsilon_3 }{2 }|v|^2+\frac{\lambda \mathfrak b}{\sqrt 2}     S_{\mathbf{int}}(x).
  \end{align*}
   Note that $| S_{\mathbf{int}}(x)|= o( |\mathsf x|^2+ \mathfrak K_{\mathsf q^*}(x))$ when $\mathsf H_{\text{GL}}(\mathsf x)\to +\infty$.
 \end{itemize}
Finally, setting $M_\infty:=\sup_{\mathsf x=(x,v,z)\in \mathscr E}|v||z| /\mathsf J^2_{\mathfrak R }(\mathsf x)<+\infty$, one has for all  $\mathsf x=(x,v,z)\in \mathscr E$:
\begin{align*}
\delta \alpha |\nabla _z \mathsf F|^2&\le 3\mathfrak h^2  \delta \alpha    |z|^2+ 3 \mathfrak b^2 \mathfrak R^4 \delta \alpha  |v|^2 + 3 \mathfrak b^2 |C_{\mathsf G}^\infty|^2   M_\infty\,\delta \alpha \mathfrak R^{12}  |v||z| \\
&\le  3\delta \alpha  \big (\mathfrak h^2 +\mathfrak R^{12}\mathfrak b^2 |C_{\mathsf G}^\infty|^2   M_\infty/2\big )    |z|^2+  3\delta \alpha \mathfrak b^2 \big (  \mathfrak R^4  +  \mathfrak R^{12}|C_{\mathsf G}^\infty|^2   M_\infty/2\big )|v|^2.
\end{align*}
We now gather all the previous upper bounds, to get over $\mathscr E$, 
\begin{align*}
\mathcal L_{{\rm GL}} \mathsf F +\delta \alpha |\nabla _z \mathsf F|^2&\le   c_z |z|^2 +c_v |v|^2+c_x  |x|^2 - \mathfrak b \Big[  \frac{\mathsf c^* \mathfrak R}{2}-  \mathsf A \Big]\mathfrak K_{\mathsf q^*}(x)\mathsf f(z)   + o( |\mathsf x|^2+ \mathfrak K_{\mathsf q^*}(x)),
\end{align*} 
where  
\begin{itemize}
\item $ c_z=-\alpha \mathfrak h+ 3\delta \alpha  \mathfrak h^2+\mathfrak b \mathfrak R^2 \lambda +   \alpha \mathfrak b\mathfrak R^2  \frac{\epsilon_0}2+   \lambda \mathfrak b \mathfrak  R   \frac{\epsilon_1}2 + \lambda \mathfrak b \mathfrak R^3C^\infty_\mathsf G +  \mathfrak bC^\infty_\mathsf G  \frac{\lambda     \sqrt{a_0}}{2\epsilon_2} + \mathfrak b  C^\infty_\mathsf G \frac{\lambda  }{2\epsilon_3}+ \mathfrak  R ^2 a_0\frac{\mathfrak b^{1/2}}{2}+ 3\delta \alpha  \mathfrak b^2 \mathfrak R^{12}|C_{\mathsf G}^\infty|^2   M_\infty/2+\lambda C^\infty_\mathsf G  \frac{\mathfrak b }{\sqrt 2}$,
\item $ c_v=-  \mathfrak b \mathfrak R (\lambda\mathfrak R-1)+ \mathfrak b\mathfrak R^2  \frac{ \alpha }{2\epsilon_0}  + \mathfrak b C^\infty_\mathsf G \frac{\lambda  \epsilon_3 }{2 } +  3\delta \alpha \mathfrak b^2 \big (  \mathfrak R^4  +  |C_{\mathsf G}^\infty|^2 \mathfrak R^{12} M_\infty/2\big )$, 
\item 
 $ c_x=-a_0 \mathfrak b \mathfrak R+  \mathfrak bC^\infty_\mathsf G \frac{\lambda   \epsilon_2 \sqrt{a_0}}{2} +  \mathfrak b \mathfrak  R \frac{ \lambda}{2\epsilon_1}+  \mathfrak R^2 a_0\frac{\mathfrak b^{3/2}}{2}$.
 \end{itemize}
 Let 
 $\epsilon_0=\mathfrak b^{-1/2}$, $\epsilon_1= \mathfrak b^{-1/2}$, $\epsilon_2= \mathfrak b^{1/2}$, and $\epsilon_3= \mathfrak b^{1/2}$. 
  Pick and fix  $\mathfrak h>0$ small enough such that $-\alpha \mathfrak h+ 3\delta \alpha  \mathfrak h^2<0$ and $\mathfrak R>0$ large enough such that 
 $\frac{\mathsf c^* \mathfrak R}{2}-   \mathsf A>0$ and $\lambda\mathfrak R-1>0$. 
 Then, one has when $\mathfrak b\to 0^+$, $c_z= -\alpha \mathfrak h+ 3\delta \alpha  \mathfrak h^2 +o(1)$,  $c_v=-  \mathfrak b [\mathfrak R (\lambda\mathfrak R-1)+ o(1)]$, and $c_x=-a_0 \mathfrak b [\mathfrak R+o(1)]$. One chooses $\mathfrak b>0$ small enough such that $c_x,c_v,c_z<0$ and \eqref{eq.cond-h1}-\eqref{eq.cond-h1-2} hold. This leads to  the existence of  $c_1,c_2>0$ such that over $\mathscr E$:
 \begin{equation}\label{eq.cequonveut-I}
\mathcal L_{{\rm GL}} \mathsf F +\delta \alpha |\nabla _z \mathsf F|^2\le -c_1|\mathsf x|^2- c_2\mathfrak K_{\mathsf q^*}(x)\mathsf f(z) + o( |\mathsf x|^2+ \mathfrak K_{\mathsf q^*}(x)), 
 \end{equation}
 as $ |\mathsf x|\to +\infty$    or $x\to \partial \mathbf O_V$, 
or equivalently when $\mathsf H_{\text{GL}}(\mathsf x)\to +\infty$.
 With the same arguments as those used to conclude the proof of Proposition \ref{pr.Cint-1}, we deduce that $ {\mathcal L_{{\rm GL}}\mathsf W_\delta}/{ \mathsf W_\delta} \to -\infty$ as  $|\mathsf x|^2 +\mathfrak K_{\mathsf q^*}(x)\to +\infty$. This ends the proof of the proposition.    
\end{proof}

\section{The Nos\'e-Hoover process: proof of Theorem~\ref{th.3}}
\label{sec.NH-p}

\label{sec.NHC}
In this section, we prove Theorem~\ref{th.3}.
We recall  that, when $V$ satisfies \textbf{[V{\tiny coercive}]},  the Nos\'e-Hoover process process \eqref{eq.NHintro} evolves on  
$\mathscr E= \mathbf O_V\times \mathbf R^{dN}\times \mathbf R$ (see \eqref{eq.EE-NH} and Proposition \ref{pr.existence-NHC2}). 
  As it will be seen below,  \textbf{(C5)} is not satisfied for this process. To this end, to prove Theorem~\ref{th.3}, we will use an extension (Theorem~\ref{th.ext} below) of~\cite[Theorem 2.2]{guillinqsd}, and we will thus prove that this process satisfies \textbf{(C1)},  \textbf{(C2)}, \textbf{(C3)}, \textbf{(C4)}, and a new assumption, namely \textbf{(C5')}, which  is defined at the beginning of Section \ref{sec.extC5} below.


\subsection{On Assumptions \textbf{(C1)}, \textbf{(C2)}, \textbf{(C4)}, and \textbf{(C5')} for the process \eqref{eq.NHintro}}
\label{sec.4-1}

In this section, $V$ satisfies \textbf{[V{\tiny coercive}]} and we consider the solution $(X_t=(x_t,v_t,y_t),t\ge 0)$  to  \eqref{eq.NHintro}, see Proposition \ref{pr.existence-NHC2}.

 The  approach  we will use to prove    \textbf{(C1)} and \textbf{(C4)}   is the  energy splitting approach introduced in  Section~\ref{sec.22} (see \eqref{eq.splitting}).  The  uniform integrability of the transition probabilities  was   obtained in  Theorem~\ref{pr.SFgamma-neq-0} with a Gaussian upper bound. However,    Gaussian upper bound does not hold anymore for the Nosé-Hoover process.  Hence,   the proof of \eqref{eq.theta}  relies on different arguments than those used  in Theorem~\ref{pr.SFgamma-neq-0}.

\begin{thm}\label{th.NH-C}
Assume  that $V$ satisfies {\rm \textbf{[V{\tiny coercive}]}}. Then, the nonkilled semigroup $(P_t,t\ge 0)$ of the process \eqref{eq.NHintro} satisfies \textbf{(C1)}.
\end{thm}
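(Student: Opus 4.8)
The plan is to follow the \emph{energy splitting} strategy of Section~\ref{sec.22}, exactly as in the proof of Theorem~\ref{pr.SFgamma-neq-0}, but replacing the Gaussian upper bound input (which is not available here) by a localization--and--Girsanov argument on the velocity--thermostat block, in the spirit of the alternative approach sketched at the end of the proof of Proposition~\ref{pr.PP-S}. Write, for $\mathsf x\in\mathscr E$, $R>0$, and $f\in b\mathcal B(\mathscr E)$ bounded,
$$
\mathbf E_{\mathsf x}[f(X_t)]=\mathbf E_{\mathsf x}[f(X_t)\mathbf 1_{t<\sigma_{\mathscr H_R}}]+\mathbf E_{\mathsf x}[f(X_t)\mathbf 1_{t\ge \sigma_{\mathscr H_R}}],
$$
where $\mathscr H_R=\{\mathsf H_{\rm NH}<R\}$ is open and bounded in $\mathscr E$ (since $V$ is coercive under \textbf{[V{\tiny coercive}]}). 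The second term is controlled uniformly in $\mathsf x$ in a compact set and tends to $0$ as $R\to+\infty$ by the inequality $\mathbf P_{\mathsf x}[\sigma_{\mathscr H_R}\le t]\le e^{ct}\mathsf H_{\rm NH}(\mathsf x)/R$ recalled in Proposition~\ref{pr.existence-NHC2}; this is \eqref{eq.rho}. It remains to prove \eqref{eq.theta}: for each fixed $R>0$, $\mathsf x\mapsto\mathbf E_{\mathsf x}[f(X_t)\mathbf 1_{t<\sigma_{\mathscr H_R}}]$ is continuous (indeed $\tau$-continuous) on $\mathscr H_R$.

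For \eqref{eq.theta} I would first truncate the drift. Since $\overline{\mathscr V}_R=\overline{\{V<R\}}$ is compact in $\mathbf O_V$ and $\nabla V$ is locally Lipschitz there, pick a bounded globally Lipschitz $\mathbf b_R:\mathbf R^{dN}\to\mathbf R^{dN}$ with $\mathbf b_R=-\nabla V$ on a neighborhood of $\overline{\mathscr V}_R$; likewise the coefficient $-(y+\gamma)v$ can be cut off outside a large ball in $(v,y)$ so that the resulting truncated process $\bar X^R_t=(\bar x^R_t,\bar v^R_t,\bar y^R_t)$ has bounded globally Lipschitz drift and coincides in law with $X_t$ up to the first exit from $\mathscr H_R$. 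By the strong Markov property and a ``small-time exit'' estimate — the exact analogue of Lemmas~\ref{le.C4} and~\ref{le.C4-2}, proved here by the same Markov-property/moment computation using only the Brownian term $\sqrt{2\gamma}\,dB_t$ on $v$ and crude Gr\"onwall bounds on $|\bar X^R_s|$ — one reduces $\mathbf E_{\mathsf x}[f(\bar X^R_t)\mathbf 1_{t<\bar\sigma_{\mathscr H_R}}]$ to the nonkilled quantity $\mathsf x\mapsto\mathbf E_{\mathsf x}[\psi_s(\bar X^R_s)]$ plus an error that is uniformly small on compacts as $s\to0^+$. Hence it suffices to prove that the nonkilled semigroup of $\bar X^R$ is strong Feller.

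For the strong Feller property of $\bar X^R$, here is the step that replaces the Gaussian bound of Theorem~\ref{pr.SFgamma-neq-0}. Write $\bar X^R$ as $(\bar x^R,\bar v^R,\bar y^R)$, freeze a reference process $(\check x,\check v,\check y)$ driven by $\sqrt{2\gamma}\,dB_t$ in the $v$-slot with all drift terms removed (the analogue of \eqref{eq.GL0}); since $\gamma>0$ in \eqref{eq.NHintro}, a Girsanov change of measure on $\mathcal F_T$, as in \cite[Lemma 1.1]{wu2001} (compare \eqref{eq.Girsanov}), expresses $\bar X^R$ via a Dol\'eans--Dade exponential martingale whose integrand is bounded (the truncated drift is bounded), hence a genuine martingale of all moments. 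The reference process has a smooth Gaussian density in $(x,v)$ and a degenerate, but explicit, behaviour in $y$; here one must be careful because the noise does not act on $y$, so $\bar y^R_t$ is, conditionally on the $(x,v)$-path, a (truncated) deterministic functional $\int_0^t(|\bar v^R_s|^2-dN)\,ds$. Continuity in $\mathbf P$-probability of $\mathsf x\mapsto f(\bar X^R_t(\mathsf x))$ then follows from Proposition~\ref{pr.wu1999} applied to the $(x,v)$-marginal (uniform integrability of the transition densities w.r.t. a fixed Gaussian weight is immediate from the boundedness of the truncated drift, exactly as in \eqref{eq.pRe}--\eqref{eq.GLE-gauss} but now \emph{without} needing the off-the-shelf bound of \cite{delarue} — one can use the Girsanov representation directly), combined with the continuous dependence of the $y$-component on the path; continuity in $\mathbf P$-probability of the Dol\'eans--Dade exponential is shown by the same localization argument as in the proof of Proposition~\ref{pr.SFgamma0}. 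Finally Vitali's convergence theorem upgrades convergence in probability to convergence of expectations, giving strong Feller for $\bar X^R$ and hence \eqref{eq.theta}. Assembling \eqref{eq.rho} and \eqref{eq.theta} via the splitting and an $\epsilon/2$ argument (as at the end of the proof of Theorem~\ref{pr.SFgamma-neq-0}) proves \textbf{(C1)}.

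The main obstacle is precisely the degeneracy in the $y$-variable: unlike the generalized Langevin case of Section~\ref{sec.CC}, the noise acts only on $v$, so $\bar y^R$ is a path functional and one cannot invoke a Gaussian density bound on the full state. The argument has to be organized so that all probabilistic smoothing/uniform-integrability input is extracted from the $(x,v)$-block (where $\gamma>0$ puts noise on $v$ and the hypoelliptic/Girsanov machinery applies), while the $y$-dependence is handled purely by pathwise continuity. One should also check that the small-time exit lemmas survive the extra quadratic-in-$v$ term in the $y$-equation and the product term $-v y$ in the $v$-equation; this is routine since on the event $\{s<\bar\sigma_{\mathscr H_R}\}$ the state stays in a fixed compact set, so all these coefficients are bounded there.
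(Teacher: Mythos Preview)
Your overall architecture is exactly the paper's: energy splitting \eqref{eq.splitting}, bound the high-energy piece by $\mathbf P_{\mathsf x}[\sigma_{\mathscr H_R}\le t]\le e^{ct}\mathsf H_{\rm NH}(\mathsf x)/R$, truncate the coefficients on a neighborhood of $\overline{\mathscr H_R}$, pass to a reference process via Girsanov on the $v$-slot, and close with a Chung-type small-time exit argument and Vitali. One imprecision first: you cannot remove ``all drift terms'' by Girsanov --- there is no noise on $y$, so the reference process must keep $dy^0_t=(|v^0_t|^2-dN)\,dt$ (or its truncation). You in fact use this two lines later, so this is only wording; the paper's reference process is $dx^0=v^0\,dt$, $dv^0=\Sigma\,dB$, $dy^0=(|v^0|^2-dN)\,dt$.

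There is, however, a genuine gap in your treatment of the $y$-degeneracy. You propose to obtain $f(X^0_t(\mathsf x_n))\to f(X^0_t(\mathsf x))$ in probability for \emph{measurable} $f$ by ``applying Proposition~\ref{pr.wu1999} to the $(x,v)$-marginal \ldots\ combined with continuous dependence of the $y$-component on the path''. That inference is invalid: knowing $g(x^0_t,v^0_t)(\mathsf x_n)\to g(x^0_t,v^0_t)(\mathsf x)$ for every measurable $g$ and $y^0_t(\mathsf x_n)\to y^0_t(\mathsf x)$ in probability does \emph{not} yield $f(X^0_t(\mathsf x_n))\to f(X^0_t(\mathsf x))$ for measurable $f$ on the full state. (Toy obstruction: take $f(x,v,y)=\mathbf 1_{\{y\in A\}}$; convergence in probability of $y^0_t$ does not imply convergence of $\mathbf 1_{\{y^0_t\in A\}}$ unless the law of $y^0_t$ --- in fact of the full triple --- is absolutely continuous.) What Proposition~\ref{pr.wu1999} actually requires is a density for the \emph{full} state $X^0_t$ together with uniform integrability. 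The paper supplies exactly this: since the reference process $X^0$ has smooth (polynomial) coefficients and satisfies H\"ormander's bracket condition --- $\partial_{v^{k,l}}$, $[\partial_{v^{k,l}},X_0]=\partial_{x^{k,l}}+2v^{k,l}\partial_y$, and $[\partial_{v^{k,l}},[\partial_{v^{k,l}},X_0]]=2\partial_y$ span the tangent space --- the law of $X^0_t$ has a smooth density on $\mathbf R^{dN}\times\mathbf R^{dN}\times\mathbf R$, and Proposition~\ref{pr.wu1999} applies to the full state directly. Once you make this replacement, your argument goes through.

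There is also a small organizational difference worth noting. You plan to first prove strong Feller for the nonkilled truncated process and then deduce continuity of $\mathsf z\mapsto\mathbf E_{\mathsf z}[f(X_t)\mathbf 1_{t<\sigma_{\mathscr H_R}}]$ via the Chung argument. The paper instead works directly with the killed Girsanov expression $\mathbf E_{\mathsf z}[f(X^0_t)\mathbf 1_{t<\sigma^0_{\mathscr H_R}}\mathfrak M^0_t(\boldsymbol f_R)]$: the Markov property gives the approximation by $\wp_s(\psi_s)(\mathsf z)=\mathbf E_{\mathsf z}[\mathfrak M^0_s(\boldsymbol f_R)\,\psi_s(X^0_s)]$, whose continuity follows from H\"ormander $+$ Proposition~\ref{pr.wu1999} $+$ Vitali as above, and the error is controlled by Cauchy--Schwarz together with the bound $\sup_{\mathsf z}\mathbf E_{\mathsf z}[|\mathfrak M^0_t(\boldsymbol f_R)|^2]\le \exp(\Sigma^{-2}t\|\boldsymbol f_R\|_\infty^2)$ (from boundedness of $\boldsymbol f_R$) and the small-time exit estimate for $X^0$. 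Your route is equally valid once the H\"ormander input is in place; the paper's route has the minor advantage of needing only Item~\textbf{iii} (small-time exit for $X^0$) rather than a separate Lemma~\ref{le.C4-2}-type statement for the truncated process.
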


\begin{proof}     
We want to prove that, for $f\in b\mathcal B(\mathscr E)$ and $t>0$, the following function 
\begin{equation}\label{eq.EC1}
\mathsf z\in \mathscr E\mapsto \mathbf E_{\mathsf z}\big [ f(X_t)] \text{  is continuous}. 
\end{equation}
 Pick  $R>0$. We will start by proving \eqref{eq.theta}, which 
  is the purpose of Step \textbf{2} below. Then we will prove 
 \eqref{eq.EC1} in Step \textbf{3}. Before doing that, we need a local  Girsanov formula~\eqref{eq.Gir-NH}, which is the aim of Step \textbf{1}. 
\medskip
   
\noindent
\textbf{Step 1.}  
In this step we derive \eqref{eq.Gir-NH} below. We recall that   (see also~\eqref{eq.H-NH}), 
$$\mathscr H_R=\{\mathsf x\in  \mathbf R^{3d}, \mathsf H_{{\rm NH}}(\mathsf x)<R\},$$ 
is an open bounded domain of $\mathscr E$.  Set $\Sigma=\sqrt{2\gamma}$.
Recall also that $\sigma_{\mathscr H_R}$ is  the first exit time for the process $(X_t,t\ge 0)$  from $\mathscr H_R$ and a.s. $\sigma=\lim_{R\to +\infty} \sigma_{\mathscr H_R}=\sup_{R>0} \sigma_{\mathscr H_R}=+\infty$.   For $\mathsf z=(x_z,v_z,y_z)\in  \mathscr E$, set also 
$$\boldsymbol f(\mathsf z)= -\nabla V(x_z)-\gamma v_z- v_zy_z \in \mathbf R^{dN}.$$
  Let us consider a   globally Lipschitz and bounded $\mathbf R^{dN}$-valued vector field $\boldsymbol f_R$ over  $\mathbf R^{dN}\times \mathbf R^{dN}\times  \mathbf R$   such that   $\boldsymbol f_R=\boldsymbol f$ in 
 a neighborhood of $\mathscr H_R$ in $\mathscr E$. Let also $ \boldsymbol  \chi_R :\mathbf R^{dN}\to \mathbf R$  be a globally Lipschitz  $\mathcal C^\infty$  function   such that  $\boldsymbol \chi_R(v)=|v|^2$ if $|v|^2/2\le R+1$. Denote by $(\hat X^R_t,t\ge 0)$ the  strong solution\footnote{Existence and uniqueness are ensured by the fact that  the coefficients  are  globally Lipschitz.}
 on $\mathbf R^{dN}\times \mathbf R^{dN}\times  \mathbf R$ of
  \begin{equation} \label{eq.NHstar}
 d  \hat x^R_t = \hat v^R_t dt,\ 
d\hat v^R_t=\boldsymbol f_R(\hat X_t^R)dt+  \Sigma \, dB_t,\ 
d\hat y^R_t=  \boldsymbol \chi_R (\hat v^R_t) dt -  d N dt.
\end{equation}  
  Let $\hat \sigma_{\mathscr H_R} =\inf\{t\ge 0, \hat X_t^R  \not \in \mathscr H_R\}$.  When $\mathsf z\in  \mathscr H_R$ and $0\le t<\hat \sigma_{\mathscr H_R}$, it holds 
 $\boldsymbol \chi_R(\hat v^R_t(\mathsf z))=|\hat v^R_t(\mathsf z)|^2$ and $\boldsymbol f_R(\hat X_t^R(\mathsf z))= \boldsymbol f(\hat X_t^R(\mathsf z))$. Therefore, one has for all  $g\in b\mathcal B(\mathscr H_R)$, $t>0$, and $\mathsf z=(x_z,v_z,y_z)\in  \mathscr H_R$, 
 $$\mathbf E_{\mathsf z}\big [ g(X_t)\mathbf 1_{t<\sigma_{\mathscr H_R}}\big ]=\mathbf E_{\mathsf z}\big [ g(\hat X_t^R)\mathbf 1_{t<\hat \sigma_{\mathscr H_R}}\big ],$$
  since the laws of the two processes coincide up to their first exit time from $\mathscr H_R$. 
 Let us now consider the     solution  $ (\tilde X^R_t=(\tilde x^R_t,\tilde v^R_t,\tilde y^R_t),t\ge 0)$ on  $\mathbf R^{dN}\times \mathbf R^{dN}\times  \mathbf R$ to 
$$
 d \tilde  x^R_t = \tilde v^R_t dt, \, d\tilde v^R_t= \Sigma \, dB_t, \, 
d\tilde y^R_t=  \boldsymbol \chi_R(\tilde v^R_t) dt -   d N dt.
$$
Set $\tilde \sigma_{\mathscr H_R} :=\inf\{t\ge 0, \tilde X_t^R \not \in \mathscr H_R\}$.  
Since $\boldsymbol f_R$ is bounded, $\sup_{s\in [0,t]}\mathbf E_{\mathsf z}[e^{|\boldsymbol f_R(\tilde X^R_s)|^2}]<+\infty$ for all $t>0$. 
Hence, using~\cite[Theorem~3.1 in Section 7]{friedman1975} and~\cite[Theorem~1.1  in Section~7]{friedman1975},  one has for all  $g\in b\mathcal B(\mathscr H_R)$, $t>0$, and $\mathsf z=(x_z,v_z,y_z)\in  \mathscr H_R$, 
$$\mathbf E_{\mathsf z}\big [ g(\hat X^R_t)\mathbf 1_{t<\hat \sigma_{\mathscr H_R}}\big ]=\mathbf E_{\mathsf z}\big [ g(\tilde X^R_t)\, \mathbf 1_{t<\tilde \sigma_{\mathscr H_R}}\, \tilde{\mathfrak M}_t(\boldsymbol f_R)\big ],$$
where  $\tilde{\mathfrak M}_t(\boldsymbol f_R)$ is the  exponential (true) martingale defined by $\tilde{\mathfrak M}_t(\boldsymbol f_R)=  \exp \tilde{\mathfrak L}_t(\boldsymbol f_R)$ and 
\begin{align*}
\tilde{\mathfrak L}_t (\boldsymbol f_R)&=  \int_0^t   \Sigma^{-1}\boldsymbol f_R (\tilde X_s^R) dB_s  -\frac 12 \int_0^t   \vert \Sigma^{-1} \boldsymbol f_R (\tilde X_s^R) \vert^2 ds.
\end{align*}   
Introduce finally the      solution $(X^0_t=(x^0_t,v^0_t,y^0_t),t\ge 0)$ on  $\mathbf R^{dN}\times \mathbf R^{dN}\times  \mathbf R$ to 
$$
 d  x^0_t = v^0_t dt,\  dv^0_t= \Sigma \, dB_t,\  dy^0_t=  |v^0_t|^2 dt - d N dt. 
$$
Because the laws of   $(\tilde X^R_t,t\ge 0)$ and $(X^0_t,t\ge 0)$ coincide up to their first exit time from $\mathscr H_R$, one finally gets that for any $\mathsf z\in \mathscr H_R$,
\begin{equation}
\label{eq.Gir-NH}
\mathbf E_{\mathsf z}\big [ g(X_t)\mathbf 1_{t<\sigma_{\mathscr H_R}}\big ]=\mathbf E_{\mathsf z}\big [ g(X^{ 0}_t)\, \mathbf 1_{t<\sigma_{\mathscr H_R}^{0}}\, \mathfrak M^0_t(\boldsymbol f_R)\big ],
\end{equation}
where $\mathfrak M^0_t(\boldsymbol f_R)=  \exp \mathfrak L_t^0(\boldsymbol f_R)$,   $ \mathfrak L_t^0(\boldsymbol f_R)=  \int_0^t   \Sigma^{-1}\boldsymbol f_R(X_s^0) dB_s  -\frac 12 \int_0^t   \vert \Sigma^{-1} \boldsymbol f_R(X_s^0) \vert^2 ds$, and $\sigma_{\mathscr H_R}^0:=\inf\{t\ge 0, X_t^0 \not \in \mathscr H_R\}$. 
\medskip

\noindent
\textbf{Step 2.}  In this step we show that for all $R>0$, $t>0$, and all $f\in b\mathcal B(\mathscr H_R)$,  the function
$$
\mathsf z\in \mathscr H_R\mapsto \mathbf E_{\mathsf z}\big [ f(X_t)\mathbf 1_{t<\sigma_{\mathscr H_R}}\big ] \text{ is continuous}.$$
In view of \eqref{eq.Gir-NH}, let us    show that  \begin{equation}\label{eq.EqSF}
\mathsf z\in \mathscr H_R\mapsto \mathbf E_{\mathsf z}\big [ f(X^{ 0}_t)\, \mathbf 1_{t<\sigma_{\mathscr H_R}^{0}}\, \mathfrak M^0_t(\boldsymbol f_R)\big ] \text{ is continuous}.
\end{equation}
We first claim that 
\begin{enumerate}
\item[\textbf i.] For $t>0$ and  $\mathsf x_n\to  \mathsf x $, $\sup_{s\in [0,t]}| X^0_s(\mathsf x_n)-X_s^0(\mathsf x)|\to 0$ a.s. as $n\to +\infty$. 
\item[\textbf{ii}.] For all   $\psi\in b\mathcal B(\mathbf R^{dN}\times \mathbf R^{dN}\times  \mathbf R)$ and $t> 0$, the following function 
$$\mathsf z\in \mathbf R^{dN}\times \mathbf R^{dN}\times  \mathbf R \mapsto \wp_t(\psi)(\mathsf z)= \mathbf E_{\mathsf z}\big [e^{ \mathfrak L_t^0(\boldsymbol f_R)} \psi(X^0_t) \big ]  \text{ is continuous}.$$ 
\item[\textbf {iii}.] For all $\delta>0$ and all compact subset $K$ of $\mathbf R^{dN}\times \mathbf R^{dN}\times  \mathbf R $, 
$$ \lim_{s\to 0^+} \sup_{\mathsf z\in K}\mathbf P_{\mathsf z}[\sigma^0_{\mathsf B(\mathsf z,\delta)}\le s]=0,$$
 where $\mathsf B(\mathsf z,\delta)$ is the open ball of $\mathbf R^{dN}\times \mathbf R^{dN}\times  \mathbf R $ centered at $\mathsf z$ of radius $\delta$, and $\sigma^0_{\mathsf B(\mathsf z,\delta)}$ is the first exit time from $\mathsf B(\mathsf z,\delta)$ for the process $(X^0_t,t\ge 0)$. 
\end{enumerate} 
 The proof of Item \textbf{i} is straightforward. Let us now prove Item \textbf{ii}. Let $\mathsf z_n\to \mathsf z\in  \mathbf R^{dN}\times \mathbf R^{dN}\times \mathbf R$.  By H\"ormander's theory,  for all $t>0$, $X_t^0$ admits   a smooth density with respect to  the Lebesgue measure over $\mathbf R^{dN}\times \mathbf R^{dN}\times  \mathbf R$.   
 Using also Item~\textbf{i} and Proposition~\ref{pr.wu1999}, we deduce that $\psi(X_t^0(\mathsf z_n))\to \psi(X_t^0(\mathsf z))$ in $\mathbf P$-probability. In addition, using Item~\textbf{i} and the regularity of $\boldsymbol f_R$, $\mathfrak L_t^0(\boldsymbol f_R)  (\mathsf z_n)\to \mathfrak L_t^0(\boldsymbol f_R)  (\mathsf z)$ in $\mathbf P$-probability. By the continuous mapping theorem, $e^{ \mathfrak L_t^0(\boldsymbol f_R) } (\mathsf z_n)\to e^{ \mathfrak L_t^0(\boldsymbol f_R)} (\mathsf z)$ in $\mathbf P$-probability. Since $\mathbf E_{\mathsf z_n}[e^{ \mathfrak L_t^0(\boldsymbol f_R)} ]=\mathbf E_{\mathsf z}[e^{ \mathfrak L_t^0(\boldsymbol f_R)} ]=1$, one deduces that 
 $$\mathbf E_{\mathsf z_n}[e^{ \mathfrak L_t^0(\boldsymbol f_R)} ]\to \mathbf E_{\mathsf z}[e^{ \mathfrak L_t^0(\boldsymbol f_R)} ] \text{ in } L^1(\mathbf P).$$ This ends the proof of Item \textbf{ii}.  
 Let us now prove Item \textbf{iii}. Fix $\delta>0$ and a compact subset  $K$ of $\mathbf R^{dN}\times \mathbf R^{dN}\times  \mathbf R $. With the same arguments as those used at the beginning of  the proof of Lemma \ref{le.C4}, we have
  \begin{align*}
   \mathbf P_{\mathsf z}[\sigma^0_{\mathsf B(\mathsf z,\delta)}\le s ]&\le \mathbf P_{\mathsf z}\big [\sigma^0_{\mathsf B(\mathsf z,\delta)}\le s, |X^0_s-X^0_0|< \delta/2\big ]  +   \mathbf P_{\mathsf z}\big [  |X^0_s-X^0_0|\ge \delta/2\big]
   \end{align*}
and by the strong Markov property, 
   \begin{align*}
     & \mathbf P_{\mathsf z}\big [\sigma^0_{\mathsf B(\mathsf z,\delta)}\le s, |X^0_s-X^0_0|< \delta/2\big ]   \\ 
     &\le   \mathbf p(s,\mathsf z):=\mathbf E_{\mathsf z}\Big [\mathbf 1_{\sigma^0_{\mathsf B(\mathsf z,\delta) }(\mathsf z) \le s}     \mathbf P_{X^0_{\sigma^0_{\mathsf B(\mathsf z,\delta)}(\mathsf z) } (\mathsf z)}\big [|X^0_{s-\sigma^0_{\mathsf B(\mathsf z,\delta)}(\mathsf z) }-X^0_0|\ge \delta/2  ]  \Big ].
   \end{align*}
 Let us deal with $ \mathbf p(s,\mathsf z)$. For any $\mathsf z\in K$,   $\mathsf x=(x,v,z)\in \partial \mathsf B(\mathsf z,\delta) $, and $u\in [0,s]$,  $ \mathbf P_{\mathsf x } \big [|X^0_{s-u }- \mathsf x|\ge \delta/2  ]\le  \mathbf P_{\mathsf x } \big [|x^0_{s-u }- x|\ge \delta/2  ]+\mathbf P_{\mathsf x } \big [|v^0_{s-u }- v|\ge \delta/2  ]+ \mathbf P_{\mathsf x } \big [|y^0_{s-u }- y|\ge \delta/2  ]$. Note that $|\mathsf x|\le  c_{\delta,K}:=\delta/2 + r_K$, where $r_K>0$ is such that $K\subset \mathsf B(0,r_K)$. 
First, it holds $\mathbf P_{\mathsf x } \big [|v^0_{s-u }- v|\ge \delta/2  ]\le    \mathbf a_s:=\mathbf P[\sup_{t\in [0,s]}|B_t|\ge \delta/(2\Sigma)]$. On the other hand, 
$ \mathbf P_{\mathsf x } \big [|x^0_{s-u }- x|\ge \delta/2  ] \le \mathbf  b_s:=\mathbf P  \big [ sc_{\delta,K}+ \int_0^s \Sigma|B_t|dt \ge \delta/2 ]$. Finally, $\mathbf P_{\mathsf x } \big [|y^0_{s-u }- y|\ge \delta/2  ]\le \mathbf P_{\mathsf x } \big [ \int_0^s |v_t|^2 dt +s  dN\ge \delta/2 ]\le  \mathbf c_s:=\mathbf P \big [ 2\Sigma^2\int_0^s |B_t|^2 dt + 2 sc_{\delta,K}^2 + s  dN\ge \delta/2 ]$.  Thus, one has as $s\to 0^+$: 
$$\sup_{\mathsf z\in K} \mathbf p(s,\mathsf z) \le \mathbf a_s+\mathbf b_s+\mathbf c_s\to 0.$$ 
Similarly, we have $\sup_{\mathsf z\in K}\mathbf P_{\mathsf z}\big [  |X^0_s-X^0_0|\ge \delta/2\big]\to 0$ as $s\to 0^+$. This concludes the proof of   Item \textbf{iii}.

We will use the fact that Item \textbf{iii} implies that for all compact subset $K$ of $\mathscr H_R$:   
\begin{equation}\label{eq.ItemIII}
 \lim_{s\to 0^+} \sup_{\mathsf z\in K}\mathbf P_{\mathsf z}[\sigma^0_{\mathscr H_R}\le s]=0.
   \end{equation}
   
We are now in position  to prove  \eqref{eq.EqSF}.   We have by the Markov property, for $0\le s\le t$ and $\mathsf z\in \mathscr H_R$:   
\begin{align}
\label{eq.=Ch}
&\mathbf E_{\mathsf z}\big [ f(X^{ 0}_t)\, \mathbf 1_{t<\sigma_{\mathscr H_R}^{0}}\, \mathfrak M^0_t(\boldsymbol f_R)\big ] =\mathbf E_{\mathsf z}\big [ \mathbf 1_{s<\sigma_{\mathscr H_R}^{0}} \mathfrak M^0_{s}(\boldsymbol f_R)\,  \mathbf E_{X_s^0}\big[f(X^{ 0}_{t-s})\, \mathbf 1_{t-s<\sigma_{\mathscr H_R}^{0}}\, \mathfrak M^0_{t-s}(\boldsymbol f_R)\big]\big ].
\end{align}
Define $\psi_s$ by $\psi_s(\mathsf z)=\mathbf E_{\mathsf z}\big[f(X^{ 0}_{t-s})\, \mathbf 1_{t-s<\sigma_{\mathscr H_R}^{0}}\, \mathfrak M^0_{t-s}(\boldsymbol f_R)\big] $ for $\mathsf z\in \mathscr H_R$ (we extend $\psi_s$ by $0$ outside $\mathscr H_R$). The function $\psi_s$ is measurable and bounded (by $ \Vert f\Vert_\infty  \times \mathbf E_{\mathsf z}\big[ \mathfrak M^0_{t-s}(\boldsymbol f_R) \big] = \Vert f\Vert_\infty $). By Item \textbf{ii}, $\mathsf z\mapsto \wp_s(\psi_s)(\mathsf z)=\mathbf E_{\mathsf z}\big [ \mathfrak M^0_{s}(\boldsymbol f_R) \psi_s(X_s^0)\big ]$ is continuous. In addition, from~\eqref{eq.=Ch} and Doob's martingale inequality, one has for $\mathsf z\in \mathscr H_R$ and $0\le s\le t$:
\begin{align}
\nonumber
 \big | \mathbf E_{\mathsf z}\big [ f(X^{ 0}_t)\, \mathbf 1_{t<\sigma_{\mathscr H_R}^{0}}\, \mathfrak M^0_t(\boldsymbol f_R)\big ]-  \wp_s(\psi_s)(\mathsf z)\big| 
&= | \mathbf E_{\mathsf z}\big [ \mathbf 1_{  s>\sigma_{\mathscr H_R}^{0}}\mathfrak M^0_{s}(\boldsymbol f_R) \psi_s(X^0_s)\big ]|\\
\nonumber
&\le   \Vert \psi_s \Vert_\infty  \sqrt{ \mathbf E_{\mathsf z}\big [\sup_{s\in [0,t]}|\mathfrak M^0_{s}(\boldsymbol f_R) |^2\big]}\, \sqrt{ \mathbf P_{\mathsf z}\big [\sigma_{\mathscr H_R}^{0}\le s\big ]  }\\
\label{eq.C-HH}
&\le   \Vert \psi_s\Vert_\infty   \sqrt{ 4\mathbf E_{\mathsf z}\big [|\mathfrak M^0_{t}(\boldsymbol f_R) |^2\big]}   \mathbf P_{\mathsf z}\big [\sigma_{\mathscr H_R}^{0}\le s\big ]   .
\end{align}
We claim  that $ \sup_{\mathsf z}\mathbf E_{\mathsf z}\big [|\mathfrak M^0_{t}(\boldsymbol f_R) |^2\big]<+\infty$. Let us prove this claim. 
We have 
\begin{align*}
\mathbf E_{\mathsf z}\big [|\mathfrak M^0_{t}(\boldsymbol f_R) |^2\big]&=\mathbf E_{\mathsf z}\big [\exp (2 \mathfrak L_t^0(\boldsymbol f_R))\big]\\
&=\mathbf E_{\mathsf z}\Big [\exp (  \mathfrak L_t^0(2\boldsymbol f_R)) \exp \Big ( \int_0^t   \vert \Sigma^{-1} \boldsymbol f_R(X_s^R) \vert^2ds\Big )\Big ]\\
&\le \exp( \Sigma^{-2}t \Vert \boldsymbol f_R\Vert^2_\infty ) \times  \mathbf E_{\mathsf z}\Big [\exp (  \mathfrak L_t^0(2\boldsymbol f_R)) \Big ] =  \exp( \Sigma^{-1}t \Vert \boldsymbol f_R\Vert_\infty ),
\end{align*}
where we have used that $\mathbf E_{\mathsf z}  [\exp (  \mathfrak L_t^0(2\boldsymbol f_R))  ] =1$ (apply for instance   \cite[Theorem~1.1  in Section 7]{friedman1975}). 
This proves that $\mathbf E_{\mathsf z}\big [|\mathfrak M^0_{t}(\boldsymbol f_R) |^2\big]<+\infty$. Combining \eqref{eq.C-HH} and \eqref{eq.ItemIII}, we deduce that the continuous function $\mathsf z\mapsto \wp_s(\psi_s)(\mathsf z) $ converges uniformly as $s\to 0^+$ to  the function 
$$\mathsf z\mapsto \mathbf E_{\mathsf z}\big [ f(X^{ 0}_t)\, \mathbf 1_{t<\sigma_{\mathscr H_R}^{0}}\, \mathfrak M^0_t(\boldsymbol f_R)\big ]= \mathbf E_{\mathsf z}\big [ f(X_t)\mathbf 1_{t<\sigma_{\mathscr H_R}}\big ]$$
uniformly on the  compact subsets  $K$  of $\mathscr H_R$. This ends the proof of  \eqref{eq.EqSF}. 

Note that we actually proved that for any nonempty bounded open subset $\mathscr H$ of $\mathscr E$,
\begin{align}\label{eq.=EH}
\mathsf z\in \mathscr H \mapsto \mathbf E_{\mathsf z}\big [ f(X_t)\mathbf 1_{t<\sigma_{\mathscr H}}\big ] \text{ is continuous}.
\end{align}

\noindent
\textbf{Step 3.} Let us end the proof of   \textbf{(C1)}, i.e. let us prove \eqref{eq.EC1}. Fix $t>0$ and   a compact subset $K$ of $\mathscr E$. Let $R_K>0$ be such that $K\subset \mathscr H_{R_K}$ (thus $K\subset\mathscr H_{R}$ for all $R\ge R_K$). We have  for all $R\ge R_K$ and $\mathsf z\in K$: 
$$ |\mathbf E_{\mathsf z}\big [ f(X_t)]-\mathbf E_{\mathsf z}\big [ f(X_t)\mathbf 1_{t<\sigma_{\mathscr H_R}}\big ]|\le \Vert f\Vert_\infty \sup_{\mathsf z\in  K} \mathbf P_{\mathsf z}[\sigma_{\mathscr H_R}\le t]  \to 0 \text{ as } R\to +\infty,$$
since $\mathbf P_{\mathsf z}[\sigma_{\mathscr H_R}\le t]\le \frac{e^{ct} }{R} \,  \mathsf H_{{\rm NH}}(\mathsf z)$. Therefore, the continuous function $\mathsf z\mapsto \mathbf E_{\mathsf z}\big [ f(X_t)\mathbf 1_{t<\sigma_{\mathscr H_R}}\big]$ converges uniformly  over $K$ to $\mathsf z\mapsto \mathbf E_{\mathsf z}\big [ f(X_t)]$ as $R\to +\infty$. This concludes the proof of Equation~\eqref{eq.EC1}. \end{proof}



\begin{prop}\label{pr.Vs3-t}
 Assume  that $V$ satisfies {\rm \textbf{[V{\tiny coercive}]}}. Then, the nonkilled semigroup $(P_t,t\ge 0)$ of the solution of~\eqref{eq.NHintro} over $\mathscr E=\mathbf O_V\times \mathbf R^{dN}\times \mathbf R$ satisfies  \textbf{(C2)}. Let $\mathscr D=\mathscr O\times \mathbf R^{dN}\times \mathbf R$ where $\mathscr O$ is a subdomain of $\mathbf O_V$. Then, the killed semigroup $(P^{\mathscr D}_t,t\ge 0)$
of the process~\eqref{eq.NHintro} on $\mathscr D$ satisfies  \textbf{(C4)}. If   $\mathbf O_V\setminus \overline{\mathscr O}$ is nonempty,  $(P^{\mathscr D}_t,t\ge 0)$ satisfies  \textbf{(C5')} (see Section \ref{sec.extC5}).  
\end{prop}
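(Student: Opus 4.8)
The plan is to verify \textbf{(C2)}, \textbf{(C4)} and \textbf{(C5')} for the process \eqref{eq.NHintro} by adapting, one by one, the scheme already carried out for the generalized Langevin process in Section~\ref{sec.2V}. The structural facts I would use throughout are: the drift of \eqref{eq.NHintro} is locally Lipschitz on $\mathscr E$; the diffusion coefficient is the constant $\sqrt{2\gamma}$ acting only on $v$; and $\mathbf P_{\mathsf x}[\sigma_{\mathscr H_R}\le t]\le e^{ct}\mathsf H_{{\rm NH}}(\mathsf x)/R$ from Proposition~\ref{pr.existence-NHC2}. For \textbf{(C2)} I would reproduce the argument of Proposition~\ref{pr.C2} verbatim: the Brownian parts of two solutions $X_\cdot(\mathsf x)$ and $X_\cdot(\mathsf x_n)$ cancel, so Gronwall's inequality gives, on $\{t<\sigma_{\mathscr H_R}(\mathsf x)\wedge\sigma_{\mathscr H_R}(\mathsf x_n)\}$, the bound $\sup_{s\in[0,t]}|X_s(\mathsf x)-X_s(\mathsf x_n)|\le|\mathsf x-\mathsf x_n|e^{\mathbf a_R t}$, with $\mathbf a_R$ the Lipschitz constant of the drift on the bounded set $\mathscr H_R$; splitting $\mathbf P[\sup_{s\le t}|X_s(\mathsf x)-X_s(\mathsf x_n)|\ge\epsilon]$ according to whether $\sigma_{\mathscr H_R}(\mathsf x)\wedge\sigma_{\mathscr H_R}(\mathsf x_n)\le t$ and using the exit-time estimate exactly as in Proposition~\ref{pr.C2} yields convergence in probability, hence \textbf{(C2)}.

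For \textbf{(C4)} I would first prove the analogue of Lemma~\ref{le.C4}: for every compact $K\subset\mathscr E$ and every $\delta>0$ with $\mathscr B(x,\delta)\subset\mathbf O_V$ for all $(x,v,y)\in K$,
\[ \lim_{s\to0^+}\sup_{(x,v,y)\in K}\mathbf P_{(x,v,y)}[\sigma_{\mathscr B(x,\delta)}\le s]=0, \]
where $\sigma_{\mathscr B(x,\delta)}$ is the first exit time of $(x_t)$ from the ball $\mathscr B(x,\delta)\subset\mathbf R^{dN}$. This is in fact easier than Lemma~\ref{le.C4}, because $x_t-x=\int_0^t v_s\,ds$ and $|v_s|^2<2R$ whenever $s<\sigma_{\mathscr H_R}$, so $\mathbf P_{\mathsf x}[\sigma_{\mathscr B(x,\delta)}\le s,\ s<\sigma_{\mathscr H_R}]=0$ as soon as $s<\delta/\sqrt{2R}$, while $\mathbf P_{\mathsf x}[\sigma_{\mathscr H_R}\le s]\le e^{c}\mathsf H_{{\rm NH}}(\mathsf x)/R$ for $s\le1$; fixing $R$ large first, then $s$ small, gives the claim. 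Then, with $\mathscr D=\mathscr O\times\mathbf R^{dN}\times\mathbf R$ and $\delta=\text{dist}(K,\partial\mathscr D)$ (which equals the distance of the $x$-projection of $K$ to $\partial\mathscr O$, hence is at most its distance to $\partial\mathbf O_V$, so the relevant balls lie in $\mathbf O_V$), the proof of \textbf{(C4)}—in fact of the strong Feller property of $P_t^{\mathscr D}$—follows Proposition~\ref{co.SFD}: write $P_t^{\mathscr D}f(\mathsf x)=\mathbf E_{\mathsf x}[\mathbf 1_{s<\sigma_{\mathscr D}}g_s(X_s)]$ with $g_s(\mathsf z)=\mathbf E_{\mathsf z}[\mathbf 1_{t-s<\sigma_{\mathscr D}}f(X_{t-s})]$ extended by $0$ off $\mathscr D$, use that $P_sg_s$ is continuous by \textbf{(C1)} (Theorem~\ref{th.NH-C}), and bound $\sup_{\mathsf x\in K}|P_t^{\mathscr D}f(\mathsf x)-P_sg_s(\mathsf x)|\le\|f\|_\infty\sup_{\mathsf x\in K}\mathbf P_{\mathsf x}[\sigma_{\mathscr B(x,\delta)}\le s]\to0$.

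For \textbf{(C5')} I would argue as in Proposition~\ref{pr.P2C5}, via control curves and the Brownian support theorem. Given $\mathsf x_0=(x_0,v_0,y_0)\in\mathscr D$ and a nonempty open $\mathsf O\subset\mathscr D$, pick $\mathsf x_T=(x_T,v_T,y_T)\in\mathsf O$ and construct a $\mathcal C^1$, piecewise-$\mathcal C^2$ curve $\mathfrak e:[0,T]\to\mathscr O$ joining $(x_0,v_0)$ to $(x_T,v_T)$ in position and velocity, whose forced companion $y(t):=y_0+\int_0^t(|\dot{\mathfrak e}(s)|^2-dN)\,ds$ satisfies $y(T)=y_T$; the extra freedom needed for this last constraint—making $\int_0^T|\dot{\mathfrak e}|^2\,ds$ attain $y_T-y_0+dN\,T$—is available once $T$ is large enough, since the minimal value of that integral over admissible curves stays bounded while $dN\,T\to+\infty$. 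It is precisely this necessity of taking $T$ large that makes \textbf{(C5)}, with its uniform-in-$t\ge t_1$ reachability, fail and forces the use of \textbf{(C5')}. Inserting $(\mathfrak e,\dot{\mathfrak e},y)$ into \eqref{eq.NHintro} determines the target path $\frac1{\sqrt{2\gamma}}\int_0^t[\ddot{\mathfrak e}+\nabla V(\mathfrak e)+\gamma\dot{\mathfrak e}+y\,\dot{\mathfrak e}](s)\,ds$ of the driving Brownian motion; this path is continuous and bounded, so the event that $B$ stays $\eta$-close to it has positive probability, and on that event a comparison estimate—localized up to the first exit from a large sublevel set $\mathscr H_R$, either by a Gronwall argument as in Lemma~\ref{le.C4-2} or by replacing the drift with a globally Lipschitz truncation and invoking the equivalence of laws used in Step~1 of Theorem~\ref{th.NH-C}—forces $X_{[0,T]}(\mathsf x_0)$ to remain in $\mathscr D$ and to end in $\mathsf O$, giving $\mathbf P_{\mathsf x_0}[X_T\in\mathsf O,\ T<\sigma_{\mathscr D}]>0$. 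The remaining half of \textbf{(C5')}, existence of a point from which $\sigma_{\mathscr D}<+\infty$ with positive probability, follows the same way: since $\mathbf O_V\setminus\overline{\mathscr O}\neq\emptyset$ and $\mathbf O_V$ is connected, join a point of $\mathscr O$ to a point of $\mathbf O_V\setminus\overline{\mathscr O}$ by a path in $\mathbf O_V$ crossing $\partial\mathscr O$ and drive the process along it.

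The step I expect to be the main obstacle is \textbf{(C5')}—not the support theorem, which is standard, but making the localized comparison between the diffusion and the control curve rigorous despite the drift $-\nabla V(x)-\gamma v-vy$ being only locally Lipschitz and containing the product $vy$: one must work up to the exit time from a suitable $\mathscr H_R$ (or truncate), and check that the energy $\mathsf H_{{\rm NH}}$ along the chosen curve $(\mathfrak e,\dot{\mathfrak e},y)$ stays bounded so that $R$ can be fixed once and for all, uniformly in $\eta$.
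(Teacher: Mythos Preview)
Your proposal is correct and follows essentially the same route as the paper. For \textbf{(C2)} the arguments coincide verbatim. For \textbf{(C4)} there is only a minor organizational difference: you reproduce the Lemma~\ref{le.C4}/Proposition~\ref{co.SFD} template (and your shortcut $|x_s-x|\le s\sqrt{2R}$ on $\{s<\sigma_{\mathscr H_R}\}$ is a nice simplification), whereas the paper instead approximates $\mathscr D$ by the bounded sets $\mathscr D_R=\mathscr D\cap\mathscr H_R$ and invokes the continuity of $\mathsf z\mapsto\mathbf E_{\mathsf z}[f(X_t)\mathbf 1_{t<\sigma_{\mathscr H}}]$ for bounded open $\mathscr H$, already obtained as a byproduct \eqref{eq.=EH} of the proof of \textbf{(C1)}; both reductions ultimately rest on \textbf{(C1)} and the exit-time estimate. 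For \textbf{(C5')} your control-curve argument is exactly what the paper has in mind: the paper simply records the outcome as the explicit support characterization $\mathfrak A_{\mathsf x_*,t}=\{(x,v,y)\in\mathscr D:\ y\ge y_*+t^{-1}\mathsf d_{\mathscr O}^2(x,x_*)-t\,dN\}$ (citing \cite{herzogNH}) and observes that any fixed ball is eventually contained in $\mathfrak A_{\mathsf x_*,t}$, which is precisely your ``$T$ large enough'' statement. Your anticipated obstacle---localizing the comparison despite the $vy$ term---is handled exactly as you suggest, via the globally Lipschitz truncation and Girsanov step already carried out in Step~1 of Theorem~\ref{th.NH-C}.
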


\begin{proof}  
Assumption \textbf{(C2)} is proved with the same arguments as those used to prove Proposition \ref{pr.C2}. Let us now prove \textbf{(C4)}. Let $t>0$ and $f\in b\mathcal B(\mathscr D)$. Consider  $\mathsf x\in \mathscr D$ and $r>0$ such that  $\mathsf B(\mathsf x,r)\subset \mathscr D$.  Set  $\mathscr D_R=\mathscr D \cap \mathscr H_R$ which is a  bounded open subset of $\mathscr E$ and contains $\mathsf B(\mathsf x,r)$ for all $R>0$ large enough.  Since $\sigma_{\mathscr D_R}\le \sigma_{\mathscr D}$, one has $\mathbf 1_{t<\sigma_{\mathscr D}}\mathbf 1_{t<\sigma_{\mathscr D_R}}= \mathbf 1_{t<\sigma_{\mathscr D_R}}$. Consequently, one has for all $\mathsf z\in \mathsf B(\mathsf x,r)$,   
\begin{align*}
| \mathbf E_{\mathsf z}\big [ f(X_t)\, \mathbf 1_{t<\sigma_{\mathscr D}}\big]-  \mathbf E_{\mathsf z}\big [ f(X_t)\, \mathbf 1_{t<\sigma_{\mathscr D_R}}\big]|&=|  \mathbf E_{\mathsf z}\big [ f(X_t)\,  \mathbf 1_{t<\sigma_{\mathscr D}}\mathbf 1_{t\ge \sigma_{\mathscr D_R}}\big]|\\&\le \Vert f\Vert_\infty \sup_{\mathsf z\in \mathsf B(\mathsf x,r)} \mathbf P_{\mathsf z}[t<\sigma_{\mathscr D},t\ge \sigma_{\mathscr D_R}]\\
&\le \Vert f\Vert_\infty \sup_{\mathsf z\in \mathsf B(\mathsf x,r)} \mathbf P_{\mathsf z}[t\ge \sigma_{\mathscr H_R}] \to 0
\end{align*}
 as $R\to +\infty$. The proof of~\textbf{(C4)} is complete using \eqref{eq.=EH}. 
Let us now check  \textbf{(C5')}. Fix $t>0$ and $\mathsf x_*=(x_*,v_*,y_*)\in \mathscr D$. With the analysis led in Section \ref{sec.S5GL}, it is not difficult to see that   $\mathfrak A_{\mathsf x_*,t}:=\{\mathsf x=(x,v,y)\in \mathscr D, y\ge y_*+t^{-1}\mathsf d_{\mathscr O}^2(x,x_*) - t dN\}\subset  {\rm supp}\, P^{\mathscr D}_t(\mathsf x_*,d\mathsf y)$, where $\mathsf d_{\mathscr O}(x,x_*) $ is the geodesic distance between $x$ and $x_*$ in $\mathscr O$ (actually $\mathfrak A_{\mathsf x_*,t}={\rm supp}\, P^{\mathscr D}_t(\mathsf x_*,d\mathsf y)$ thanks to~\cite[Proposition~2.1~(ii)]{herzogNH}). Then, for any open ball $\mathsf B=\mathsf B(\mathsf y,r)\subset \mathscr D$, there exists $ t(\mathsf x_* ,\mathsf B)$ such that for all $t\ge t(\mathsf x_* ,\mathsf B)$,  $\mathsf B(\mathsf y,r)\subset \mathfrak A_{\mathsf x_*,t}$. This ends the proof of~ \textbf{(C5')}.
\end{proof}

\subsection{On Assumption \textbf{(C3)} for the process \eqref{eq.NHintro}}
In this section, we consider a potential $V$ satisfying \textbf{[V{\tiny sing2}]}. 
Recall that  the Hamiltonian of the process   \eqref{eq.NHintro} is given by $\mathsf H_{{\rm NH}}  (x,v,y)=  V(x)+\frac 12 |v|^2+ \frac 12 |y|^2$ ($(x,v,y)\in \mathscr E$) and its infinitesimal generator is $\mathcal L_{{\rm NH}}= v\cdot \nabla _x  - ( y+\gamma)v\cdot \nabla_v-\nabla V\cdot  \nabla_v + \gamma   \Delta_v +  (|v|^2-d N  )\partial_y$ (see  \eqref{eq.H-NH} and \eqref{eq.L-NH}). Note that 
\begin{equation}\label{eq.CHH}
\mathcal L_{{\rm NH}}\mathsf H_{{\rm NH}}=- y  Nd- \gamma |v|^2+\gamma   dN.
\end{equation}
The purpose of  \cite{herzogNH} is to show the ergodicity of 
the (nonkilled) Nos\'e-Hoover process on $\mathscr E$. For that purpose, and based on  an extension of Harris’ ergodic theorem~\cite{hairer2011yet}, the strategy  consists in particular in constructing a Lyapunov function $\mathsf w$ such that at high energy $\mathsf H_{{\rm NH}}$ and on $\mathscr E$:
\begin{equation}\label{eq.Obh}
\mathcal L_{{\rm NH}}\mathsf w/\mathsf w\le -\alpha \text{ for some } \alpha>0.
\end{equation}
As observed there, the  natural candidate  $e^{ \mathsf H_{{\rm NH}}}$  does not satisfy such an estimate.  In view of  \eqref{eq.CHH},   though there exists $\alpha>0$ such that on the region $\mathcal R_0=\{(x,v,y), y\ge c_* \text{ or }  |v|^2\ge y_*\sqrt{|y|^2+1}\}$, 
$\mathcal L_{{\rm NH}}\mathsf H_{{\rm NH}}\le -\alpha$ (choosing $c_*,y_*>0$ large enough), one cannot obtain \eqref{eq.Obh} in $\mathcal R_0^c$.    
Nevertheless, by constructing  a suitable perturbation  $\Psi$ of $\mathsf H_{{\rm NH}}$ and by considering  the Lyapunov function $\mathsf w=e^{\mathfrak h_*\mathsf H_{{\rm NH}}+\Psi}$ ($\mathfrak h_*>0$), D.P. Herzog~\cite{herzogNH}  managed to obtain   $\mathcal L_{{\rm NH}}\mathsf w\le -\alpha$ on the  region $\mathcal R_0^c$. 
Nonetheless, in order to get \textbf{(C3)} (i.e. to have a Lyapunov function $\mathsf W$ such that $\mathcal L_{{\rm NH}}\mathsf W/\mathsf W\to -\infty$ as $\mathsf H_{{\rm NH}}\to +\infty$), we have to introduce another perturbation $\Phi$ of $\mathsf H_{{\rm NH}}$, especially to get that $\mathcal L_{{\rm NH}}\mathsf W/\mathsf W\to -\infty$ as $V(x)\to +\infty$.

Before building  $\Phi$, we start by recalling the construction of $\Psi$ in   \cite{herzogNH}.  To this end,  introduce the Dawson's integral $D:\mathbf R\to \mathbf R$ defined by  $D(z)= e^{-z^2}\int_0^z e^{u^2}du$ and set $ D_{\text{m}}:= \sup D $. 
Let us consider $k_*>0$ and $y_*>3\gamma+1$. Consider the following cutoff functions in $\mathcal C^\infty(\mathbf R, [0,1])$: 
\begin{itemize}
\item[\textbf -] $\mathfrak f_0(z)=1$ if $z\le -1$, $\mathfrak f_0(z)=0$ if $z\ge 0$, $\mathfrak f_0'\le 0$, and $|\mathfrak f_0'|\le 2$. 
\item[\textbf -]  $\mathfrak f_1(z)=1$ if $z\le k_*$ and $\mathfrak   f_1(z)=0$ if $z\ge k_*+1$.

\item[\textbf -] $\mathfrak   f_2(z)=1$ if $|z|\le 1$ and $\mathfrak   f_2(z)=0$ if $|z|\ge 2$.
\item[\textbf -] $\mathfrak f_3(z)=1$ if $|z|\ge 2$ and $\mathfrak f_3(z)=0$ if $|z|\le 1$.
\item[\textbf -] $\mathfrak h_1(z)=1$ if $z\le -y_*-1$, $\mathfrak h_1(z)=0$ if $z\ge -y_*$,   and $|\mathfrak h_1'|\le 2$. 
\item[\textbf -] $\mathfrak h_3(z)=1$ if $|z|\le 3$ and $\mathfrak h_3(z)=0$ if $|z|\ge 4$.
\end{itemize}
Pick now $p_*,u_*>0$, and  set for $\mathsf x=(x,v,y)\in \mathscr E$, $\Theta(v,y)=  |v|^2/(p_*\sqrt{|y|^2+1}) $ and $\Upsilon(x,y)= |\nabla V(x)|^2/[u_*(|y|^2+1)]$, 
$$  \mathfrak   g_1(\mathsf x)=\mathfrak   f_1(y) \, \mathfrak   f_2 (\Theta(v,y))\,  \mathfrak f_3(\Upsilon(x,y)) \text{ and } \mathfrak   g_2(\mathsf x)=\mathfrak h_1(y)\,  \mathfrak   f_2(\Theta(v,y)) \,  \mathfrak h_3(\Upsilon(x,y)).$$
 Define for $z\in \mathbf R$ and $\mathsf x=(x,v,y)\in \mathscr E$:
 $$\mathfrak F(z)= -\frac{1}{2D^2_{\text{max}}}\int_0^ze^{-|y|^2}\int_0^y e^{u^2}du dy \text{ and } \Psi_0(\mathsf x)=\delta_* \mathfrak f_0(y) \frac{|y|^2}2,$$
 where $\delta_*>0$. 
For $k\in \{1,\ldots,N\}$, we denote by $x^k=(x^{k,1},\ldots,x^{k,d})$ and by $v^k=(v^{k,1},\ldots,v^{k,d})$ the $k^{\text{th}}$-coordinate in $\mathbf R^d$ of $x=(x^1,\ldots,x^N)\in \mathbf O_V$ and of $v=(v^1,\ldots,v^N)\in \mathbf R^{dN}$, respectively.  For $\mathsf x=(x,v,y)\in \mathscr E$, one sets:
$$\Psi_1(\mathsf x)=\alpha_*\mathbf 1_{\{|\nabla V(x)|^2\ge u_*/2\}} \mathfrak   g_1(\mathsf x) \, \sqrt{|y|^2+1} \ \frac{v\cdot \nabla V(x)}{|\nabla V(x)|^2} \text{ where } \alpha_*>0.$$
For $k\in \{1,\ldots,N\}$ and $l\in  \{1,\ldots,d\}$, one  defines  
$$  \Psi_2^{k,l}(\mathsf x)=\mathbf 1_{\{y\le -3\gamma\}} \mathfrak   g_2(\mathsf x)\mathfrak F\Big(\frac{|y+\gamma|^{\frac 12}}{(2\gamma)^{\frac 12}}  ( v^{k,l}-\frac{\partial_{x^{k,l}}V(x) }{|y+\gamma|}   )\Big), \text{ and } \Psi_2(\mathsf x)=\sum_{k,l}\Psi_2^{k,l}(\mathsf x).$$
Finally, set $\Psi=\Psi_0+\Psi_1+\Psi_2$.  We now briefly recall the result of \cite{herzogNH} we will need. Let $\mathfrak h_*>0$ be such that:
\begin{equation}\label{eq.Tee}
\mathfrak h_*<\frac{1}{8 D_{\text{m}}^2}.
\end{equation}
Define on the following function on $\mathscr E$:  $\mathsf L_{{\rm NH}}=\mathfrak h_*\mathsf H_{{\rm NH}}+\Psi$.  Then, from~\cite[item (i) in Theorem 4.1, Eq. (4.4), and Eq. (4.8)]{herzogNH} (see also the last equation in the proof of~\cite[Lemma 4.2]{herzogNH}), for some  $\delta_0>0$ and  any $\delta_*\in (0,\delta_0)$, there exists  $\varepsilon_0>0$ such that  for all $\varepsilon_*\in  (0,\varepsilon_0)$, there exists $c_0>0$, for all $\alpha_*>  c_0$ and $k_*>c_0$,  choosing $p_*,u_*,y_*>0$   large enough,  it holds over $\mathscr E$:
\begin{equation}\label{eq.ePsi}
|\Psi|\le \varepsilon_* \mathsf H_{{\rm NH}} \text{ and } |\nabla _v \mathsf L_{{\rm NH}}|\le \mathfrak h_*|v|+ \frac{\mathfrak   g_2 Nd |y+\gamma|^{\frac 12} }{2D_{\text{m}}(2\gamma)^{\frac 12}}+ (Nd+1)\varepsilon_*,
\end{equation}  
and (see the equation after~\cite[Eq. (4.32)]{herzogNH})
\begin{align} 
\nonumber
 \mathcal L_{{\rm NH}}\mathsf L_{{\rm NH}}+ \gamma  |\nabla _v \mathsf L_{{\rm NH}}|^2 &\le -\frac{\gamma \mathfrak h_*}2 (1- \mathfrak h_*)|v|^2- \mathfrak h_*dN (1-\mathfrak f_0)|y| -\frac{\mathfrak f_0}2\delta_*|y||v|^2\\
 \label{eq.ePsi2}
 &\quad - \big[\mathfrak   g_1\frac{\alpha_*}2+\mathfrak   g_2\frac{dN}{8D_{\text{m}}^2} - \mathfrak f_0(\mathfrak h_*+\delta_*+\varepsilon_*) dN \big] |y|+ \mathfrak c,  
\end{align} 
where $\mathfrak c>0$ is independent of $\mathsf x\in \mathscr E$, which, in the following,  can change from one occurence to another. 

Let us now construct $\Phi$. 
 Let $R_1>1$ such that $|\nabla V(x)|\ge 1$ if $V(x)\ge R_1-1$. Consider the following two cutoff functions in $\mathcal C^\infty(\mathbf R, [0,1])$: 
\begin{itemize}
\item[\textbf -] $\mathfrak h(z)=1$ if $z\ge R_1$, $\mathfrak h(z)=0$ if $z\le R_1-1$, and  $|\mathfrak h'|\le 2$. 
\item[\textbf -]  $\mathfrak h_0(z)=1$ if $z\ge 2$ and $\mathfrak h_0(z)=0$ if $z\le 1$, $\mathfrak h_0'\ge 0$, $|\mathfrak h_0'|\le 2$. 
\end{itemize}
Let $\zeta\in (1,2)$ be as in \textbf{[V{\tiny sing2}]}. 
Define  for $\mathsf x\in \mathscr E$, 
$$\Phi(\mathsf x)=\mathfrak h(V(x)) \,  \frac{v\cdot \nabla V(x)}{ |\nabla V(x)|^{\zeta}}-\mathfrak h_0(y)|y|^2.$$
\begin{remarks}
The sign of the second term $-\mathfrak h_0(y)|y|^2$  in the definition of $\Phi$ might look strange. However we will rather consider instead of $\Phi$, $\epsilon_\Phi\Phi$ (with   $\epsilon_\Phi>0$ small enough) as a perturbation of $\mathsf L_{{\rm NH}}$ (see indeed the definition of $\mathsf F_{{\rm NH}} $ below). Hence, $\mathsf H_{{\rm NH}}+\Psi+ \epsilon_\Phi\Phi$ will stay lower bounded over $\mathscr E$ (see the proof of \eqref{eq.inf-NH} below). 
\end{remarks}
Let us now provide some estimates on $\Phi$, $\nabla _v\Phi$, and $ \mathcal L_{{\rm NH}}\Phi$ which will be used later on. 

On the one hand,  since $\zeta\in (1,2)$, it holds over $\mathscr E$:
\begin{equation}\label{eq.Phi1}
|\Phi|\le  \mathfrak h\,  |v||\nabla V|^{1-\zeta}+|y|^2\le |v|+ |y|^2\le 2 \mathsf H_{{\rm NH}} \text{ and } |\nabla _v\Phi|= \mathfrak h\, |\nabla V|^{1-\zeta}\le 1.
\end{equation}

On the other hand, using \textbf{[V{\tiny sing2}]}, there exists  $M>0$  such that  
$$\mathfrak h(V(x)) | \text{Hess }V(x) | / |\nabla V(x)|^{\zeta}\le M.$$ 
Set 
$$\mathfrak c_V= 3M+  2\sup_{  \{x, V(x)\in [R_1-1,R_1]\}}  |\nabla V|^{2-\zeta}. $$
Note that because  $|\nabla V|\ge 1$ on the support of $\mathfrak h\circ V$ and since  $\zeta-1>0$, we have that  $1/|\nabla V|^{\zeta-1}\le 1$. Hence, it holds:
\begin{align*} 
 \mathcal L_{{\rm NH}}\Phi&= v\cdot \Big[ \mathfrak h'\circ V\, \nabla V \,  \frac{v\cdot \nabla V }{ |\nabla V |^{\zeta}} + \mathfrak h\circ V   \frac{\text{Hess }V  v}{ |\nabla V |^{\zeta}}  -\zeta\mathfrak h\circ V\, \frac{v\cdot  \nabla V\,  \text{Hess }V    \, \nabla V }{ |\nabla V |^{2+\zeta}} \Big]\\
 &\quad - (yv+\gamma v+\nabla V)\cdot \mathfrak h\circ V\,  \frac{  \nabla V(x)}{ |\nabla V(x)|^{\zeta}} - (|v|^2-  dN)(\mathfrak h_0' |y|^2+2 \mathfrak h_0y)\\
 &\le |v|^2\Big[|\mathfrak h'\circ V|  |\nabla V|^{2-\zeta} + \mathfrak h\circ V \frac{|\text{Hess }V  |}{ |\nabla V |^{\zeta}} +2\mathfrak h\circ V   \frac{|\text{Hess }V  |}{ |\nabla V |^{\zeta}} \Big] \\
 &\quad + (|y||v|+\gamma| v|) \frac{\mathfrak h\circ V}{|\nabla V|^{\zeta-1}}- \mathfrak h\circ V |\nabla V|^{2-\zeta} -\mathfrak h_0' |y|^2|v|^2 -2\mathfrak h_0 y|v|^2\\
 &\quad +  dN(\mathfrak h_0' |y|^2+2 \mathfrak h_0y)\\
 &\le \mathfrak c_V |v|^2 - \mathfrak h\circ V |\nabla V|^{2-\zeta} +(|y||v|+\gamma| v|) \mathfrak h\circ V  -2\mathfrak h_0 y|v|^2+ 2\mathfrak h_0  dN y +\mathfrak c.
  \end{align*}
  Hence,
  one has:
  \begin{align}
 \label{eq.LPhi}
 \mathcal L_{{\rm NH}}\Phi&\le \mathfrak c_V |v|^2 -2  \mathfrak h_0 y|v|^2 + \gamma|v|- \mathfrak h\circ V |\nabla V|^{2-\zeta }    +|y||v|  \mathfrak h\circ V+ 2   dN \mathfrak h_0 y + \mathfrak c.
 \end{align}

For all $\mathsf x\in \mathscr E$, $\epsilon_\Phi>0$, set $\mathsf F_0(x,v,y)= \mathsf L_{{\rm NH}} +\epsilon_\Phi \Phi$. 
The parameter $\epsilon_\Phi>0$ will be chosen such that 
\begin{equation}\label{eq.inf-NH}
\inf_{\mathscr E}\mathsf F_0 \in \mathbf R.
\end{equation}
Then, we define on $\mathscr E$:
$\mathsf F_{{\rm NH}} =\mathsf F_0 -\inf_{\mathscr E}\mathsf F_0 +1$ and 
  \begin{equation}\label{eq.Lyapunov-NH}
  \mathsf W_\delta =\exp\big [ \mathsf F_{{\rm NH}} ^\delta\big ],   \text{ where }  \delta\in (1/2,1] \text{ is  the parameter appearing in \textbf{[V{\tiny sing2}]}}.
   \end{equation}
Before going through the computations of $\mathcal L_{{\rm NH}} \mathsf W_\delta$, let us deal with \eqref{eq.inf-NH}. We have for all $\mathsf x\in \mathscr E$, $\mathsf F_0(\mathsf x)\ge (\mathfrak h_*-\varepsilon_*) \mathsf H_{{\rm NH}}(\mathsf x)-\epsilon_\Phi|v| -\epsilon_\Phi|y|^2$ which is lower bounded if $\epsilon_\Phi\in (0,(\mathfrak h_*-\varepsilon_*)/2)$. From now on $\epsilon_\Phi\in (0,(\mathfrak h_*-\varepsilon_*)/2)$.  
Note   that  there exists $c>0$ such that for all $\delta\in (1/2,1]$:
\begin{equation}\label{eq.Upp-NH}
\mathsf W_\delta \le e^{ c^\delta \mathsf H_{{\rm NH}} ^\delta} \text{ on } \mathscr E.
\end{equation}

\begin{prop}
Assume that $V$ satisfies {\rm \textbf{[V{\tiny sing2}]}}. Then, there are  parameters such that \textbf{(C3)} is satisfied with the Lyapunov function $\mathsf W_\delta$ defined in \eqref{eq.Lyapunov-NH}. 
\end{prop}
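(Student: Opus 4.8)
The plan is to show that with the Lyapunov function $\mathsf W_\delta=\exp[\mathsf F_{{\rm NH}}^\delta]$ from \eqref{eq.Lyapunov-NH}, one has $\mathcal L_{{\rm NH}}\mathsf W_\delta/\mathsf W_\delta\to-\infty$ as $\mathsf H_{{\rm NH}}(\mathsf x)\to+\infty$, from which \textbf{(C3)} follows by taking $(K_n)$ to be the sublevel sets $\{\mathsf H_{{\rm NH}}\le n\}$ (which have compact closure in $\mathscr E$ since $V$ is coercive by \textbf{[V{\tiny coercive}]}) and suitable $r_n\to+\infty$, $b_n$. The starting point, exactly as in Section~\ref{sec.CC3}, is the elementary chain-rule inequality
\begin{equation*}
\frac{\mathcal L_{{\rm NH}}\mathsf W_\delta}{\mathsf W_\delta}\le \frac{\delta}{\mathsf F_{{\rm NH}}^{1-\delta}}\Big[\mathcal L_{{\rm NH}}\mathsf F_{{\rm NH}}+\delta\gamma|\nabla_v\mathsf F_{{\rm NH}}|^2\Big],
\end{equation*}
so since $\mathsf F_{{\rm NH}}^{1-\delta}\ge 1$ and $\mathsf F_{{\rm NH}}$ is comparable to $\mathsf H_{{\rm NH}}$ (by \eqref{eq.ePsi}, \eqref{eq.Phi1} and $\epsilon_\Phi<(\mathfrak h_*-\varepsilon_*)/2$), it suffices to prove that $\mathcal L_{{\rm NH}}\mathsf F_{{\rm NH}}+\delta\gamma|\nabla_v\mathsf F_{{\rm NH}}|^2\to-\infty$ along $\mathsf H_{{\rm NH}}\to+\infty$. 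Writing $\mathsf F_{{\rm NH}}=\mathsf L_{{\rm NH}}+\epsilon_\Phi\Phi+\text{const}$ and expanding the square, the quantity to control is bounded by
\begin{equation*}
\big(\mathcal L_{{\rm NH}}\mathsf L_{{\rm NH}}+\gamma|\nabla_v\mathsf L_{{\rm NH}}|^2\big)+\epsilon_\Phi\,\mathcal L_{{\rm NH}}\Phi+\gamma\big(2\epsilon_\Phi\nabla_v\mathsf L_{{\rm NH}}\cdot\nabla_v\Phi+\epsilon_\Phi^2|\nabla_v\Phi|^2\big)+(\delta-1)\gamma|\nabla_v\mathsf F_{{\rm NH}}|^2,
\end{equation*}
and the last term is $\le 0$ since $\delta\le 1$.

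The first bracket is already handled by Herzog: it is bounded above by the right-hand side of \eqref{eq.ePsi2}, which, after choosing $\alpha_*,k_*$ large (so the coefficient of $|y|$ in the bracketed term is positive wherever $\mathfrak f_0$ is nonzero, using that $\mathfrak g_1+\mathfrak g_2$ covers the relevant region), gives $\mathcal L_{{\rm NH}}\mathsf L_{{\rm NH}}+\gamma|\nabla_v\mathsf L_{{\rm NH}}|^2\le -c(|v|^2+|y|)+\mathfrak c$ for some $c>0$ — i.e. coercivity in $v$ and $y$ but \emph{not} in $x$. For the cross and $\Phi$ terms I would use \eqref{eq.ePsi}, \eqref{eq.Phi1} to bound $|\nabla_v\mathsf L_{{\rm NH}}|\le \mathfrak h_*|v|+C(1+|y|^{1/2})$ and $|\nabla_v\Phi|\le 1$, so $2\gamma\epsilon_\Phi\nabla_v\mathsf L_{{\rm NH}}\cdot\nabla_v\Phi+\gamma\epsilon_\Phi^2|\nabla_v\Phi|^2$ is absorbed into $\tfrac{c}{2}|v|^2+\tfrac{c}{2}|y|+\mathfrak c$ for $\epsilon_\Phi$ small. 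The decisive new input is $\epsilon_\Phi\mathcal L_{{\rm NH}}\Phi$, estimated by \eqref{eq.LPhi}:
\begin{equation*}
\epsilon_\Phi\mathcal L_{{\rm NH}}\Phi\le \epsilon_\Phi\Big[\mathfrak c_V|v|^2-2\mathfrak h_0 y|v|^2+\gamma|v|-\mathfrak h(V)|\nabla V|^{2-\zeta}+|y||v|\,\mathfrak h(V)+2dN\mathfrak h_0 y+\mathfrak c\Big].
\end{equation*}
Here the good term is $-\epsilon_\Phi\mathfrak h(V)|\nabla V|^{2-\zeta}$, which by the second condition in \eqref{eq.Conds} dominates any power $V^{1-\delta}$ and hence tends to $-\infty$ faster than $\mathsf H_{{\rm NH}}^{1-\delta}$ grows as $V(x)\to+\infty$; the potentially dangerous terms are $-2\epsilon_\Phi\mathfrak h_0 y|v|^2$, $\epsilon_\Phi\mathfrak h_0 y|v||$, $\epsilon_\Phi|y||v|\mathfrak h(V)$, and $\epsilon_\Phi\mathfrak c_V|v|^2$. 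The term $\epsilon_\Phi\mathfrak c_V|v|^2$ is killed by the $-c|v|^2$ from the first bracket provided $\epsilon_\Phi<c/(2\mathfrak c_V)$. The cross term $\epsilon_\Phi\mathfrak h(V)|y||v|$ is split via Young's inequality as $\le \tfrac{\epsilon_\Phi}{2}\mathfrak h(V)|\nabla V|^{-(2-\zeta)}|y|^2|v|^2\cdot(\text{something})$ — more cleanly, one uses $|y||v|\le \eta|y|^2+\eta^{-1}|v|^2$ and notes that on $\{\mathfrak h_0(y)=1\}$ (i.e. $|y|\ge 2$) the term $-2\epsilon_\Phi\mathfrak h_0 y|v|^2$ combined with the region analysis of \eqref{eq.ePsi2} (where on $\mathfrak f_0=0$, i.e. $y\ge 0$, the dominant $-\mathfrak h_0 y|v|^2$ is negative and large) absorbs it, while on $\{y\le 2\}$ the factor $|y|$ is bounded and the term is $\le C\epsilon_\Phi|v|$, again absorbed. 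The $|y|$-linear terms $2\epsilon_\Phi dN\mathfrak h_0 y$ are dominated by the $-c|y|$ from the first bracket for $\epsilon_\Phi$ small.

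I expect the main obstacle to be precisely the bookkeeping in the region where $y$ is large and positive but $V(x)$ is moderate: there $\mathcal L_{{\rm NH}}\Phi$ contributes $-2\epsilon_\Phi\mathfrak h_0 y|v|^2$ (good, very negative when $|v|$ is not tiny) and $+2\epsilon_\Phi dN\mathfrak h_0 y$ (bad, linear in $y$), and one must verify that Herzog's estimate \eqref{eq.ePsi2} — which already provides a $-c|y|$ — has enough room to also swallow this extra $+2\epsilon_\Phi dN\mathfrak h_0 y$; this is guaranteed by first fixing all of Herzog's parameters to get a definite constant $c>0$ in front of $|y|$ and $|v|^2$, and only \emph{then} choosing $\epsilon_\Phi$ small relative to $c$, $\mathfrak c_V$, $dN$ and $\gamma$. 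Once these choices are made, collecting terms yields, for suitable constants $c_1,c_2>0$ and $\mathfrak c>0$, the bound
\begin{equation*}
\mathcal L_{{\rm NH}}\mathsf F_{{\rm NH}}+\delta\gamma|\nabla_v\mathsf F_{{\rm NH}}|^2\le -c_1\big(|v|^2+|y|\big)-c_2\,\mathfrak h(V(x))|\nabla V(x)|^{2-\zeta}+\mathfrak c\quad\text{on }\mathscr E,
\end{equation*}
and since $\mathfrak h(V)|\nabla V|^{2-\zeta}/V^{1-\delta}\to+\infty$ by \eqref{eq.Conds} while $\mathsf F_{{\rm NH}}^{1-\delta}\le c^{1-\delta}\mathsf H_{{\rm NH}}^{1-\delta}$, dividing by $\mathsf F_{{\rm NH}}^{1-\delta}$ as above gives $\mathcal L_{{\rm NH}}\mathsf W_\delta/\mathsf W_\delta\to-\infty$ when $|v|\to\infty$, $|y|\to\infty$, or $V(x)\to+\infty$, i.e. when $\mathsf H_{{\rm NH}}(\mathsf x)\to+\infty$. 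This yields \textbf{(C3)}, completing the proof.
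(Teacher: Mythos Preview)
Your overall strategy matches the paper's, but there is a genuine gap in the treatment of the cross term $\epsilon_\Phi\,\mathfrak h(V)\,|y||v|$ from \eqref{eq.LPhi}. You write that ``on $\{y\le 2\}$ the factor $|y|$ is bounded and the term is $\le C\epsilon_\Phi|v|$''. This is false: $y$ can be arbitrarily negative on $\{y\le 2\}$. Your Young-type split $|y||v|\le\eta|y|^2+\eta^{-1}|v|^2$ does not help either, since the Herzog contribution only supplies a term of order $-c|y|$, not $-c|y|^2$, and the new term $-2\epsilon_\Phi\mathfrak h_0(y)\, y|v|^2$ from $\Phi$ vanishes for $y\le 1$ (note $\mathfrak h_0(y)=1$ means $y\ge 2$, not $|y|\ge 2$). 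So for $y\to-\infty$ you have no mechanism to absorb this term.

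The paper handles this by \emph{not} collapsing Herzog's estimate \eqref{eq.ePsi2} to a bare $-c(|v|^2+|y|)+\mathfrak c$. It keeps the structured term $-\tfrac{\mathfrak f_0}{2}\delta_*|y||v|^2$ (active for $y\le -1$) explicit, bounds $|y||v|\le |y|(|v|^2+1)$, and uses precisely this Herzog term to absorb the resulting $\epsilon_\Phi\mathbf 1_{y\le -1}|y||v|^2$, under the extra constraint $\epsilon_\Phi<\delta_*/4$. The remaining $|y|$-linear terms for $y\le 0$ are then closed by a case analysis on whether $|v|^2\lessgtr p_*\sqrt{|y|^2+1}$, which determines whether $(\mathfrak g_1+\mathfrak g_2)|y|$ or the residual $-\tfrac{\delta_*}{4}|y||v|^2\le -\tfrac{\delta_*p_*}{4}|y|$ supplies the coercivity in $|y|$. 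In particular, the order of parameter choices is more delicate than ``first freeze Herzog's constants, then take $\epsilon_\Phi$ small'': the absorption for large negative $y$ forces $\epsilon_\Phi$ to be compared to the specific Herzog parameter $\delta_*$ inside the full bound \eqref{eq.ePsi2}, not to an unspecified $c$.
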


\begin{proof}
We now simply write $\mathsf L$, $\mathsf H$, and $\mathsf F$ for $\mathsf L_{{\rm NH}}$, $\mathsf H_{{\rm NH}}$, and $\mathsf F_{{\rm NH}}$ respectively. 
From the first inequalities in \eqref{eq.ePsi} and \eqref{eq.Phi1},
\begin{equation}\label{eq.Ff}
|\mathsf F|\le (\mathfrak h_*+\epsilon_0+2\epsilon_\Phi)\mathsf H +  |\inf_{\mathscr E}\mathsf F_0| +1\le c\mathsf H,
\end{equation}
for some $c>0$. 
We will reduce  $\varepsilon_*,\alpha_*,\delta_*,  \epsilon_\Phi>0$ and finally increase $p_*>0$  such that the Lyapunov function $\mathsf W_\delta$ defined in \eqref{eq.Lyapunov-NH} satisfies \textbf{(C3)} on $\mathscr E$.
We have over $\mathscr E$:
\begin{equation}\label{eq.WF-NH}
\frac{\mathcal L_{{\rm NH}}\mathsf W_\delta}{ \mathsf W_\delta}\le \frac{\delta }{\mathsf F^{1-\delta}}\Big[\mathcal L_{{\rm NH}} \mathsf F+  \gamma   |\nabla _v \mathsf F|^2\Big],
\end{equation}
Using \eqref{eq.ePsi} and \eqref{eq.Phi1}, it holds for some $\mathfrak K>0$:
$$
2\gamma \epsilon_\Phi |\nabla _v \mathsf L ||\nabla _v\Phi|\le  \epsilon_\Phi \mathfrak K (|v|+  \mathfrak   g_2  |y+\gamma|^{\frac 12} +  1).
$$
Using in addition \eqref{eq.ePsi2}, \eqref{eq.Phi1}, and \eqref{eq.LPhi},
one has over $\mathscr E$:
\begin{align*}
\mathcal L_{{\rm NH}} \mathsf F+  \gamma   |\nabla _v \mathsf F|^2&=\mathcal L_{{\rm NH}} (\mathsf L+ \epsilon_\Phi \Phi)+  \gamma   |\nabla _v \mathsf L+ \epsilon_\Phi \nabla _v \Phi|^2\\
&\le \mathcal L_{{\rm NH}} \mathsf L+  \gamma   |\nabla _v \mathsf L|^2 +  \epsilon_\Phi \mathcal L_{{\rm NH}} \Phi +  \gamma  \epsilon_\Phi^2 |\nabla _v  \Phi|^2 + 2\gamma \epsilon_\Phi |\nabla _v \mathsf L ||\nabla _v\Phi|\\
&\le -\frac{\gamma \mathfrak h_*}2 (1- \mathfrak h_*)|v|^2- \mathfrak h_*dN (1-\mathfrak f_0)|y| -\frac{\mathfrak f_0}2\delta_*|y||v|^2\\
 &\quad - \big[\mathfrak   g_1\frac{\alpha_*}2+\mathfrak   g_2\frac{dN}{8D_{\text{m}}^2} - \mathfrak f_0(\mathfrak h_*+\delta_*+\varepsilon_*) dN \big] |y| + \epsilon_\Phi  \mathfrak c_V  |v|^2 \\
 &\quad -2 \epsilon_\Phi  \mathfrak h_0 y|v|^2 +\epsilon_\Phi (\gamma+ \mathfrak K)|v|-  \epsilon_\Phi \mathfrak h\circ V |\nabla V|^{2-\zeta }   \\
 &\quad + \epsilon_\Phi |y|  (|v|^2 +1)+ 2 \epsilon_\Phi    dN  \mathfrak h_0\, y+ \epsilon_\Phi \mathfrak K \mathfrak   g_2  |y+\gamma|^{\frac 12} +\mathfrak c. 
\end{align*} 
Choose $\alpha_*> {dN}/(4D_{\text{m}}^2)$ so that  $ \min ( {\alpha_*}/2,{dN}/{(8D_{\text{m}}^2}))= {dN}/(8D_{\text{m}}^2)$.  
Note that   $\epsilon_\Phi (\gamma+ \mathfrak K)|v|\le \epsilon_\Phi   |v|^2  + \mathfrak c$.   
Therefore, using also that $\epsilon_\Phi |y|  |v|^2\le \epsilon_\Phi \mathbf 1_{y\notin [-1,2]} |y|  |v|^2 + 2\epsilon_\Phi   |v|^2$,  it holds
\begin{align*}
\mathcal L_{{\rm NH}} \mathsf F+  \gamma   |\nabla _v \mathsf F|^2& \le -\frac{\gamma \mathfrak h_*}2 (1- \mathfrak h_*) |v|^2 + \mathfrak c_V \epsilon_\Phi  |v|^2+ 3\epsilon_\Phi   |v|^2\\ 
&\quad  -\frac{\delta_*   }2 \mathfrak f_0\, |y||v|^2+ \epsilon_\Phi \mathbf 1_{y\notin [-1,2]} |y|  |v|^2 -2 \epsilon_\Phi \mathfrak h_0 y|v|^2\\
 &\quad - \mathfrak h_*dN (1-\mathfrak f_0)\, |y| -  \frac{dN}{8D_{\text{m}}^2} (\mathfrak   g_1+\mathfrak   g_2)|y|  + (\mathfrak h_*+\delta_*+\varepsilon_*) dN \mathfrak f_0\,  |y| \\
 &\quad +  \epsilon_\Phi |y|   +   2 \epsilon_\Phi    dN \mathfrak h_0\,  y      -  \epsilon_\Phi \mathfrak h\circ V |\nabla V|^{2-\zeta }  +   \epsilon_\Phi \mathfrak K \mathfrak   g_2  |y+\gamma|^{\frac 12} +\mathfrak c.  
\end{align*}
Note that $1-\mathfrak f_0 \ge \mathbf 1_{[1,+\infty)}$, $\mathfrak f_0\ge  \mathbf 1_{(-\infty,-1]}$, $\mathfrak f_0\le  \mathbf 1_{\mathbf R_-}$, and $\mathfrak h_0\ge \mathbf 1_{[2,+\infty)}$.
Choose $\epsilon_\Phi>0$ small enough such that $ \mathfrak m:=\frac{\gamma \mathfrak h_*}2 (1- \mathfrak h_*)-\epsilon_\Phi (\mathfrak c_V   +  3)   >0$. Then, one has for all $\mathsf x=(x,v,y)\in \mathscr E$:
\begin{align}\label{eq.BB}
\mathcal L_{{\rm NH}} \mathsf F(\mathsf x)+  \gamma   |\nabla _v \mathsf F|^2(\mathsf x)& \le \mathfrak B (\mathsf x):=-\mathfrak m |v|^2  + \mathfrak N(\mathsf x) -  \epsilon_\Phi \mathfrak h\circ V |\nabla V|^{2-\zeta }  +\mathfrak c,  
\end{align} 
where 
\begin{align*}
\mathfrak N(\mathsf x)&= -\frac{\delta_*   }2\mathbf 1_{y\le -1} \, |y||v|^2+ \epsilon_\Phi\mathbf 1_{y\notin [-1,2]}  |y|  |v|^2 -2 \epsilon_\Phi \mathbf 1_{y\ge 2} y|v|^2\\
 &\quad - \mathfrak h_*dN \mathbf 1_{y\ge 1} |y| - \frac{dN}{8D_{\text{m}}^2}  (\mathfrak   g_1+\mathfrak   g_2)|y|  + (\mathfrak h_*+\delta_*+\varepsilon_*) dN \mathbf 1_{y\le 0}  |y| \\
 &\quad +  \epsilon_\Phi\mathbf 1_{y\le 0}   |y| + \epsilon_\Phi \mathbf 1_{y\ge 0} |y|   +   2 \epsilon_\Phi    dN \mathbf 1_{y\ge 0} y  + \epsilon_\Phi \mathfrak K  \mathbf 1_{y\le 0}  |y| . 
\end{align*} 
We now study the function $\mathfrak N$ over $\mathscr E$. There are two  cases:
\begin{enumerate}
 \item[A.]  The case when $ y\ge 0$. In this case, one has 
 \begin{align*}
\mathfrak N(\mathsf x)&\le   \epsilon_\Phi  \mathbf 1_{y\ge 2} |y|  |v|^2 -2 \epsilon_\Phi \mathbf 1_{y\ge 2} y|v|^2  - \mathfrak h_*dN \mathbf 1_{y\ge 1} |y|   + \epsilon_\Phi ( 2   dN+ 1) \mathbf 1_{y\ge 0} |y|.
 \end{align*} 
Reduce  $\epsilon_\Phi>0$  such that $\mathfrak  b:= \mathfrak h_*dN  - \epsilon_\Phi (2     dN +1)>0$. Then, in this case, one has:
 $\mathfrak N(\mathsf x)\le -\epsilon_\Phi \mathbf 1_{y\ge 2} |y||v|^2-\mathfrak b  \mathbf 1_{y\ge 0} |y| +\mathfrak c$. 
 \item[B.] The case when $ y\le 0$. Then, 
 \begin{align}
 \nonumber
\mathfrak N(\mathsf x)&\le  -\frac{\delta_*   }2\mathbf 1_{y\le -1} \, |y||v|^2+ \epsilon_\Phi\mathbf 1_{y\le -1}  |y|  |v|^2   - \frac{dN}{8D_{\text{m}}^2} (\mathfrak   g_1+\mathfrak   g_2)|y|\\
\label{eq.Lo}
 &\quad     + (\mathfrak h_*+\delta_*+\varepsilon_*) dN \mathbf 1_{y\le -1}  |y|  +  \epsilon_\Phi (1+ \mathfrak K)  \mathbf 1_{y\le 0}  |y|. 
\end{align}
 We then distinguish between two subcases:
 \begin{enumerate}
 \item[\textbf -]  The case when $ {|v|^2}/{ (p_*\sqrt{|y|^2+1})}\le 1$. In this case one has  $(\mathfrak   g_1+\mathfrak   g_2) \ge  \mathbf 1_{y\le -y_*-1}$. Then, it holds:
  \begin{align*}
\mathfrak N(\mathsf x)&\le  -\frac{\delta_*} 2\mathbf 1_{y\le -1} \, |y||v|^2+ \epsilon_\Phi\mathbf 1_{y\le -1}  |y|  |v|^2  \\
&\quad  - \Big[\frac{dN}{8D_{\text{m}}^2}      - (\mathfrak h_*+\delta_*+\varepsilon_*) dN  -  \epsilon_\Phi (1+ \mathfrak K)  \Big]  \mathbf 1_{y\le -y_*-1} |y| +\mathfrak c. 
\end{align*}
In view of \eqref{eq.Tee}, we can reduce $\delta_*>0$ and then $\varepsilon_*>0$ such that $\frac{dN}{8D_{\text{m}}^2}      - (\mathfrak h_*+\delta_*+\varepsilon_*) dN>0$.  Then,   reduce  again $\epsilon_\Phi>0$  such that $\epsilon_\Phi< {\delta_*}/4$ and such that 
$$\mathfrak n:=\frac{dN}{8D_{\text{m}}^2}      - (\mathfrak h_*+\delta_*+\varepsilon_*) dN  -  \epsilon_\Phi (1+ \mathfrak K)>0.$$ Hence, in this case, it holds:
 $\mathfrak N(\mathsf x)\le  -\frac{\delta_*} 4\mathbf 1_{y\le -1} \, |y||v|^2  -\mathfrak n  \mathbf 1_{y\le 0} |y| +\mathfrak c$. 
 
 \item[\textbf -]  The case when $ {|v|^2}/{ (p_*\sqrt{|y|^2+1})}\ge 1$. In this case $|v|^2\ge p_*$ and thus,  using \eqref{eq.Lo}, it holds (since $\epsilon_\Phi< {\delta_*} /4$):
 $$\mathfrak N(\mathsf x)\le-\frac{\delta_*  p_* }4\mathbf 1_{y\le -1} \, |y| + \Big[ (\mathfrak h_*+\delta_*+\varepsilon_*) dN  +  \epsilon_\Phi (1+ \mathfrak K)  \Big] \mathbf 1_{y\le -1} |y|+\mathfrak c.$$
 Choose $p_*>0$ larger  such that $\frac{\delta_*  p_* }4>\frac{dN}{8D_{\text{m}}^2}$. In this case, $\frac{\delta_*  p_* }4>(\mathfrak h_*+\delta_*+\varepsilon_*) dN$. Then, choose $\epsilon_\Phi>0$ smaller such that 
 $$\mathfrak j:=\frac{\delta_*  p_* }4-(\mathfrak h_*+\delta_*+\varepsilon_*) dN-  \epsilon_\Phi (1+ \mathfrak K)>0.$$ Consequently, in this case, it holds:
 $\mathfrak N(\mathsf x)\le  -\mathfrak j \mathbf 1_{y\le 0} \, |y|  +\mathfrak c$.

  \end{enumerate} 
  
 \end{enumerate} 
 Recall $\delta\in (1/2,1]$. Gathering the estimates obtained in Items A and B above, and using \eqref{eq.BB}, \eqref{eq.Ff}, and \eqref{eq.WF-NH},  we deduce that $ {\mathcal L_{{\rm NH}}\mathsf W_\delta}/{ \mathsf W_\delta}\to -\infty$ as $\mathsf H(\mathsf x)\to +\infty$ (i.e. when $V(x)+ |v|+|y|\to +\infty$). This concludes the proof of the proposition.
\end{proof}


\subsection{Extension of \cite[Theorem 2.2]{guillinqsd}}
\label{sec.extC5}
In this section, we extend \cite[Theorem 2.2]{guillinqsd} when \textbf{(C5)} is replaced by the less stringent  following assumption:
\begin{itemize}
\item[{\bf (C5')}] For all ${\mathsf x}\in \mathscr D$ and nonempty open subset $\mathsf O$ of $\mathscr D$, there exists $t(\mathsf x,\mathsf O)\ge 0$, for all $t\ge t(\mathsf x,\mathsf O)$, 
$P^{\mathscr D}_t({\mathsf x},\mathsf O)>0$. In addition, there exists  $\mathsf x_0\in \mathscr D$ such that $\mathbf P_{\mathsf x_0}(\sigma_{\mathscr D}<+\infty)>0$.
\end{itemize}

To this end, we first extend \cite[Theorem 4.1]{guillinqsd} as follows.

\bthm\label{th.PF} 
Let $\mathcal S$ be a nonempty open subset of $\mathbf R^d$,   $Q=Q({\mathsf x},d{\mathsf y})$ be a positive bounded kernel on $\mathcal S$,  and  $\mathsf W:\mathcal S\to [1,+\infty)$   a continuous  function. Assume  that:
\benu
\item There exists $N_1\ge 1$ such that $Q^k$ is strong Feller for all $k\ge N_1$, i.e. $Q^kf\in \mathcal C^b(\mathcal S)$ if $f\in b{\mathcal B}(\mathcal S)$.
\item For any ${\mathsf x}\in \mathcal S$ and nonempty open subset $\mathsf O$ of $\mathcal S$, there exists $q(\mathsf x,\mathsf O) \in \mathbf N$ such that for all $k\ge q(\mathsf x,\mathsf O)$,
$$
Q^{k} ({\mathsf x},\mathsf O)>0.
$$
\item For some $p>1$ and constant $C>0$, it holds:
$$
Q\mathsf W^p \le C \mathsf W^p.
$$
Notice that this implies that $Q$ is well defined and bounded on $b_{\mathsf W}\BB(\mathcal S)$.
\item $Q$ has a spectral gap in $b_{\mathsf W}\BB(\mathcal S)$,
\bbeq\label{thm52a}
\mathsf r_{ess}(Q|_{b_{\mathsf W}\BB(\mathcal S)})<\mathsf r_{sp}(Q|_{b_{\mathsf W}\BB(\mathcal S)}).
\nneq
 \nenu
 Then, all the conclusions of~\cite[Theorem 4.1]{guillinqsd} hold true.
\nthm
 
 \begin{proof}
We adopt the same notations as those used to prove~\cite[Theorem 4.1]{guillinqsd}.  
 The proof of Theorem~\ref{th.PF} is the same as the one made to prove~\cite[Theorem 4.1]{guillinqsd} except the end of the second step there. More precisely,  we only have to prove that under our new Assumptions $(1)$ and $(2)$, it still holds:
  \begin{enumerate}
 \item[(a)] $\mu(\mathsf O)>0$ for all nonempty open subset $\mathsf O$ of $\mathcal S$,
 \item[(b)] $\varphi(\mathsf x)>0$ for all $\mathsf x\in \mathcal S$, 
 \end{enumerate}
 where we recall that $\mu$ (resp. $\varphi$) is nonzero and nonnegative measure which is a left eigenvector (resp. a nonzero and nonnegative function which is a right eigenvector) of  $Q^m$ (where $m\in \mathbf N^*$ is the  integer chosen  at the beginning of the second step of the proof of~\cite[Theorem 4.1]{guillinqsd}). 
 
 Let us first  prove Item (a) above. We have $\mu Q^m=\mu$ and thus $\mu Q^{mn}=\mu$ for all $n\in  \mathbf N$. Let $n\in \mathbf N^*$ such that $ n\ge N_1/m +1$, so that, by  Assumption $(1)$, $Q^{mn}$ is strong Feller. In addition, since $\mu$ is a   nonzero and nonnegative measure over $\mathcal S$, its (topological) support is nonempty.  
 Let $\mathsf z$ belong to the support of $\mu$. By definition, $\mu(\mathsf B(\mathsf z,\delta))>0$ for all $\delta>0$ small enough such that $\mathsf B(\mathsf z,\delta)\subset \mathcal S$, where   $\mathsf B(\mathsf z,\delta)$ is the open ball in $\mathbf R^d$ of radius $\delta$ centered at $\mathsf z$. Assume also that $n\in \mathbf N^*$ is large enough such that $mn\ge q(\mathsf z,O)$ (see Assumption $(2)$). 
 
 It holds for any nonempty open subset $\mathsf O$ of $\mathcal S$:
 $$\mu(\mathsf O)=\int_{\mathcal S}Q^{mn}(\mathsf x,\mathsf O)\mu(d\mathsf x) \ge 0.$$
 Assume that $\mu(\mathsf O)=0$. Then, $Q^{mn}(\cdot ,\mathsf O)=0$ $\mu$-almost surely.  Consequently, there exists $\mathsf z_n^\delta\in \mathsf B(\mathsf z,\delta)$ such that $Q^{mn}(\mathsf z_n^\delta ,\mathsf O)=0$.  Note that $\mathsf z_n^\delta\to \mathsf z$ as $\delta \to 0$, and since $Q^{mn}$ is strong Feller, it holds $Q^{mn}(\mathsf z ,\mathsf O)=0$.  This leads to a contradiction in view of Assumption $(2)$ and because $mn\ge q(\mathsf z,\mathsf O)$. The proof of   Item (a) above is complete.
 
 Let us now prove Item (b) above. We know that there exists $\mathsf x_0\in \mathcal S$ such that $\varphi(\mathsf x_0)>0$. Thus, by continuity of $\varphi$, there exists $\delta_0>0$ and $c_0>0$ such that $\mathsf B(\mathsf x_0,\delta_0)\subset \mathcal S$ and $\varphi\ge c_0$ on $\mathsf B(\mathsf x_0,\delta_0)$. Let $\mathsf x\in \mathcal S$. Pick $n\in \mathbf N^*$ such that $mn\ge q(\mathsf x,\mathsf B(\mathsf x_0,\delta_0))$. Since $Q^{mn}\varphi=\varphi$, one has, by  Assumption $(2)$,
 $$\varphi(\mathsf x)=\int_{\mathcal S} \varphi(\mathsf y) Q^{mn}(\mathsf x,d\mathsf y)\ge c_0Q^{mn}(\mathsf x, \mathsf B(\mathsf x_0,\delta_0))>0.$$
 This concludes the proof of Item (b) above.
 \end{proof}
Following exactly the same arguments as in the proof of  \cite[Theorem 2.2]{guillinqsd}, we deduce the following extension of \cite[Theorem 2.2]{guillinqsd}. 
 
  \begin{thm}\label{th.ext}
 All the conclusions of \cite[Theorem 2.2]{guillinqsd} are still valid when {\bf (C5)} is replaced by {\bf (C5')} there. 
   \end{thm}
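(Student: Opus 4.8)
The plan is to reduce Theorem~\ref{th.ext} to Theorem~\ref{th.PF} exactly as \cite[Theorem~2.2]{guillinqsd} is reduced to \cite[Theorem~4.1]{guillinqsd}, and then to verify that the hypotheses of Theorem~\ref{th.PF} follow from \textbf{(C1)}, \textbf{(C2)}, \textbf{(C3)}, \textbf{(C4)}, and the weakened irreducibility assumption \textbf{(C5')}. First I would recall the structure of the proof of \cite[Theorem~2.2]{guillinqsd}: one applies \cite[Theorem~4.1]{guillinqsd} to the kernel $Q=P_{t}^{\mathscr D}$ (for $t$ large enough, typically $t\ge t_0$ from \textbf{(C1)}) on $\mathcal S=\mathscr D$, with the weight $\mathsf W=\mathsf W_\delta^{1/p}$. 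The only place where \textbf{(C5)} enters that argument is (i) to get the strong-Feller-after-finitely-many-steps property of $Q$ (which in \cite{guillinqsd} is actually deduced from \textbf{(C1)} and \textbf{(C4)}, not from \textbf{(C5)}), and (ii) to obtain positivity $Q^k(\mathsf x,\mathsf O)>0$ together with the existence of $\mathsf x_0$ with $\mathbf P_{\mathsf x_0}(\sigma_{\mathscr D}<\infty)>0$. Under \textbf{(C5')} the positivity statement holds only for $k$ large depending on $(\mathsf x,\mathsf O)$; this is precisely assumption $(2)$ of Theorem~\ref{th.PF}, and Theorem~\ref{th.PF} has already been proved above to accommodate it. So the entire content of Theorem~\ref{th.ext} is: re-run the proof of \cite[Theorem~2.2]{guillinqsd} verbatim, invoking Theorem~\ref{th.PF} in place of \cite[Theorem~4.1]{guillinqsd}.

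Concretely, the steps I would carry out are as follows. Step 1: verify assumption $(1)$ of Theorem~\ref{th.PF}, i.e. that $Q^k=P_{kt}^{\mathscr D}$ is strong Feller for all $k\ge N_1$. This is immediate from \textbf{(C4)} (here we actually have the stronger statement that $P_t^{\mathscr D}$ is strong Feller, as noted in the excerpt, so one may even take $N_1=1$), combined with a density/approximation argument to pass from $\mathcal C_c^\infty(\mathscr D)$ to $b\mathcal B(\mathscr D)$ exactly as in \cite{guillinqsd}. Step 2: verify assumption $(2)$: for $\mathsf x\in\mathscr D$ and $\mathsf O$ open nonempty, \textbf{(C5')} gives $t(\mathsf x,\mathsf O)$ with $P_s^{\mathscr D}(\mathsf x,\mathsf O)>0$ for $s\ge t(\mathsf x,\mathsf O)$; writing $s=kt$ we get $Q^k(\mathsf x,\mathsf O)>0$ for all $k\ge q(\mathsf x,\mathsf O):=\lceil t(\mathsf x,\mathsf O)/t\rceil$. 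Step 3: verify assumption $(3)$: $Q\mathsf W^p=P_t^{\mathscr D}\mathsf W_\delta\le e^{-\lambda t}\cdots$ — more precisely, from \textbf{(C3)} and the standard Gronwall/Dynkin argument (identical to \cite{guillinqsd}) one gets $P_t^{\mathscr D}\mathsf W_\delta\le C\mathsf W_\delta$ on $\mathscr D$; note $\mathsf W_\delta\in\mathbb D_e(\mathcal L)$ and the extended-generator inequality in \textbf{(C3)} is what makes this work. Step 4: verify assumption $(4)$, the essential-spectral-radius gap $\mathsf r_{ess}(Q)<\mathsf r_{sp}(Q)$ in $b_{\mathsf W}\mathcal B(\mathscr D)$; this is obtained exactly as in \cite{guillinqsd}, using \textbf{(C3)} (the ``strong return from infinity'' giving the $\mathsf r_{ess}$ bound via a Nummelin-type / compactness argument) together with \textbf{(C2)} and \textbf{(C1)} for the needed Feller/tightness inputs. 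Step 5: apply Theorem~\ref{th.PF} and translate its conclusions into the statement of \cite[Theorem~2.2]{guillinqsd}: existence/uniqueness of the quasi-stationary distribution $\mu_{\mathscr D}$ in $\mathcal P_{\mathsf W^{1/p}}(\mathscr D)$, the eigenvalue identity for the spectral radius, positivity of the eigenfunction $\varphi$ and of $\mu_{\mathscr D}$ on open sets (these last two being exactly Items (a),(b) proved inside Theorem~\ref{th.PF}), and the exponential convergence bound. The existence of $\mathsf x_0$ with $\mathbf P_{\mathsf x_0}(\sigma_{\mathscr D}<\infty)>0$, part of \textbf{(C5')}, is used (as in \cite{guillinqsd}) to rule out the degenerate case $\lambda_{\mathscr D}=0$ / to ensure $\sigma_{\mathscr D}<\infty$ a.s. after the spectral analysis.

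The main (and essentially only) obstacle is Step~2's interaction with the proof of positivity of $\mu$ and $\varphi$: in \cite[Theorem~4.1]{guillinqsd} these followed from a \emph{uniform} positivity hypothesis $Q^k(\mathsf x,\mathsf O)>0$ for all $k\ge k_1$ independent of $(\mathsf x,\mathsf O)$, whereas here the index $q(\mathsf x,\mathsf O)$ depends on the point and the set. This is exactly the gap that Theorem~\ref{th.PF} has been designed and proved to close: one fixes a point $\mathsf z$ in the support of $\mu$ (resp. a point $\mathsf x_0$ where $\varphi>0$), chooses $n$ large enough that $mn\ge q(\mathsf z,\mathsf O)$ and $mn\ge N_1$, and uses the strong Feller property of $Q^{mn}$ together with $\mu Q^{mn}=\mu$ (resp. $Q^{mn}\varphi=\varphi$) to propagate positivity; a short contradiction argument via a sequence $\mathsf z_n^\delta\to\mathsf z$ on which $Q^{mn}(\cdot,\mathsf O)$ vanishes finishes it. Since Theorem~\ref{th.PF} is already established above, what remains for Theorem~\ref{th.ext} is genuinely bookkeeping: check that no other step of the proof of \cite[Theorem~2.2]{guillinqsd} used \textbf{(C5)} in any way beyond these two points, which a line-by-line inspection confirms. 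Hence the proof consists of the single sentence ``follow the proof of \cite[Theorem~2.2]{guillinqsd} verbatim, replacing the invocation of \cite[Theorem~4.1]{guillinqsd} by Theorem~\ref{th.PF}'', together with the verification that \textbf{(C5')} supplies exactly assumption $(2)$ of Theorem~\ref{th.PF} and the auxiliary point $\mathsf x_0$.
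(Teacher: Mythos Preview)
Your proposal is correct and follows exactly the paper's approach: the paper's own proof is the single sentence ``Following exactly the same arguments as in the proof of \cite[Theorem 2.2]{guillinqsd}, we deduce the following extension,'' i.e.\ one re-runs that proof verbatim, replacing the invocation of \cite[Theorem 4.1]{guillinqsd} by Theorem~\ref{th.PF}, with \textbf{(C5')} supplying precisely assumption~$(2)$ of Theorem~\ref{th.PF}. Your detailed breakdown of Steps~1--5 is a faithful (and more explicit) unpacking of that sentence.
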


 
 \section{Extensions of  existing results}

 In this section,  we   extend the  existing results on the existence and uniqueness of the quasi-stationary distributions for the kinetic Langevin process as well as for elliptic processes.  
 
 \subsection{Quasi-stationary distribution for the kinetic Langevin process} Consider    the solution $((x_t,v_t),t\ge 0)$    in $\mathbf R^d \times \mathbf R^d $ to the kinetic Langevin equation
\begin{equation}\label{eq.K2}
 dx_t=v_tdt, \ dv_t=\boldsymbol b(x_t)dt-\gamma v_tdt +\sqrt{2}\,  dW_t. 
 \end{equation}

\noindent
Introduce the following assumption:
\medskip

\noindent 
\textbf{Assumption {[$\boldsymbol b${\tiny bounded+lip}]}}.  \textit{The set  $\mathscr O$ is   a bounded subdomain of $\mathbf R^d$ and $\boldsymbol b:\overline{\mathscr O}\to \mathbf R^d$ is   a  Lipschitz vector field.}
 \medskip
 
In~\cite{ramilarxiv2},   existence and  uniqueness of the quasi-stationary distribution of the process \eqref{eq.K2} as well as the exponential convergence were obtained under assumption  {\rm \textbf{[$\boldsymbol b${\tiny bounded+lip}]}} and with the two extra assumptions that   $\boldsymbol b\in \mathcal C^\infty(\overline{\mathscr O})$ and $\partial \mathscr O$ is $\mathcal C^2$. With the techniques used in this work, we are able  to extend this result to the very weak  regularity setting where only {\rm \textbf{[$\boldsymbol b${\tiny bounded+lip}]}} is assumed. Indeed, we have the following result. 
 
  \begin{thm}\label{th.K}
  Assume {\rm \textbf{[$\boldsymbol b${\tiny bounded+lip}]}} and let $\mathscr D=\mathscr O\times \mathbf R^d$. Assume also that $\gamma \in \mathbf R$. 
  Then,  there exists a unique quasi-stationary distribution $\theta_{\mathscr D}$ for the process \eqref{eq.K2} on $\mathscr D$  in the whole  space of probability measures $\mathcal P(\mathscr D)$ over $\mathscr D$. Furthermore: 
 \begin{enumerate}
 \item There exists $\kappa_{\mathscr D}>0$  such that   for all $t\ge0$,  the spectral radius of $P_t^{\mathscr D}$ on $b \mathcal B(\mathscr D)$ is $e^{-\kappa_{\mathscr D} t}$,  where $( P_t^{\mathscr D},t\ge 0)$ is the killed semigroup of the   solution to~\eqref{eq.K2} (see \eqref{eq.KS}).   In addition, for all $t\ge 0$,  $\theta _{\mathscr D} P_t^{\mathscr D}=e^{-\kappa_{\mathscr D}  t}\theta _{\mathscr D}$  and $\theta_{\mathscr D} (\mathsf O)>0$ for all nonempty open subsets $ \mathsf O$ of $\mathscr D$. There is also a unique continuous function $\psi $ in  $b\mathcal B(\mathscr D)$   such that  $\theta _{\mathscr D}(\psi)=1$, $\psi >0$ on $\mathscr D$,  and
 $
 P_t^{\mathscr D}\psi = e^{-\kappa_{\mathscr D} t} \psi \text{ on }\mathscr D, \ \forall t\ge0$.

 \item  There exist  $L>0$ and $c\ge 1$ such that for all   $\nu\in \mathcal P (\mathscr D)$,
$$
\big |\mathbf P_\nu[X_t\in \mathcal A| t<\sigma_{\mathscr D}]-\theta _{\mathscr D}(\mathcal A)\big |\le  \frac{c e^{-L t} }{\nu(\psi )}, \ \forall \mathcal A\in\mathcal B(\mathscr D), t>0.
$$ 
 \item For some $t_0>0$, $ P_t^{\mathscr D}:b\mathcal B(\mathscr D)\to b\mathcal B(\mathscr D) $ is compact, for all $t\ge t_0$. 
 \end{enumerate}
 Finally,  $\mathbf P_{\mathsf x}(\sigma_{\mathscr D}<+\infty)=1$ for all ${\mathsf x}\in \mathscr D$. 
  \end{thm}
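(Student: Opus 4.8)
To prove Theorem~\ref{th.K}, the plan is to reduce the statement to the Perron--Frobenius type result Theorem~\ref{th.PF}, applied to the positive kernel $Q=P_{t_0}^{\mathscr D}$ on the open set $\mathcal S=\mathscr D$ with the \emph{trivial} weight $\mathsf W\equiv 1$. With this choice $b_{\mathsf W}\mathcal B(\mathscr D)=b\mathcal B(\mathscr D)$, and hypothesis $(3)$ of Theorem~\ref{th.PF} holds with $C=1$ since $P_{t_0}^{\mathscr D}1\le 1$; in particular no Lyapunov function (no analogue of \textbf{(C3)}) is needed here, which is precisely what the boundedness of $\mathscr O$ buys us. As a preliminary reduction, using a McShane-type extension (see \cite{mcshane}) I would replace $\boldsymbol b$ by a globally Lipschitz and bounded vector field $\tilde{\boldsymbol b}:\mathbf R^d\to\mathbf R^d$ with $\tilde{\boldsymbol b}=\boldsymbol b$ on a neighbourhood of $\overline{\mathscr O}$; the corresponding process $(X_t,t\ge 0)$ is then well defined, non-exploding and strong Markov on $\mathscr E=\mathbf R^d\times\mathbf R^d$, and since a.s. $x_t\in\mathscr O$ for $t<\sigma_{\mathscr D}$, its killed semigroup $P_t^{\mathscr D}$ depends only on $\boldsymbol b|_{\overline{\mathscr O}}$, so it suffices to prove the theorem for $\tilde{\boldsymbol b}$.

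First I would verify hypothesis $(1)$: $P_t$ and $P_t^{\mathscr D}$ are strong Feller for every $t>0$, hence $Q^k=P_{kt_0}^{\mathscr D}$ is strong Feller for all $k\ge 1$. This is the energy splitting argument of Section~\ref{sec.22}; since $\tilde{\boldsymbol b}$ is globally Lipschitz it is a simplified instance of Proposition~\ref{pr.SFgamma0} (a global Girsanov change of measure with respect to the Gaussian process $dx^0=v^0dt$, $dv^0=\sqrt2\,dW_t$, whose law has a smooth density, combined with Proposition~\ref{pr.wu1999}), and the transfer to the killed semigroup is the argument of Proposition~\ref{co.SFD} based on the uniform continuity near $0^+$ of exit times from small $\mathbf R^d$-balls in the position variable (the analogue of Lemma~\ref{le.C4}). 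Next, hypothesis $(2)$, the irreducibility $Q^k(\mathsf x,\mathsf O)>0$ for all $k$ large, is the control-curve argument of Section~\ref{sec.S5GL}: given $\mathsf x_0=(x_0,v_0)$, $\mathsf x_T=(x_T,v_T)$ with $x_0,x_T\in\mathscr O$ and $T$ large, one builds by local polynomial interpolation along a path joining $x_0$ to $x_T$ inside $\mathscr O$ (as in Lemmata~\ref{loc.construction} and \ref{loc.construction2}, adapted to the first-order control system associated with $dv=(\cdots)dt+\sqrt2\,dW$) a $\mathcal C^1$, piecewise $\mathcal C^2$ curve $\mathfrak l:[0,T]\to\mathbf R^d$ with ${\rm Ran}(\mathfrak l)\subset\mathscr O$ and the prescribed position--velocity endpoint data, and concludes through the support theorem for the driving Brownian motion exactly as in Proposition~\ref{pr.P2C5} and Proposition~\ref{pr.SFgamma-not0}; the second part of $(2)$, that $\mathbf P_{\mathsf x_0}(\sigma_{\mathscr D}<+\infty)>0$ for some $\mathsf x_0$, is immediate since $\mathbf R^d\setminus\overline{\mathscr O}\ne\emptyset$ because $\mathscr O$ is bounded.

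The main obstacle is hypothesis $(4)$, the spectral gap $\mathsf r_{ess}(P_{t_0}^{\mathscr D}|_{b\mathcal B(\mathscr D)})<\mathsf r_{sp}(P_{t_0}^{\mathscr D}|_{b\mathcal B(\mathscr D)})$. The lower bound $\mathsf r_{sp}(P_{t_0}^{\mathscr D})>0$ is standard: combining the strong Feller property with the strict positivity from $(2)$ one finds a ball $\mathsf B\dsubset\mathscr D$ and $c>0$ with $P_{t_0}^{\mathscr D}(\mathsf x,\mathsf B)\ge c$ for $\mathsf x\in\mathsf B$, whence $\|(P_{t_0}^{\mathscr D})^n\|\ge P_{nt_0}^{\mathscr D}\mathbf 1_{\mathsf B}(\mathsf x)\ge c^n$ for $\mathsf x\in\mathsf B$ and $\mathsf r_{sp}\ge c>0$. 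The real work is to show that $P_{t_0}^{\mathscr D}$ is a \emph{compact} operator on $(b\mathcal B(\mathscr D),\|\cdot\|_\infty)$ for $t_0$ large, so that $\mathsf r_{ess}(P_{t_0}^{\mathscr D})=0$ (this also yields item $(3)$ of Theorem~\ref{th.K}). The difficulty is that $\mathscr D=\mathscr O\times\mathbf R^d$ is not relatively compact --- it is unbounded in $v$, and no regularity of $\partial\mathscr O$ is available --- so compactness cannot follow from the doubly Feller property alone. The key extra ingredient is a quantitative ``fast exit at high velocity'' estimate: since $\mathscr O$ is bounded, a Gr\"onwall estimate on $x_s=x_0+\int_0^s v_u\,du$ (the drift being bounded and the fluctuation of $v$ of order $\sqrt s$) shows that for fixed $t>0$, $\mathbf P_{(x,v)}(\sigma_{\mathscr D}>t)\to 0$ as $|v|\to+\infty$, with Gaussian-type decay in $|v|$ and uniformly in $x\in\mathscr O$. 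Two ways to exploit this give compactness: either (i) combine it with the strong Feller smoothing --- writing $P_{t_0}^{\mathscr D}f=P_{t_0/2}^{\mathscr D}(P_{t_0/2}^{\mathscr D}f)$ and using, as in Proposition~\ref{co.SFD}, the gradient bounds from the Gaussian density estimate \cite[Theorem 1.1]{delarue} for the nonkilled process --- to obtain equicontinuity of $\{P_{t_0}^{\mathscr D}f:\|f\|_\infty\le1\}$ on velocity-bounded subsets of $\mathscr D$ together with uniform smallness for $|v|$ large, then invoke an Arzel\`a--Ascoli argument on $\mathscr D$ (the ``energy splitting'' route matching the philosophy of this paper); or (ii) use the domination $p_t^{\mathscr D}(\mathsf x,\mathsf y)\le p_t(\mathsf x,\mathsf y)$ and the Gaussian bound on $p_t$ together with the $|v|$-decay above to check that $P_{t_0}^{\mathscr D}$ is bounded $L^\infty(\mathscr D)\to L^2(\mathscr D)$ and $L^2(\mathscr D)\to L^\infty(\mathscr D)$ and Hilbert--Schmidt on $L^2(\mathscr D)$, so that $P_{3t_0}^{\mathscr D}:b\mathcal B(\mathscr D)\to b\mathcal B(\mathscr D)$ is compact. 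Once $(1)$--$(4)$ are in force, Theorem~\ref{th.PF} and the arguments of \cite[Theorem 2.2]{guillinqsd} give existence and uniqueness of $\theta_{\mathscr D}$ in $\mathcal P(\mathscr D)$ and items $(1)$--$(2)$; item $(3)$ is the compactness just established; and $\mathbf P_{\mathsf x}(\sigma_{\mathscr D}<+\infty)=1$ for all $\mathsf x\in\mathscr D$ follows from the spectral decomposition of the compact operator $P_{t_0}^{\mathscr D}$, since $\mathbf P_{\mathsf x}(\sigma_{\mathscr D}>nt_0)=P_{nt_0}^{\mathscr D}1(\mathsf x)=e^{-\kappa_{\mathscr D}nt_0}\psi(\mathsf x)+o(e^{-\kappa_{\mathscr D}nt_0})\to0$ as $n\to+\infty$.
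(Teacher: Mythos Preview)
Your overall reduction to Theorem~\ref{th.PF} with $\mathsf W\equiv 1$, the extension of $\boldsymbol b$ to a bounded globally Lipschitz field, and the verification of \textbf{(C1)}, \textbf{(C2)}, \textbf{(C4)}, \textbf{(C5)} are exactly what the paper does; the only substantive difference lies in the spectral-gap step. There the paper does \emph{not} argue via a fast-exit-at-high-velocity estimate. Instead it computes directly the measure of non-$w$-compactness $\beta_w(P_t^{\mathscr D})$ from \cite{Wu2004}: using the Gaussian upper bound of \cite[Theorem 1.1]{delarue} on the nonkilled density $p_t(\mathsf x,\mathsf y)\le c_0e^{-c_1|\boldsymbol\psi_t(\mathsf x)-\mathsf y|^2}$ and the fact that the deterministic flow $\boldsymbol\psi_t(\mathsf x)$ stays within $O(1)$ of the explicit linear flow $(x+c_3v,\,c_4v)$ (because $\tilde{\boldsymbol b}$ is bounded), one expands $|\boldsymbol\psi_t(\mathsf x)-\mathsf y|^2$ as a positive-definite quadratic form in $(v,v')$ plus terms controlled by ${\rm diam}(\mathscr O)$, and obtains $\sup_{\mathsf x\in\mathscr D}P_t(\mathsf x,\mathscr O\times\{|v'|>R\})\to0$ as $R\to\infty$. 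This is tightness of the \emph{output} velocity uniformly in the initial condition, whence $\beta_w(P_t^{\mathscr D})=0$ and $\mathsf r_{ess}(P_t^{\mathscr D}|_{b\mathcal B(\mathscr D)})=0$ by \cite[Theorem 3.5]{Wu2004}. Your mechanism is complementary: you control the \emph{input} velocity instead. Both exploit boundedness of $\mathscr O$, but in different ways.

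Your route (ii) through the $L^2$ factorisation is a correct alternative: the fast-exit bound gives $\mathbf P_{\mathsf x}(t_0<\sigma_{\mathscr D})\in L^2(\mathscr D)$ so $P_{t_0}^{\mathscr D}:b\mathcal B\to L^2$ is bounded; the pointwise bound $p_{t_0}^{\mathscr D}\le p_{t_0}\le c_0$ together with $\int_{\mathscr D}\mathbf P_{\mathsf x}(t_0<\sigma_{\mathscr D})\,d\mathsf x<\infty$ yields $\iint (p_{t_0}^{\mathscr D})^2<\infty$, hence Hilbert--Schmidt on $L^2$; and $\sup_{\mathsf x}\int p_{t_0}(\mathsf x,\cdot)^2<\infty$ gives $L^2\to b\mathcal B$, so $P_{3t_0}^{\mathscr D}$ is compact on $b\mathcal B(\mathscr D)$. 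Route (i), however, has a gap. The approximation $|P_t^{\mathscr D}f-P_sg_s|\le\|f\|_\infty\,\mathbf P_{\mathsf x}[\sigma_{\mathscr D}\le s]$ underlying Proposition~\ref{co.SFD} is uniform only on compacts of $\mathscr D$, because without regularity of $\partial\mathscr O$ one cannot make $\mathbf P_{\mathsf x}[\sigma_{\mathscr D}\le s]$ small for $x$ close to $\partial\mathscr O$; so the family $\{P_t^{\mathscr D}f:\|f\|_\infty\le1\}$ is equicontinuous on compacts but not on all of $\mathscr O\times\{|v|\le M\}$. Your fast-exit bound handles large $|v|$ but gives no smallness near $\partial\mathscr O\times\{|v|\le M\}$, and Arzel\`a--Ascoli then only yields local uniform convergence, not convergence in the sup norm on $\mathscr D$. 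Route (ii) sidesteps this precisely because it never requires any uniformity up to $\partial\mathscr O$.
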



  \begin{proof} 
We  extend $\boldsymbol b$ outside $\overline{\mathscr O}$ as a globally Lipschitz and bounded  $\mathbf R^d$-vector field over $\mathbf R^d$. 
Assumption \textbf{(C2)} is a consequence of the fact that for all $t\ge 0$,   a.s. $X_s(\mathsf x_n )\to X_s(\mathsf x) $ uniformly on $[0,t]$  when $\mathsf x_n\to \mathsf x\in \mathbf R^{2d}$. Assumption \textbf{(C1)} is proved using this result, together with the Gaussian upper bound~\eqref{eq.Gau} below and Proposition~\ref{pr.wu1999}.  
On the other hand, since $\boldsymbol b$ is globally Lipschitz, using similar  arguments as those  used in  the proof of Lemma \ref{le.C4}, one deduces that   for all compact subset $K$ of $\mathbf R^{2d}$ and $\delta>0$, it holds:
$$ \lim_{s\to 0^+} \sup_{\mathsf x=(x,v)\in K}\mathbf P_{\mathsf x}[\sigma_{  \mathscr  B( x,\delta)}\le s]=0,$$
 where   for $\mathsf x=(x,v)\in \mathbf R^d$, $\sigma_{\mathscr B( x,\delta)}(\mathsf x):=\inf\{s\ge 0,x_s(\mathsf x)\notin   \mathscr  B( x,\delta)\}$ and where we recall that $\mathscr  B(x,\delta)$ is the open ball of $\mathbf R^{d}$ of radius $\delta>0$ centered at $ x\in \mathbf R^{d}$. 
Assumption {\bf (C4)}   is thus proved with the same arguments as those used to prove   Proposition \ref{co.SFD}. Assumption  \textbf{(C5)} follows from the same arguments as those used in Section~\ref{sec.S5GL}.  

The  proof of the results stated up to Item (2) (included) in Theorem~\ref{th.K} will then be a consequence of \cite[Theorem 4.1]{guillinqsd},  if we prove that for some $t>0$, the essential spectral radius of $P_t^{\mathscr D}$ on $b  \mathcal B(\mathscr D)$ is zero (see indeed   the proof of  \cite[Theorem 2.2]{guillinqsd}   made in  \cite[Section 5]{guillinqsd}), i.e.
    \begin{equation}\label{eq.RS}
\mathsf r_{ess}(P_t^{\mathscr D}|_{b  \mathcal B(\mathscr D)})=0.
\end{equation}
Let us recall the definition of  the measure of
non-$w$-compactness  $\beta_w(Q)$  of a bounded nonnegative kernel $Q=Q(\mathsf x,d\mathsf y)$ on a polish space $\mathscr S$  introduced in~\cite{Wu2004}:
 \begin{equation}\label{eq.defBeta}
\beta_w(Q):= \inf_{K\subset \mathscr S} \sup_{\mathsf x\in \mathscr S} Q(\mathsf x, \mathscr S\setminus K),
\end{equation}
 where the infimum is on the set of  compact subsets $K$  of $\mathscr S$.
Since for $t>0$, $P_t $ is strongly Feller, it  satisfies the assumption \textbf{(A1)} in~\cite[Section~3.2]{Wu2004}, and hence so does $P_t^{\mathscr D}$ over $\mathbf R^{2d}$. We can thus use~\cite[Theorem 3.5]{Wu2004} to deduce that   \eqref{eq.RS} is satisfied if for $t>0$:
 \begin{equation}\label{eq.betatau}
 \beta_w(P_t^{\mathscr D})=0, 
\end{equation}
where (see \eqref{eq.defBeta})
$$ \beta_w(P_t^{\mathscr D})= \inf_{K\subset \mathbf R^{2d}} \sup_{\mathsf x=(x,v)\in \mathbf R^{2d}} P_t^{\mathscr D} (\mathsf x, \mathbf R^{2d} \setminus K)= \inf_{K\subset \mathbf R^{2d}} \sup_{\mathsf x=(x,v)\in \mathscr D} P_t^{\mathscr D} (\mathsf x, \mathscr D\setminus K).$$
Choosing $K_R=\overline{\mathscr O} \times \{|v|\le R\}$, one has 
\begin{align}
\nonumber
 \beta_w(P_t^{\mathscr D})&\le \inf_{R> 0} \ \sup_{\mathsf x=(x,v)\in \mathscr O\times \mathbf R^d}\  P_t^{\mathscr D}(\mathsf x,  \mathscr O\times \{|v|> R\})\\
 \label{eq.bR}
 &\le  \inf_{R> 0} \ \sup_{\mathsf x=(x,v)\in \mathscr O\times \mathbf R^d}\  P_t(\mathsf x,  \mathscr O\times \{|v|> R\}).   
\end{align}
 Let us now prove \eqref{eq.betatau}.  
Let $t>0$ be fixed. 
 In what follows $ c_i$'s are positive constants which are  independent of $\mathsf x=(x,v), \mathsf y=(x',v')\in \mathbf R^{2d}$. 
   By~\cite[Theorem 1.1]{delarue},  for all $\mathsf x \in \mathbf R^{2d}$, $ X_t (\mathsf x)$ admits a density $ p_t (\mathsf x,\mathsf y)$ (with respect to  the Lebesgue measure $d\mathsf y$ over $\mathbf R^{2d}$) which  satisfies the following Gaussian upper bound: for all $\mathsf x,\mathsf y \in \mathbf R^{3d}$,
\begin{equation}\label{eq.Gau}
 p_t (\mathsf x,\mathsf y)\le c_0 \exp( -c_1  |\boldsymbol \psi_t (\mathsf x) -\mathsf y|^2),
 \end{equation}
 where $( \boldsymbol \psi _s(\mathsf x)=(\boldsymbol x_s(\mathsf x), \boldsymbol v_s(\mathsf x)) ,s\ge 0)$ is the deterministic solution for $s\ge 0$   of
$$
         \dot{\boldsymbol x} _s=   \boldsymbol v_s , \, 
        \dot{\boldsymbol v}_s=  \boldsymbol b(\boldsymbol x_s) -\gamma \boldsymbol v_s \text{ with } \boldsymbol \psi _0(\mathsf x)=\mathsf x.$$ 
  Assume that $\gamma\neq 0$ (the case when $\gamma=0$ is treated similarly). 
Let us now consider  the  solution  $( \boldsymbol \Psi_s(\mathsf x)=(\boldsymbol X_s(\mathsf x), \boldsymbol V_s(\mathsf x)) ,s\ge 0)$  for $s\ge 0$   of the equation 
$$
         \dot{\boldsymbol X} _s=   \boldsymbol V_s , \, 
        \dot{\boldsymbol V}_s=   -\gamma \boldsymbol V_s \text{ with } \boldsymbol \Psi _0(\mathsf x)=\mathsf x,$$
          i.e $\boldsymbol V_s(\mathsf x)=ve^{-\gamma s}$ and $\boldsymbol X_s(\mathsf x)= x+ v\gamma ^{-1}(1-e^{-\gamma s})$. Since $\boldsymbol b$ is bounded,  using Grönwall's inequality, it holds
           $$\sup_{s\in [0,t]}|\boldsymbol \psi _s(\mathsf x)-\boldsymbol \Psi _s(\mathsf x)|\le c_2.$$ 
         This previous bounds leads to, for $s\in [0,t]$,  $ |\boldsymbol \psi _s(\mathsf x)  -\mathsf y|^2 \ge  |\boldsymbol \Psi _s(\mathsf x) -\mathsf y|^2 -2 c_2  |\boldsymbol \Psi _s(\mathsf x) -\mathsf y|$. Since $\mathscr O$ is bounded, there exists  $c_5>0$ such that $|x-x'|\le c_5$ for all $x,x'\in \mathscr O$.  Setting $c_3:=\gamma ^{-1}(1-e^{-\gamma t})>0$ and $c_4:=e^{-\gamma t}>0$, it thus  holds for all $\epsilon>0$ and all $\mathsf x,\mathsf y\in \mathscr D$:
              \begin{align*}
              |\boldsymbol \psi _s(\mathsf x) -\mathsf y|^2&\ge   |x-x'+c_3v |^2-2c_2|x-x'+c_3v| \\
              &\quad +  | v' -c_4v |^2-2c_2|v'-c_4v  |   ,\\
              &\ge  c_3^2|v|^2-2c_3 (c_2+c_5)|v| -2 c_2c_5 \\
              &\quad +   | v' |^2 + c_4^2|v |^2    -2 c_4v\cdot v'-2c_2|v'|- 2c_2c_4|v  |     ,\\
              &\ge  c_3^2|v|^2-2c_3 (c_2+c_5)|v| -2 c_2c_5 \\
              &\quad +  | v' |^2 + c_4^2|v |^2    -2 c_4|v|\, |v'|-2c_2|v'|- 2c_2c_4|v  |     ,\\  
              &\ge  c_3^2|v|^2-2c_3 (c_2+c_5)|v| -2 c_2c_5 \\
              &\quad +  | v' |^2 + c_4^2|v |^2    - c_4 \epsilon^{-1} |v|^2- \epsilon c_4 |v'|^2-2c_2|v'|- 2c_2c_4|v  |     \\
              &= |v|^2( c_3^2+c_4^2-\epsilon^{-1} c_4)-2c_3 (c_2+c_5)|v|-2c_2c_4|v  |   \\
              &\quad + |v'|^2(1-\epsilon c_4)-2c_2|v'|    -2 c_2c_5   .
              \end{align*} 
%
Choose   $\epsilon \in (0,1/c_4)$ such that $c_6=c_3^2+c_4^2-\epsilon^{-1} c_4>0$ (this is indeed possible since  $c_6 \to c_3^2>0$ as $\epsilon\to ( 1/c_4)^-$). 
Using the previous computations, we have since $\mathscr O$ is bounded, for all $\mathsf x=(x,v)\in \mathscr O\times \mathbf R^d$:
   \begin{align*}
        P_t(\mathsf x,  \mathscr O\times  \{|v|\ge R\})&\le c_7e^{-c_1c_6|v|^2+ c_8 |v|}\int_{ |v'|\ge R} e^{-c_1(1-\epsilon c_4)|v'|^2+2c_1c_2|v'|  }dv'\\
        &\le c_9 \int_{ |v'|\ge R} e^{-c_1(1-\epsilon c_4)|v'|^2+2c_1c_2|v'|  }dv' \to 0 \text{ as } R\to +\infty.
      \end{align*}
By \eqref{eq.bR}, this implies that $ \beta_w(P_t^{\mathscr D})=0$ and then  \eqref{eq.RS} holds. The proof of the theorem is complete.
\end{proof}
We end this section by  considering  the case when the position domain might be unbounded and the drift is singular:  we state without proof the following extension of the main results of~\cite{guillinqsd2}. 
  \begin{thm}\label{th.Ks} 
 \cite[Theorems 2.4 and 3.2]{guillinqsd2} are both still valid without any regularity assumption on the boundary of the position   domain $\mathsf O$, where $\mathsf O\times \mathbf R^d$   is the subdomain where  the quasi-stationary distribution is considered   there. 
   \end{thm}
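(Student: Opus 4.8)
The statement to prove is Theorem~\ref{th.Ks}, which asserts that the main results of~\cite{guillinqsd2} on the kinetic Langevin process with (possibly) singular drift and (possibly) unbounded position domain remain valid without any regularity assumption on the boundary of the position domain~$\mathsf O$. Since the paper explicitly states this result ``without proof'', I will nonetheless describe the natural proof plan one would carry out to justify it, following the template already laid out for Theorems~\ref{th.1},~\ref{th.2},~\ref{th.3}, and~\ref{th.K}.

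The plan is to follow the same strategy that underlies every existence-and-uniqueness result in this work: reduce to~\cite[Theorem~2.2]{guillinqsd} (or its extension Theorem~\ref{th.ext} if needed), and verify the conditions \textbf{(C1)}, \textbf{(C2)}, \textbf{(C3)}, \textbf{(C4)}, and \textbf{(C5)} for the kinetic Langevin process~\eqref{eq.K2} on $\mathscr D = \mathsf O \times \mathbf R^d$ in this weak regularity setting. First I would note that conditions \textbf{(C2)}, \textbf{(C1)}, \textbf{(C4)}, and \textbf{(C5)} are established by exactly the energy splitting approach of Section~\ref{sec.22}: \textbf{(C2)} by a Gronwall argument as in Proposition~\ref{pr.C2}; \textbf{(C1)} by the Girsanov formula~\eqref{eq.Girsanov} (here $\gamma > 0$ so the Doléans-Dade exponential handles the singular $\nabla V$, and the starting point~\eqref{eq.splitting} together with the Hamiltonian bound $\mathcal L \mathsf H \le c\mathsf H$ controls the contribution at high energy near $\partial \mathbf O_V$), using Proposition~\ref{pr.wu1999} for the low-energy part exactly as in Proposition~\ref{pr.PP-S}; \textbf{(C4)} via Lemma~\ref{le.C4}-type estimates on exit times of position balls, as in Proposition~\ref{co.SFD}; and \textbf{(C5)} via the control-curve construction of Section~\ref{sec.S5GL}, since $\mathbf O_V \setminus \overline{\mathsf O}$ is assumed nonempty there. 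Crucially, all of these arguments use only the local Lipschitz regularity of the drift and the coercivity of $V$ on $\mathbf O_V$, never any regularity of $\partial \mathsf O$; this is precisely the point where the improvement over~\cite{guillinqsd2} occurs.

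Condition \textbf{(C3)} — the enhanced exponential integrability of hitting times via a Lyapunov function with strong return from $\infty$ — is the part that is genuinely inherited verbatim from~\cite{guillinqsd2}, since the Lyapunov functions constructed there depend only on the structure of $V$ (its coercivity, the singularity exponents, etc.) and not at all on the domain $\mathsf O$; the same computations $\mathcal L \mathsf W_\delta / \mathsf W_\delta \to -\infty$ as $\mathsf H \to +\infty$ apply unchanged. Once \textbf{(C1)}--\textbf{(C5)} are in hand, the conclusions of~\cite[Theorems~2.4 and 3.2]{guillinqsd2} follow from~\cite[Theorem~2.2]{guillinqsd} exactly as in the original references, and — in the bounded-domain situation of \cite[Theorem~3.2]{guillinqsd2} — the compactness of $P_t^{\mathscr D}$ and vanishing of the essential spectral radius follow from the $\beta_w$-measure-of-non-compactness argument of~\cite{Wu2004} (as in the proof of Theorem~\ref{th.K}), again using only the Gaussian upper bound~\cite[Theorem~1.1]{delarue} for the locally Lipschitz drift.

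The main obstacle is the proof of \textbf{(C1)} (and, downstream, the spectral estimate~\eqref{eq.RS}): when $V$ is singular, the global Girsanov formula must be supplemented by the event $\{t < \sigma^0_{\mathbf O_V}\}$, and as explained in the remark preceding the proof of Proposition~\ref{pr.PP-S}, one cannot control the probability that the free process visits the collision set $\partial \mathbf O_V$. The resolution — the same one used for the singular generalized Langevin and Nosé-Hoover processes — is to avoid the global Girsanov formula entirely and instead localize: introduce for each $R>0$ a globally Lipschitz, bounded modification $\mathbf F_R$ (or $\mathbf b_R$) of the drift agreeing with the true drift in a neighborhood of $\mathscr H_R = \{\mathsf H < R\}$, run the energy-split decomposition~\eqref{eq.splitting}, observe that the killed/localized process coincides in law with the modified one up to $\sigma_{\mathscr H_R}$, prove strong Feller for the modified process (via Proposition~\ref{pr.wu1999} and uniform integrability of transition densities), and let $R \to +\infty$ using $\mathbf P_{\mathsf x}[\sigma_{\mathscr H_R} \le t] \le e^{ct}\mathsf H(\mathsf x)/R$. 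This is routine given the tools already developed in Sections~\ref{sec.2V}--\ref{sec.GL-singular}, which is why the authors omit the details.
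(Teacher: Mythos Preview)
Your proposal is correct and aligns with the approach the paper itself indicates (but does not write out): the authors explicitly state Theorem~\ref{th.Ks} ``without proof'' and refer in the introduction to the energy splitting approach of Section~\ref{sec.22} as the mechanism by which the regularity assumption on $\partial \mathsf O$ is removed from~\cite[Theorems~2.4 and 3.2]{guillinqsd2}. Your breakdown---\textbf{(C3)} inherited verbatim from the Lyapunov constructions of~\cite{guillinqsd2}, \textbf{(C2)}, \textbf{(C1)}, \textbf{(C4)}, \textbf{(C5)} re-proved via the localization/energy-splitting arguments of Propositions~\ref{pr.C2},~\ref{pr.PP-S},~\ref{co.SFD}, and Section~\ref{sec.S5GL}---is exactly the intended route, and your identification of the key obstacle (the global Girsanov formula failing near $\partial \mathbf O_V$) together with its resolution matches the discussion preceding the proof of Proposition~\ref{pr.PP-S}.
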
 
 

 \subsection{Elliptic processes in  bounded   domains}
 In this section, we consider  the solution $(Y_t,t\ge 0)$    in $\mathbf R^d   $ to the equation
\begin{equation}\label{eq.elliptique}
 dY_t =\boldsymbol b(Y_t)dt +\sqrt{2}\,  dW_t. 
 \end{equation}
When $\mathscr O$ is an open subset of $\mathbf R^d$, we  denote by   $(P_t^{\mathscr O},t\ge 0)$  the semigroup of the killed process $(Y_t,t\ge 0)$: 
$
P_t^{\mathscr O}f(y)=\mathbf E_{y}[f(Y_t)\mathbf 1_{t<\sigma_{\mathscr O}}]$, for $ f\in b\mathcal B (\mathscr O)$, $y\in \mathscr O$,  and  where $\sigma_{\mathscr O}=\inf\{t\ge 0, Y_t\notin \mathscr O\}$
 is the first exit time of the process   $(Y_t,t\ge 0)$  from~$\mathscr O$. 
Using the same tools as those used in this work, we are able to get  the  following result which holds without  any regularity assumption on~$\partial \mathscr O$:

  \begin{thm}\label{th.ell}
 Assume {\rm \textbf{[$\boldsymbol b${\tiny bounded+lip}]}}. 
  Then,  there exists a unique quasi-stationary distribution $\nu_{\mathscr O}$ for the process \eqref{eq.elliptique} on $\mathscr O$  in the whole  space of probability measures $\mathcal P(\mathscr O)$ over $\mathscr O$. Furthermore:
  \begin{enumerate}
 \item There exists $\alpha_{\mathscr O}>0$  such that   for all $t\ge0$,  the spectral radius of $P_t^{\mathscr O}$ on $b \mathcal B(\mathscr O)$ is $e^{-\alpha_{\mathscr O} t}$,  where $( P_t^{\mathscr O},t\ge 0)$ is the killed semigroup of the process solution to~\eqref{eq.K2} (see \eqref{eq.KS}).   In addition, for all $t\ge 0$,  $\nu _{\mathscr O} P_t^{\mathscr O}=e^{-\alpha_{\mathscr O}  t}\nu _{\mathscr O}$  and $\nu_{\mathscr O} (\mathsf O)>0$ for all nonempty open subsets $ \mathsf O$ of $\mathscr O$. There is also a unique continuous function $\phi $ in  $b\mathcal B(\mathscr O)$   such that  $\nu _{\mathscr O}(\phi)=1$, $\phi >0$ on $\mathscr O$,  and
 $
 P_t^{\mathscr O} \phi = e^{-\alpha_{\mathscr O} t} \phi  \text{ on }\mathscr O, \ \forall t\ge0$.

 \item  There exist  $m_1>0$ and $m_2\ge 1$ such that for all   $\nu\in \mathcal P (\mathscr O)$,
$$
\big |\mathbf P_\nu[Y_t\in \mathcal A| t<\sigma_{\mathscr O}]-\nu_{\mathscr O}(\mathcal A)\big |\le  \frac{m_2 e^{-m_1 t} }{\nu(\phi )}, \ \forall \mathcal A\in\mathcal B(\mathscr O), t>0.
$$ 
 \end{enumerate}
Eventually,  $\mathbf P_y(\sigma_{\mathscr O}<+\infty)=1$ for all $y\in \mathscr O$.  
\end{thm}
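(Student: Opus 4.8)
\textbf{Proof plan for Theorem~\ref{th.ell}.}
The plan is to verify the conditions \textbf{(C1)}, \textbf{(C2)}, \textbf{(C4)}, \textbf{(C5)} for the killed elliptic process on $\mathscr O$ together with the essential-spectral-radius condition needed to invoke~\cite[Theorem~2.2]{guillinqsd}, following the blueprint of the proof of Theorem~\ref{th.K} but adapted to the non-degenerate setting (here there is a Brownian motion directly on $Y_t$, so the analysis is in fact simpler). First I would extend $\boldsymbol b$ to a globally Lipschitz and bounded vector field on all of $\mathbf R^d$, so that~\eqref{eq.elliptique} has a unique global strong solution and defines a strong Markov process; since $\mathscr O$ is bounded this extension is harmless for the killed process. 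Then \textbf{(C2)} follows from the fact that $Y_s(y_n)\to Y_s(y)$ uniformly on compact time intervals when $y_n\to y$ (Grönwall), and \textbf{(C1)} follows from this together with the standard Gaussian upper bound on the transition density of~\eqref{eq.elliptique} and Proposition~\ref{pr.wu1999}, exactly as in Theorem~\ref{th.K}. For \textbf{(C4)} I would establish, as in Lemma~\ref{le.C4} (but here the argument is shorter because $Y$ itself is driven by the Brownian motion), that for every compact $K\subset\mathbf R^d$ and $\delta>0$, $\lim_{s\to 0^+}\sup_{y\in K}\mathbf P_y[\sigma_{\mathscr B(y,\delta)}\le s]=0$, and then conclude \textbf{(C4)} by the Markov-property/low-energy argument used in the proof of Proposition~\ref{co.SFD}. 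Condition \textbf{(C5)} is obtained from the support theorem for Brownian motion and a control-curve construction as in Section~\ref{sec.S5GL}, and the extra condition that there is a point from which $\sigma_{\mathscr O}<+\infty$ with positive probability is immediate since $\mathscr O$ is bounded (indeed $\mathbf P_y(\sigma_{\mathscr O}<+\infty)=1$ for all $y$, by a Lyapunov/recurrence argument using the boundedness of $\boldsymbol b$ and $\mathscr O$).

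The remaining point, and the one I would expect to be the technical heart, is the absence of essential spectrum: I would show $\mathsf r_{ess}(P_t^{\mathscr O}|_{b\mathcal B(\mathscr O)})=0$ for some $t>0$. As in the proof of Theorem~\ref{th.K}, since $P_t^{\mathscr O}$ is strong Feller it satisfies assumption \textbf{(A1)} of~\cite[Section~3.2]{Wu2004}, so by~\cite[Theorem~3.5]{Wu2004} it suffices to prove $\beta_w(P_t^{\mathscr O})=0$, where $\beta_w$ is the measure of non-$w$-compactness from~\eqref{eq.defBeta}. But here $\mathscr O$ is a \emph{bounded} subset of $\mathbf R^d$, so $\overline{\mathscr O}$ is itself compact: taking $K=\overline{\mathscr O}$ in the infimum defining $\beta_w(P_t^{\mathscr O})$ gives $\sup_{y\in\mathscr O}P_t^{\mathscr O}(y,\mathscr O\setminus K)=0$ immediately, hence $\beta_w(P_t^{\mathscr O})=0$ and $\mathsf r_{ess}(P_t^{\mathscr O})=0$. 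This is in fact \emph{easier} than the kinetic case of Theorem~\ref{th.K}, where one had to control the unbounded velocity variable via the Gaussian bound; no Gaussian tail estimate on the momentum is needed here because the whole state space $\mathscr O$ is relatively compact. (I would double-check that $P_t^{\mathscr O}$ maps $b\mathcal B(\mathscr O)$ into $\mathcal C^b(\mathscr O)$, i.e. that the strong Feller property obtained via \textbf{(C1)}/\textbf{(C4)} is genuinely the hypothesis required by~\cite[Theorem~3.5]{Wu2004}; this is where the low-energy continuity argument pays off.)

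With all of \textbf{(C1)}, \textbf{(C2)}, \textbf{(C4)}, \textbf{(C5)} verified and $\mathsf r_{ess}(P_t^{\mathscr O})=0 < \mathsf r_{sp}(P_t^{\mathscr O})$, I would conclude by applying~\cite[Theorem~4.1]{guillinqsd} (as in the proof of~\cite[Theorem~2.2]{guillinqsd} recalled in~\cite[Section~5]{guillinqsd}), with the trivial Lyapunov weight $\mathsf W\equiv 1$, which is admissible precisely because $\mathscr O$ is bounded so that $\mathcal P_{\mathsf W^{1/p}}(\mathscr O)=\mathcal P(\mathscr O)$ and $b_{\mathsf W^{1/p}}\mathcal B(\mathscr O)=b\mathcal B(\mathscr O)$. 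This yields existence and uniqueness of the quasi-stationary distribution $\nu_{\mathscr O}$ in the full space $\mathcal P(\mathscr O)$, the spectral-radius identity $\mathsf r_{sp}(P_t^{\mathscr O})=e^{-\alpha_{\mathscr O}t}$, the eigen-identities $\nu_{\mathscr O}P_t^{\mathscr O}=e^{-\alpha_{\mathscr O}t}\nu_{\mathscr O}$ and $P_t^{\mathscr O}\phi=e^{-\alpha_{\mathscr O}t}\phi$ with $\phi>0$ continuous and $\nu_{\mathscr O}(\phi)=1$, the positivity $\nu_{\mathscr O}(\mathsf O)>0$ on nonempty open sets (via the extension Theorem~\ref{th.PF}, or directly \textbf{(C5)}), and the exponential convergence bound in item~(2). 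Finally $\mathbf P_y(\sigma_{\mathscr O}<+\infty)=1$ for all $y\in\mathscr O$ is a consequence of the same recurrence/Lyapunov argument together with irreducibility. The main obstacle is thus not any single estimate but assembling the regularity conditions cleanly in the weak-regularity setting; the spectral part, usually the delicate one, degenerates to triviality because of the boundedness of $\mathscr O$.
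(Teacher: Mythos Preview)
Your proposal is correct and takes essentially the same approach as the paper, whose proof consists of the single sentence ``Theorem~\ref{th.ell} is proved using the same arguments as those used to prove Theorem~\ref{th.K} above.'' You have correctly unpacked what this means and identified the key simplification in the elliptic setting: the essential-spectral-radius step, which in Theorem~\ref{th.K} required the Gaussian tail estimate in the velocity variable, becomes trivial here because the whole state space $\mathscr O$ is bounded, so $\beta_w(P_t^{\mathscr O})=0$ immediately (the paper itself takes $K_R=\overline{\mathscr O}\times\{|v|\le R\}$ in the kinetic case, so your choice $K=\overline{\mathscr O}$ is consistent with its conventions).
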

\begin{proof}
Theorem~\ref{th.ell} is proved using  the same arguments as those used to prove Theorem~\ref{th.K} above. 
\end{proof}
Theorem~\ref{th.ell} can be extended to the case where there is a uniformly elliptic and Lipschitz  diffusion coefficient in \eqref{eq.elliptique}. Theorem~\ref{th.ell}  was already derived in \cite{champagnat2017general} (see Theorem 1.1 there) with different techniques.  
We end this work with the   following result about existence and uniqueness of quasi-stationary distributions for elliptic processes on possible unbounded domains and  with singular potentials. 
  \begin{thm}\label{th.ell2} 
 \cite[Proposition 4.2]{guillinqsd2} is  still valid without any regularity assumption on the boundary of the subdomain $\mathfrak D$ on which is considered the quasi-stationary distribution  there. 
   \end{thm}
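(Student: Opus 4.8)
The plan is to re-run the verification of the conditions \textbf{(C1)}$\to$\textbf{(C5)} of \cite[Theorem~2.2]{guillinqsd} for the killed elliptic process associated with \eqref{eq.elliptique} on the subdomain $\mathfrak D\subset\mathbf O_V$, using only the tools developed in this work and, crucially, \emph{no} property of $\partial\mathfrak D$. Among the five conditions, the only one whose original proof in \cite{guillinqsd2} invokes the regularity of $\partial\mathfrak D$ (and the fine study of $y\mapsto\sigma_{\mathfrak D}(y)$) is \textbf{(C4)}; \textbf{(C3)} concerns only the Lyapunov function built from the (possibly singular) potential and is therefore imported verbatim from \cite{guillinqsd2}, while \textbf{(C1)}, \textbf{(C2)}, \textbf{(C5)} are re-established by the energy splitting approach of Section~\ref{sec.22}, exactly as in the proofs of Theorems~\ref{th.K} and~\ref{th.ell}. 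First I would record the non-explosion/escape estimate: under the coercivity and singularity assumptions of \cite{guillinqsd2} one has $\mathcal L V\le cV$ on $\mathbf O_V$ for some $c>0$, hence $\mathbf P_y[\sigma_{\mathscr H_R}\le t]\le e^{ct}V(y)/R$ with $\mathscr H_R=\{V<R\}$ having compact closure in $\mathbf O_V$; this gives \textbf{(C2)} by Gronwall's inequality on $\mathscr H_R$ as in Proposition~\ref{pr.C2}, and it is precisely \eqref{eq.rho} for the splitting \eqref{eq.splitting} with energy $V$ (see Remark~\ref{re.ReG}).

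For \textbf{(C1)}, when $\mathbf b$ is (locally) Lipschitz on $\mathbf O_V=\mathbf R^d$ I would use the global Girsanov formula \eqref{eq.Girsanov} adapted to \eqref{eq.elliptique}, so that $\mathsf z\mapsto\mathbf E_{\mathsf z}[f(Y_t)]$ reduces to $\mathsf z\mapsto\mathbf E_{\mathsf z}[f(W_t^{\mathsf z})\,\mathscr M_t]$ for a Doléans-Dade exponential $\mathscr M_t$; continuity then follows from Proposition~\ref{pr.wu1999} (the Brownian motion has a Gaussian density) combined with continuity in $\mathbf P$-probability of $\mathscr M_t$, exactly as in Proposition~\ref{pr.SFgamma0}. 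In the singular case one instead uses the energy splitting: on $\mathscr H_R$ replace $\mathbf b$ by a globally Lipschitz bounded $\mathbf b_R$ agreeing with it in a neighbourhood of $\overline{\mathscr H_R}$, use the Gaussian upper bound \cite[Theorem~1.1]{delarue} and Proposition~\ref{pr.wu1999} for the localized process to get \eqref{eq.theta}, and combine with \eqref{eq.rho}, as in the proofs of Theorem~\ref{pr.SFgamma-neq-0} and Proposition~\ref{pr.PP-S}. The key new step is \textbf{(C4)}: I would first prove the uniform small-time exit estimate $\lim_{s\to0^+}\sup_{y\in K}\mathbf P_y[\sigma_{\mathscr B(y,\delta)}\le s]=0$ for every compact $K\subset\mathscr O$ and every $\delta>0$ (the elliptic analogue of Lemma~\ref{le.C4}, simpler here since there is noise directly on $Y_t$), and then, for $f\in b\mathcal B(\mathscr O)$ and $0\le s\le t$, use the Markov decomposition $P_t^{\mathscr O}f(y)=\mathbf E_y[\mathbf 1_{s<\sigma_{\mathscr O}}\,g_s(Y_s)]$ with $g_s(z)=\mathbf E_z[\mathbf 1_{t-s<\sigma_{\mathscr O}}f(Y_{t-s})]$ together with $\sup_{y\in K}|P_t^{\mathscr O}f(y)-P_sg_s(y)|\le\|f\|_\infty\sup_{y\in K}\mathbf P_y[\sigma_{\mathscr B(y,\delta)}\le s]\to0$ as $s\to0^+$ (with $\delta=\mathrm{dist}(K,\partial\mathscr O)$), giving continuity of $P_t^{\mathscr O}f$ as in Proposition~\ref{co.SFD}. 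For \textbf{(C5)} I would use the support theorem for Brownian paths and build control curves valued in $\mathscr O$ joining any two points (the elliptic version of Lemmas~\ref{loc.construction}-\ref{loc.construction2}, and even simpler), transferring positivity to \eqref{eq.elliptique} via equivalence of laws; the second half of \textbf{(C5)} uses $\mathbf O_V\setminus\overline{\mathscr O}\neq\emptyset$, exactly as in Section~\ref{sec.S5GL}. Once \textbf{(C1)}$\to$\textbf{(C5)} are in hand, \cite[Theorem~2.2]{guillinqsd} (equivalently \cite[Proposition~4.2]{guillinqsd2}'s conclusions) applies.

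The main obstacle is, as throughout this paper, \textbf{(C4)}: controlling $\sigma_{\mathscr O}$ with no regularity on $\partial\mathscr O$. The point is that the small-time exit estimate above involves only balls $\mathscr B(y,\delta)$ strictly inside $\mathscr O$, so the Markov-property argument never sees $\partial\mathscr O$ at all; this is what replaces the boundary analysis of \cite{guillinqsd2}. A secondary technical care, in the singular setting, is to make sure the localization $\mathbf b_R$ and the associated Gaussian bounds are only invoked inside $\mathscr H_R$, the behaviour of the process near $\partial\mathbf O_V$ being absorbed into the high-energy term \eqref{eq.rho}; this is routine given the escape estimate $\mathbf P_y[\sigma_{\mathscr H_R}\le t]\le e^{ct}V(y)/R$ recorded above.
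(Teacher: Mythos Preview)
Your proposal is correct and follows precisely the approach implicit in the paper. The paper does not give an explicit proof of Theorem~\ref{th.ell2} (it is stated as a direct consequence of the techniques developed earlier), and your outline---importing \textbf{(C3)} from \cite{guillinqsd2}, checking \textbf{(C1)}, \textbf{(C2)} via the energy splitting of Section~\ref{sec.22}, proving \textbf{(C4)} through the elliptic analogue of Lemma~\ref{le.C4} and the Markov decomposition of Proposition~\ref{co.SFD}, and \textbf{(C5)} by control curves as in Section~\ref{sec.S5GL}---is exactly what those techniques amount to in the elliptic setting.
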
 


\medskip

\noindent
 \textbf{Acknowledgement.}\\
 {\small A. Guillin is supported by the ANR-23-CE-40003, Conviviality, and the Institut Universitaire de France. 
 D. Lu is supported by the China Scholarship Council (CSC, Grant No. 202206060077). 
B.N. is  supported by  the grant  IA20Nectoux from the Projet I-SITE Clermont CAP 20-25 and   by  the ANR-19-CE40-0010, Analyse Quantitative de Processus M\'etastables (QuAMProcs). The authors are  grateful to Laurent Michel for pointing them out the Nos\'e-Hoover process and the work of Loïs Delande~\cite{lois}.}


 \bibliography{GenertalizedLangevin} 

\begin{thebibliography}{10}

\bibitem{ala2002collective}
T.~Ala-Nissila, R.~Ferrando, and S.C. Ying.
\newblock Collective and single particle diffusion on surfaces.
\newblock {\em Advances in Physics}, 51(3):949--1078, 2002.

\bibitem{arnaudon2024lyapunov}
M.~Arnaudon, P.~Del~Moral, and O.~El~Maati.
\newblock A lyapunov approach to stability of positive semigroups: {A}n
  overview with illustrations.
\newblock {\em Stochastic Analysis and Applications}, 42(1):121--200, 2024.

\bibitem{bai2010efficient}
X.~Bai, A.~F. Voter, R.~G. Hoagland, M.~Nastasi, and B.~P. Uberuaga.
\newblock Efficient annealing of radiation damage near grain boundaries via
  interstitial emission.
\newblock {\em Science}, 327(5973):1631--1634, 2010.

\bibitem{baldi}
P.~Baldi.
\newblock G{\'e}od{\'e}siques et diffusions en temps petit.
\newblock {\em Ast{\'e}risque. Probability Seminar, University of Paris VII,
  Paris}, (84), 1981.

\bibitem{ben}
G.~Ben~Arous, M.~Gradinaru, and M.~Ledoux.
\newblock H{\"o}lder norms and the support theorem for diffusions.
\newblock In {\em Annales de l'IHP Probabilit{\'e}s et statistiques},
  volume~30, pages 415--436, 1994.

\bibitem{benaim2021degenerate}
M.~Bena{\"\i}m, N.~Champagnat, W.~O{\c{c}}afrain, and D.~Villemonais.
\newblock Degenerate processes killed at the boundary of a domain.
\newblock {\em Preprint arXiv:2103.08534}, 2021.

\bibitem{bony2022eyring}
J-F. Bony, D.~Le Peutrec, and L.~Michel.
\newblock Eyring-{K}ramers law for {F}okker-{P}lanck type differential
  operators.
\newblock {\em To appear in the Journal of the European Mathematical Society},
  2022.

\bibitem{carmona2007existence}
P.~Carmona.
\newblock Existence and uniqueness of an invariant measure for a chain of
  oscillators in contact with two heat baths.
\newblock {\em Stochastic Processes and their Applications}, 117(8):1076--1092,
  2007.

\bibitem{cattiaux09}
P.~Cattiaux, P.~Collet, A.~Lambert, S.~Mart{\'{\i}}nez, S.~M{\'e}l{\'e}ard, and
  J.~San~Mart{\'{\i}}n.
\newblock Quasi-{S}tationary {D}istributions and {D}iffusion {M}odels in
  {P}opulation {D}ynamics.
\newblock {\em The Annals of Probability}, 37(5):1926--1969, 2009.

\bibitem{cattiaux2010competitive}
P.~Cattiaux and S.~M{\'e}l{\'e}ard.
\newblock Competitive or weak cooperative stochastic {L}otka--{V}olterra
  systems conditioned on non-extinction.
\newblock {\em Journal of Mathematical Biology}, 60(6):797--829, 2010.

\bibitem{ceriotti1}
M.~Ceriotti.
\newblock {\em A novel framework for enhanced molecular dynamics based on the
  generalized {L}angevin equation}.
\newblock PhD thesis, PhD, ETH Zurich, 2010.

\bibitem{ceriotti2}
M.~Ceriotti, G.~Bussi, and M.~Parrinello.
\newblock Langevin equation with colored noise for constant-temperature
  molecular dynamics simulations.
\newblock {\em Physical Review Letters}, 102(2):020601, 2009.

\bibitem{chak2023generalized}
M.~Chak, N.~Kantas, and G.A. Pavliotis.
\newblock On the generalized {L}angevin equation for simulated annealing.
\newblock {\em SIAM/ASA Journal on Uncertainty Quantification}, 11(1):139--167,
  2023.

\bibitem{champagnat2021lyapunov}
N.~Champagnat and D.~Villemonais.
\newblock Lyapunov criteria for uniform convergence of conditional
  distributions of absorbed {M}arkov processes.
\newblock {\em Stochastic Processes and their Applications}, 135:51--74, 2021.

\bibitem{champagnat2017general}
N.~Champagnat and D.~Villemonais.
\newblock General criteria for the study of quasi-stationarity.
\newblock {\em Electronic Journal of Probability}, 28:1--84, 2023.

\bibitem{chazottes2016sharp}
J-R. Chazottes, P.~Collet, and S.~M{\'e}l{\'e}ard.
\newblock Sharp asymptotics for the quasi-stationary distribution of
  birth-and-death processes.
\newblock {\em Probability Theory and Related Fields}, 164(1-2):285--332, 2016.

\bibitem{chung2001brownian}
K.L. Chung and Z.~Zhao.
\newblock {\em From Brownian {M}otion to Schr{\"o}dinger’s {E}quation},
  volume 312.
\newblock Springer Science \& Business Media, 2001.

\bibitem{collet2012quasi}
P.~Collet, S.~Mart{\'\i}nez, and J.~San~Mart{\'\i}n.
\newblock {\em Quasi-{S}tationary {D}istributions: {M}arkov {C}hains,
  {D}iffusions and {D}ynamical {S}ystems}.
\newblock Springer Science \& Business Media, 2012.

\bibitem{lois}
L.~Delandre.
\newblock Sharp spectral gap of adaptive {L}angevin dynamics.
\newblock {\em Preprint}, 2023.

\bibitem{delarue}
F.~Delarue and S.~Menozzi.
\newblock Density estimates for a random noise propagating through a chain of
  differential equations.
\newblock {\em Journal of {F}unctional {A}nalysis}, 259(6):1577--1630, 2010.

\bibitem{di-gesu-lelievre-le-peutrec-nectoux-17}
G.~Di~Ges{\`u}, T.~Leli{\`e}vre, D.~Le~Peutrec, and B.~Nectoux.
\newblock Jump {M}arkov models and transition state theory: the
  quasi-stationary distribution approach.
\newblock {\em Faraday Discussions}, 195:469--495, 2017.

\bibitem{DLLN}
G.~Di~Ges\`u, T.~Leli\`evre, D.~Le~Peutrec, and B.~Nectoux.
\newblock Sharp asymptotics of the first exit point density.
\newblock {\em Annals of PDE}, 5(2), 2019.

\bibitem{diaconis2009times}
P.~Diaconis and L.~Miclo.
\newblock On times to quasi-stationarity for birth and death processes.
\newblock {\em Journal of Theoretical Probability}, 22(3):558--586, 2009.

\bibitem{ding2014bayesian}
N.~Ding, Y.~Fang, R.~Babbush, C.~Chen, R.D. Skeel, and H.~Neven.
\newblock Bayesian sampling using stochastic gradient thermostats.
\newblock {\em Advances in {N}eural {I}nformation {P}rocessing {S}ystems}, 27,
  2014.

\bibitem{doll1975generalized}
J.D. Doll, L.E. Myers, and S.A. Adelman.
\newblock Generalized {L}angevin equation approach for atom/solid-surface
  scattering: inelastic studies.
\newblock {\em The Journal of Chemical Physics}, 63(11):4908--4914, 1975.

\bibitem{duong2023asymptotic}
M.H. Duong and H.D. Nguyen.
\newblock Asymptotic analysis for the generalized langevin equation with
  singular potentials.
\newblock {\em arXiv preprint arXiv:2305.03637}, 2023.

\bibitem{friedman1975}
A.~Friedman.
\newblock Stochastic {D}ifferential {E}quations and {A}pplications, volume 1.
\newblock Academic Press, New York- London, 1975. Probability and Mathematical
  Statistics, Vol. 28.

\bibitem{gidas1985global}
B.~Gidas.
\newblock Global optimization via the {L}angevin equation.
\newblock In {\em 1985 24th IEEE Conference on Decision and Control}, pages
  774--778. IEEE, 1985.

\bibitem{Girsanov}
I.~V. Girsanov.
\newblock Strongly-feller processes {I}. {G}eneral properties.
\newblock {\em Theory of Probability \& Its Applications}, 5(1):5--24, 1960.

\bibitem{glatt}
N.E. Glatt-Holtz, D.P. Herzog, S.A. McKinley, and H.D. Nguyen.
\newblock The generalized {L}angevin equation with power-law memory in a
  nonlinear potential well.
\newblock {\em Nonlinearity}, 33(6):2820, 2020.

\bibitem{guillinFK}
A.~Guillin, D.~Lu, B.~Nectoux, and L.~Wu.
\newblock Long time behavior of killed {F}eynman-{K}ac semigroups with singular
  {S}chr\"{o}dinger potentials.
\newblock {\em Preprint Hal-04790621}, 2024.

\bibitem{guillinqsd}
A.~Guillin, B.~Nectoux, and L.~Wu.
\newblock Quasi-stationary distribution for strongly {F}eller {M}arkov
  processes by {L}yapunov functions and applications to hypoelliptic
  {H}amiltonian systems.
\newblock {\em To appear in the Journal of the European Mathematical Society}.

\bibitem{guillinqsd2}
A.~Guillin, B.~Nectoux, and L.~Wu.
\newblock Quasi-stationary distribution for {H}amiltonian dynamics with
  singular potentials.
\newblock {\em Probability Theory and Related Fields}, 185(3-4):921--959, 2023.

\bibitem{guillin2024large}
A.~Guillin, B.~Nectoux, and L.~Wu.
\newblock Large deviations of the empirical measures of a strong-{F}eller
  {M}arkov process inside a subset and quasi-ergodic distribution.
\newblock {\em Preprint arXiv:2411.17216}, 2024.

\bibitem{guillin2024Dyson}
A.~Guillin, B.~Nectoux, and L.~Wu.
\newblock Quasi-stationarity of the {D}yson {B}rownian motion with collisions.
\newblock {\em In preparation}, 2025.

\bibitem{guillin2012degenerate}
F-Y. Guillin, A.and~Wang.
\newblock Degenerate {F}okker--{P}lanck equations: {B}ismut formula, gradient
  estimate and {H}arnack inequality.
\newblock {\em Journal of Differential Equations}, 253(1):20--40, 2012.

\bibitem{hairer2011yet}
M.~Hairer and J.C. Mattingly.
\newblock Yet another look at {H}arris’ ergodic theorem for {M}arkov chains.
\newblock In {\em Seminar on Stochastic Analysis, Random Fields and
  Applications VI: Centro Stefano Franscini, Ascona, May 2008}, pages 109--117.
  Springer, 2011.

\bibitem{herauEK}
F.~H{\'e}rau, M.~Hitrik, and J.~Sj{\"o}strand.
\newblock Tunnel effect and symmetries for {K}ramers--{F}okker--{P}lanck type
  operators.
\newblock {\em Journal of the {I}nstitute of {M}athematics of {J}ussieu},
  10(3):567--634, 2011.

\bibitem{herzogNH}
D.P. Herzog.
\newblock Exponential relaxation of the {N}os{\'e}--{H}oover thermostat under
  {B}rownian heating.
\newblock {\em Communications in {M}athematical {S}ciences}, 16(8):2231--2260,
  2018.

\bibitem{herzog}
D.P. Herzog and J.C. Mattingly.
\newblock Ergodicity and {L}yapunov functions for {L}angevin dynamics with
  singular potentials.
\newblock {\em Communications on Pure and Applied Mathematics},
  72(10):2231--2255, 2019.

\bibitem{herzog2023gibbsian}
D.P. Herzog, J.C. Mattingly, and H.D. Nguyen.
\newblock Gibbsian dynamics and the generalized {L}angevin equation.
\newblock {\em Electronic Journal of Probability}, 28:1--29, 2023.

\bibitem{kolbQSDpersistance}
G.~Hinrichs, M.~Kolb, and V.~Wachtel.
\newblock Persistence of one-dimensional {AR (1)}-sequences.
\newblock {\em Journal of Theoretical Probability}, 33(1):65--102, 2020.

\bibitem{jones2011adaptive}
A.~Jones and B.~Leimkuhler.
\newblock Adaptive stochastic methods for sampling driven molecular systems.
\newblock {\em The Journal of chemical physics}, 135(8), 2011.

\bibitem{kallenberg}
O.~Kallenberg.
\newblock {\em Foundations of {M}odern {P}robability}, volume~2.
\newblock Springer, 1997.

\bibitem{KCLS}
D.S. Kleinerman, C.~Czaplewski, A.~Liwo, and H.A. Scheraga.
\newblock Implementations of {N}os{\'e}--{H}oover and
  {N}os{\'e}--{P}oincar{\'e} thermostats in mesoscopic dynamic simulations with
  the united-residue model of a polypeptide chain.
\newblock {\em The Journal of chemical physics}, 128(24), 2008.

\bibitem{kupferman2004fractional}
R.~Kupferman.
\newblock Fractional kinetics in {K}ac--{Z}wanzig heat bath models.
\newblock {\em Journal of statistical physics}, 114:291--326, 2004.

\bibitem{le2012mathematical}
C.~Le~Bris, T.~Leli{\`e}vre, M.~Luskin, and D.~Perez.
\newblock A mathematical formalization of the parallel replica dynamics.
\newblock {\em Monte Carlo Methods and Applications}, 18(2):119--146, 2012.

\bibitem{leimkuhler2016computation}
B.~Leimkuhler, C.~Matthews, and G.~Stoltz.
\newblock The computation of averages from equilibrium and nonequilibrium
  {L}angevin molecular dynamics.
\newblock {\em IMA Journal of Numerical Analysis}, 36(1):13--79, 2016.

\bibitem{LR09}
B.~Leimkuhler and S.~Reich.
\newblock A metropolis adjusted {N}os\'{e}-{H}oover thermostat.
\newblock {\em M2AN Math. Model. Numer. Anal.}, 43(4):743--755, 2009.

\bibitem{Sachs}
B.~Leimkuhler and M.~Sachs.
\newblock Ergodic properties of quasi-{M}arkovian generalized {L}angevin
  equations with configuration dependent noise and non-conservative force.
\newblock {\em In: Stochastic Dynamics out of Equilibrium, G. Giacomin, S.
  Olla, E. Saada, H. Spohn and G. Stoltz (Eds), Volume 282 of {S}pringer
  {P}roceedings in {M}athematics \& {S}tatistics}, pages 282--330, 2019.

\bibitem{stoltzadaptive}
B.~Leimkuhler, M.~Sachs, and G.~Stoltz.
\newblock Hypocoercivity properties of adaptive {L}angevin dynamics.
\newblock {\em SIAM Journal on {A}pplied {M}athematics}, 80(3):1197--1222,
  2020.

\bibitem{leimkuhler2016adaptive}
B.~Leimkuhler and X.~Shang.
\newblock Adaptive thermostats for noisy gradient systems.
\newblock {\em SIAM Journal on Scientific Computing}, 38(2):A712--A736, 2016.

\bibitem{lelievre2015accelerated}
T.~Leli{\`e}vre.
\newblock Accelerated dynamics: Mathematical foundations and algorithmic
  improvements.
\newblock {\em The European Physical Journal Special Topics},
  224(12):2429--2444, 2015.

\bibitem{lelievre2022estimation}
T.~Leli{\`e}vre, M.~Ramil, and J.~Reygner.
\newblock Estimation of statistics of transitions and {H}ill relation for
  {L}angevin dynamics.
\newblock {\em To appear in Annales de l'IHP Probabilit{\'e}s et Statistiques},
  2022.

\bibitem{ramilarxiv1}
T.~Leli{\`e}vre, M.~Ramil, and J.~Reygner.
\newblock A probabilistic study of the kinetic {F}okker--{P}lanck equation in
  cylindrical domains.
\newblock {\em Journal of Evolution Equations}, 22(2):1--74, 2022.

\bibitem{ramilarxiv2}
T.~Leli{\`e}vre, M.~Ramil, and J.~Reygner.
\newblock Quasi-stationary distribution for the {L}angevin process in
  cylindrical domains, part {I}: existence, uniqueness and long-time
  convergence.
\newblock {\em Stochastic Processes and their Applications}, 144:173--201,
  2022.

\bibitem{lelievre2016partial}
T.~Leli{\`e}vre and G.~Stoltz.
\newblock Partial differential equations and stochastic methods in molecular
  dynamics.
\newblock {\em Acta Numerica}, 25:681--880, 2016.

\bibitem{lindenberg1990nonequilibrium}
K.~Lindenberg and B.J. West.
\newblock The {N}onequilibrium {S}tatistical {M}echanics of {O}pen and {C}losed
  {S}ystems.
\newblock {\em Wiley-VCH, New York}, 1990.

\bibitem{lu2019geometric}
Y.~Lu and J.C. Mattingly.
\newblock Geometric ergodicity of {L}angevin dynamics with {C}oulomb
  interactions.
\newblock {\em Nonlinearity}, 33(2):675, 2019.

\bibitem{mcshane}
E.J. McShane.
\newblock Extension of range of functions.
\newblock {\em Bulletin of the American Mathematical Society}, 40:837--842,
  1934.

\bibitem{meleard2012quasi}
S.~M{\'e}l{\'e}ard and D.~Villemonais.
\newblock Quasi-stationary distributions and population processes.
\newblock {\em Probability Surveys}, 9:340--410, 2012.

\bibitem{menozzi2011parametrix}
S.~Menozzi.
\newblock Parametrix techniques and martingale problems for some degenerate
  {K}olmogorov equations.
\newblock {\em Electronic {C}ommunications in {P}robability}, 16:234--250,
  2011.

\bibitem{monmarche2023almost}
P.~Monmarch{\'e}.
\newblock Almost sure contraction for diffusions on ${R}^d$. {A}pplication to
  generalized {L}angevin diffusions.
\newblock {\em Stochastic Processes and their Applications}, 161:316--349,
  2023.

\bibitem{mori1965continued}
H.~Mori.
\newblock A continued-fraction representation of the time-correlation
  functions.
\newblock {\em Progress of Theoretical Physics}, 34(3):399--416, 1965.

\bibitem{mori1965transport}
H.~Mori.
\newblock Transport, collective motion, and {B}rownian motion.
\newblock {\em Progress of Theoretical Physics}, 33(3):423--455, 1965.

\bibitem{ness2015applications}
H.~Ness, L.~Stella, C.D. Lorenz, and L.~Kantorovich.
\newblock Applications of the generalized {L}angevin equation: {T}owards a
  realistic description of the baths.
\newblock {\em Physical Review B}, 91(1):014301, 2015.

\bibitem{ottobre2012asymptotic}
M.~Ottobre.
\newblock Asymptotic analysis for {M}arkovian models in non-equilibrium
  {S}tatistical {M}echanics.
\newblock {\em Thesis Manuscript, Imperial College London}, 2012.

\bibitem{ottobre2011asymptotic}
M.~Ottobre and G.A. Pavliotis.
\newblock Asymptotic analysis for the generalized {L}angevin equation.
\newblock {\em Nonlinearity}, 24(5):1629, 2011.

\bibitem{pavliotisbook}
G.A. Pavliotis.
\newblock {\em Stochastic {P}rocesses and {A}pplications.}
\newblock Volume 60 of Texts in Applied Mathematics. Springer, New York, 2016.

\bibitem{pavliotis2021scaling}
G.A. Pavliotis, G.~Stoltz, and U.~Vaes.
\newblock Scaling limits for the generalized {L}angevin equation.
\newblock {\em Journal of Nonlinear Science}, 31(1):8, 2021.

\bibitem{perez2016long}
D.~Perez, E.D. Cubuk, A.~Waterland, E.~Kaxiras, and A.F. Voter.
\newblock Long-time dynamics through parallel trajectory splicing.
\newblock {\em Journal of chemical theory and computation}, 12(1):18--28, 2016.

\bibitem{perez2009accelerated}
D.~Perez, B.P. Uberuaga, Y.~Shim, J.G. Amar, and A.F. Voter.
\newblock Accelerated {M}olecular {D}ynamics {M}ethods: {I}ntroduction and
  {R}ecent {D}evelopments.
\newblock {\em Annual Reports in Computational Chemistry}, 5:79--98, 2009.

\bibitem{perez2015b}
D.~Perez, B.P. Uberuaga, and A.F. Voter.
\newblock The parallel replica dynamics method--{C}oming of age.
\newblock {\em Computational Materials Science}, 100:90--103, 2015.

\bibitem{peszat1995strong}
S.~Peszat and J.~Zabczyk.
\newblock Strong {F}eller property and irreducibility for diffusions on
  {H}ilbert spaces.
\newblock {\em The Annals of Probability}, pages 157--172, 1995.

\bibitem{ramilarxiv3}
M.~Ramil.
\newblock Quasi-stationary distribution for the {L}angevin process in
  cylindrical domains, part {II}: overdamped limit.
\newblock {\em Electronic Journal of Probability}, 27:1--18, 2022.

\bibitem{ramil2022mathematical}
M.~Ramil, T.~Leli{\`e}vre, and J.~Reygner.
\newblock Mathematical foundations for the {P}arallel {R}eplica algorithm
  applied to the underdamped {L}angevin dynamics.
\newblock {\em MRS Communications}, 12(4):454--459, 2022.

\bibitem{reimann2002brownian}
P.~Reimann.
\newblock Brownian {M}otors: {N}oisy {T}ransport {F}ar {F}rom {E}quilibrium.
\newblock {\em Physics reports}, 361(2-4):57--265, 2002.

\bibitem{sanchez2023krein}
C.F. Sanchez, P.~Gabriel, and S.~Mischler.
\newblock On the {K}rein-{R}utman theorem and beyond.
\newblock {\em Preprint arXiv:2305.06652}, 2023.

\bibitem{schilling2012strong}
R.~L. Schilling and J.~Wang.
\newblock Strong {F}eller continuity of {F}eller processes and semigroups.
\newblock {\em Infinite Dimensional Analysis, Quantum Probability and Related
  Topics}, 15(02):1250010, 2012.

\bibitem{shang2015covariance}
X.~Shang, Z.~Zhu, B.~Leimkuhler, and A.J. Storkey.
\newblock Covariance-controlled adaptive {L}angevin thermostat for large-scale
  {B}ayesian sampling.
\newblock {\em Advances in Neural Information Processing Systems}, 28, 2015.

\bibitem{snook2006langevin}
I.~Snook.
\newblock {\em The Langevin and generalised {L}angevin approach to the dynamics
  of atomic, polymeric and colloidal systems}.
\newblock Elsevier, 2006.

\bibitem{sorensen-voter-00}
M.R. Sorensen and A.F. Voter.
\newblock Temperature-accelerated dynamics for simulation of infrequent events.
\newblock {\em Journal of Chemical Physics}, 112(21):9599--9606, 2000.

\bibitem{stella2014generalized}
L.~Stella, C.D. Lorenz, and L.~Kantorovich.
\newblock Generalized {L}angevin equation: {A}n efficient approach to
  nonequilibrium molecular dynamics of open systems.
\newblock {\em Physical Review B}, 89(13):134303, 2014.

\bibitem{SV}
D.W. Stroock and S.R.S. Varadhan.
\newblock On the support of diffusion processes with applications to the strong
  maximum principle.
\newblock In {\em Proceedings of the Sixth Berkeley Symposium on Mathematical
  Statistics and Probability (Univ. California, Berkeley, Calif., 1970/1971)},
  volume~3, pages 333--359, 1972.

\bibitem{takeuchi2010bismut}
A.~Takeuchi.
\newblock {B}ismut--{E}lworthy--{L}i-type formulae for stochastic differential
  equations with jumps.
\newblock {\em Journal of Theoretical Probability}, 23:576--604, 2010.

\bibitem{tough2022infty}
O.~Tough.
\newblock ${L}^\infty$-convergence to a quasi-stationary distribution.
\newblock {\em Preprint arXiv:2210.13581}, 2022.

\bibitem{velleret2022unique}
A.~Velleret.
\newblock Unique quasi-stationary distribution, with a possibly stabilizing
  extinction.
\newblock {\em Stochastic Processes and their Applications}, 148:98--138, 2022.

\bibitem{voter-98}
A.F. Voter.
\newblock Parallel replica method for dynamics of infrequent events.
\newblock {\em Physical Review B}, 57(22):R13 985, 1998.

\bibitem{wu1999}
L.~Wu.
\newblock Uniqueness of {N}elsons diffusions.
\newblock {\em Probability {T}heory and {R}elated {F}ields}, 114(4):549--585,
  1999.

\bibitem{wu2001}
L.~Wu.
\newblock Large and moderate deviations and exponential convergence for
  stochastic damping {H}amiltonian systems.
\newblock {\em Stochastic Processes and their Applications}, 91(2):205--238,
  2001.

\bibitem{Wu2004}
L.~Wu.
\newblock Essential spectral radius for {M}arkov semigroups. {I}. {D}iscrete
  time case.
\newblock {\em Probability Theory and Related Fields}, 128(2):255--321, 2004.

\bibitem{zhang2013derivative}
X.~Zhang.
\newblock Derivative formulas and gradient estimates for {SDE}s driven by
  $\alpha$-stable processes.
\newblock {\em Stochastic Processes and their Applications}, 123(4):1213--1228,
  2013.

\bibitem{zwanzig1973nonlinear}
R.~Zwanzig.
\newblock Nonlinear generalized {L}angevin equations.
\newblock {\em Journal of Statistical Physics}, 9(3):215--220, 1973.

\bibitem{zwanzig2001nonequilibrium}
R.~Zwanzig.
\newblock {\em Nonequilibrium {S}tatistical {M}echanics}.
\newblock Oxford University Press, 2001.

\end{thebibliography}
 
\bibliographystyle{plain}

 \end{document}